\numberwithin{equation}{section}
\numberwithin{figure}{section}
\theoremstyle{plain}
\newtheorem{thm}{Theorem}[section]
  \crefname{thm}{Theorem}{Theorems}
  \newtheorem{lem}[thm]{Lemma}
  \crefname{lem}{Lemma}{Lemmas}
  \newtheorem{prop}[thm]{Proposition}
  \crefname{prop}{Proposition}{Propositions}
  \newtheorem{cor}[thm]{Corollary}
	\crefname{cor}{Corollary}{Corollaries}
  \newtheorem*{prop1}{Proposition A}
  \newtheorem*{theor2}{Theorem B}
  \newtheorem*{theor3}{Theorem C}
  \newtheorem*{theor4}{Theorem D}
  \newtheorem*{cor5}{Corollary E}
  \newtheorem*{ack}{Acknowledgments}
\theoremstyle{definition}
  \newtheorem{defi}[thm]{Definition}
  \crefname{defi}{Definition}{Definitions}
  \theoremstyle{remark}
  \newtheorem{ntn}[thm]{Notation}
  \crefname{ntn}{Notation}{Notations}
	 \theoremstyle{remark}
  \newtheorem{rem}[thm]{Remark}
  \crefname{rem}{Remark}{Remarks}
  \newtheorem{ex}[thm]{Example}
  \crefname{ex}{Example}{Examples}
\def\r{\mathbb{R}}
\def\c{\mathbb{C}}
\def\q{\mathbb{Q}}
\def\z{\mathbb{Z}}
\newcommand{\Mat}{\operatorname{Mat}}
\newcommand{\Spec}{\operatorname{Spec}}
\newcommand{\rank}{\operatorname{rank}}
\newcommand{\sgn}{\operatorname{sgn}}
\newcommand{\Image}{\operatorname{Im}}
    \newcommand*{\qrr@gobblenexttocentry}[5]{}
    \newcommand*{\qrr@gobblenexttocentry}[4]{}
\newcommand*{\addsubsection}{%
    \addtocontents{toc}{\protect\qrr@gobblenexttocentry}%
    \subsection}
\begin{document}
\title[Newton--Okounkov polytopes arising from cluster structures]{Newton--Okounkov polytopes of Schubert varieties arising from cluster structures}

\author{Naoki FUJITA}

\address[Naoki FUJITA]{Faculty of Advanced Science and Technology, Kumamoto University, 2-39-1 Kurokami, Chuo-ku, Kumamoto 860-8555, Japan.}

\email{fnaoki@kumamoto-u.ac.jp}

\author{Hironori OYA}

\address[Hironori OYA]{Department of Mathematics, Institute of Science Tokyo, 2-12-1 Ookayama, Meguro-ku, Tokyo 152-8551, Japan.}

\email{hoya@math.titech.ac.jp}

\subjclass[2020]{Primary 05E10; Secondary 13F60, 14M15, 14M25, 52B20}

\keywords{Newton--Okounkov body, cluster algebra, string polytope, Nakashima--Zelevinsky polytope, toric degeneration}

\thanks{The work of the first named author was supported by Grant-in-Aid for JSPS Fellows (No.\ 19J00123), by JSPS Grant-in-Aid for Early-Career Scientists (No.\ 20K14281, 24K16902), and by MEXT Japan Leading Initiative for Excellent Young Researchers (LEADER) Project. 
The work of the second named author was supported by JSPS Grant-in-Aid for Early-Career Scientists (No.\ 19K14515).}

\date{}

\begin{abstract}
The theory of Newton--Okounkov bodies is a generalization of that of Newton polytopes for toric varieties, and it gives a systematic method of constructing toric degenerations of projective varieties. In this paper, we study Newton--Okounkov bodies of Schubert varieties from the theory of cluster algebras. We construct Newton--Okounkov bodies using specific valuations which generalize extended g-vectors in cluster theory, and discuss how these bodies are related to string polytopes and Nakashima--Zelevinsky polytopes. 
\end{abstract}
\maketitle
\tableofcontents 
\section{Introduction}\label{s:section}

A Newton--Okounkov body $\Delta(X, \mathcal{L}, v)$ is a convex body constructed from a polarized variety $(X, \mathcal{L})$ with a higher rank valuation $v$ on the function field $\mathbb{C}(X)$, which was introduced by Okounkov \cite{Oko1, Oko2, Oko3}, and afterward developed independently by Kaveh--Khovanskii \cite{KK1,KK2} and by Lazarsfeld--Mustata \cite{LM}. It generalizes the notion of Newton polytopes for toric varieties to arbitrary projective varieties. A remarkable fact is that the theory of Newton--Okounkov bodies gives a systematic method of constructing toric degenerations (see \cite[Theorem 1]{And}). In the case of flag varieties and Schubert varieties, their Newton--Okounkov bodies realize the following representation-theoretic polytopes:
\begin{enumerate}
\item[{\rm (i)}] Berenstein--Littelmann--Zelevinsky's string polytopes \cite{Kav},
\item[{\rm (ii)}] Nakashima--Zelevinsky polytopes \cite{FN},
\item[{\rm (iii)}] Feigin--Fourier--Littelmann--Vinberg polytopes \cite{FeFL, Kir}.
\end{enumerate}
The set of lattice points in each polytope of {\rm (i)}--{\rm (iii)} parametrizes a specific basis of an irreducible highest weight representation of a semisimple Lie algebra. In particular, string polytopes and Nakashima--Zelevinsky polytopes give polyhedral parametrizations of crystal bases (see \cite{BZ2, Lit, Nak1, NZ}). One motivating problem is to relate these polytopes using the framework of cluster algebras. 
Following Gross--Hacking--Keel--Kontsevich \cite{GHKK}, the theory of cluster algebras also can be used to obtain toric degenerations of projective varieties. Cluster algebras were introduced by Fomin--Zelevinsky \cite{FZ:ClusterI, FZ:ClusterII, FZ:ClusterIV} to develop a combinatorial approach to total positivity and to the dual canonical basis. Fock--Goncharov \cite{FG} introduced a cluster ensemble $(\mathcal{A}, \mathcal{X})$ which gives a more geometric point of view to the theory of cluster algebras. In the paper \cite{GHKK}, the notion of positive polytopes was introduced, which induces toric degenerations of compactified $\mathcal{A}$-cluster varieties. In this paper, we study relations between these two constructions of toric degenerations: Newton--Okounkov bodies and cluster algebras. In addition, we relate string polytopes with Nakashima--Zelevinsky polytopes by tropicalized cluster mutations.

To be more precise, let \[
\mathscr{U}(\mathcal{S}) = \bigcap_{\mathbf{s}} \c[A_{j; \mathbf{s}} ^{\pm 1} \mid j \in J]
\]
be an upper cluster algebra of geometric type, where $\mathbf{s} = (\mathbf{A} = (A_{j; \mathbf{s}})_{j \in J}, \varepsilon)$ runs over a set of Fomin--Zelevinsky seeds which are all mutation equivalent and stable under the mutations. Assuming that $X$ is birational to the corresponding cluster variety $\mathcal{A}$, we have the following identification for each $\mathbf{s}$: 
\[\mathbb{C}(X) \simeq \mathbb{C}(A_{j; \mathbf{s}} \mid j \in J).\] 
We consider the case that the exchange matrix $\varepsilon$ is of full rank for all $\mathbf{s}$. Fix a total order $\le_\varepsilon$ on $\mathbb{Z}^J$ refining the opposite order $\preceq_{\varepsilon} ^{\rm op}$ of Qin's dominance order $\preceq_{\varepsilon}$ \cite{Qin}; this $\le_\varepsilon$ induces a total order on the set of Laurent monomials in $A_{j; \mathbf{s}}$, $j \in J$. We define a valuation 
\[v_\mathbf{s} \colon \mathbb{C}(X) \setminus \{0\} \rightarrow \mathbb{Z}^J\]
to be the associated lowest term valuation (see \cref{ex:lowest_term_valuation}). By definition, this valuation $v_\mathbf{s}$ generalizes the extended $g$-vectors of cluster monomials and of the theta function basis. 
This observation implies that under Gross--Hacking--Keel--Kontsevich's toric degenerations \cite{GHKK} of compactified $\mathcal{A}$-cluster varieties, the Newton polytopes of the toric varieties can be realized as Newton--Okounkov bodies (see Section \ref{s:GHKK} for more details). 
Such valuation generalizing the extended $g$-vectors was also studied by Kaveh--Manon \cite[Section 7]{KavMan} in a different context. 
In the present paper, our valuation comes from $\mathcal{A}$-cluster structures, but we can also use $\mathcal{X}$-cluster structures to obtain valuations. 
Such valuation coming from an $\mathcal{X}$-cluster structure was previously introduced by Rietsch--Williams \cite{RW} for Grassmannians. 
They studied the associated Newton--Okounkov bodies of Grassmannians from the viewpoint of mirror symmetry.
Bossinger--Frias-Medina--Magee--N$\acute{\rm a}$jera Ch$\acute{\rm a}$vez \cite{BFMN} gave a different construction of toric degenerations using $\mathcal{X}$-cluster structures.
They developed the theory of $\mathcal{X}$-cluster varieties \emph{with coefficients}, and used it to construct a toric degeneration of a partially compactified $\mathcal{X}$-cluster variety as a certain cluster dual family of the degeneration given in \cite{GHKK}. 
Since an $\mathcal{X}$-cluster variety is not necessary separated (see \cite[Remark 4.2]{GHK}), toric degenerations constructed in \cite{BFMN} cannot be obtained from Newton--Okounkov bodies in general (see \cite[Section 1.3]{BFMN} for more details). 

To state our main results explicitly, let $G$ be a simply-connected semisimple algebraic group over $\mathbb{C}$, $B$ a Borel subgroup of $G$, $W$ the Weyl group, and $P_+$ the set of dominant integral weights. We denote by $X(w) \subset G/B$ the Schubert variety corresponding to $w \in W$, and by $\mathcal{L}_\lambda$ the globally generated line bundle on $X(w)$ associated with $\lambda \in P_+$. Let $U_w ^- \subset G$ be the unipotent cell associated with $w \in W$, which is naturally regarded as an open subvariety of $X(w)$. 
Berenstein--Fomin--Zelevinsky \cite{BFZ2} gave an upper cluster algebra structure on the coordinate ring $\c[U_w ^-]$. Then $U_w ^-$ is birational to the corresponding $\mathcal{A}$-cluster variety, and we have $\c(X(w)) = \c(U_w ^-) = \c(\mathcal{A})$, which gives a valuation $v_\mathbf{s}$ on $\c(X(w))$ for each seed $\mathbf{s}$. 
Let $\mathcal{A}^\vee$ denote the Fock--Goncharov dual \cite{FG} of $\mathcal{A}$. 
By tropicalizing the cluster mutations $\mu_k$ for $\mathcal{A}^\vee$, we obtain the tropicalized cluster mutations $\mu_k ^T$. 
Kashiwara--Kim \cite{KasKim} developed the theory of extended $g$-vectors of the upper global basis by using Kang--Kashiwara--Kim--Oh's monoidal categorification \cite{KKKO} of the unipotent quantum coordinate ring, which is valid only when the Cartan matrix is symmetric. 
After the first version of the present paper was put on arXiv, Qin's preprint \cite{Qin3} appeared, which proves that the upper global basis is a common triangular basis when the Cartan matrix is symmetrizable. 
The following proposition is a straightforward consequence of this property of the upper global basis. 

\begin{prop1}[{\cref{c:relation_of_NO_by_tropicalized_mutations}}]
Let $w \in W$, $\lambda \in P_+$, and $\mathbf{s}$ a seed for $\c[U_w ^-]$.
\begin{enumerate}
\item[{\rm (1)}] The Newton--Okounkov body $\Delta(X(w), \mathcal{L}_\lambda, v_\mathbf{s})$ is independent of the choice of a refinement of the opposite dominance order $\preceq_{\varepsilon} ^{\rm op}$.
\item[{\rm (2)}] If $\mathbf{s}^\prime = \mu_k (\mathbf{s})$, then 
\[\Delta(X(w), \mathcal{L}_\lambda, v_{\mathbf{s}^\prime}) = \mu_k ^T (\Delta(X(w), \mathcal{L}_\lambda, v_\mathbf{s})).\]
\end{enumerate}
\end{prop1}

Let $R(w)$ denote the set of reduced words for $w \in W$. 
Then there exists a specific class $\{\mathbf{s}_{\bm i} = (\mathbf{D}_{\bm{i}}, \varepsilon^{\bm{i}})
 \mid {\bm i} \in R(w)\}$ of seeds for $\c[U_w ^-]$ whose cluster $\mathbf{D}_{\bm{i}}$ is given by  generalized minors. 
Let $\Delta_{\bm i} (\lambda)$ (resp., $\widetilde{\Delta}_{\bm i} (\lambda)$) be the string polytope (resp., the Nakashima--Zelevinsky polytope) associated with ${\bm i} \in R(w)$ and $\lambda \in P_+$. 
We consider a lexicographic order $\prec$ on $\mathbb{Z}^{\ell (w)}$ (see \cref{d:lowest_term_valuation_Schubert} for its  precise definition), where $\ell (w)$ is the length of $w$. 
Our first main result relates the Newton--Okounkov bodies arising from cluster structures with the string polytopes. 

\begin{theor2}[{see \cref{p:refine} and \cref{c:relstring}}]
Let $w \in W$, $\lambda \in P_+$, and ${\bm i} \in R(w)$. 
\begin{enumerate}
\item[{\rm (1)}] The lexicographic order $\prec$ refines the opposite dominance order $\preceq_{\varepsilon^{\bm{i}}} ^{\rm op}$.
\item[{\rm (2)}] The Newton--Okounkov body $\Delta(X(w), \mathcal{L}_\lambda, v_{\mathbf{s}_{\bm i}})$ is unimodularly equivalent to the string polytope $\Delta_{\bm i} (\lambda)$.
\end{enumerate}
\end{theor2}

We note that unimodular equivalence between GHKK (Gross--Hacking--Keel--Kontsevich) superpotential polytopes in cluster theory and string polytopes was previously given by Magee \cite{Mag} and Bossinger--Fourier \cite{BF} in type $A$, and by Genz--Koshevoy--Schumann \cite{GKS} in simply-laced case. 
In their works, they dealt with double Bruhat cells and base affine spaces, while our main objects in the present paper are Schubert cells and unipotent cells. 
In Section \ref{s:superpotential_polytopes}, we relate GHKK superpotential polytopes with our Newton--Okounkov bodies in type $A$.

As an application of Theorem B, we prove that Anderson's construction \cite{And} of toric degenerations can be applied to the Newton--Okounkov body $\Delta(X(w), \mathcal{L}_\lambda, v_{\mathbf{s}})$, and we obtain the following. 

\begin{theor3}[{see \cref{t:main_result_toric_degenerations}}]
Let $w \in W$, $\lambda \in P_+$, and $\mathbf{s}$ a seed for $\c[U_w ^-]$.
\begin{enumerate}
\item[{\rm (1)}] The Newton--Okounkov body $\Delta(X(w), \mathcal{L}_\lambda, v_\mathbf{s})$ is a rational convex polytope.
\item[{\rm (2)}] If $\mathcal{L}_\lambda$ is very ample on $X(w)$, then there exists a flat degeneration of $X(w)$ to the normal toric variety corresponding to the rational convex polytope $\Delta(X(w), \mathcal{L}_\lambda, v_\mathbf{s})$.
\end{enumerate}
\end{theor3}

We then relate the Newton--Okounkov bodies arising from cluster structures with the Nakashima--Zelevinsky polytopes. 
Desired relations are revealed when we take a seed ${\bf s}_{\bm i} ^{\rm mut} = (\mathbf{D}_{\bm{i}} ^{\rm mut}, \varepsilon^{\bm{i}, {\rm mut}})$ obtained from ${\bf s}_{\bm i}$ through an appropriate mutation sequence $\overrightarrow{\boldsymbol{\mu}_{\bm{i}}}^{\vee}$ (see Section \ref{s:mutation_sequence} for the precise definition). 
We should note that the idea of our choice of seeds comes from \emph{the Chamber Ansatz formulas}, established by Berenstein--Fomin--Zelevinsky \cite{BFZ1} and Berenstein--Zelevinsky \cite{BZ1} (see \cref{t:Chamber_Ansatz}). 

\begin{theor4}[{see \cref{c:relNZ}}]
Let $w \in W$, $\lambda \in P_+$, and ${\bm i} \in R(w)$. 
Then the Newton--Okounkov body $\Delta(X(w), \mathcal{L}_\lambda, v_{{\bf s}_{\bm i} ^{\rm mut}})$ is unimodularly equivalent to the Nakashima--Zelevinsky polytope $\widetilde{\Delta}_{\bm i} (\lambda)$.
\end{theor4}

Combining Theorems B and D with Proposition A, we obtain the following.

\begin{cor5}[{\cref{c:relation_string_NZ}}]
For $w \in W$ and $\lambda \in P_+$, the string polytopes $\Delta_{\bm i} (\lambda)$ and the Nakashima--Zelevinsky polytopes $\widetilde{\Delta}_{\bm i} (\lambda)$ associated with ${\bm i} \in R(w)$ are all related by tropicalized cluster mutations up to unimodular transformations.
\end{cor5}

For simplicity, we deal with only simply-connected semisimple algebraic groups $G$ in the present paper, but Theorems B, C, D and Corollary E can be extended to symmetrizable Kac--Moody groups $G$ without much difficulty.

Finally, we mention some related works. An analogous application of $g$-vectors to representation theory was given by Fei \cite{Fei17,Fei:tensor} under the assumption that $G$ is of symmetric type. 
He dealt with an upper cluster algebra structure of the ring of regular functions on the base affine space $U^-\backslash G$, and constructed a polytope by using the $g$-vectors of generic characters whose lattice points count a weight multiplicity of an irreducible representation of $G$ (see \cite[Example 3.10]{Fei17} and \cite[Theorem 10.2]{Fei:tensor}). 
When $G$ is of symmetric type, Genz--Koshevoy--Schumann \cite{GKS} proved that, for the longest element $w_0$ in $W$, the string polytopes $\Delta_{\bm i} (\lambda)$, ${\bm i} \in R(w_0)$, are all related by tropicalized cluster mutations. 
Casbi \cite{Cas} also studied relations between monoidal categorifications and Newton--Okounkov bodies, using monomial valuations for cluster variables (see \cite[Remark 4.5]{Cas}). Kanakubo--Nakashima \cite{KanNak1, KanNak2} gave relations between cluster algebras and crystal bases, especially focusing on cluster variables and cluster tori. Note that our approach in this paper is different from theirs.
It is an interesting problem to study relations between Newton--Okounkov bodies of flag varieties arising from cluster structures and Feigin--Fourier--Littelmann--Vinberg polytopes. 
Bossinger \cite{Bos} gave a partial answer to the question whether these polytopes are unimodularly equivalent. 

This paper is organized as follows. In Section \ref{s:NO_bodies}, we recall the definition of Newton--Okounkov bodies, and review results of \cite{FO}. 
In Section \ref{s:Clusterval}, we recall some basic facts on cluster algebras, and define our main valuations. 
In Section \ref{s:GHKK}, we review Gross--Hacking--Keel--Kontsevich's toric degenerations, and relate them with Newton--Okounkov bodies. 
Section \ref{s:GHKK} is independent of the rest of this paper; hence readers who are mainly interested in the results concerning Newton--Okounkov bodies of Schubert varieties may skip this section. 
In Section \ref{s:unipcell}, we recall Berenstein--Fomin--Zelevinsky's cluster structure on unipotent cells, and show Proposition A above by using the existence of a $\c$-basis of $\c[U_w ^-]$ with desirable properties. 
In Section \ref{s:rel_with_stringNZ}, we realize string polytopes and Nakashima--Zelevinsky polytopes as Newton--Okounkov bodies arising from cluster structures. 
Theorem C above is also proved in this section. 
In Section \ref{s:superpotential_polytopes}, we relate our Newton--Okounkov bodies arising from cluster structures with superpotential polytopes introduced in \cite{GHKK}.
Section \ref{s:cluster_cone} is devoted to studying ray generators of cluster cones in some specific cases.

\begin{ack}\normalfont
The authors are grateful to Lara Bossinger, Elie Casbi, Xin Fang, Akihiro Higashitani, Tsukasa Ishibashi, Yoshiyuki Kimura, and Bea Schumann for helpful comments and fruitful discussions. 
The authors thank Jiarui Fei for informing them of his results in \cite{Fei17,Fei:tensor}. 
The authors would also like to thank anonymous referees for reading the manuscript carefully and for suggesting many improvements. 
\end{ack}

\section{Newton--Okounkov bodies}\label{s:NO_bodies}

\subsection{Basic definitions and properties}

First of all, we review the definition of Newton--Okounkov bodies, following \cite{HK, Kav, KK1, KK2}. Let $R$ be a $\mathbb{C}$-algebra without nonzero zero-divisors, and fix a total order $\lhd$ on $\mathbb{Z}^m$, $m \in \z_{>0}$, respecting the addition. 

\begin{defi}\label{defval}
	A map $v \colon R \setminus \{0\} \rightarrow \mathbb{Z}^m$ is called a \emph{valuation} on $R$ if the following holds: for every $\sigma, \tau \in R \setminus \{0\}$ and $c \in \mathbb{C}^\times \coloneqq \mathbb{C} \setminus \{0\}$,
	\begin{enumerate}
		\item[{\rm (i)}] $v(\sigma \cdot \tau) = v(\sigma) + v(\tau)$,
		\item[{\rm (ii)}] $v(c \cdot \sigma) = v(\sigma)$, 
		\item[{\rm (iii)}] $v (\sigma + \tau) \unrhd {\rm min} \{v(\sigma), v(\tau) \}$ unless $\sigma + \tau = 0$. 
	\end{enumerate}
\end{defi}

We need to fix a total order on $\z^m$ whenever we consider a valuation. The following is a fundamental property of valuations.

 \begin{prop}[{see, for instance, \cite[Proposition 1.8 (2)]{Kav}}]\label{prop1_val}
	Let $v$ be a valuation on $R$, $\sigma_1, \ldots, \sigma_s \in R \setminus \{0\}$, and $V \coloneqq \c \sigma_1 + \cdots + \c \sigma_s$. Assume that $v(\sigma_1), \ldots, v(\sigma_s)$ are all distinct. Then for $c_1, \ldots, c_s \in \mathbb{C}$ such that $\sigma \coloneqq c_1 \sigma_1 + \cdots + c_s \sigma_s \neq 0$, the following equality holds: 
	\[
	v(\sigma) = \min\{v(\sigma_t ) \mid 1 \le t \le s,\ c_t \neq 0 \}.
	\]
	In particular, it follows that 
\[v (V \setminus \{0\}) = \{v(\sigma_1), \ldots, v(\sigma_s)\}.\]
\end{prop}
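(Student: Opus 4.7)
The plan is to reduce the claim to the two-term strengthening of property (iii): namely, that if $v(\sigma) \ne v(\tau)$ and $\sigma + \tau \ne 0$, then $v(\sigma + \tau) = \min\{v(\sigma), v(\tau)\}$. Without loss of generality, assume $v(\sigma) \lhd v(\tau)$. Property (iii) directly yields $v(\sigma + \tau) \unrhd v(\sigma)$. For the reverse inequality, I would rewrite $\sigma = (\sigma + \tau) + (-\tau)$ and apply (iii) combined with (ii) (to absorb the sign on $\tau$) to obtain $v(\sigma) \unrhd \min\{v(\sigma + \tau), v(\tau)\}$; since $v(\sigma) \lhd v(\tau)$, this minimum cannot be $v(\tau)$, forcing $v(\sigma) \unrhd v(\sigma + \tau)$, which together with the opposite inequality from (iii) yields equality.

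With this two-term case in hand, I would prove the main identity by induction on the number $r$ of indices $t$ with $c_t \ne 0$. After relabeling, suppose $c_1, \ldots, c_r \ne 0$ and $v(\sigma_1) \lhd \cdots \lhd v(\sigma_r)$; by (ii), $v(c_t \sigma_t) = v(\sigma_t)$, so these valuations remain all distinct. The case $r = 1$ is immediate from (ii). For $r \geq 2$, set $\tau \coloneqq c_2 \sigma_2 + \cdots + c_r \sigma_r$. If $\tau = 0$, then $\sigma = c_1 \sigma_1$ and the conclusion follows directly from (ii). Otherwise, the induction hypothesis gives $v(\tau) = v(\sigma_2) \rhd v(\sigma_1) = v(c_1 \sigma_1)$, and the two-term statement applied to $c_1 \sigma_1$ and $\tau$ yields $v(\sigma) = v(\sigma_1) = \min\{v(\sigma_t) \mid c_t \ne 0\}$, as claimed.

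For the \emph{in particular} clause, every element of $V \setminus \{0\}$ is a linear combination of the form $c_1 \sigma_1 + \cdots + c_s \sigma_s$, so the main identity places its valuation in $\{v(\sigma_1), \ldots, v(\sigma_s)\}$, while each $\sigma_t$ itself lies in $V \setminus \{0\}$ and contributes $v(\sigma_t)$ to the image. There is no genuine obstacle here: the argument is the classical ultrametric strengthening familiar from non-archimedean valuation theory. The only point requiring care is the bookkeeping when intermediate partial sums can vanish during the induction, which I would handle by explicitly separating the case $\tau = 0$ as above rather than attempting a filtration-based or dimension-based argument.
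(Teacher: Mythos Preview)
Your argument is correct and is exactly the standard ultrametric strengthening one would expect. The paper itself does not supply a proof of this proposition; it simply cites \cite[Proposition 1.8 (2)]{Kav} and moves on, so there is no in-paper argument to compare against.
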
 

For ${\bm a} \in \z^m$ and a valuation $v$ on $R$ with values in $\z^m$, we set 
\[
R_{\bm a} \coloneqq \{\sigma \in R \setminus \{0\} \mid v(\sigma) \unrhd {\bm a}\} \cup \{0\};
\]
this is a $\c$-subspace of $R$. Then the \emph{leaf} above ${\bm a} \in \z^m$ is defined to be the quotient space $R[{\bm a}] \coloneqq R_{\bm a}/\bigcup_{{\bm a}\lhd {\bm b}} R_{\bm b}$. A valuation $v$ is said to have \emph{1-dimensional leaves} if $\dim_\c(R[{\bm a}]) = 0\ {\rm or}\ 1$ for all ${\bm a} \in \z^m$. 

\begin{ex}\label{ex:lowest_term_valuation}
Let $\lhd$ be a total order on $\mathbb{Z}^m$, $m \in \mathbb{Z}_{>0}$, respecting the addition, and $\mathbb{C}(t_1, \ldots, t_m)$ the field of rational functions in $m$ variables. The total order $\lhd$ on $\mathbb{Z}^m$ induces a total order (denoted by the same symbol $\lhd$) on the set of Laurent monomials in $t_1, \ldots, t_m$ as follows: 
\begin{center}
$t_1 ^{a_1} \cdots t_m ^{a_m} \lhd t_1 ^{a_1 ^\prime} \cdots t_m ^{a_m ^\prime}$ if and only if $(a_1, \ldots, a_m) \lhd (a_1 ^\prime, \ldots, a_m ^\prime)$. 
\end{center}
Let us define a map $v^{\rm low}_{\lhd} \colon \mathbb{C}(t_1, \ldots, t_m) \setminus \{0\} \rightarrow \mathbb{Z}^m$ as follows: 
\begin{itemize}
    \item $v^{\rm low} _{\lhd} (f) \coloneqq (a_1, \ldots, a_m)$ for
\[
f = c t_1 ^{a_1} \cdots t_m ^{a_m} + ({\rm higher\ terms}) \in \mathbb{C}[t_1, \ldots, t_m] \setminus \{0\},
\]
where $c \in \mathbb{C}^\times$, and the summand ``(higher terms)'' stands for a linear combination of monomials bigger than $t_1 ^{a_1} \cdots t_m ^{a_m}$ with respect to $\lhd$.
\item $v^{\rm low}_{\lhd} (f/g) \coloneqq v^{\rm low} _{\lhd} (f) - v^{\rm low} _{\lhd} (g)$ for $f, g \in \mathbb{C}[t_1, \ldots, t_m] \setminus \{0\}$.
\end{itemize}
It is obvious that this map $v^{\rm low} _{\lhd}$ is a well-defined valuation with $1$-dimensional leaves with respect to the total order $\lhd$. We call $v^{\rm low} _{\lhd}$ the {\it lowest term valuation} with respect to $\lhd$.
\end{ex}

\begin{defi}[{see \cite[Section 1.2]{Kav} and \cite[Definition 1.10]{KK2}}]\label{Newton--Okounkov body}
Let $X$ be an irreducible normal projective variety over $\c$ of complex dimension $m$, and $\mathcal{L}$ a line bundle on $X$ generated by global sections. Take a valuation $v \colon \mathbb{C}(X) \setminus \{0\} \rightarrow \z^m$ with 1-dimensional leaves, and fix a nonzero section $\tau \in H^0 (X, \mathcal{L})$. We define a subset $S(X, \mathcal{L}, v, \tau) \subset \z_{>0} \times \z^m$ by \[S(X, \mathcal{L}, v, \tau) \coloneqq \bigcup_{k \in \z_{>0}} \{(k, v(\sigma / \tau^k)) \mid \sigma \in H^0(X, \mathcal{L}^{\otimes k}) \setminus \{0\}\},\] and denote by $C(X, \mathcal{L}, v, \tau) \subset \r_{\ge 0} \times \r^m$ the smallest real closed cone containing $S(X, \mathcal{L}, v, \tau)$. Let us define a subset $\Delta(X, \mathcal{L}, v, \tau) \subset \r^m$ by \[\Delta(X, \mathcal{L}, v, \tau) \coloneqq \{{\bm a} \in \r^m \mid (1, {\bm a}) \in C(X, \mathcal{L}, v, \tau)\};\] this is called the {\it Newton--Okounkov body} of $X$ associated with $(\mathcal{L}, v, \tau)$. If the set $\Delta(X, \mathcal{L}, v, \tau)$ is a polytope, i.e., the convex hull of a finite number of points, then we call it a \emph{Newton--Okounkov polytope}.
\end{defi}

It follows by the definition of valuations that $S(X, \mathcal{L}, v, \tau)$ is a semigroup. Hence we see that $C(X, \mathcal{L}, v, \tau)$ is a closed convex cone, and that $\Delta(X, \mathcal{L}, v, \tau)$ is a convex set. Moreover, we deduce by \cite[Theorem 2.30]{KK2} that $\Delta(X, \mathcal{L}, v, \tau)$ is a convex body, i.e., a compact convex set. If $\mathcal{L}$ is very ample, then it follows from \cite[Corollary 3.2]{KK2} that the real dimension of $\Delta(X, \mathcal{L}, v, \tau)$ equals $m$; this is not necessarily the case if $\mathcal{L}$ is not very ample. If the semigroup $S(X, \mathcal{L}, v, \tau)$ is finitely generated, then $\Delta(X, \mathcal{L}, v, \tau)$ is a rational convex polytope, i.e., the convex hull of a finite number of rational points.

\begin{rem}\label{independence}
	If we take another section $\tau^\prime \in H^0 (X, \mathcal{L}) \setminus \{0\}$, then $S(X, \mathcal{L}, v, \tau^\prime)$ is the shift of $S(X, \mathcal{L}, v, \tau)$ by $k v(\tau/\tau^\prime)$ in $\{k\} \times \mathbb{Z}^m$ for $k \in \mathbb{Z}_{>0}$, that is, 
	\[
	S(X, \mathcal{L}, v, \tau^\prime) \cap (\{k\} \times \mathbb{Z}^m) = S(X, \mathcal{L}, v, \tau) \cap (\{k\} \times \mathbb{Z}^m) + (0, k v(\tau/\tau^\prime)).
	\] 
	Hence we obtain that $\Delta(X, \mathcal{L}, v, \tau^\prime) = \Delta(X, \mathcal{L}, v, \tau) + v(\tau/\tau^\prime)$. Thus, the Newton--Okounkov body $\Delta(X, \mathcal{L}, v, \tau)$ does not essentially depend on the choice of $\tau \in H^0 (X, \mathcal{L}) \setminus \{0\}$; hence it is also denoted simply by $\Delta(X, \mathcal{L}, v)$.
\end{rem} 

\begin{rem}\label{r:homogeneous_coordinate}
If $\mathcal{L}$ is a very ample line bundle on $X$, then it gives a closed embedding $X \hookrightarrow \mathbb{P}(H^0(X, \mathcal{L})^\ast)$. We denote by $R = \bigoplus_{k \in \z_{\ge 0}} R_k$ the corresponding homogeneous coordinate ring. Newton--Okounkov bodies are sometimes defined by using $R_k$ instead of $H^0(X, \mathcal{L}^{\otimes k})$ (see, for instance, \cite{HK}). However, since $X$ is normal, we deduce by \cite[Ch.\ I\hspace{-.1em}I Ex.\ 5.14]{Hart} that $R_k = H^0(X, \mathcal{L}^{\otimes k})$ for all $k \gg 0$. In addition, since $S(X, \mathcal{L}, v, \tau)$ is a semigroup, the cone $C(X, \mathcal{L}, v, \tau)$ coincides with the smallest real closed cone containing \[\bigcup_{k>k^\prime} \{(k, v(\sigma / \tau^k)) \mid \sigma \in H^0(X, \mathcal{L}^{\otimes k}) \setminus \{0\}\}\] for each $k^\prime \in \z_{\ge 0}$. Thus, $R_k$ and $H^0(X, \mathcal{L}^{\otimes k})$ are interchangeable in the definition of Newton--Okounkov bodies.
\end{rem}

We say that $X$ admits a \emph{flat degeneration} to a variety $X_0$ if there exists a flat morphism 
\[\pi \colon \mathfrak{X} \rightarrow \Spec(\c[z])\] 
of schemes such that the scheme-theoretic fiber $\pi^{-1}(z)$ (resp., $\pi^{-1}(0)$) over a closed point $z \in \c^\times$ (resp., the origin $0 \in \c$) is isomorphic to $X$ (resp., $X_0$). If $X_0$ is a toric variety, then this degeneration is called a \emph{toric degeneration}.

\begin{thm}[{see \cite[Theorem 1]{And} and \cite[Corollary 3.14]{HK}}]\label{t:toric deg}
Assume that $\mathcal{L}$ is very ample, and that the semigroup $S(X, \mathcal{L}, v, \tau)$ is finitely generated; hence $\Delta(X, \mathcal{L}, v, \tau)$ is a rational convex polytope. Then there exists a flat degeneration of $X$ to a (not necessarily normal) toric variety 
\[X_0 \coloneqq {\rm Proj} (\c[S(X, \mathcal{L}, v, \tau)]),\] 
where the $\z_{>0}$-grading of $S(X, \mathcal{L}, v, \tau)$ induces a $\z_{\ge 0}$-grading of $\c[S(X, \mathcal{L}, v, \tau)]$; note that the normalization of $X_0$ is the normal toric variety corresponding to the Newton--Okounkov polytope $\Delta(X, \mathcal{L}, v, \tau)$.
\end{thm}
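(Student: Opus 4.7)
The plan is to adapt Anderson's Rees-algebra construction to our Newton-Okounkov setup. Because $\mathcal{L}$ is very ample and $X$ is normal, \cref{r:homogeneous_coordinate} lets me identify, in sufficiently large degree, the homogeneous coordinate ring of $X \hookrightarrow \mathbb{P}(H^0(X, \mathcal{L})^{\ast})$ with $R \coloneqq \bigoplus_{k \ge 0} H^0(X, \mathcal{L}^{\otimes k})$, so that $X \simeq \mathrm{Proj}(R)$. Fixing a nonzero $\tau \in H^0(X, \mathcal{L})$ and setting
\[F_{\bm a} R_k \coloneqq \{\sigma \in R_k \setminus \{0\} \mid v(\sigma / \tau^k) \unrhd {\bm a}\} \cup \{0\}\]
gives a $\z^m$-indexed filtration on each $R_k$ that is multiplicatively compatible by \cref{defval}. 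The 1-dimensional leaf hypothesis together with \cref{prop1_val} guarantees that the associated graded piece in bidegree $(k, {\bm a})$ is $\c$ precisely when $(k, {\bm a}) \in S(X, \mathcal{L}, v, \tau)$ and is $0$ otherwise; hence $\mathrm{gr}_F R$ is isomorphic to $\c[S(X, \mathcal{L}, v, \tau)]$ as $\z_{\ge 0}$-graded $\c$-algebras.

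To turn this into a one-parameter degeneration, I would next refine $F$ to a $\z$-indexed filtration compatible with the $R$-grading. Write $S \coloneqq S(X, \mathcal{L}, v, \tau)$. Since $S$ is finitely generated, its projection lies inside a strongly convex rational cone $C \subset \r_{\ge 0} \times \r^m$, and I can pick an integral linear functional $\ell \colon \z \times \z^m \rightarrow \z$ that is strictly positive on $C \setminus \{0\}$ and refines the restriction of the order $\lhd$ to the finitely many semigroup generators of $S$ (and hence is injective on each graded slice of $S$). Setting
\[F^{\z}_N R \coloneqq \sum_{(k, {\bm a})\, :\, \ell(k, {\bm a}) \ge N} F_{\bm a} R_k\]
produces an exhaustive, separated, descending, multiplicatively compatible $\z$-filtration of $R$ whose associated graded ring is again $\c[S]$, since $\ell$ is injective on the part of $S$ contributing in each $R$-graded piece.

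Then I would invoke the standard Rees-algebra construction: let $\widetilde R \subset R[z, z^{-1}]$ be the Rees algebra of $F^{\z}$, a $\z_{\ge 0}$-graded $\c[z]$-subalgebra (with the $R$-grading inherited), flat over $\c[z]$ because $z$ is a non-zerodivisor, and satisfying $\widetilde R[z^{-1}] \simeq R \otimes \c[z^{\pm 1}]$ together with $\widetilde R / z \widetilde R \simeq \mathrm{gr}_{F^{\z}} R \simeq \c[S]$. Taking $\mathrm{Proj}$ with respect to the $R$-grading yields the desired flat morphism $\pi \colon \mathfrak X \rightarrow \mathrm{Spec}(\c[z])$ with $\pi^{-1}(c) \simeq X$ for every $c \in \c^{\times}$ and $\pi^{-1}(0) \simeq X_0 = \mathrm{Proj}(\c[S])$. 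Finally, because $S$ is a finitely generated graded subsemigroup of $\z_{\ge 0} \times \z^m$, the scheme $X_0$ carries a natural $(\c^{\times})^m$-action with a dense open orbit, so it is a (not necessarily normal) projective toric variety, and the classical toric-geometry dictionary identifies its normalization with the polarized normal toric variety associated with $\Delta(X, \mathcal{L}, v, \tau)$.

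The main obstacle I anticipate is the second step: building the $\z$-refinement $F^{\z}$ so that the associated graded ring is honestly $\c[S]$ rather than some further degeneration of it. This forces one to choose $\ell$ positive on the cone spanned by $S$ and injective on the semigroup data in each graded slice, which is exactly where the finite generation hypothesis on $S$ is used in an essential way. Once this refinement is in place, the flatness of the Rees algebra, the identification of generic and special fibers, and the toric-variety interpretation of $X_0$ are all standard.
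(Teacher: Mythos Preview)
The paper does not prove this theorem: it is quoted as a black box from Anderson \cite{And} and Harada--Kaveh \cite{HK}, with no argument given. Your sketch is precisely the Rees-algebra argument from those references, so in that sense you are reproducing the literature's proof rather than offering an alternative.

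One point deserves care. Your parenthetical ``(and hence is injective on each graded slice of $S$)'' does not follow from injectivity of $\ell$ on the finite generating set of $S$: sums of generators landing in the same $k$-slice can easily collide under a linear functional even when the generators themselves are separated. Anderson's argument does not rely on slice-by-slice injectivity of $\ell$. Rather, one chooses algebra generators of $R$ (or a Veronese of it), takes $\ell$ to separate their $v$-values and to be positive on the cone, and then compares Hilbert functions: the $\z$-associated graded is a flat degeneration of $\c[S]$ with the same Hilbert function in the $k$-grading, hence equal to $\c[S]$. You correctly flag this step as the crux, but the specific mechanism you wrote down should be replaced by the Hilbert-function comparison. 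With that fix your outline is the standard proof.
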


\subsection{Newton--Okounkov bodies of Schubert varieties}\label{ss:NOSchu}
In this subsection, we fix our notation concerning Lie theory, and review results of \cite{FO}. 

Let $G$ be a connected, simply-connected semisimple algebraic group over $\mathbb{C}$, and $\mathfrak{g}$ its Lie algebra. Choose a Borel subgroup $B \subset G$ and a maximal torus $H \subset B$. Then the full flag variety is defined to be a quotient space $G/B$, which is a nonsingular projective variety. 
Denote by $\mathfrak{h} \subset \mathfrak{g}$ the Lie algebra of $H$, by $\mathfrak{h}^\ast \coloneqq {\rm Hom}_\mathbb{C} (\mathfrak{h}, \mathbb{C})$ its dual space, and by $\langle \cdot, \cdot \rangle \colon \mathfrak{h}^\ast \times \mathfrak{h} \rightarrow \mathbb{C}$ the canonical pairing. Denote by $I$ an index set for the vertices of the Dynkin diagram of $\mathfrak{g}$. 
Let $P \subset \mathfrak{h}^\ast$ be the weight lattice for $\mathfrak{g}$, $P_+ \subset P$ the set of dominant integral weights, $\{\alpha_i \mid i \in I\} \subset P$ the set of simple roots, $\{h_i \mid i \in I\} \subset \mathfrak{h}$ the set of simple coroots, $c_{i,j} \coloneqq \langle \alpha_j, h_i \rangle$ the Cartan integer for $i,j \in I$, and $e_i, f_i, h_i \in \mathfrak{g}$, $i \in I$, the Chevalley generators. Denote by $N_G(H)$ the normalizer of $H$ in $G$, and by $W \coloneqq N_G(H)/H$ the Weyl group of $\mathfrak{g}$. 
The simple reflection associated with $i\in I$ is denoted by $s_i$. For $w\in W$, let us denote by $\ell (w)$ the length of $w$ with respect to $\{s_i\mid i\in I\}$, and set 
\[
R(w)\coloneqq \{(i_1,\dots, i_{\ell(w)}) \in I^{\ell(w)} \mid s_{i_1}\dots s_{i_{\ell(w)}}=w\}. 
\]
An element of $R(w)$ is called a \emph{reduced word} for $w \in W$. 

For $i\in I$, there exist one-parameter subgroups $x_{i}, y_{i}\colon \mathbb{C}\to G$ of $G$ such that 
\begin{align*}
hx_{i}(t)h^{-1}&=x_i(h^{\alpha_i}t),& dx_{i}\colon \mathbb{C}&\xrightarrow{\sim}\mathbb{C}e_i,\\
hy_{i}(t)h^{-1}&=y_i((h^{\alpha_i})^{-1} t),& dy_{i}\colon \mathbb{C}&\xrightarrow{\sim}\mathbb{C}f_i
\end{align*}
for $h\in H$ and $t\in\mathbb{C}$. Here $h^{\alpha_i} \in \mathbb{C}$ is determined by $\mathrm{Ad}(h)e_i=h^{\alpha_i}e_i$ through the adjoint action $\mathrm{Ad}$ of $G$ on $\mathfrak{g}$, and $dx_{i}$ and $dy_{i}$ are the derivations of $x_{i}$ and $y_{i}$, respectively. In this paper, we normalize them so that $dx_{i}(1)=e_i$ and $dy_i(1)=f_i$. Then there exists a homomorphism $\varphi_i\colon SL_2(\mathbb{C})\to G$ such that 
\begin{align*}
\begin{pmatrix}
1&t\\
0&1
\end{pmatrix}&\mapsto x_i(t),&
\begin{pmatrix}
1&0\\
t&1
\end{pmatrix}&\mapsto y_i(t).
\end{align*}
For $i\in I$, set 
\[
\overline{s}_i\coloneqq\varphi_i\left(\begin{pmatrix}
0&-1\\
1&0
\end{pmatrix}\right).
\]
Moreover, for $(i_1,\dots, i_m)\in R(w)$, set 
\[
\overline{w} \coloneqq \overline{s}_{i_1}\cdots \overline{s}_{i_m} \in N_G(H).
\]
Then the element $\overline{w}$ does not depend on the choice of a reduced word $(i_1,\dots, i_m)$ for $w$, and $\overline{w}$ is a lift for $w\in W=N_G(H)/H$. 
 \begin{defi}[{see, for instance, \cite[Section I\hspace{-.1em}I.13.3]{Jan} and \cite[Definition 7.1.13]{Kum}}]
Denote by $X(w)$ for $w \in W$ the Zariski closure of $B \widetilde{w} B/B$ in $G/B$, where $\widetilde{w} \in N_G(H)$ denotes a lift for $w$. The closed subvariety $X(w)$ of $G/B$ is called the {\it Schubert variety} corresponding to $w$.
\end{defi} 

The Schubert variety $X(w)$ is a normal projective variety of complex dimension $\ell(w)$ (see, for instance, \cite[Sections I\hspace{-.1em}I.13.3, I\hspace{-.1em}I.14.15]{Jan}). If $w$ is the longest element $w_0$ in $W$, then the Schubert variety $X(w_0)$ coincides with the full flag variety $G/B$. 

For $\lambda \in P_+$, we define a line bundle $\mathcal{L}_\lambda$ on $G/B$ as follows:
\[
\mathcal{L}_\lambda \coloneqq (G \times \mathbb{C})/B,
\] 
where $B$ acts on $G \times \mathbb{C}$ from the right by $(g, c) \cdot b \coloneqq (g b, \lambda(b) c)$ for $g \in G$, $c \in \mathbb{C}$, and $b \in B$. By restricting this bundle, we obtain a line bundle on $X(w)$, which we denote by the same symbol $\mathcal{L}_\lambda$. 

\begin{prop}[{see, for instance, \cite[Sections I\hspace{-.1em}I.4.4, I\hspace{-.1em}I.8.5]{Jan}}]\label{p:line_bundle_positivity}
For $\lambda \in P_+$, the following hold.
\begin{enumerate}
\item[{\rm (1)}] The line bundle $\mathcal{L}_\lambda$ on $G/B$ is generated by global sections. 
\item[{\rm (2)}] The line bundle $\mathcal{L}_\lambda$ on $G/B$ is very ample if and only if $\lambda$ is a regular dominant integral weight, that is, $\langle \lambda, h_i\rangle \in \z_{>0}$ for all $i \in I$. 
\end{enumerate}
\end{prop}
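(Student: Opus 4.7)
My plan is to treat both parts as classical consequences of the Borel--Weil theorem together with the structure theory of flag varieties. The first step is to identify $H^0(G/B, \mathcal{L}_\lambda)$ with the space of regular functions $f \colon G \to \mathbb{C}$ satisfying $f(gb) = \lambda(b)^{-1} f(g)$ for all $g \in G$ and $b \in B$, a space on which $G$ acts by left translation. By the Borel--Weil theorem, for $\lambda \in P_+$ this $G$-module is nonzero and isomorphic to $V(\lambda)^\ast$, where $V(\lambda)$ denotes the irreducible highest weight $G$-module of highest weight $\lambda$.

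For (1), I would observe that the common zero locus $Z \subseteq G/B$ of all global sections of $\mathcal{L}_\lambda$ is $G$-stable, so by transitivity of the $G$-action on $G/B$ it is either empty or the whole $G/B$. A distinguished section---for instance the function which on the big cell $U^- H U \subset G$ is given by $u^- h u \mapsto \lambda(h)^{-1}$ and extended by the twisted equivariance---does not vanish at $eB$. Hence $Z = \emptyset$, and $\mathcal{L}_\lambda$ is generated by its global sections.

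For (2), the global sections induce a $G$-equivariant morphism $\varphi_\lambda \colon G/B \to \mathbb{P}(V(\lambda))$ sending $gB$ to the line $g \cdot [v_\lambda]$ for a fixed highest weight vector $v_\lambda \in V(\lambda)$. When $\lambda$ is regular dominant, the stabilizer of $[v_\lambda]$ in $G$ equals $B$, so $\varphi_\lambda$ is injective on points. A tangent-space computation at $eB$ then shows that $d\varphi_\lambda$ is injective because the vectors $\{f_i v_\lambda\}_{i \in I}$ are linearly independent modulo $\mathbb{C} v_\lambda$, using precisely the regularity assumption $\langle \lambda, h_i \rangle > 0$ for every $i \in I$. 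Hence $\varphi_\lambda$ is a closed embedding and $\mathcal{L}_\lambda$ is very ample. Conversely, if $\langle \lambda, h_i \rangle = 0$ for some $i \in I$, then $\lambda$ extends to a character of the minimal parabolic $P_i \supsetneq B$, so $\mathcal{L}_\lambda \cong \pi_i^\ast \mathcal{L}_\lambda'$ along the $\mathbb{P}^1$-bundle projection $\pi_i \colon G/B \to G/P_i$. The restriction of $\mathcal{L}_\lambda$ to any fiber is trivial, so global sections cannot separate points in a fiber and $\mathcal{L}_\lambda$ fails to be very ample.

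The main difficulty is the immersion step in the forward direction of (2): one must verify that $d\varphi_\lambda$ is injective at $eB$, which amounts to checking that the sections associated to $\{f_i v_\lambda^\ast\}_{i \in I}$ and to $v_\lambda^\ast$ in $V(\lambda)^\ast$ have linearly independent differentials at $eB$, and it is here that regularity enters essentially. Once $\varphi_\lambda$ is shown to be a closed immersion, the rest is standard projective-geometry bookkeeping, and both statements can in any case be extracted from the Jantzen reference already cited.
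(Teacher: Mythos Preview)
Your outline is a correct sketch of the standard argument, but note that the paper does not supply its own proof of this proposition at all: it is stated with a citation to Jantzen \cite[Sects.~II.4.4, II.8.5]{Jan} and used as a black box. So there is no ``paper's proof'' to compare against; your write-up simply fills in what the authors chose to quote from the literature.
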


By \cref{p:line_bundle_positivity}, the line bundle $\mathcal{L}_\lambda$ on $X(w)$ is generated by global sections for all $w \in W$ and $\lambda \in P_+$; in addition, it is very ample if $\lambda$ is regular. 

For $\lambda\in P_{+}$, let $V(\lambda)$ be the (rational) irreducible $G$-module over $\c$ with highest weight $\lambda$. We fix a highest weight vector $v_{\lambda}$ of $V(\lambda)$. Set 
\begin{align*}
v_{w\lambda} \coloneqq \overline{w} v_{\lambda}
\end{align*}
for $w\in W$. The {\it Demazure module} $V_w(\lambda)$ corresponding to $w \in W$ is defined to be the $B$-submodule of $V(\lambda)$ given by 
\[
V_w(\lambda) \coloneqq \sum_{b \in B} \mathbb{C} b v_{w\lambda}.
\] 
From the Borel--Weil type theorem (see, for instance, \cite[Corollary 8.1.26]{Kum}), we know that the space $H^0(G/B, \mathcal{L}_\lambda)$ (resp., $H^0(X(w), \mathcal{L}_\lambda)$) of global sections is a $G$-module (resp., a $B$-module) which is isomorphic to the dual module $V(\lambda)^\ast \coloneqq {\rm Hom}_\mathbb{C}(V(\lambda), \mathbb{C})$ (resp., $V_w (\lambda)^\ast \coloneqq {\rm Hom}_\mathbb{C}(V_w(\lambda), \mathbb{C})$). 
Here the natural right actions of a group on the dual spaces of modules are transformed into the left actions by the inverse map of the group. We fix a lowest weight vector $\tau_\lambda \in H^0(G/B, \mathcal{L}_\lambda)$. By restricting this section, we obtain a section in $H^0(X(w), \mathcal{L}_\lambda)$, which we denote by the same symbol $\tau_\lambda$. 

Let $B^- \subset G$ denote the opposite Borel subgroup, and $U^-$ the unipotent radical of $B^-$ with Lie algebra $\mathfrak{u}^-$. We regard $U^-$ as an affine open subvariety of $G/B$ by the following open embedding: 
\[U^- \hookrightarrow G/B,\ u \mapsto u \bmod B.\] 
Then the intersection $U^- \cap X(w)$ is thought of as an affine open subvariety of $X(w)$; in particular, we have $\mathbb{C}(X(w)) = \mathbb{C}(U^- \cap X(w))$. For ${\bm i} = (i_1, \ldots, i_m) \in R(w)$, we see by \cite[Ch.\ I\hspace{-.1em}I.13]{Jan} that the morphism \[\c^m \rightarrow U^- \cap X(w),\ (t_1, \ldots, t_m) \mapsto y_{i_1}(t_1) \cdots y_{i_m}(t_m) \bmod B\] is birational. Hence the function field $\mathbb{C}(X(w)) = \c (U^- \cap X(w))$ is identified with the field $\mathbb{C}(t_1, \ldots, t_m)$ of rational functions in $t_1, \ldots, t_m$. 

\begin{defi}\label{d:lowest_term_valuation_Schubert}
Consider two lexicographic orders $<$ and $\prec$ on $\mathbb{Z}^m$, defined as follows: 
\[
(a_1, \ldots, a_m) < (a_1 ^\prime, \ldots, a_m ^\prime)\quad (\text{resp., }(a_1, \ldots, a_m) \prec (a_1 ^\prime, \ldots, a_m ^\prime)) 
\]
if and only if there exists $1 \le k \le m$ such that 
\[
a_1 = a_1 ^\prime, \ldots, a_{k-1} = a_{k-1} ^\prime,\ a_k < a_k ^\prime\quad (\text{resp., }a_m = a_m ^\prime, \ldots, a_{k+1} = a_{k+1} ^\prime,\ a_k < a_k ^\prime).
\]
Then we define valuations $v_{\bm i} ^{\rm low}$ and $\tilde{v}_{\bm i} ^{\rm low}$ on $\mathbb{C}(X(w))$ to be $v^{\rm low} _{<}$ and $v^{\rm low} _{\prec}$ on $\mathbb{C}(t_1, \ldots, t_m)$, respectively (see \cref{ex:lowest_term_valuation}). 
\end{defi}

\begin{rem}
The definition of $\tilde{v}_{\bm i} ^{\rm low}$ is slightly different from the one in \cite[Section 2]{FO} because the order of coordinates of its values is reversed.
\end{rem}

\begin{rem}[{see \cite[Section 2]{FO}}]
The valuation $v_{\bm i} ^{\rm low}$ coincides with the one given by counting the orders of zeros or poles along a specific sequence of Schubert varieties. A similar result also holds for the valuation $\tilde{v}_{\bm i} ^{\rm low}$ after reversing the order of coordinates of its values.
\end{rem}

Let $\Delta_{\bm i} (\lambda)$ (resp., $\widetilde{\Delta}_{\bm i} (\lambda)$) denote the string polytope (resp., the Nakashima--Zelevinsky polytope) associated with ${\bm i} \in R(w)$ and $\lambda \in P_+$. 
We do not recall the original definitions of these polytopes since they are not needed in the present paper (see \cite[Section 1]{Lit}, \cite[Definition 2.15]{FN}, \cite[Definition 3.24]{FO}, and \cite[Definition 3.9]{Fuj} for their precise definitions).
However, we note that these polytopes are defined from a representation-theoretic structure $\mathcal{B}(\lambda)$, called the \emph{Kashiwara crystal basis}, for $V(\lambda)$, which is equipped with specific operators $\tilde{e}_i, \tilde{f}_i \colon \mathcal{B}(\lambda) \rightarrow \mathcal{B}(\lambda) \cup \{0\}$, called the \emph{Kashiwara operators}, for each $i \in I$. 
Roughly speaking, the string polytope $\Delta_{\bm i} (\lambda)$ is defined from a specific parametrization of $\mathcal{B}(\lambda)$ given by calculating lengths of certain strings, or more precisely by arranging numbers of the form $\max\{a \in \z_{\geq 0} \mid \tilde{e}_i^a b \neq 0\}$ for some $b \in \mathcal{B}(\lambda)$. 
The Nakashima--Zelevinsky polytope $\widetilde{\Delta}_{\bm i} (\lambda)$ is defined similarly by using the operator $\tilde{e}_i^\ast$ instead of $\tilde{e}_i$, where $\tilde{e}_i^\ast$ is the twist of $\tilde{e}_i$ by Kashiwara's involution $\ast$. 

\begin{rem}
The definition of Nakashima--Zelevinsky polytopes $\widetilde{\Delta}_{\bm i} (\lambda)$ in \cite[Definition 3.9]{Fuj} is slightly different from the one in \cite[Definition 3.24 (2)]{FO} because the order of coordinates is reversed. In the present paper, we use the definition in \cite[Definition 3.9]{Fuj}.
\end{rem}

The authors \cite{FO} proved that $\Delta(X(w), \mathcal{L}_\lambda, v_{\bm i} ^{\rm low}, \tau_\lambda)$ and $\Delta(X(w), \mathcal{L}_\lambda, \tilde{v}_{\bm i} ^{\rm low}, \tau_\lambda)$ coincide with the representation-theoretic polytopes mentioned above as follows.

\begin{thm}[{see \cite[Propositions 3.28, 3.29 and Corollaries 5.3, 5.4]{FO}}]\label{t:NO_body_crystal_basis}
Let $w \in W$, $\lambda \in P_+$, and ${\bm i} \in R(w)$.
\begin{enumerate}
\item[{\rm (1)}] The Newton--Okounkov bodies $\Delta(X(w), \mathcal{L}_\lambda, v_{\bm i} ^{\rm low}, \tau_\lambda)$ and $\Delta(X(w), \mathcal{L}_\lambda, \tilde{v}_{\bm i} ^{\rm low}, \tau_\lambda)$ are both rational convex polytopes.
\item[{\rm (2)}] The Newton--Okounkov polytope $\Delta(X(w), \mathcal{L}_\lambda, v_{\bm i} ^{\rm low}, \tau_\lambda)$ coincides with the Nakashima--Zelevinsky polytope $\widetilde{\Delta}_{\bm i} (\lambda)$.
\item[{\rm (3)}] The Newton--Okounkov polytope $\Delta(X(w), \mathcal{L}_\lambda, \tilde{v}_{\bm i} ^{\rm low}, \tau_\lambda)$ coincides with the string polytope $\Delta_{\bm i} (\lambda)$.
\end{enumerate}
\end{thm}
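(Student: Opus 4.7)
The plan is to prove (2) and (3) in parallel, and to deduce (1) from the polyhedrality of the target polytopes. The starting point is the birational chart $\mathbb{C}^m \to U^- \cap X(w)$, $(t_1, \ldots, t_m) \mapsto y_{i_1}(t_1)\cdots y_{i_m}(t_m)\bmod B$, which identifies $\mathbb{C}(X(w)) \cong \mathbb{C}(t_1, \ldots, t_m)$. Borel--Weil provides $H^0(X(w), \mathcal{L}_\lambda^{\otimes k}) \cong V_w(k\lambda)^\ast$ for each $k \ge 1$, and an elementary computation shows that, after an appropriate normalisation of $\tau_\lambda$, the ratio of the section $\sigma_\xi \leftrightarrow \xi$ by $\tau_\lambda^k$ expands as
\[
\frac{\sigma_\xi}{\tau_\lambda^k}(t_1, \ldots, t_m) \;=\; \xi\bigl(y_{i_1}(t_1) \cdots y_{i_m}(t_m) v_{k\lambda}\bigr) \;=\; \sum_{\mathbf{a} \in \mathbb{Z}_{\ge 0}^m} \frac{t_1^{a_1}\cdots t_m^{a_m}}{a_1!\cdots a_m!}\, \xi\bigl(f_{i_1}^{a_1}\cdots f_{i_m}^{a_m} v_{k\lambda}\bigr).
\]
Computing $\tilde{v}_{\bm i}^{\rm low}(\sigma_\xi/\tau_\lambda^k)$ (resp.\ $v_{\bm i}^{\rm low}(\sigma_\xi/\tau_\lambda^k)$) is thereby reduced to locating the $\prec$-lowest (resp.\ $<$-lowest) $\mathbf{a}$ in the support $\{\mathbf{a} : \xi(f_{i_1}^{a_1}\cdots f_{i_m}^{a_m} v_{k\lambda}) \neq 0\}$ for a well-chosen basis $\{\xi\}$.

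The distinguished basis is Kashiwara's upper global basis $\{\xi_\mathbf{b}\}_{\mathbf{b} \in B_w(k\lambda)}$ of $V_w(k\lambda)^\ast$, dual to the lower global basis of $V_w(k\lambda)$. The central technical assertion is
\[
\tilde{v}_{\bm i}^{\rm low}\bigl(\sigma_{\xi_\mathbf{b}}/\tau_\lambda^k\bigr) = \mathrm{str}_{\bm i}(\mathbf{b}), \qquad v_{\bm i}^{\rm low}\bigl(\sigma_{\xi_\mathbf{b}}/\tau_\lambda^k\bigr) = \Phi_{\bm i}(\mathbf{b}),
\]
where $\mathrm{str}_{\bm i}(\mathbf{b})$ is the Littelmann--Berenstein--Zelevinsky string parameter and $\Phi_{\bm i}(\mathbf{b})$ the Nakashima--Zelevinsky parameter. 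Both identities are proved by extracting the coordinates of the minimal multi-index in the support of $\xi_\mathbf{b}$ one at a time, in the order dictated by the ambient lex order: $\prec$ extracts right-to-left, while $<$ extracts left-to-right. Each extracted coordinate is matched to a step in the crystal algorithm defining the corresponding parameter by exploiting the compatibility of the upper global basis with the upper crystal lattice, the congruence $\tilde{f}_i \equiv f_i$ at the leading graded level, and the stability of the upper global basis under restriction to each rank-one $\mathfrak{sl}_2$-subalgebra generated by $e_{i_k}, f_{i_k}, h_{i_k}$. This translates the raw Kostant-partition data into Kashiwara-operator data at each position of $\bm i$.

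Once these identifications are established, injectivity of $\mathrm{str}_{\bm i}$ and $\Phi_{\bm i}$ on $B_w(k\lambda)$ combined with \cref{prop1_val} yields
\[
\tilde{v}_{\bm i}^{\rm low}\!\left(\bigl(H^0(X(w), \mathcal{L}_\lambda^{\otimes k}) \setminus \{0\}\bigr)/\tau_\lambda^k\right) \;=\; \mathrm{str}_{\bm i}(B_w(k\lambda)),
\]
and the analogous equality for $v_{\bm i}^{\rm low}$ with $\Phi_{\bm i}$. Dividing by $k$, taking the union over $k$, and passing to the closed cone, the classical descriptions of $\Delta_{\bm i}(\lambda)$ and $\widetilde{\Delta}_{\bm i}(\lambda)$ as closures of normalised string (resp.\ NZ) parameters identify the Newton--Okounkov bodies with the two representation-theoretic polytopes, giving (2) and (3). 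Rationality (assertion (1)) is then inherited from the known polyhedrality of string and Nakashima--Zelevinsky polytopes.

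The main obstacle is the crystal-basis calculation in the second paragraph. The vanishing $\xi_\mathbf{b}(f_{i_1}^{a_1}\cdots f_{i_m}^{a_m} v_{k\lambda}) = 0$ for every $\mathbf{a}$ strictly below the target parameter is not visible at the level of $f_i$-monomials themselves; it requires Kashiwara's machinery of upper crystal lattices, the congruence $\tilde{f}_i \equiv f_i \pmod{qL}$, and the $\mathfrak{sl}_2$-stability of the upper global basis. In practice one reduces, position by position along $\bm i$, to a rank-one computation, which is the route followed in \cite{FO}.
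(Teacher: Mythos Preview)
This theorem is not proved in the present paper: it is quoted verbatim from the authors' earlier work \cite{FO} (Propositions 3.28, 3.29 and Corollaries 5.3, 5.4 there), so there is no proof here to compare against. That said, your outline is an accurate high-level summary of the argument in \cite{FO}: the identification $\mathbb{C}(X(w))\simeq\mathbb{C}(t_1,\ldots,t_m)$ via the Bott--Samelson chart, the expansion of $\sigma_\xi/\tau_\lambda^k$ as a generating series in PBW monomials acting on $v_{k\lambda}$, the use of the upper global basis of $V_w(k\lambda)^\ast$, and the crucial step of matching the lowest-term exponent to the string (resp.\ Nakashima--Zelevinsky) parameter by a coordinate-by-coordinate crystal argument. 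Your closing caveat is also well placed: the reduction to rank-one $\mathfrak{sl}_2$ computations and the passage from $f_i$-monomials to Kashiwara operators via the crystal lattice is exactly where the technical work in \cite{FO} lies, and your sketch does not supply those details. As a roadmap your proposal is correct and faithful to the source; as a self-contained proof it would need the actual crystal-basis computations from \cite[\S 4--5]{FO} (or the parallel arguments in \cite{Kav} and \cite{FN}) to be complete.
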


\begin{rem}
Such relation between Newton--Okounkov bodies and string polytopes (resp., Nakashima--Zelevinsky polytopes) was previously given by Kaveh \cite{Kav} (resp., by the first named author and Naito \cite{FN}) using a different kind of valuation.
\end{rem}

\begin{ex}\label{ex:basic_example}
Let $G = SL_3(\c)$ (of type $A_2$), $I = \{1, 2\}$, $\lambda = \alpha_1 + \alpha_2 \in P_+$, and ${\bm i} = (1, 2, 1) \in R(w_0)$. By standard monomial theory (see, for instance, \cite[Section 2]{Ses}), the $\c$-subspace $\{\sigma/\tau_\lambda \mid \sigma \in H^0(G/B, \mathcal{L}_\lambda)\}$ of $\c(G/B) = \c(t_1, t_2, t_3)$ is spanned by \[\{1, t_1 + t_3, t_2, t_1 t_2, t_2 t_3, t_1 t_2 (t_1 + t_3), t_2 ^2 t_3, t_1 t_2 ^2 t_3\}.\] Now we obtain the following list.\\

\begin{table}[h]
\begin{tabular}{|c||c|c|c|c|c|c|c|c|} \hline
valuation & $1$ & $t_1 + t_3$ & $t_2$ & $t_1 t_2$\\ \hline
$v_{\bm i} ^{\rm low}$ & $(0, 0, 0)$ & $\ \ (0, 0, 1)$ & $\ \ (0, 1, 0)$ & $\ \ (1, 1, 0)$\\ \hline
$\tilde{v}_{\bm i} ^{\rm low}$ & $(0, 0, 0)$ & $\ \ (1, 0, 0)$ & $\ \ (0, 1, 0)$ & $\ \ (1, 1, 0)$\\ \hline \hline
valuation & $t_2 t_3$ & $t_1 t_2 (t_1 + t_3)$ & $t_2 ^2 t_3$ & $t_1 t_2 ^2 t_3$\\ \hline
$v_{\bm i} ^{\rm low}$ & $\ \ (0, 1, 1)$ & $\ \ (1, 1, 1)$ & $\ \ (0, 2, 1)$ & $\ \ (1, 2, 1)$\\ \hline
$\tilde{v}_{\bm i} ^{\rm low}$ & $\ \ (0, 1, 1)$ & $\ \ (2, 1, 0)$ & $\ \ (0, 2, 1)$ & $\ \ (1, 2, 1)$\\ \hline
\end{tabular}
\end{table}

\noindent The Newton--Okounkov bodies $\Delta(G/B, \mathcal{L}_\lambda, v_{\bm i} ^{\rm low}, \tau_\lambda)$ and $\Delta(G/B, \mathcal{L}_\lambda, \tilde{v}_{\bm i} ^{\rm low}, \tau_\lambda)$ coincide with the convex hulls of the corresponding eight points in the list above, respectively; see Figures \ref{figure_ex_NZ_polytope}, \ref{figure_ex_string_polytope}.

\begin{figure}[!ht]
\begin{center}
   \includegraphics[width=8.0cm,bb=60mm 180mm 150mm 230mm,clip]{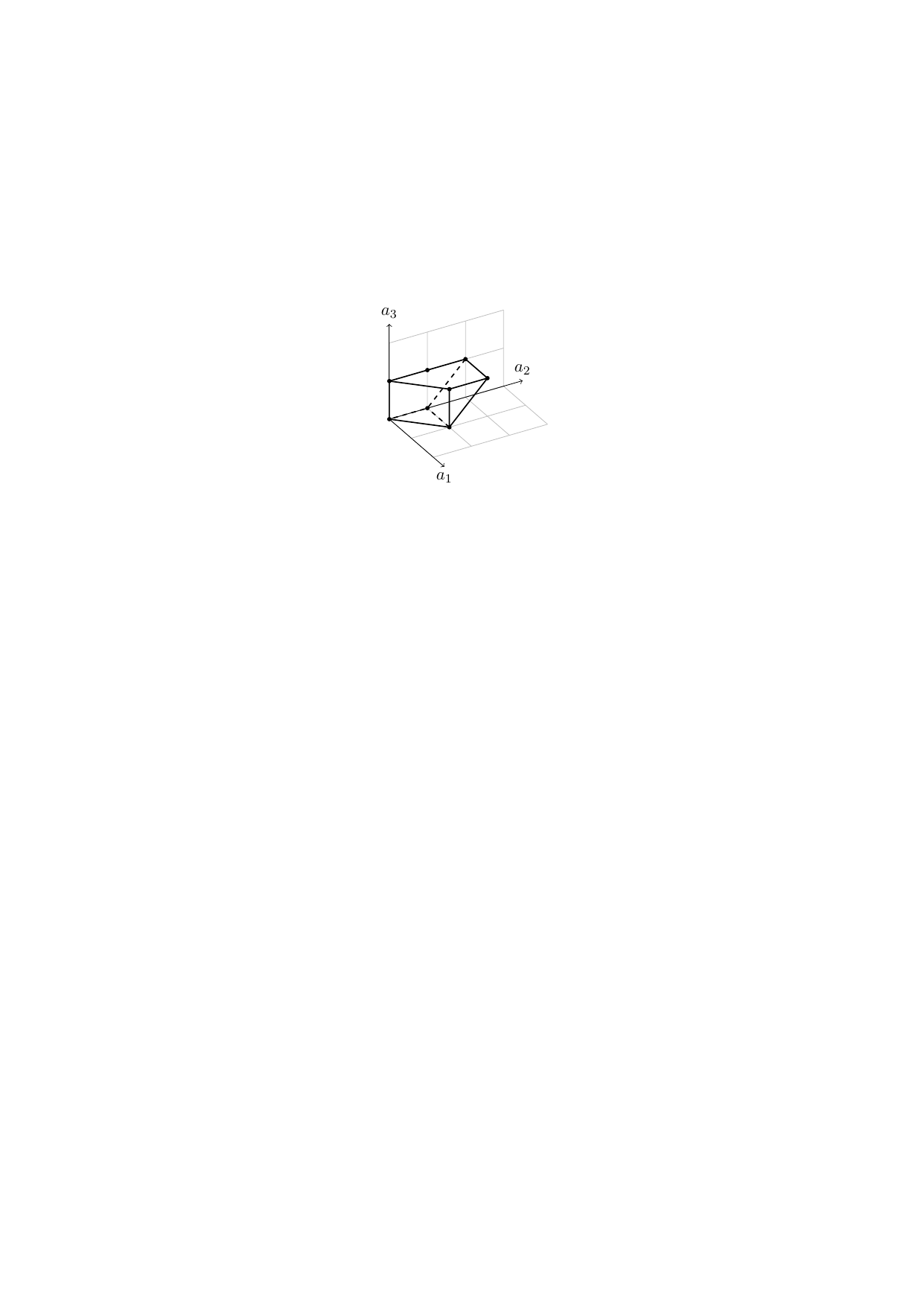}
	\caption{The Newton--Okounkov body $\Delta(G/B, \mathcal{L}_\lambda, v_{\bm i} ^{\rm low}, \tau_\lambda)$.}
	\label{figure_ex_NZ_polytope}
\end{center}
\end{figure}

\begin{figure}[!ht]
\begin{center}
   \includegraphics[width=8.0cm,bb=60mm 180mm 150mm 230mm,clip]{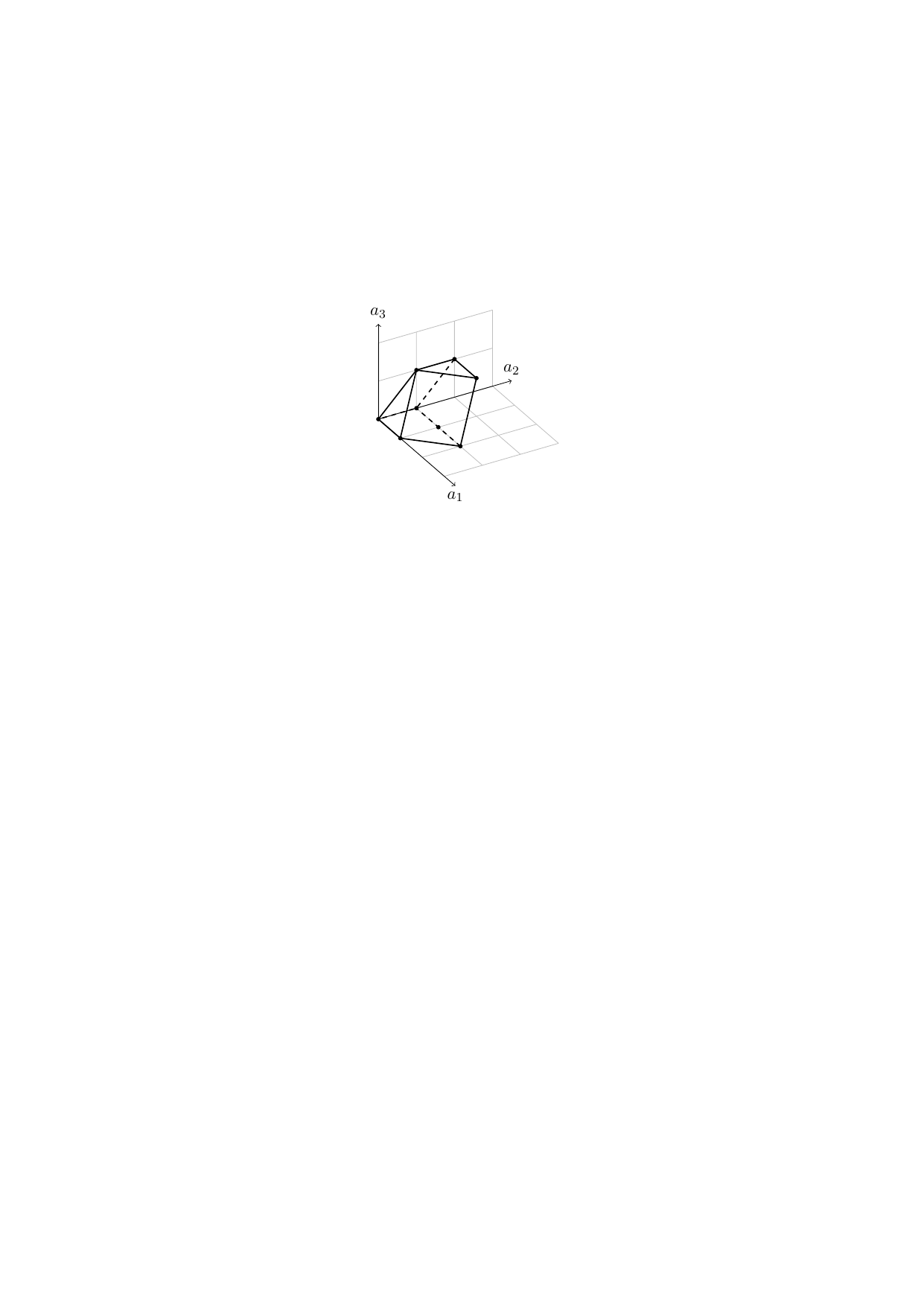}
	\caption{The Newton--Okounkov body $\Delta(G/B, \mathcal{L}_\lambda, \tilde{v}_{\bm i} ^{\rm low}, \tau_\lambda)$.}
	\label{figure_ex_string_polytope}
\end{center}
\end{figure}
\end{ex}

We know the following property of the semigroups and the closed convex cones; here we use Gordan's lemma (see, for instance, \cite[Proposition 1.2.17]{CLS}). 

\begin{thm}[{see the argument given before \cite[Corollary 5.6]{FO}}]\label{t:saturatedness_of_semigroups}
Let $w \in W$, $\lambda \in P_+$, and ${\bm i} \in R(w)$.
\begin{enumerate}
\item[{\rm (1)}] The real closed cones $C(X(w), \mathcal{L}_\lambda, v_{\bm i} ^{\rm low}, \tau_\lambda)$ and $C(X(w), \mathcal{L}_\lambda, \tilde{v}_{\bm i} ^{\rm low}, \tau_\lambda)$ are both rational convex polyhedral cones.
\item[{\rm (2)}] The equalities
\begin{align*}
&S(X(w), \mathcal{L}_\lambda, v_{\bm i} ^{\rm low}, \tau_\lambda) = C(X(w), \mathcal{L}_\lambda, v_{\bm i} ^{\rm low}, \tau_\lambda) \cap (\z_{>0} \times \z^m)\ {\it and}\\
&S(X(w), \mathcal{L}_\lambda, \tilde{v}_{\bm i} ^{\rm low}, \tau_\lambda) = C(X(w), \mathcal{L}_\lambda, \tilde{v}_{\bm i} ^{\rm low}, \tau_\lambda) \cap (\z_{>0} \times \z^m)
\end{align*}
hold. In particular, the semigroups $S(X(w), \mathcal{L}_\lambda, v_{\bm i} ^{\rm low}, \tau_\lambda)$ and $S(X(w), \mathcal{L}_\lambda, \tilde{v}_{\bm i} ^{\rm low}, \tau_\lambda)$ are both finitely generated and saturated.
\end{enumerate}
\end{thm}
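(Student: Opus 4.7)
The plan is to leverage Theorem 2.8, which identifies both Newton-Okounkov polytopes with well-understood representation-theoretic polytopes—the string polytope $\Delta_{\bm i}(\lambda)$ for $\tilde v_{\bm i}^{\rm low}$ and the Nakashima-Zelevinsky polytope $\widetilde\Delta_{\bm i}(\lambda)$ for $v_{\bm i}^{\rm low}$—and then to combine this with a slice-by-slice lattice-point count. Throughout, write $v$ for either of the two valuations, $P$ for the corresponding Newton-Okounkov polytope (a rational convex polytope by Theorem 2.8), $S$ and $C$ for the associated semigroup and cone, and $m=\ell(w)$.

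For (1), I would first identify
\[
C \;=\; C_P \;\coloneqq\; \{0\}\cup\bigl\{(k,ka)\in\r_{\ge 0}\times\r^m\bigm| k>0,\ a\in P\bigr\}.
\]
The inclusion $C_P\subseteq C$ is immediate because $\{1\}\times P\subseteq C$ by the very definition of $P$ and $C$ is a cone. Conversely, every generator $(k,v(\sigma/\tau_\lambda^{k}))$ of $S$ satisfies $\tfrac{1}{k}v(\sigma/\tau_\lambda^{k})\in P$, so $S\subseteq C_P$; since $P$ is bounded, $C_P$ is closed, whence $C\subseteq C_P$. Because $P$ is a rational convex polytope, $C_P$ is the rational polyhedral cone generated by $\{(1,v_j)\}$ as $v_j$ runs over the vertices of $P$, proving (1).

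For (2), the inclusion $S\subseteq C\cap(\z_{>0}\times\z^m)$ is automatic, so it suffices to match cardinalities at each level $k\in\z_{>0}$. The lowest-term valuation $v$ has $1$-dimensional leaves (Example 2.4), so Proposition 2.2 applied to $H^0(X(w),\mathcal{L}_\lambda^{\otimes k})$ yields
\[
\#\bigl(S\cap(\{k\}\times\z^m)\bigr)\;=\;\dim_\c H^0(X(w),\mathcal{L}_\lambda^{\otimes k})\;=\;\dim V_w(k\lambda),
\]
the last equality coming from the Borel-Weil identification recalled in Section 2.2 together with $\mathcal{L}_\lambda^{\otimes k}=\mathcal{L}_{k\lambda}$. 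By Littelmann's theorem (and its Nakashima-Zelevinsky counterpart), the lattice points of $\Delta_{\bm i}(k\lambda)$ (respectively $\widetilde\Delta_{\bm i}(k\lambda)$) parametrize the Kashiwara crystal basis of $V_w(k\lambda)$, so their total count equals $\dim V_w(k\lambda)$; moreover the defining inequalities of these polytopes are linear in $\lambda$, giving the scaling $\Delta_{\bm i}(k\lambda)=k\Delta_{\bm i}(\lambda)=kP$ and likewise for $\widetilde\Delta_{\bm i}$. Combining,
\[
\#\bigl(S\cap(\{k\}\times\z^m)\bigr)\;=\;\#(kP\cap\z^m)\;=\;\#\bigl(C\cap(\{k\}\times\z^m)\bigr),
\]
which together with the one-sided inclusion forces equality slice by slice, establishing the displayed identities in (2). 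Finite generation of $S\cup\{0\}=C\cap(\z_{\ge 0}\times\z^m)$ (and thus saturatedness) then follows from Gordan's lemma applied to the rational polyhedral cone $C$ of part (1).

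The hard part will be the representation-theoretic input: the exact lattice-point count $\dim V_w(k\lambda)=\#(\Delta_{\bm i}(k\lambda)\cap\z^m)$, together with the homogeneity $\Delta_{\bm i}(k\lambda)=k\Delta_{\bm i}(\lambda)$ and the analogous statements for $\widetilde\Delta_{\bm i}$. These are nontrivial consequences of Littelmann's and Nakashima-Zelevinsky's polyhedral realizations of crystal bases and of their compatibility with the Demazure subcrystal; they are precisely the point where the structure of string and Nakashima-Zelevinsky polytopes enters decisively. Once these are accepted, the cone identification in (1) and the level-by-level counting in (2) are bookkeeping, and finite generation is a standard toric-geometry consequence.
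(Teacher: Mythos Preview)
Your argument is correct and follows the same line as the one the paper cites from \cite{FO}: the key inputs are the identification of the Newton--Okounkov body with the string (resp.\ Nakashima--Zelevinsky) polytope, the lattice-point count $\#(\Delta_{\bm i}(k\lambda)\cap\z^m)=\dim V_w(k\lambda)$ coming from the crystal-basis parametrization of the Demazure crystal, the homogeneity in $\lambda$ of the defining inequalities, and Gordan's lemma. The only cosmetic difference is that you treat Theorem~\ref{t:NO_body_crystal_basis} as a black box and run a level-by-level counting argument, whereas \cite{FO} obtains the identification $S\cap(\{k\}\times\z^m)=\{k\}\times(\Delta_{\bm i}(k\lambda)\cap\z^m)$ directly by exhibiting a basis of $H^0(X(w),\mathcal{L}_{k\lambda})$ (coming from the upper global basis) whose valuation image is precisely the set of string (resp.\ Nakashima--Zelevinsky) data.
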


For ${\bm i} \in R(w)$, we define $C_{\bm i} \subset \r^m$ and $\widetilde{C}_{\bm i} \subset \r^m$ by 
\begin{align*}
C_{\bm i} \coloneqq \bigcup_{\lambda \in P_+} \Delta(X(w), \mathcal{L}_\lambda, v_{\bm i} ^{\rm low}, \tau_\lambda), \\
\widetilde{C}_{\bm i} \coloneqq \bigcup_{\lambda \in P_+} \Delta(X(w), \mathcal{L}_\lambda, \tilde{v}_{\bm i} ^{\rm low}, \tau_\lambda),
\end{align*}
respectively. 

\begin{prop}[{see \cite[Corollaries 3.20 and 5.5]{FO}}]\label{p:string_cone_lattice_points}
Let $w \in W$, and ${\bm i} \in R(w)$. Then the following equalities hold:
\begin{align*}
&C_{\bm i} \cap \z^m = v_{\bm i} ^{\rm low} (\c[U^- \cap X(w)] \setminus \{0\}),\ {\it and}\\
&\widetilde{C}_{\bm i} \cap \z^m = \tilde{v}_{\bm i} ^{\rm low} (\c[U^- \cap X(w)] \setminus \{0\}).
\end{align*}
\end{prop}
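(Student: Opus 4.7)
The plan is to reduce the proposition to \cref{t:saturatedness_of_semigroups} after first establishing the identification
\[
\c[U^- \cap X(w)] = \bigcup_{\lambda \in P_+} \tau_\lambda^{-1} \cdot H^0(X(w), \mathcal{L}_\lambda)
\]
of subsets of $\c(X(w))$. The inclusion $\supset$ expresses the fact that $\tau_\lambda$ is nowhere vanishing on the affine open $U^- \cap X(w)$; hence $\sigma/\tau_\lambda$ restricts to a regular function on $U^- \cap X(w)$ for every $\sigma \in H^0(X(w), \mathcal{L}_\lambda)$. For the inclusion $\subset$, I would use that $X(w) \setminus (U^- \cap X(w))$ is a union of codimension-one Schubert subvarieties $D_1,\dots,D_r$ and that, for regular $\lambda$, the effective divisor $\operatorname{div}(\tau_\lambda)$ on $X(w)$ has strictly positive multiplicity along every $D_j$; given $f \in \c[U^-\cap X(w)]$, the poles of $f$ as a rational function on $X(w)$ are supported on $D_1\cup\cdots\cup D_r$, so choosing $\lambda$ with $\operatorname{div}(\tau_\lambda)$ dominating $-\operatorname{div}(f)$ places $f\tau_\lambda$ in $H^0(X(w),\mathcal{L}_\lambda)$.

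Once this identification is in hand, the rest is bookkeeping. For fixed $\lambda \in P_+$, the definition of the semigroup $S(X(w),\mathcal{L}_\lambda,v_{\bm i}^{\rm low},\tau_\lambda)$ gives
\[
S(X(w),\mathcal{L}_\lambda,v_{\bm i}^{\rm low},\tau_\lambda) \cap (\{1\}\times\z^m) = \{1\} \times v_{\bm i}^{\rm low}\!\left(\tau_\lambda^{-1}\cdot H^0(X(w),\mathcal{L}_\lambda)\setminus\{0\}\right),
\]
while by the very definition of the Newton-Okounkov body,
\[
C(X(w),\mathcal{L}_\lambda,v_{\bm i}^{\rm low},\tau_\lambda) \cap (\{1\}\times\z^m) = \{1\}\times\bigl(\Delta(X(w),\mathcal{L}_\lambda,v_{\bm i}^{\rm low},\tau_\lambda)\cap\z^m\bigr).
\]
The saturatedness equality $S = C \cap (\z_{>0}\times\z^m)$ in \cref{t:saturatedness_of_semigroups}(2) then identifies the left-hand sides of these two displays, and therefore their right-hand sides. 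Taking the union over $\lambda\in P_+$ and inserting the first-step identification produces
\[
v_{\bm i}^{\rm low}(\c[U^-\cap X(w)]\setminus\{0\}) = \bigcup_{\lambda\in P_+}\!\bigl(\Delta(X(w),\mathcal{L}_\lambda,v_{\bm i}^{\rm low},\tau_\lambda)\cap\z^m\bigr) = C_{\bm i}\cap\z^m,
\]
where in the last step one uses that taking the union commutes with intersection with $\z^m$. The argument for $\tilde{v}_{\bm i}^{\rm low}$ and $\widetilde{C}_{\bm i}$ is verbatim the same.

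The main obstacle will be the first-step identification of $\c[U^-\cap X(w)]$ with $\bigcup_\lambda \tau_\lambda^{-1} H^0(X(w),\mathcal{L}_\lambda)$, which requires controlling the order of vanishing of $\tau_\lambda$ along each Schubert divisor $D_j \subset X(w)$ as a linear function of $\lambda \in P_+$. This can be carried out either directly, using the $B$-equivariance of $\mathcal{L}_\lambda$ together with the indexing of Schubert divisors in $X(w)$ by simple reflections $s_i \le w$ in the Bruhat order, or extracted from standard references on the projective geometry of Schubert varieties. Everything else in the proof is a mechanical consequence of the saturatedness statement already recorded in \cref{t:saturatedness_of_semigroups}.
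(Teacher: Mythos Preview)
Your argument is correct and is essentially the route taken in the cited paper \cite{FO}: the identification
\[
\c[U^- \cap X(w)] = \bigcup_{\lambda \in P_+} \{\sigma/\tau_\lambda \mid \sigma \in H^0(X(w), \mathcal{L}_\lambda)\}
\]
is exactly \cite[Corollary 3.20 (1)]{FO} (and is quoted verbatim later in the present paper, just before the definition of the cluster cone $C_{\bf s}$), while the passage from this identification to the proposition via the saturatedness statement of \cref{t:saturatedness_of_semigroups} is the content of \cite[Corollary 5.5]{FO}. Your divisorial sketch for the nontrivial inclusion is the standard one; the only point worth flagging is that the complement $X(w)\setminus(U^-\cap X(w))$ consists of the Richardson divisors $X(w)\cap X^{s_i}$ for those $i$ with $s_i\le w$, and the order of vanishing of $\tau_\lambda$ along each of these is indeed $\langle \lambda, h_i\rangle$, so one may take any sufficiently regular $\lambda$.
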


Write ${\bm a}^{\rm op} \coloneqq (a_m, \ldots, a_1)$ for an element ${\bm a} = (a_1, \ldots, a_m) \in \mathbb{R}^m$, and $V^{\rm op} \coloneqq \{{\bm a}^{\rm op} \mid {\bm a} \in V\}$ for a subset $V \subset \mathbb{R}^m$. By \cref{t:NO_body_crystal_basis} and \cite[Section 2.4]{NZ}, we obtain the following relation with string cones (see \cite[Section 1]{Lit} and \cite[Section 3.2]{BZ2} for more details on string cones).  

\begin{cor}\label{c:relation_with_string_cones}
Let ${\bm i} = (i_1, \ldots, i_m) \in R(w)$, and define ${\bm i}^{\rm op} \in R(w^{-1})$ by ${\bm i}^{\rm op} \coloneqq (i_m, \ldots, i_1)$.
\begin{enumerate}
\item[{\rm (1)}] The set $\widetilde{C}_{\bm i}$ coincides with the string cone associated with ${\bm i}$.
\item[{\rm (2)}] The set $C_{\bm i} ^{\rm op}$ coincides with the string cone associated with ${\bm i}^{\rm op}$.
\item[{\rm (3)}] The equality $\widetilde{C}_{{\bm i}^{\rm op}} = C_{\bm i} ^{\rm op}$ holds.
\end{enumerate}
\end{cor}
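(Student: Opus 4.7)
By \cref{t:NO_body_crystal_basis}(2),~(3), the unions $C_{\bm i}$ and $\widetilde{C}_{\bm i}$ can be rewritten as unions of representation-theoretic polytopes,
\[
\widetilde{C}_{\bm i} = \bigcup_{\lambda \in P_+} \Delta_{\bm i}(\lambda)
\qquad\text{and}\qquad
C_{\bm i} = \bigcup_{\lambda \in P_+} \widetilde{\Delta}_{\bm i}(\lambda),
\]
so the task is reduced to matching these unions with string cones.

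For (1), I would invoke the classical description of the string cone associated with ${\bm i}$ as the union $\bigcup_{\lambda \in P_+} \Delta_{\bm i}(\lambda)$ of string polytopes (see, e.g., \cite[Sect.~1]{Lit} and \cite[Sect.~3.2]{BZ2}). This follows from the fact that $\Delta_{\bm i}(\lambda)$ is cut out of the string cone by finitely many $\lambda$-dependent linear inequalities which become vacuous when $\lambda$ is scaled to infinity inside $P_+$, so that every point of the rational polyhedral cone lies in some $\Delta_{\bm i}(\lambda)$.

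For (2), the plan is to apply the coordinate-reversal identity between Nakashima-Zelevinsky polytopes and string polytopes indicated in \cite[Sect.~2.4]{NZ}, namely
\[
\widetilde{\Delta}_{\bm i}(\lambda)^{\rm op} = \Delta_{{\bm i}^{\rm op}}(\lambda)
\]
for each $\lambda \in P_+$, where ${\bm i}^{\rm op} \in R(w^{-1})$ is the reversed word. Taking the union over $\lambda \in P_+$ and using the obvious compatibility $\bigl(\bigcup_\lambda X_\lambda\bigr)^{\rm op} = \bigcup_\lambda X_\lambda^{\rm op}$, this yields $C_{\bm i}^{\rm op} = \bigcup_{\lambda \in P_+} \Delta_{{\bm i}^{\rm op}}(\lambda)$, which by part (1) applied to ${\bm i}^{\rm op}$ is precisely the string cone associated with ${\bm i}^{\rm op}$. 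Part (3) then follows immediately by combining (1) and (2), since both $\widetilde{C}_{{\bm i}^{\rm op}}$ and $C_{\bm i}^{\rm op}$ are thereby identified with the string cone of ${\bm i}^{\rm op}$.

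The only real obstacle is verifying the coordinate-reversal identity $\widetilde{\Delta}_{\bm i}(\lambda)^{\rm op} = \Delta_{{\bm i}^{\rm op}}(\lambda)$, which reflects the duality between Kashiwara's $\ast$-crystal structure (underlying the Nakashima-Zelevinsky parametrization) and the standard crystal structure (underlying the string parametrization). One has to reconcile the convention of \cite[Definition 3.9]{Fuj} used here for $\widetilde{\Delta}_{\bm i}(\lambda)$ with that of \cite{NZ} so that the coordinate reversal comes out in the precise form stated; once this bookkeeping is settled, the three assertions of the corollary fall out in the order (1), (2), (3).
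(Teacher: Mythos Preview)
Your proposal is correct and follows essentially the same approach as the paper, which simply cites \cref{t:NO_body_crystal_basis} together with \cite[Sect.~2.4]{NZ} (and \cite[Sect.~1]{Lit}, \cite[Sect.~3.2]{BZ2} for the description of string cones) without spelling out the details. You have correctly identified and expanded precisely what those references supply: the identification of the unions with string/Nakashima--Zelevinsky polytopes, the fact that the string cone is the union of string polytopes, and the coordinate-reversal relation between $\widetilde{\Delta}_{\bm i}(\lambda)$ and $\Delta_{{\bm i}^{\rm op}}(\lambda)$.
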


\section{Cluster algebras and valuations}\label{s:Clusterval}

We first recall the definitions of $\mathcal{A}$-cluster varieties and (upper) cluster algebras of geometric type, following \cite{BFZ2, FG, FZ:ClusterIV, GHK}. For a $\z$-lattice $L$ of finite rank, we
set $L_{\r} \coloneqq L \otimes_{\z} \r$, and define an algebraic torus $T_L$ by 
\[T_L \coloneqq L \otimes_{\z} \c^\times = \Spec(\c[{\rm Hom}_{\z}(L, \z)]).\]
We fix the following data $\Gamma$, called \emph{fixed data}:
\begin{itemize}
\item a $\z$-lattice $N$ of finite rank with a skew-symmetric bilinear form 
\[\{\cdot, \cdot\} \colon N \times N \rightarrow \q,\]
\item a saturated sublattice $N_{\rm uf} \subset N$, called an \emph{unfrozen sublattice},
\item a finite set $J$ with $|J| = \rank N$ and a subset $J_{\rm uf} \subset J$ with $|J_{\rm uf}| = \rank N_{\rm uf}$,
\item positive integers $d_j$, $j \in J$, with greatest common divisor $1$,
\item a sublattice $N^\circ \subset N$ of finite index such that 
\[\{N_{\rm uf}, N^\circ\} \subset \z,\quad \{N, N_{\rm uf} \cap N^\circ\} \subset \z,\]
\item $M \coloneqq {\rm Hom}_\z (N, \z)$, $M^\circ \coloneqq {\rm Hom}_\z (N^\circ, \z)$.
\end{itemize}
Then a \emph{seed} $\widehat{\bf s} = (e_j)_{j \in J}$ for $\Gamma$ is a $\z$-basis of $N$ such that $(e_j)_{j \in J_{\rm uf}}$ is a $\z$-basis of $N_{\rm uf}$, and $(d_j e_j)_{j \in J}$ is a $\z$-basis of $N^\circ$. Given a seed $\widehat{\bf s} = (e_j)_{j \in J}$, we obtain the following:
\begin{itemize}
\item $\widehat{\varepsilon} = (\varepsilon_{i, j})_{i, j \in J} \in \Mat_{J \times J}(\mathbb{Q})$ given by $\varepsilon_{i, j} \coloneqq \{e_i, e_j\} d_j$ for $i, j \in J$,
\item the dual basis $(e_j ^\ast)_{j \in J}$ for $M$,
\item a $\z$-basis $(f_j)_{j \in J}$ of $M^\circ$ defined by $f_j \coloneqq d_j ^{-1} e_j ^\ast$ for $j \in J$.
\end{itemize}
Note that $\varepsilon_{i, j} \in \z$ unless $i, j \in J_{\rm fr} \coloneqq J \setminus J_{\rm uf}$. We set 
\[\mathcal{A}_{\widehat{\bf s}} \coloneqq T_{N^\circ} = \Spec(\c[M^\circ]),\]
and define $A_{j; \widehat{\bf s}}$, $j \in J$, to be the coordinates on $\mathcal{A}_{\widehat{\bf s}}$ corresponding to the basis elements $f_j \in M^\circ$, $j \in J$. Let us write $[a]_+ \coloneqq \max\{a, 0\}$ for $a \in \mathbb{R}$. We define the \emph{mutation} $\mu_k (\widehat{\bf s})$ in direction $k \in J_{\rm uf}$ to be another $\z$-basis $\widehat{\bf s}^\prime = (e_j ^\prime)_{j \in J}$ of $N$, where 
\[e_j ^\prime \coloneqq 
\begin{cases}
e_j + [\varepsilon_{j, k}]_+ e_k &(j \neq k),\\
-e_k &(j = k).
\end{cases}\]
Then $\mu_k (\widehat{\bf s}) = \widehat{\bf s}^\prime$ is also a seed for $\Gamma$. Note that $\mu_k\mu_k (\widehat{\bf s})\neq \widehat{\bf s}$ in general. By the mutation $\mu_k$, the matrix $\widehat{\varepsilon} = (\varepsilon_{i, j})_{i, j \in J}$ changes as follows: if we denote the matrix for $\widehat{\bf s}^\prime$ by $\widehat{\varepsilon}^\prime = (\varepsilon_{i, j} ^\prime)_{i, j \in J}$, then
\begin{equation}\label{eq_mutation_for_varepsilon}
\begin{aligned}
\varepsilon^\prime _{i, j} = \begin{cases}
-\varepsilon_{i, j}&\ \text{if}\ i=k\ \text{or}\ j=k,\\
\varepsilon_{i, j} + \sgn(\varepsilon_{k, j})[\varepsilon_{i, k} \varepsilon_{k, j}]_+&\ \text{otherwise}. 
\end{cases}
\end{aligned}
\end{equation}
We define a birational map $\mu_k \colon \mathcal{A}_{\widehat{\bf s}} \dashrightarrow \mathcal{A}_{\mu_k (\widehat{\bf s})}$, called the \emph{birational mutation}, as follows:
\begin{equation}\label{eq_mutation_for_cluster_variable}
\begin{aligned}
A_i ^\prime \coloneqq \begin{cases}
\displaystyle \frac{\prod_{j \in J} A_j ^{[\varepsilon_{k, j}]_+} + \prod_{j \in J} A_j ^{[-\varepsilon_{k, j}]_+}}{A_k} &\ \text{if}\ i = k,\\
A_i &\ \text{otherwise}
\end{cases}
\end{aligned}
\end{equation}
for $i \in J$, where we denote $A_{j; \widehat{\bf s}}$ (resp., $\mu_k ^\ast A_{j; \mu_k (\widehat{\bf s})}$) by $A_j$ (resp., by $A_j ^\prime$) for $j \in J$. 

Let $\widehat{\mathbb{T}}$ be the oriented rooted tree such that each vertex has $|J_{\rm uf}|$ outgoing edges labeled by $J_{\rm uf}$ so that the $|J_{\rm uf}|$ outgoing edges emanating from each vertex receive different labels. Denote by $t_0$ the root of $\widehat{\mathbb{T}}$. We fix a seed $\widehat{\bf s}_{t_0}$ for $\Gamma$. For each $t \in \widehat{\mathbb{T}}$, there exists a unique simple path starting at $t_0$ and ending at $t$, which is of the form $t_0\xrightarrow{k_1}t_1\xrightarrow{k_2}\cdots \xrightarrow{k_{\ell}}t_{\ell}=t$. Then we define a seed $\widehat{\bf s}_t$ for $\Gamma$ by 
\begin{align}
\widehat{\bf s}_t \coloneqq \mu_{t_0, t}(\widehat{\bf s}_{t_0}),\text{ where }\mu_{t_0, t}\coloneqq \mu_{k_{\ell}} \cdots \mu_{k_{2}} \mu_{k_{1}},\label{eq:assignment}
\end{align}
which gives an assignment $\{\widehat{\bf s}_t\}_{t \in \widehat{\mathbb{T}}}$ of a seed $\widehat{\bf s}_t$ for $\Gamma$ to each vertex $t \in \widehat{\mathbb{T}}$. For $t \in \widehat{\mathbb{T}}$, denote the torus $\mathcal{A}_{\widehat{\bf s}_t}$ by $\mathcal{A}_t$. Then we obtain a scheme 
\[
\mathcal{A} = \bigcup_{t \in \widehat{\mathbb{T}}} \mathcal{A}_t = \bigcup_{t \in \widehat{\mathbb{T}}} \Spec (\mathbb{C}[A_{j; \widehat{\bf s}_t}^{\pm 1} \mid j \in J])
\]
by gluing tori $\mathcal{A}_t = \Spec (\mathbb{C}[A_{j; \widehat{\bf s}_t}^{\pm 1} \mid j \in J])$ via the birational mutation $\mu_k$ defined above (see \cite[Proposition 2.4]{GHK} for careful treatment of gluing). This scheme $\mathcal{A}$ is called an \emph{$\mathcal{A}$-cluster variety}.

Let $\c(\mathcal{A}_{t_0})=\c(A_{j; \widehat{\bf s}_{t_0}} \mid j \in J)$ be the function field of $\mathcal{A}_{t_0}$. For $t \in \widehat{\mathbb{T}}$ and $j\in J$, write 
\[
A_{j; t}\coloneqq \mu_{t_0, t}^{\ast}(A_{j; \widehat{\bf s}_t})\in\c(\mathcal{A}_{t_0}),
\]
where we use the notation in \eqref{eq:assignment}. 

\begin{defi}[{see \cite[Definitions 1.6 and 1.11]{BFZ2}}]
We set 
\[
{\rm up}(\mathcal{A})\coloneqq \bigcap_{t \in \widehat{\mathbb{T}}} \c[A_{j; t} ^{\pm 1} \mid j \in J] \subset \c(\mathcal{A}_{t_0}).
\]
This is called an \emph{upper cluster algebra of geometric type}, which is isomorphic to the ring of regular functions on $\mathcal{A}$ through the restriction of the domain of functions. The (\emph{ordinary}) \emph{cluster algebra} ${\rm ord}(\mathcal{A})$ \emph{of geometric type} is defined to be the $\mathbb{C}$-subalgebra of $\c(\mathcal{A}_{t_0})$ generated by $\{A_{j; t} \mid t \in \widehat{\mathbb{T}},\ j \in J_{\rm uf}\} \cup \{A_{j; t}^{\pm 1} \mid t \in \widehat{\mathbb{T}},\ j \in J_{\rm fr}\}$. Note that $A_{j; t}=A_{j; t'}$ for all $t, t' \in \widehat{\mathbb{T}}$ if $j \in J_{\rm fr}$.
\end{defi}
\begin{thm}[{\cite[Theorem 3.14]{GHK}}]
The canonical map $\mathcal{A} \rightarrow \Spec({\rm up}(\mathcal{A}))$ is an open immersion. In particular, $\mathcal{A}$ is separated.
\end{thm}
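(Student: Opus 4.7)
The plan is to identify each torus chart $\mathcal{A}_t$ with a principal open subset of $\Spec({\rm up}(\mathcal{A}))$ and then verify that these open subsets fit together in the prescribed way to recover the scheme $\mathcal{A}$. The first ingredient is the Laurent phenomenon of Fomin–Zelevinsky: for every pair of vertices $t, t' \in \widehat{\mathbb{T}}$, the cluster variable $A_{j; t}$, when expanded in the coordinates at $t'$, is in fact a Laurent polynomial, so $A_{j; t} \in \c[A_{j'; t'}^{\pm 1} \mid j' \in J]$. Taking the intersection over all $t'$ yields $A_{j; t} \in {\rm up}(\mathcal{A})$ for every $t \in \widehat{\mathbb{T}}$ and every $j \in J$.

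Next I would show that, for each $t \in \widehat{\mathbb{T}}$, the localization ${\rm up}(\mathcal{A})\bigl[\prod_{j \in J} A_{j; t}^{-1}\bigr]$ coincides with $\c[A_{j; t}^{\pm 1} \mid j \in J]$. The inclusion $\subseteq$ is immediate from the defining intersection. For the reverse inclusion, any Laurent monomial $\prod_{j \in J} A_{j; t}^{\alpha_j}$ can be written as $\bigl(\prod_{j \in J} A_{j; t}^{[\alpha_j]_+}\bigr) \cdot \bigl(\prod_{j \in J} A_{j; t}^{[-\alpha_j]_+}\bigr)^{-1}$, whose numerator lies in the polynomial subring generated by the $A_{j; t}$ inside ${\rm up}(\mathcal{A})$ (by the first step) and whose denominator is a product of the inverted elements. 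Consequently, the natural morphism $\mathcal{A}_t \to \Spec({\rm up}(\mathcal{A}))$ is an open immersion identifying $\mathcal{A}_t$ with the principal open $D\bigl(\prod_{j \in J} A_{j; t}\bigr)$.

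To assemble these open immersions into a global map $\mathcal{A} \to \Spec({\rm up}(\mathcal{A}))$, one has to check that whenever $t' = \mu_k(t)$, the two identifications $\mathcal{A}_t \hookrightarrow \Spec({\rm up}(\mathcal{A}))$ and $\mathcal{A}_{t'} \hookrightarrow \Spec({\rm up}(\mathcal{A}))$ restrict to the same subscheme on the overlap prescribed by the birational mutation \eqref{eq_mutation_for_cluster_variable}. Since both $A_{j; t}$ and $A_{j; t'}$ lie in ${\rm up}(\mathcal{A})$ and are related inside this ring by the very formula used to glue $\mathcal{A}_t$ to $\mathcal{A}_{t'}$, the two open immersions agree on the common locus $D(A_{k; t}) \simeq D(A_{k; t'})$. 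A straightforward induction on the distance in the tree $\widehat{\mathbb{T}}$, using \eqref{eq:assignment}, then yields compatibility for arbitrary pairs $t, t' \in \widehat{\mathbb{T}}$, producing a well-defined morphism $\mathcal{A} \to \Spec({\rm up}(\mathcal{A}))$ that is locally an open immersion, hence an open immersion.

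Separatedness of $\mathcal{A}$ is then a formal consequence, since any open subscheme of the affine (therefore separated) scheme $\Spec({\rm up}(\mathcal{A}))$ inherits separatedness. The main obstacle I anticipate lies in the compatibility argument of the third paragraph: a priori, the infinite tree $\widehat{\mathbb{T}}$ produces infinitely many torus charts glued by birational maps, and one must rule out pathological situations in which two ``distant'' charts get identified in $\Spec({\rm up}(\mathcal{A}))$ in a manner incompatible with the explicit gluing defining $\mathcal{A}$. The decisive leverage is precisely that every cluster variable is a genuine element of ${\rm up}(\mathcal{A})$, so that the cluster coordinates at each $t$ honestly separate the points of $\mathcal{A}_t$ viewed inside the ambient affine scheme.
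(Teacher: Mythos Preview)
The paper does not supply its own proof of this statement; it is quoted verbatim as \cite[Theorem 3.14]{GHK} and used as a black box. So there is no in-paper argument to compare against, only the original proof in Gross--Hacking--Keel.

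That said, your sketch has a genuine gap at the step ``locally an open immersion, hence an open immersion.'' This implication is false without an additional injectivity (monomorphism) check: the line with doubled origin maps to $\mathbb{A}^1$ by a morphism that is an open immersion on each chart yet is not an open immersion. What you must rule out is precisely the situation you flag in your final paragraph: two points $p \in \mathcal{A}_t$ and $q \in \mathcal{A}_{t'}$ with the same image in $\Spec({\rm up}(\mathcal{A}))$ but not identified in $\mathcal{A}$. Your compatibility check only shows that the two open immersions agree on $D(A_{k;t})$; it does not show that the intersection of the images $D(\prod_j A_{j;t}) \cap D(\prod_j A_{j;t'})$ inside $\Spec({\rm up}(\mathcal{A}))$ is no larger than the gluing locus used to build $\mathcal{A}$.

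In the original GHK argument this is handled by analyzing adjacent charts: one shows that for $t' = \mu_k(t)$ the overlap $D(\prod_j A_{j;t}) \cap D(\prod_j A_{j;t'})$ in the affine target equals exactly the domain of definition of the birational mutation $\mu_k$, so the scheme obtained by gluing the tori inside $\Spec({\rm up}(\mathcal{A}))$ coincides with $\mathcal{A}$. Your vague appeal to ``cluster coordinates separate points'' does not substitute for this computation, and without it the conclusion does not follow.
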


In this paper, we mainly deal with the algebras ${\rm ord}(\mathcal{A})$ and ${\rm up}(\mathcal{A})$. Indeed, we can define these algebras without using the whole of the fixed data and seeds, following Fomin--Zelevinsky's approach \cite{FZ:ClusterI,FZ:ClusterIV}. Since this approach is more convenient in some cases, we also recall it briefly. 

Let $\mathcal{F} \coloneqq \mathbb{C}(z_j \mid j \in J)$ be the field of rational functions in $|J|$ variables. Then a \emph{Fomin--Zelevinsky seed} (\emph{FZ-seed} for short) ${\bf s} = (\mathbf{A}, \varepsilon)$ of $\mathcal{F}$ is a pair of 
\begin{itemize}
\item a $J$-tuple $\mathbf{A} = (A_j)_{j \in J}$ of elements of $\mathcal{F}$,\ {\rm and}
\item $\varepsilon = (\varepsilon_{i, j})_{i \in J_{\rm uf}, j \in J} \in \Mat_{J_{\rm uf} \times J}(\mathbb{Z})$
\end{itemize}
such that
\begin{itemize}
	\item[(i)] $\mathbf{A}$ forms a free generating set of $\mathcal{F}$, and
   \item[(ii)] the $J_{\rm uf} \times J_{\rm uf}$-submatrix $\varepsilon^{\circ}$ of $\varepsilon$ is skew-symmetrizable, that is, there exists $(d_i)_{i \in J_{\rm uf}} \in \mathbb{Z}_{>0} ^{J_{\rm uf}}$ such that $\varepsilon_{i, j} d_i = -\varepsilon_{j, i} d_j$ for all $i, j \in J_{\rm uf}$.  
\end{itemize}
The matrix $\varepsilon$ is called the \emph{exchange matrix} of ${\bf s}$, and the submatrix $\varepsilon^{\circ}$ is called the \emph{principal part} of $\varepsilon$. 

\begin{rem}
Our exchange matrix $\varepsilon$ is transposed to the one in \cite[Section 2]{FZ:ClusterIV}.
\end{rem}

Let ${\bf s} = (\mathbf{A}, \varepsilon) = ((A_j)_{j \in J}, (\varepsilon_{i, j})_{i \in J_{\rm uf}, j \in J})$ be an FZ-seed of $\mathcal{F}$. For $k \in J_{\rm uf}$, the \emph{mutation} $\mu_k ({\bf s}) = (\mu_k (\mathbf{A}), \mu_k (\varepsilon))$ in direction $k$ is defined by the formulas \eqref{eq_mutation_for_varepsilon} and \eqref{eq_mutation_for_cluster_variable}, where we write $\mu_k (\mathbf{A}) = (A^\prime _j)_{j \in J}$ and $\mu_k (\varepsilon) = (\varepsilon^\prime _{i, j})_{i \in J_{\rm uf}, j \in J}$. Then $\mu_k ({\bf s})$ is again an FZ-seed of $\mathcal{F}$. This time we have $\mu_k \mu_k ({\bf s}) = {\bf s}$. Two FZ-seeds ${\bf s}$ and ${\bf s}^\prime$ are said to be \emph{mutation equivalent} if there exists a sequence $(k_1, k_2, \ldots, k_{\ell})$ in $J_{\rm uf}$ such that 
\[
\mu_{k_{\ell}} \cdots \mu_{k_{2}} \mu_{k_{1}} ({\bf s}) = {\bf s}^\prime.
\]
In this case, we write ${\bf s} \sim {\bf s}^\prime$, which gives an equivalence relation. 

Let $\mathbb{T}$ be the $|J_{\rm uf}|$-regular (neither oriented nor rooted) tree whose edges are labeled by $J_{\rm uf}$ so that the $|J_{\rm uf}|$-edges emanating from each vertex receive different labels. If $t, t^\prime \in \mathbb{T}$ are joined by an edge labeled by $k \in J_{\rm uf}$, then we write $t \overset{k}{\text{---}} t^\prime$. \emph{A cluster pattern} $\mathcal{S} = \{{\bf s}_t\}_{t \in \mathbb{T}} = \{(\mathbf{A}_t, \varepsilon_t)\}_{t \in \mathbb{T}}$ is an assignment of an FZ-seed ${\bf s}_t = (\mathbf{A}_t, \varepsilon_t)$ of $\mathcal{F}$ to each vertex $t \in \mathbb{T}$ such that $\mu_k ({\bf s}_t) = {\bf s}_{t^\prime}$ whenever $t \overset{k}{\text{---}} t'$. For a cluster pattern $\mathcal{S} = \{{\bf s}_t = (\mathbf{A}_t, \varepsilon_t)\}_{t \in \mathbb{T}}$, write 
\[
\mathbf{A}_t = (A_{j; t})_{j \in J}, \quad \varepsilon_t = (\varepsilon_{i, j} ^{(t)})_{i \in J_{\rm uf}, j \in J}.
\]

We define the (\emph{ordinary}) \emph{cluster algebra} $\mathscr{A}(\mathcal{S})$ \emph{of geometric type} to be the $\mathbb{C}$-subalgebra of $\mathcal{F}$ generated by $\mathcal{V} \coloneqq \{A_{j; t} \mid t \in \mathbb{T},\ j \in J\}$ and $\{A_{j; t}^{-1} \mid t \in \mathbb{T},\ j \in J_{\rm fr}\}$. The elements of $\mathcal{V}$ are called \emph{cluster variables}. They are divided into two parts: $\mathcal{V}_{\rm uf} \coloneqq \{A_{j; t} \mid t \in \mathbb{T},\ j \in J_{\rm uf}\}$ and $\mathcal{V}_{\rm fr} \coloneqq \{A_{j; t} \mid t \in \mathbb{T},\ j \in J_{\rm fr}\}$. The elements of $\mathcal{V}_{\rm uf}$ (resp., $\mathcal{V}_{\rm fr}$) are called \emph{unfrozen variables} (resp., \emph{frozen variables}). For $j \in J_{\rm fr}$, the frozen variable $A_{j; t}$ is independent of the choice of $t \in \mathbb{T}$, which is also denoted by $A_j$. For each $t \in \mathbb{T}$, a set $\{A_{j; t}\mid j \in J\}$ of cluster variables is called the \emph{cluster} at $t$ (the frozen variables are included in the clusters in this paper). A \emph{cluster monomial} is a monomial in cluster variables all of which belong to the same cluster $\{A_{j; t}\mid j \in J\}$. Moreover, we set 
\[
\mathscr{U}(\mathcal{S})\coloneqq \bigcap_{t \in \mathbb{T}} \c[A_{j; t} ^{\pm 1} \mid j \in J], 
\]
which is called the \emph{upper cluster algebra of geometric type}. We sometimes write $\mathscr{A}(\mathcal{S})$ (resp., $\mathscr{U}(\mathcal{S})$) as $\mathscr{A}({\bf s}_{t_0})$ or $\mathscr{A}(\mathbf{A}_{t_0}, \varepsilon_{t_0})$ (resp., $\mathscr{U}({\bf s}_{t_0})$ or $\mathscr{U}(\mathbf{A}_{t_0}, \varepsilon_{t_0})$) for some $t_0 \in \mathbb{T}$ since a cluster pattern $\mathcal{S} = \{{\bf s}_t\}_{t \in \mathbb{T}}$ is determined by the FZ-seed ${\bf s}_{t_0} = (\mathbf{A}_{t_0}, \varepsilon_{t_0})$. We usually fix $t_0 \in \mathbb{T}$, and construct a cluster pattern $\mathcal{S} = \{{\bf s}_t\}_{t \in \mathbb{T}}$ from an FZ-seed ${\bf s}_{t_0}$. In this case, ${\bf s}_{t_0}$ is called the \emph{initial FZ-seed}.

\begin{thm}[{\cite[Theorem 3.1]{FZ:ClusterI}}]\label{t:laurentpheno}
Let $\mathcal{S}$ be a cluster pattern. Then it follows that
\[
\mathscr{A}(\mathcal{S}) \subset \mathbb{C}[A_{j; t}^{\pm 1} \mid j \in J] 
\]
for all $t \in \mathbb{T}$; this property is called the \emph{Laurent phenomenon}. In particular, $\mathscr{A}(\mathcal{S})$ is included in the upper cluster algebra $\mathscr{U}(\mathcal{S})$.
\end{thm}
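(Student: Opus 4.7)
The statement is the classical Laurent phenomenon of Fomin-Zelevinsky, and the plan is to prove the stronger assertion that, for each fixed vertex $t \in \mathbb{T}$, every cluster variable $A_{j; t'}$ (for all $t' \in \mathbb{T}$ and $j \in J$) lies in $\c[A_{k; t}^{\pm 1} \mid k \in J]$. Granting this, the inclusion $\mathscr{A}(\mathcal{S}) \subset \c[A_{j; t}^{\pm 1} \mid j \in J]$ is immediate since $\mathscr{A}(\mathcal{S})$ is generated by cluster variables together with inverses of frozen variables, all of which lie in the Laurent ring at $t$. The inclusion $\mathscr{A}(\mathcal{S}) \subset \mathscr{U}(\mathcal{S})$ then follows by intersecting over all $t$.

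The argument proceeds by induction on the distance $d(t, t')$ in the tree $\mathbb{T}$. For $d = 0$, the cluster variable $A_{j; t'} = A_{j; t}$ is a Laurent monomial in the cluster at $t$, and for $d = 1$ the explicit mutation formula \eqref{eq_mutation_for_cluster_variable} exhibits $A_{j; t'}$ directly as a Laurent polynomial in the cluster at $t$. For the inductive step, assume the claim for all pairs of vertices at distance less than $d$, and let $t \overset{k}{\text{---}} t_1$ be the first edge of a shortest path from $t$ to $t'$. Applying the inductive hypothesis with $t_1$ as initial vertex expresses $A_{j; t'}$ as a Laurent polynomial in the cluster at $t_1$. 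Combining this with the inverse mutation relation, which writes each $A_{i; t_1}$ as a Laurent polynomial in the cluster at $t$ (with a single power of $A_{k; t}$ in the denominator for $i = k$), yields an expression of $A_{j; t'}$ as a rational function in the cluster at $t$ whose denominator is a monomial in the $A_{i; t}$.

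The main obstacle is the Caterpillar Lemma of Fomin-Zelevinsky, namely the coprimality check needed to conclude that this rational expression is actually Laurent. Concretely, to rule out that $A_{k; t}$ appears with negative exponent after simplification, one compares two paths from $t$ to $t'$ — the direct path through $t_1$ and a detour through a vertex $t_0' \overset{\ell}{\text{---}} t$ with $\ell \neq k$ — and uses the inductive hypothesis on both. Working in the UFD $\c[A_{i; t}^{\pm 1} \mid i \neq k][A_{k; t}]$, one verifies that the exchange binomial $\prod_{j} A_{j; t}^{[\varepsilon_{k,j}^{(t)}]_+} + \prod_{j} A_{j; t}^{[-\varepsilon_{k,j}^{(t)}]_+}$ is coprime to $A_{k; t}$ (its two monomial summands are distinct and involve no $A_{k; t}$), so no spurious $A_{k; t}$-denominator can appear. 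Formalizing this coprimality propagation along the tree — the heart of the caterpillar argument — is the technically delicate part, but once it is in place the Laurent phenomenon, and hence the asserted inclusion $\mathscr{A}(\mathcal{S}) \subset \mathscr{U}(\mathcal{S})$, follows.
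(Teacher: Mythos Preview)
The paper does not prove this theorem; it is cited from \cite{FZ:ClusterI} without argument, so there is no in-paper proof to compare against. Your sketch correctly names the Caterpillar Lemma as the engine, but the mechanism you describe is off in several places.

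First, after substituting $A_{k;t_1}=P/A_{k;t}$ (with $P$ the exchange binomial) into a Laurent polynomial in the cluster at $t_1$, the resulting denominator is \emph{not} a monomial in the $A_{i;t}$: negative powers of $A_{k;t_1}$ contribute positive powers of $P$ to the denominator, since $A_{k;t_1}^{-1}=A_{k;t}/P$. Were the denominator already monomial, the expression would already be Laurent and there would be nothing left to prove. Second, and relatedly, what must be ruled out is that $P$ survives in the denominator, not that $A_{k;t}$ carries a negative exponent --- negative exponents of cluster variables are precisely what ``Laurent'' permits. Third, the decisive coprimality in Fomin--Zelevinsky's argument is between the exchange polynomials attached to two \emph{consecutive} mutations along the spine of the caterpillar, viewed in the Laurent ring of the middle seed; the coprimality of $P$ with $A_{k;t}$ that you verify is true but essentially trivial and is not what carries the induction. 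The actual inductive structure runs over three consecutive seeds on the caterpillar spine, not via comparing two paths from $t$ to $t'$ through a detour vertex $t_0'$ (note also that $d(t_0',t')=d+1$, so your inductive hypothesis would not even apply there).
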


Let $\{\widehat{\bf s}_t\}_{t \in \widehat{\mathbb{T}}}$ be the family of seeds for $\Gamma$ defined above. Then it induces a cluster pattern $\mathcal{S} = \{{\bf s}_{t'}\}_{t' \in \mathbb{T}}$ in the following way. Fix $t'_0 \in \mathbb{T}$. There uniquely exists a surjective map $\Pi\colon \{\text{vertices of }\widehat{\mathbb{T}}\}\twoheadrightarrow \{\text{vertices of }\mathbb{T}\}$ such that, for $t\in \widehat{\mathbb{T}}$ with  $t_0\xrightarrow{k_1}t_1\xrightarrow{k_2}\cdots \xrightarrow{k_{\ell}}t_{\ell}=t$, 
\[
\Pi(t)=t',\text{ where }t'_0 \overset{k_1}{\text{---}} t'_1  \overset{k_2}{\text{---}}\cdots \overset{k_{\ell}}{\text{---}}t'_{\ell}=t'. 
\]
Set $\mathcal{F}=\c(\mathcal{A}_{t_0})$. For $t' \in \mathbb{T}$, we set 
\[
{\bf s}_{t'}=((\mu_{t_0, t}^{\ast}(A_{j; \widehat{\bf s}_t}))_{j\in J}, \varepsilon_t),
\]
where $t\in \Pi^{-1}(t')$ and $\varepsilon_t$ is a $J_{\rm uf} \times J$-submatrix of the skew-symmetrizable matrix $\widehat{\varepsilon}_t$ associated with $\widehat{\bf s}_t$. Then ${\bf s}_{t'}$ is a well-defined FZ-seed of $\mathcal{F}$, and $\mathcal{S} = \{{\bf s}_{t'}\}_{t' \in \mathbb{T}}$ is a cluster pattern. We have $\mathscr{A}(\mathcal{S})={\rm ord}(\mathcal{A})$ and $\mathscr{U}(\mathcal{S})={\rm up}(\mathcal{A})$. 

\begin{defi}[{\cite[Definition 3.1.1]{Qin}}]\label{d:order}
	Let ${\bf s} = (\mathbf{A}, \varepsilon)$ be an FZ-seed of $\mathcal{F}$, and assume that the exchange matrix $\varepsilon$ is of full rank. For $\bm{a}, \bm{a}'\in \mathbb{Z}^{J}$, we write 
	\[
	\bm{a} \preceq_{\varepsilon} \bm{a}'\ \text{if and only if }
	\bm{a}=\bm{a}'+\bm{v} \varepsilon\ \text{for some }\bm{v}\in \mathbb{Z}_{\geq 0}^{J_{\rm uf}},
	\]
	where we regard elements of $\mathbb{Z}^{J}$ (resp., $\mathbb{Z}_{\geq 0}^{J_{\rm uf}}$) as $1 \times J$-matrices (resp., $1 \times J_{\rm uf}$-matrices). This $\preceq_{\varepsilon}$ defines a partial order on $\mathbb{Z}^{J}$, called the \emph{dominance order} associated with $\varepsilon$.
\end{defi}

For $i \in J_{\rm uf}$, we set 
\[
\widehat{X}_{i; t} \coloneqq \prod_{j \in J} A_{j; t} ^{\varepsilon_{i, j} ^{(t)}};
\]
see \cite[Section 3]{FZ:ClusterIV}. For $t \overset{k}{\text{---}} t^\prime$ and $j \in J_{\rm uf}$, we have 
\begin{align}
\mu_k(\widehat{X}_{j; t}) \coloneqq \widehat{X}_{j; t^\prime} = \begin{cases}
\widehat{X}_{j; t}\widehat{X}_{k; t}^{[\varepsilon_{j, k}^{(t)}]_+} (1 + \widehat{X}_{k; t})^{-\varepsilon_{j, k} ^{(t)}}&(j\neq k),\\
\widehat{X}_{j; t}^{-1}&(j = k);
\end{cases}\label{eq:X-mutation}
\end{align}
see \cite[Proposition 3.9]{FZ:ClusterIV}. Note that only the principal part of the exchange matrix appears in the relation \eqref{eq:X-mutation}. In the rest of this section, we assume that 
\begin{enumerate}
\item[($\dagger$)] the exchange matrix $\varepsilon_{t}$ is of full rank for all $t \in \mathbb{T}$.
\end{enumerate}
Indeed, this assumption is automatically satisfied if $\varepsilon_t$ is of full rank for some $t \in \mathbb{T}$; see \cite[Lemma 3.2]{BFZ2}. We introduce the notion of weakly pointed elements, which is a slight modification of pointedness introduced by Qin \cite{Qin}.

\begin{defi}[{see \cite{FZ:ClusterIV, Qin, Tra}}]\label{d:weakly_pointed}
Let $\mathcal{S} = \{{\bf s}_t = (\mathbf{A}_t, \varepsilon_t)\}_{t \in \mathbb{T}}$ be a cluster pattern, and fix $t \in \mathbb{T}$. An element $f \in \c[A_{j;t} ^{\pm 1} \mid j \in J]$ is said to be \emph{weakly pointed} at $(g_j)_{j \in J} \in \z^J$ for $t$ if we can write
\[
f = \left(\prod_{j \in J} A_{j; t} ^{g_j}\right) \left(\sum_{{\bm a}=(a_j)_{j} \in \mathbb{Z}_{\geq 0}^{J_{\rm uf}}} c_{\bm a} \prod_{j \in J_{\rm uf}} \widehat{X}_{j; t} ^{a_j}\right)
\]
for some $\{c_{\bm a} \in \c \mid {\bm a} \in \mathbb{Z}_{\geq 0}^{J_{\rm uf}}\}$ such that $c_0 \neq 0$. In this case, we set $g_t (f) \coloneqq (g_j)_{j \in J} \in \z^J$, which is called the \emph{extended $g$-vector} of $f$ (see \cite[Section 6]{FZ:ClusterIV} and \cite[Definition 3.7]{Tra}). If $c_0 = 1$ in addition, then $f$ is said to be \emph{pointed}; see \cite[Definition 3.1.4]{Qin}.
\end{defi} 

One significant class of pointed elements is given by cluster monomials.

\begin{thm}[{see \cite[Corollary 6.3]{FZ:ClusterIV}, \cite[Theorem 1.7]{DWZ}, and \cite{GHKK}}]\label{t:pointedness_cluster_monomial}
All cluster variables and hence monomials are pointed for all $t \in \mathbb{T}$.
\end{thm}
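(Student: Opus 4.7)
The plan is to combine the Fomin--Zelevinsky separation formula with the fact that the $F$-polynomial has constant term $1$. First, I would reduce to the setting of principal coefficients at $t$: by the standard argument of \cite[Sect.~3]{FZ:ClusterIV}, the Laurent expansion at $t$ of any cluster variable $A_{j;t'}$ is the image, under a monomial coefficient specialization, of a universal ``principal coefficient'' Laurent polynomial that depends only on the principal part $\varepsilon_t^{\circ}$. This reduction is what makes the $g$-vector $(g_i)_{i \in J} \in \z^{J}$ well defined as an element indexed by the full set $J$, after combining the principal $J_{\rm uf}$-component with the contribution coming from the frozen rows of $\varepsilon_t$.

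In the principal coefficient setup, \cite[Corollary 6.3]{FZ:ClusterIV} gives the separation formula
\[
X_{j;t'} \;=\; \Bigl(\prod_{i \in J_{\rm uf}} x_i^{g_i}\Bigr)\, F_{j;t',t}(\widehat{y}_1,\ldots,\widehat{y}_{|J_{\rm uf}|}),
\]
where $F_{j;t',t}$ is the $F$-polynomial attached to $A_{j;t'}$. Pulling back via the coefficient specialization and tracking frozen variables produces an expression of the shape
\[
A_{j;t'} \;=\; \Bigl(\prod_{i \in J} A_{i;t}^{g_i}\Bigr)\Bigl(\sum_{\bm{a}\in \z_{\geq 0}^{J_{\rm uf}}} c_{\bm{a}} \prod_{i \in J_{\rm uf}} \widehat{X}_{i;t}^{a_i}\Bigr),
\]
exactly as in \cref{d:weakly_pointed}. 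It therefore remains to verify that $c_0 = 1$, i.e.\ $F_{j;t',t}(0,\ldots,0) = 1$.

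This constant term property is, I expect, the main obstacle, and it requires external input. It was conjectured in \cite{FZ:ClusterIV} and proved in the skew-symmetric case by Derksen--Weyman--Zelevinsky via the representation theory of quivers with potentials \cite{DWZ}, and extended to the general skew-symmetrizable setting by Gross--Hacking--Keel--Kontsevich using scattering diagrams and broken lines \cite{GHKK}. Invoking these results gives $c_0 = 1$, so every cluster variable is pointed at every $t \in \mathbb{T}$.

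Finally, the passage from cluster variables to cluster monomials is formal and multiplicative. If $f_1, f_2 \in \c[A_{j;t}^{\pm 1} \mid j \in J]$ are pointed at $t$ with extended $g$-vectors $g^{(1)}, g^{(2)} \in \z^J$, then the constant term in the $\widehat{X}_{i;t}$-expansion of the product equals $1 \cdot 1 = 1$, so $f_1 f_2$ is pointed at $t$ with extended $g$-vector $g^{(1)} + g^{(2)}$. Applied to a cluster monomial $\prod_{i \in J} A_{i;t'}^{b_i}$ (with $b_i \in \z_{\geq 0}$ for $i \in J_{\rm uf}$ and $b_i \in \z$ for $i \in J_{\rm fr}$), this yields pointedness with extended $g$-vector $\sum_{i \in J} b_i\, g_t(A_{i;t'})$, completing the proof.
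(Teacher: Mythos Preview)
Your outline is correct and matches the content of the cited references. Note that the paper itself does not supply a proof of this statement: it is recorded as a theorem from the literature, with the attributions \cite[Corollary 6.3]{FZ:ClusterIV}, \cite[Theorem 1.7]{DWZ}, and \cite{GHKK} serving in lieu of a proof. Your sketch---separation formula plus the constant-term property $F(0,\ldots,0)=1$, then multiplicativity for monomials---is exactly the argument those references supply, so there is nothing to compare against in the paper.
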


Now we define our main valuations in this paper.

\begin{defi}\label{d:main_valuation}
Let $\mathcal{S} = \{{\bf s}_t = (\mathbf{A}_t, \varepsilon_t)\}_{t \in \mathbb{T}}$ be a cluster pattern, and $\preceq_{\varepsilon_t} ^{\rm op}$ the opposite order of $\preceq_{\varepsilon_t}$. We fix a total order $<_t$ on $\z^J$ which refines $\preceq_{\varepsilon_t} ^{\rm op}$. It induces a total order (denoted by the same symbol $<_t$) on the set of Laurent monomials in $A_{j;t}$, $j \in J$, by identifying $(a_j)_{j \in J} \in \z^J$ with $\prod_{j \in J} A_{j;t}
 ^{a_j}$. Let $v_{{\bf s}_t}$ (or simply $v_t$) denote the corresponding lowest term valuation $v^{\rm low} _{<_t}$ on $\mathcal{F} = \c(A_{j; t} \mid j \in J)$. 
\end{defi} 

The following is an immediate consequence of the definitions.

\begin{prop}\label{p:valuation_generalizing_g}
Let $\mathcal{S} = \{(\mathbf{A}_t, \varepsilon_t)\}_{t \in \mathbb{T}}$ be a cluster pattern. If $f \in \c[A_{j; t} ^{\pm 1} \mid j \in J]$ is weakly pointed for $t \in \mathbb{T}$, then the equality $v_t (f) = g_t (f)$ holds for an arbitrary refinement of $\preceq_{\varepsilon_t} ^{\rm op}$.
\end{prop}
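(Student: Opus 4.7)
The plan is to unfold the definitions of weak pointedness, the dominance order, and the lowest term valuation, and observe that the assertion is essentially tautological once these are combined, with the full-rank assumption $(\dagger)$ ensuring that no accidental cancellations occur.

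First, I would expand the weakly pointed expression of $f$ as an honest Laurent polynomial in the cluster variables $A_{j;t}$. Substituting the definition $\widehat{X}_{i;t} = \prod_{j \in J} A_{j;t}^{\varepsilon_{i,j}^{(t)}}$ gives, for any ${\bm a} = (a_i)_{i \in J_{\rm uf}} \in \mathbb{Z}_{\geq 0}^{J_{\rm uf}}$,
\[
\prod_{i \in J_{\rm uf}} \widehat{X}_{i;t}^{\,a_i} \;=\; \prod_{j \in J} A_{j;t}^{({\bm a}\varepsilon_t)_j},
\]
so that
\[
f \;=\; \sum_{{\bm a} \in \mathbb{Z}_{\geq 0}^{J_{\rm uf}}} c_{\bm a}\, \prod_{j \in J} A_{j;t}^{\,g_j + ({\bm a}\varepsilon_t)_j},
\]
with $c_0 \neq 0$. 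In particular, the exponent vectors appearing in $f$ lie in the set $\{(g_j)_{j \in J} + {\bm a}\varepsilon_t \mid {\bm a} \in \mathbb{Z}_{\geq 0}^{J_{\rm uf}}\}$.

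Next, I would compare these exponents under the dominance order. By \cref{d:order}, the vector $(g_j)_{j \in J} + {\bm a}\varepsilon_t$ satisfies $(g_j)_{j \in J} + {\bm a}\varepsilon_t \preceq_{\varepsilon_t} (g_j)_{j \in J}$ for every ${\bm a} \in \mathbb{Z}_{\geq 0}^{J_{\rm uf}}$. Moreover, assumption $(\dagger)$ says that $\varepsilon_t$ has full rank, which means the map ${\bm a} \mapsto {\bm a}\varepsilon_t$ is injective; hence for ${\bm a} \neq 0$ we have ${\bm a}\varepsilon_t \neq 0$, so the inequality above is \emph{strict}. Passing to the opposite order gives
\[
(g_j)_{j \in J} \;\prec_{\varepsilon_t}^{\rm op}\; (g_j)_{j \in J} + {\bm a}\varepsilon_t \quad \text{for all } {\bm a} \in \mathbb{Z}_{\geq 0}^{J_{\rm uf}} \setminus \{0\}.
\]

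Finally, I would invoke the refinement property: since $<_t$ refines $\preceq_{\varepsilon_t}^{\rm op}$, every strict relation above is preserved, and therefore $(g_j)_{j \in J}$ is the $<_t$-minimum among all exponent vectors that actually appear in the Laurent expansion of $f$. By the full-rank injectivity the exponent vectors indexed by distinct ${\bm a}$ are pairwise distinct, so no monomial collision occurs; combined with $c_0 \neq 0$, the definition of the lowest term valuation $v_t = v_{<_t}^{\rm low}$ from \cref{ex:lowest_term_valuation} forces
\[
v_t(f) \;=\; (g_j)_{j \in J} \;=\; g_t(f),
\]
which is the desired equality. The only place one must be careful is step two, where the full-rank hypothesis is essential to upgrade $\preceq_{\varepsilon_t}$ to a \emph{strict} relation and thereby rule out the possibility that some other monomial shares the exponent $(g_j)_{j \in J}$ or sits incomparably below it; no other substantial obstacle arises.
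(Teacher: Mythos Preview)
Your proof is correct and is precisely the unfolding of definitions that the paper has in mind; the paper itself does not give a detailed argument, stating only that the proposition is ``an immediate consequence of the definitions.'' Your care in invoking the full-rank hypothesis $(\dagger)$ to ensure both strictness of the dominance-order inequalities and the absence of monomial collisions makes explicit exactly the points one must check.
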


Hence we obtain the following by \cref{t:pointedness_cluster_monomial}.

\begin{cor}\label{c:cluster_monomial_valuation}
If $f$ is a cluster monomial, then the equality $v_t (f) = g_t (f)$ holds for all $t \in \mathbb{T}$ and all refinements of $\preceq_{\varepsilon_t} ^{\rm op}$.
\end{cor}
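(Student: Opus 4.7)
The plan is to deduce the corollary immediately from the two preceding results, Theorem \ref{t:pointedness_cluster_monomial} and Proposition \ref{p:valuation_generalizing_g}. First I would invoke Theorem \ref{t:pointedness_cluster_monomial}, which asserts that every cluster monomial $f$ is pointed for every $t \in \mathbb{T}$. Since ``pointed'' in the sense of Definition \ref{d:weakly_pointed} is the special case of ``weakly pointed'' where the constant term $c_0$ equals $1$, pointedness of $f$ at $t$ in particular implies that $f$ is weakly pointed at $t$ (with the same extended $g$-vector $g_t(f)$). Applying Proposition \ref{p:valuation_generalizing_g} to $f$ at this $t$ then yields $v_t(f) = g_t(f)$ for any choice of refinement $<_t$ of the opposite dominance order $\preceq_{\varepsilon_t}^{\rm op}$. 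Since this works for every $t \in \mathbb{T}$ and every such refinement, the corollary follows.

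There is essentially no substantive obstacle, because the hard conceptual content is already packaged into the cited results: the deep input is Theorem \ref{t:pointedness_cluster_monomial} (whose proof in the skew-symmetrizable case relies on work of Derksen-Weyman-Zelevinsky and Gross-Hacking-Keel-Kontsevich), and the bookkeeping that ``weakly pointed'' forces the lowest term in the expansion to be the $g$-vector term is handled by Proposition \ref{p:valuation_generalizing_g}. Should one want a self-contained proof of the corollary in this specific case, the only verification needed is that, writing $f$ in the pointed form $\left(\prod_{j} A_{j;t}^{g_j}\right)\sum_{\mathbf{a}} c_{\mathbf{a}} \prod_{j \in J_{\rm uf}} \widehat{X}_{j;t}^{a_j}$, the exponent vectors appearing are exactly $(g_j)_{j \in J} + \mathbf{a}\varepsilon_t$, and by Definition \ref{d:order} these satisfy $(g_j) + \mathbf{a}\varepsilon_t \preceq_{\varepsilon_t} (g_j)$ for $\mathbf{a} \in \mathbb{Z}_{\geq 0}^{J_{\rm uf}}$; using the full-rank assumption $(\dagger)$ to ensure that distinct $\mathbf{a}$ give distinct exponent vectors, the unique smallest term with respect to any refinement of $\preceq_{\varepsilon_t}^{\rm op}$ is the one with $\mathbf{a} = 0$, whose coefficient $c_0 \neq 0$ is nonzero. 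Hence the lowest term valuation $v_t(f)$ reads off precisely $(g_j)_{j \in J} = g_t(f)$, completing the proof.
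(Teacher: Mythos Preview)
Your proposal is correct and matches the paper's own argument exactly: the paper derives the corollary in one line from \cref{t:pointedness_cluster_monomial} together with \cref{p:valuation_generalizing_g}, and your first paragraph spells this out. The additional ``self-contained'' verification you supply in the second paragraph is precisely the content that the paper leaves implicit in its remark that \cref{p:valuation_generalizing_g} is ``an immediate consequence of the definitions.''
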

\begin{ex}
Let $J=\{1,2,3,4\}, J_{\rm uf}=\{1,2\}$, and 
\[
\varepsilon=\begin{pmatrix}
    0&1&1&0\\
    -1&0&0&1
\end{pmatrix}.
\]
Consider the FZ-seed ${\bf s}_{t_0}= ((A_j)_{j\in J}, \varepsilon)$ of $\mathcal{F} \coloneqq \mathbb{C}(z_j \mid j \in J)$, and write $\mu_1\mu_2({\bf s}_{t_0})=((A'_j)_{j\in J}, \varepsilon')$. 
Then 
\[
\widehat{X}_{1} \coloneqq A_2A_3,\qquad \widehat{X}_{2} \coloneqq A_1^{-1}A_4,
\]
and 
\[
A'_1=\frac{A_1+A_4+A_2A_3A_4}{A_1A_2}=\frac{1}{A_2}(1+\widehat{X}_{2}+\widehat{X}_{1}\widehat{X}_{2}).
\]
Hence $A_1'$ is pointed for $t_0$, and its extended $g$-vector is $g_{t_0} (A_1')=(0, -1, 0, 0)$. 

On the other hand, since $1 \preceq_{\varepsilon} ^{\rm op}\widehat{X}_{j}$ ($j=1, 2$)  by definition, any refinement $<_t$ of $\preceq_{\varepsilon} ^{\rm op}$ satisfies $1 <_t\widehat{X}_{j}$ ($j=1, 2$). Therefore, 
\[
v_{t_0} (A_1') = (0, -1, 0, 0)=g_{t_0} (A_1'). 
\]
\end{ex}

\section{Relation with GHKK's toric degenerations}\label{s:GHKK}

Gross--Hacking--Keel--Kontsevich \cite{GHKK} gave a systematic method of constructing toric degenerations of compactified $\mathcal{A}$-cluster varieties. In this section, we relate this construction with Newton--Okounkov bodies by using the valuations defined in \cref{d:main_valuation}. This section is independent of the rest of this paper. In particular, readers who are mainly interested in the results concerning Newton--Okounkov bodies of Schubert varieties may skip it. The main theorem in this section is \cref{t:GHKK_NO}.

\subsection{Cluster varieties and tropicalizations}\label{ss:tropicalization}

In this subsection, we review some basic facts on cluster varieties and their tropicalizations, following \cite{FG, GHK, GHKK}. We take a family $\{\widehat{\bf s}_t\}_{t \in \widehat{\mathbb{T}}}$ of seeds for $\Gamma$ as in Section \ref{s:Clusterval}, and let 
\[\mathcal{A} = \bigcup_{t \in \widehat{\mathbb{T}}} \mathcal{A}_t = \bigcup_{t \in \widehat{\mathbb{T}}} \Spec (\mathbb{C}[A_{j; \widehat{\bf s}_t}^{\pm 1} \mid j \in J])\]
be the corresponding $\mathcal{A}$-cluster variety.

Following \cite{FG}, the Fock--Goncharov dual $\mathcal{A}^\vee$ of $\mathcal{A}$ is defined to be the Langlands dual of the $\mathcal{X}$-cluster variety. More precisely,  
\[\mathcal{A}^\vee = \bigcup_{t \in \widehat{\mathbb{T}}} \mathcal{A}_t ^\vee = \bigcup_{t \in \widehat{\mathbb{T}}} \Spec (\mathbb{C}[X_{j; t}^{\pm 1} \mid j \in J])\]
is given by gluing tori $\mathcal{A}_t ^\vee = \Spec (\mathbb{C}[X_{j; t}^{\pm 1} \mid j \in J])$ via the following birational mutations: for $t \xrightarrow{k} t'$,
\[\mu_k \colon \mathcal{A}^{\vee} _t \dashrightarrow \mathcal{A}^{\vee} _{t'},\ (X_{j; t})_{j \in J} \mapsto (X_{j; t} ^\prime)_{j \in J},\] 
where
\[X_{j; t} ^\prime \coloneqq
\begin{cases}
X_{j; t} X_{k; t} ^{[-\varepsilon_{k, j} ^{(t)}]_+} (1 + X_{k; t})^{\varepsilon_{k, j} ^{(t)}} &(j \neq k),\\
X_{j; t} ^{-1} &(j = k)
\end{cases}\] 
for $j \in J$ if we denote the matrix $\widehat{\varepsilon}$ for $\widehat{\bf s}_t$ by $\widehat{\varepsilon} = (\varepsilon_{i, j} ^{(t)})_{i, j \in J}$ (cf.\ \eqref{eq:X-mutation}). For each $t \in \widehat{\mathbb{T}}$, the torus $\mathcal{A}_t ^\vee$ is regarded as the dual torus $T_{M^\circ} = \Spec (\c[N^\circ])$ of $\mathcal{A}_t$, where the coordinate $X_{j; t}$ of $\mathcal{A}_t ^\vee$ corresponds to $d_j e_j \in N^\circ$ for the seed $\widehat{\bf s}_t$. Since $\mathcal{A}^\vee$ is a positive space, that is, obtained by gluing algebraic tori via subtraction-free birational maps, we obtain the set $\mathcal{A}^{\vee} (\r^T)$ of $\r^T$-valued points, where $\r^T$ is a semifield $(\r; \max, +)$. More precisely, the set $\mathcal{A}^{\vee} (\r^T)$ is defined by gluing 
\[\mathcal{A}^{\vee} _t (\r^T) = T_{M^\circ} (\r^T) = (M^\circ)_{\r} = \r^J\] 
via the following tropicalized cluster mutations: for $t \xrightarrow{k} t'$,
\[\mu_k ^T \colon \mathcal{A}^{\vee} _t (\r^T) \rightarrow \mathcal{A}^{\vee} _{t'} (\r^T),\ (g_j)_{j \in J} \mapsto (g' _j)_{j \in J},\] 
where 
\begin{equation}\label{eq:tropical_mutation}
\begin{aligned}
g' _j = 
\begin{cases}
g_j + [-\varepsilon_{k, j} ^{(t)}]_+ g_k + \varepsilon_{k, j} ^{(t)} [g_k]_+ &(j \neq k),\\
-g_j &(j = k)
\end{cases}
\end{aligned}
\end{equation}
for $j \in J$. Since $\mu_k ^T \colon \mathcal{A}^{\vee} _t (\r^T) \rightarrow \mathcal{A}^{\vee} _{t'} (\r^T)$ is bijective, the set $\mathcal{A}^{\vee} (\r^T)$ is identified with $\mathcal{A}^{\vee} _t (\r^T) = \r^J$ for each $t \in \widehat{\mathbb{T}}$. If we change $t$, then we obtain a different identification of $\mathcal{A}^{\vee} (\r^T)$ with the Euclidean space $\r^J$. For $q \in \mathcal{A}^\vee (\r^T)$ and $t \in \widehat{\mathbb{T}}$, denote by $q_t$ the corresponding element in $\mathcal{A}^{\vee} _t (\r^T)$, and set 
\[\Xi_t \coloneqq \{q_t \mid q \in \Xi\} \subset \mathcal{A}^{\vee} _t (\r^T)\]
for a subset $\Xi \subset \mathcal{A}^\vee (\r^T)$. Similarly, the set $\mathcal{A}^{\vee} (\z^T)$ of $\z^T$-valued points of $\mathcal{A}^{\vee}$ is defined, which is naturally regarded as a subset of $\mathcal{A}^{\vee} (\r^T)$, where $\z^T$ is a semifield $(\z; \max, +)$. Let 
\[\mathcal{A}_{\rm prin} = \bigcup_{t \in \widehat{\mathbb{T}}} \mathcal{A}_{{\rm prin}, t}\]
be the $\mathcal{A}$-cluster variety with principal coefficients; see \cite[Construction 2.11]{GHK} for the definition. In particular, the torus $\mathcal{A}_{{\rm prin}, t}$ is given by $\mathcal{A}_{{\rm prin}, t} = T_{N^\circ \oplus M} = \Spec(\c[M^\circ \oplus N])$. Since the canonical projections
\[\pi \colon \mathcal{A}_{{\rm prin}, t} = T_{N^\circ \oplus M} \twoheadrightarrow T_M\] 
for $t \in \widehat{\mathbb{T}}$ are compatible with birational mutations, we obtain a canonical map 
\[\pi \colon \mathcal{A}_{\rm prin} \rightarrow T_M,\]
which induces a $\c[N]$-algebra structure on ${\rm up}(\mathcal{A}_{\rm prin})$. For $z \in T_M$, we set $\mathcal{A}_z \coloneqq \pi^{-1} (z)$. Then we have $\mathcal{A}_e = \mathcal{A}$ for the identity element $e \in T_M$; see an argument given before \cite[Definition 2.12]{GHK}. By \cite[Proposition B.2 (4)]{GHKK}, the canonical projection $M^\circ \oplus N \twoheadrightarrow M^\circ$ induces a map $\rho \colon \mathcal{A}_{\rm prin} ^\vee \rightarrow \mathcal{A}^\vee$. Then the tropicalization 
\[\rho^T \colon \mathcal{A}_{\rm prin} ^\vee (\r^T) \rightarrow \mathcal{A}^\vee (\r^T)\]
is given by the canonical projection $(M^\circ \oplus N)_{\r} \twoheadrightarrow (M^\circ)_{\r}$ under the identifications $\mathcal{A}_{\rm prin} ^\vee (\r^T) \simeq \mathcal{A}_{{\rm prin}, t} ^\vee (\r^T)$ and $\mathcal{A}^\vee (\r^T) \simeq \mathcal{A}_t ^\vee (\r^T)$ for each $t \in \widehat{\mathbb{T}}$.

\subsection{GHKK's toric degenerations and Newton--Okounkov bodies} 

In this subsection, we assume the following conditions:
\begin{enumerate}
\item[{(i)}] $\mathcal{A}$ has large cluster complex in the sense of \cite[Definition 8.23]{GHKK};
\item[{(ii)}] the equivalent conditions of \cite[Lemma B.7]{GHKK} hold.
\end{enumerate}
Since $\mathcal{A}$ has large cluster complex, \cite[Proposition 8.25]{GHKK} implies the full Fock--Goncharov conjecture for $\mathcal{A}_{\rm prin}$ (see \cite[Definition 0.6]{GHKK}). In particular, it follows that
\[{\rm up}(\mathcal{A}_{\rm prin}) = \sum_{q \in \mathcal{A}_{\rm prin} ^\vee (\z^T)} \c \vartheta_q,\]
where $\{\vartheta_q \mid q \in \mathcal{A}_{\rm prin} ^\vee (\z^T)\}$ is the theta function basis. Since the translation action of $N$ on $\mathcal{A}_{{\rm prin}, t} ^\vee (\z^T) = M^\circ \oplus N$ for $t \in \widehat{\mathbb{T}}$ is compatible with tropicalized cluster mutations, we obtain a canonical action of $N$ on $\mathcal{A}_{\rm prin} ^\vee (\z^T)$. This is compatible with the $\c[N]$-algebra structure on ${\rm up}(\mathcal{A}_{\rm prin})$ through the theta function basis. Similarly, we obtain a canonical action of $N_{\r}$ on $\mathcal{A}_{\rm prin} ^\vee (\r^T)$. Since the equivalent conditions of \cite[Lemma B.7]{GHKK} hold, the submatrix $\varepsilon_t = (\varepsilon_{i, j})_{i \in J_{\rm uf}, j \in J}$ of the matrix $\widehat{\varepsilon}$ for $\widehat{\bf s}_t$ is of full rank for some $t \in \widehat{\mathbb{T}}$, which implies that it is of full rank for all $t \in \widehat{\mathbb{T}}$. In addition, the map $\pi \colon \mathcal{A}_{\rm prin} \rightarrow T_M$ is isomorphic to the trivial bundle $\mathcal{A} \times T_M \rightarrow T_M$ by \cite[Lemma B.7]{GHKK}. Thus, we obtain $\mathcal{A}_z \simeq \mathcal{A}$ for all $z \in T_M$. By \cite[Proposition 8.25]{GHKK}, this implies the full Fock--Goncharov conjecture for $\mathcal{A}$. In particular, we have 
\[{\rm up}(\mathcal{A}) = \sum_{q \in \mathcal{A}^\vee (\z^T)} \c \vartheta_q,\]
where $\{\vartheta_q \mid q \in \mathcal{A}^\vee (\z^T)\}$ is the theta function basis. Note that we have 
\[{\rm up}(\mathcal{A}) = {\rm up}(\mathcal{A}_{\rm prin}) \otimes_{\c[N]} \c,\]
where $\c[N] \rightarrow \c$ is given by $e \in T_M$. Then the equality $\vartheta_{\rho^T (q)} = \vartheta_q \otimes 1$ holds for all $q \in \mathcal{A}_{\rm prin} ^\vee (\z^T)$; see \cite[Definition--Lemma 7.14]{GHKK}. As we have seen after \cref{t:laurentpheno}, the family $\{\widehat{\bf s}_t\}_{t \in \widehat{\mathbb{T}}}$ of seeds for $\Gamma$ induces a cluster pattern. Hence we can define the pointedness, the extended $g$-vector $g_t$, and the valuation $v_t$ for $t \in \widehat{\mathbb{T}}$ by applying Definitions \ref{d:weakly_pointed} and \ref{d:main_valuation} to ${\bf s}_{\Pi(t)} = ((A_{j; t}
 \coloneqq \mu_{t_0, t}^{\ast}(A_{j; \widehat{\bf s}_t}))_{j\in J}, \varepsilon_t)$. By the construction of the theta function basis in \cite{GHKK}, each basis element $\vartheta_q \in {\rm up}(\mathcal{A}) \subset \c[A_{j; t}^{\pm 1} \mid j \in J]$ is pointed for all $t \in \widehat{\mathbb{T}}$, and we have 
\[g_t (\vartheta_q) = q_t\]
for all $q \in \mathcal{A}^\vee (\z^T)$ and $t \in \widehat{\mathbb{T}}$ (see \cite{GHKK} and \cite[Appendix A.1]{Qin2}). For a closed subset $\Xi \subset \mathcal{A}_{\rm prin} ^\vee (\r^T)$ and $d \in \z_{\ge 0}$, set 
\begin{align*}
&{\bf C}(\Xi) \coloneqq \overline{\{(r, p) \mid r \in \r_{\ge 0},\ p \in r \Xi\}} \subset \r_{\ge 0} \times \mathcal{A}_{\rm prin} ^\vee (\r^T),\\
&d\Xi (\z) \coloneqq \{q \in \mathcal{A}_{\rm prin} ^\vee (\z^T) \mid (d, q) \in {\bf C}(\Xi)\},
\end{align*}
and define $\widetilde{S}_{\Xi} \subset {\rm up}(\mathcal{A}_{\rm prin})[x]$ by
\[\widetilde{S}_{\Xi} \coloneqq \bigoplus_{d \in \z_{\ge 0}} \bigoplus_{q \in d \Xi(\z)} \c \vartheta_q x^d,\]
where $x$ is an indeterminate; see \cite[Section 8.1 and Theorem 8.19]{GHKK}. 

We now explain GHKK's construction of toric degenerations, following \cite[Section 8]{GHKK}. Let $\Xi \subset \mathcal{A}_{\rm prin} ^\vee (\r^T)$ be a rationally-defined positive convex polytope in the sense of \cite[Section 8]{GHKK}, and assume that $\Xi$ is full-dimensional and bounded. If we set 
\[\overline{\Xi} \coloneqq \rho^T (\Xi) \subset \mathcal{A}^\vee (\r^T),\]
then $\overline{\Xi}_t$ is a $|J|$-dimensional rational convex polytope for all $t \in \widehat{\mathbb{T}}$. We fix $t \in \widehat{\mathbb{T}}$, and write $\widehat{\bf s}_t = (e_j)_{j \in J}$. Set 
\[
N_t ^\oplus \coloneqq \sum_{j \in J} \z_{\ge 0} e_j,\quad N_{t, \r} ^\oplus \coloneqq \sum_{j \in J} \r_{\ge 0} e_j,
\]
and let
\[\pi_{N} ^{(t)} \colon \mathcal{A}_{\rm prin} ^\vee (\r^T) \simeq \mathcal{A}_{{\rm prin}, t} ^\vee (\r^T) = (M^\circ \oplus N)_{\r} \twoheadrightarrow N_{\r}\] 
denote the second projection. If we write 
\[\widetilde{\Xi} \coloneqq \Xi + N_{\r},\quad {\Xi}^+ \coloneqq \widetilde{\Xi} \cap (\pi_{N} ^{(t)})^{-1} (N_{t, \r} ^\oplus),\]
then the sets $\widetilde{S}_{\widetilde{\Xi}}$ and $\widetilde{S}_{{\Xi}^+}$ are both finitely generated $\c$-subalgebras of ${\rm up}(\mathcal{A}_{\rm prin})[x]$; see an argument given after \cite[Lemma 8.29]{GHKK}. Now the inclusion of $\c[N_t ^\oplus] = \c[N_t ^\oplus] \vartheta_0$ in the degree $0$ part of $\widetilde{S}_{{\Xi}^+}$ induces a flat morphism $\pi_t \colon \mathfrak{X} \coloneqq {\rm Proj}(\widetilde{S}_{{\Xi}^+}) \rightarrow \Spec(\c[N_t ^\oplus]) = \c ^J$. Similarly, the inclusion of $\c[N] = \c[N] \vartheta_0$ in the degree $0$ part of $\widetilde{S}_{\widetilde{\Xi}}$ induces a flat morphism $\mathfrak{X}^\prime \coloneqq {\rm Proj}(\widetilde{S}_{\widetilde{\Xi}}) \rightarrow \Spec(\c[N]) = T_M$. This is the restriction of $\pi_t \colon \mathfrak{X} \rightarrow \c ^J$ as follows:
\begin{align*}
\xymatrix{
\mathfrak{X}^\prime \ar[d] \ar@{^{(}->}[r] & \mathfrak{X} \ar[d]^-{\pi_t} \\
T_M \ar@{^{(}->}[r] & \c^J.
}
\end{align*}
For $z \in \c^J$, let $\mathfrak{X}_z \coloneqq \pi_t ^{-1} (z)$. The generic fiber of $\pi_t$ gives a compactification of $\mathcal{A}$ as follows.

\begin{thm}[{see \cite[Theorem 8.32]{GHKK}}]
For $z \in T_M$, the fiber $\mathfrak{X}_z$ is a normal projective variety containing $\mathcal{A}_z \simeq \mathcal{A}$ as an open subscheme.
\end{thm}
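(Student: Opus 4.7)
The plan is to obtain $\mathfrak{X}_z$ explicitly as the Proj of a graded algebra built from the theta basis, to locate $\mathcal{A}_z \simeq \mathcal{A}$ as a distinguished affine open, and to verify normality by a semigroup argument on $\overline{\Xi}$. First, since $\pi_t\colon \mathfrak{X} = {\rm Proj}(\widetilde{S}_{\Xi^+}) \to \Spec(\c[N_t^\oplus])$ arises from a finitely generated graded $\c[N_t^\oplus]$-algebra, every scheme-theoretic fiber is projective over $\c$. To describe the fiber at $z \in T_M \subset \c^J$, I would work with the restriction $\mathfrak{X}' = {\rm Proj}(\widetilde{S}_{\widetilde{\Xi}}) \to T_M$ and apply base change along $\c[N] \to \c$, $n \mapsto z^n$. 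Using the trivialization $\mathcal{A}_{\rm prin} \simeq \mathcal{A} \times T_M$ granted by condition (ii), together with the compatibility $\vartheta_{\rho^T(q)} = \vartheta_q \otimes 1$ of the theta basis with $\rho^T$, the specialization identifies $\widetilde{S}_{\widetilde{\Xi}} \otimes_{\c[N]} \c$ with
\[
S_{\overline{\Xi}} \coloneqq \bigoplus_{d \ge 0} \bigoplus_{\bar q \in d\overline{\Xi}(\z)} \c\, \vartheta_{\bar q}\, x^d,
\]
where $d\overline{\Xi}(\z) \coloneqq \{\bar q \in \mathcal{A}^{\vee}(\z^T) \mid (d, \bar q) \in \mathbf{C}(\overline{\Xi})\}$; this yields $\mathfrak{X}_z \simeq {\rm Proj}(S_{\overline{\Xi}})$, projective over $\c$.

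To produce the open immersion $\mathcal{A}_z \simeq \mathcal{A} \hookrightarrow \mathfrak{X}_z$, I would use the degree-one element $\vartheta_0 x$, which lies in $S_{\overline{\Xi}}$ because $\overline{\Xi}$ is positive (and hence contains the origin). The non-vanishing locus $D_+(\vartheta_0 x)$ is an affine open of $\mathfrak{X}_z$, whose coordinate ring is the degree-zero part of the homogeneous localization $(S_{\overline{\Xi}})_{(\vartheta_0 x)}$. Since $\overline{\Xi}$ is full-dimensional and bounded, every $\bar q \in \mathcal{A}^{\vee}(\z^T)$ lies in $d\overline{\Xi}(\z)$ for $d$ large enough; this together with $\vartheta_0 = 1$ identifies the degree-zero ring with ${\rm up}(\mathcal{A}) = \sum_{\bar q} \c\, \vartheta_{\bar q}$. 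The full Fock--Goncharov conjecture for $\mathcal{A}$, granted by assumption (i) via \cite[Proposition 8.25]{GHKK}, then exhibits $\mathcal{A}$ as an open subscheme of $\Spec\,{\rm up}(\mathcal{A}) = D_+(\vartheta_0 x)$, giving the required embedding.

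The main obstacle is the normality of $\mathfrak{X}_z$. My plan is to cover it by the affine opens $D_+(\vartheta_{q_0} x^{d_0})$ where $(d_0, q_0)$ runs over a generating set of the semigroup $\bigsqcup_{d \ge 1} (\{d\} \times d\overline{\Xi}(\z))$, and to verify that each corresponding degree-zero localization is integrally closed. The two key inputs are (a) the non-negativity of the structure constants $\alpha(p,q,r)$ in $\vartheta_p \vartheta_q = \sum_r \alpha(p,q,r)\vartheta_r$ coming from broken-line combinatorics, which makes $S_{\overline{\Xi}}$ behave like a semigroup ring on the lattice points of $\mathbf{C}(\overline{\Xi})$; and (b) saturatedness of this semigroup in each chart $\mathcal{A}^{\vee}_t(\z^T) = \z^J$, which follows from $\overline{\Xi}$ being rationally defined and positive. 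Together these reduce each affine chart to a saturated affine semigroup ring, which is normal by a standard fact in toric geometry. Carrying out this broken-line-compatibility argument in full detail is essentially the content of \cite[Theorem 8.32]{GHKK}, whose strategy I would follow directly rather than attempt a substantially different route.
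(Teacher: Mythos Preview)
The paper does not prove this statement at all: it is simply quoted, with attribution to \cite[Theorem 8.32]{GHKK}, as background input for the subsequent discussion of Newton--Okounkov bodies. So there is no ``paper's own proof'' to compare your proposal against. Your outline is a reasonable sketch of the structure of the GHKK argument (specialize $\widetilde{S}_{\widetilde{\Xi}}$ along $z$, identify the dehomogenization at $\vartheta_0 x$ with ${\rm up}(\mathcal{A})$, and deduce normality from positivity of the theta-function structure constants and saturatedness of the semigroup), and you yourself acknowledge in the last paragraph that this is essentially what \cite[Theorem 8.32]{GHKK} does. In the context of this paper no proof is expected; a citation suffices.
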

\begin{thm}[{\cite[Theorem 8.30]{GHKK}}]
The central fiber $\mathfrak{X}_0$ is the normal toric variety corresponding to the rational convex polytope $\overline{\Xi}_t$.
\end{thm}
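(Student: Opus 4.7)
The plan is to describe $\mathfrak{X}_0$ explicitly as ${\rm Proj}$ of a semigroup algebra and then to recognize it as the standard construction of the normal toric variety of a lattice polytope. First, I would identify
\[
\mathfrak{X}_0 = {\rm Proj}\bigl(\widetilde{S}_{\Xi^+} / \mathfrak{m}_0 \widetilde{S}_{\Xi^+}\bigr),
\]
where $\mathfrak{m}_0 \subset \c[N_t^\oplus]$ is the augmentation ideal generated by the images of $e_j$ for $j \in J$ (equivalently, by the degree-zero theta functions $\vartheta_{(0, e_j)} = z^{e_j}$). Next, I would exploit the $\c[N]$-module structure on $\widetilde{S}_{\widetilde{\Xi}}$ coming from the principal coefficient projection $\pi \colon \mathcal{A}_{\rm prin} \to T_M$: since $\vartheta_{(0, n)} = z^n$ is a global monomial and multiplication by monomial theta functions acts by translation on the index set of the theta function basis, one obtains $\vartheta_{(m, n)} = z^n \cdot \vartheta_{(m, 0)}$ for every $(m, n) \in M^\circ \oplus N$. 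Consequently $\widetilde{S}_{\Xi^+}$ is a free $\c[N_t^\oplus]$-module whose $\c$-basis after reduction modulo $\mathfrak{m}_0$ is indexed by $\{\vartheta_{(m, 0)} x^d \mid (m, 0) \in d\Xi^+(\z)\}$.

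Second, I would identify these surviving indices with lattice points of the cone over $\overline{\Xi}_t$. By construction $\Xi^+ = \widetilde{\Xi} \cap (\pi_N^{(t)})^{-1}(N_{t, \r}^\oplus)$ and $\widetilde{\Xi} = \Xi + N_\r$, so the condition $(m, 0) \in d\Xi^+(\z)$ corresponds exactly to $m \in d\overline{\Xi}_t \cap M^\circ$ through the $M^\circ$-projection $\rho^T$. This matches, as a graded $\c$-vector space, the semigroup algebra $\c[\mathbf{C}(\overline{\Xi}_t) \cap (\z_{\ge 0} \times M^\circ)]$.

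Third, I would verify that the induced multiplication on the quotient coincides with the semigroup multiplication of $\mathbf{C}(\overline{\Xi}_t) \cap (\z_{\ge 0} \times M^\circ)$. Expanding
\[
\vartheta_{(m, 0)} \cdot \vartheta_{(m', 0)} = \sum_{(m'', n'')} C^{(m'', n'')}_{(m, 0),(m', 0)} \vartheta_{(m'', n'')}
\]
and reducing modulo $\mathfrak{m}_0$, only terms with $n'' = 0$ survive; the broken line description of the structure constants in the principal coefficient scattering diagram then forces the unique surviving term to be $\vartheta_{(m + m', 0)}$ with coefficient one. Hence the central fiber is ${\rm Proj}$ of the saturated semigroup algebra of $\mathbf{C}(\overline{\Xi}_t) \cap (\z_{\ge 0} \times M^\circ)$, which is precisely the normal projective toric variety associated with the rational convex polytope $\overline{\Xi}_t$.

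The main obstacle is the third step: controlling the theta function structure constants under the degeneration. This requires the fine combinatorial input from the GHKK scattering diagram, in particular the fact that for initial indices with vanishing $N$-coordinate, every broken line correction carries its $n$-component strictly into $N_{t, \r}^\oplus \setminus \{0\}$, which is exactly what kills it modulo $\mathfrak{m}_0$. The normality and finite generation of the target semigroup (which ensure that ${\rm Proj}$ really is the normal toric variety of $\overline{\Xi}_t$) then follow from Gordan's lemma applied to the rational convex polytope $\overline{\Xi}_t$.
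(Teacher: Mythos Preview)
The paper does not prove this statement at all: it is quoted verbatim as \cite[Theorem 8.30]{GHKK} and serves purely as background for the subsequent \cref{t:GHKK_NO}. There is therefore no ``paper's own proof'' to compare against; your proposal is a reconstruction of the original GHKK argument rather than a comparison target.

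That said, your outline is broadly faithful to how the result is established in \cite{GHKK}. The identification of $\mathfrak{X}_0$ with ${\rm Proj}$ of the reduction modulo the augmentation ideal, the use of the $N$-translation compatibility $\vartheta_{(m,n)} = z^n \vartheta_{(m,0)}$, and the broken-line argument showing that all correction terms in $\vartheta_{(m,0)} \cdot \vartheta_{(m',0)}$ acquire strictly positive $N_t^\oplus$-component are the correct ingredients. One point to be careful about: the claim that $(m,0) \in d\Xi^+(\z)$ is equivalent to $m \in d\overline{\Xi}_t \cap M^\circ$ requires that $\Xi$ be chosen so that $\rho^T$ restricted to the $n=0$ slice of $\widetilde{\Xi}$ hits exactly $\overline{\Xi}$; this holds because $\widetilde{\Xi} = \Xi + N_\r$ and $\overline{\Xi} = \rho^T(\Xi)$, but you should state explicitly that the fibre of $\rho^T$ over any $m \in \overline{\Xi}$ meets the zero section. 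Otherwise the sketch is sound.
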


We set 
\[\widetilde{S}_e \coloneqq \widetilde{S}_{\widetilde{\Xi}} \otimes_{\c[N]} \c,\]
where $\c[N] \rightarrow \c$ is given by $e \in T_M$. Then it follows that $\mathfrak{X}_e = {\rm Proj}(\widetilde{S}_e)$. 
GHKK's framework gives a systematic method of constructing toric degenerations of $\mathfrak{X}_e$. In the following theorem, we interpret the rational convex polytope $\overline{\Xi}_t$ as a Newton--Okounkov body of $\mathfrak{X}_e$.

\begin{thm}\label{t:GHKK_NO}
For each $t \in \widehat{\mathbb{T}}$, the Newton--Okounkov body $\Delta(\mathfrak{X}_e, \mathcal{L}, v_t, x)$ coincides with the rational convex polytope $\overline{\Xi}_t$, where $\mathcal{L}$ is the restriction of $\mathcal{O}_{\mathfrak{X}}(1)$ on $\mathfrak{X} = {\rm Proj}(\widetilde{S}_{{\Xi}^+})$ to $\mathfrak{X}_e$. In particular, $\Delta(\mathfrak{X}_e, \mathcal{L}, v_t, x)$ is independent of the choice of a refinement of the opposite dominance order $\preceq_{\varepsilon_t} ^{\rm op}$.
\end{thm}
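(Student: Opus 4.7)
The plan is to identify the graded pieces of the homogeneous coordinate ring of $\mathfrak{X}_e$ with respect to $\mathcal{L}$ as spans of theta functions indexed by lattice points of dilates of $\overline{\Xi}_t$, compute the $v_t$-values of those theta functions, and then read off the Newton-Okounkov body. The key observation is that theta functions $\vartheta_q$ have pairwise distinct extended $g$-vectors $g_t(\vartheta_q) = q_t$ at $t$, so by \cref{p:valuation_generalizing_g} their $v_t$-values are also pairwise distinct, and \cref{prop1_val} will let us enumerate $v_t(H^0(\mathfrak{X}_e, \mathcal{L}^{\otimes d}) \setminus \{0\})$ directly.

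First I would establish, for $d \gg 0$, the identification
\[
H^0(\mathfrak{X}_e, \mathcal{L}^{\otimes d}) = \bigoplus_{q \in d\overline{\Xi}(\z)} \c\, \vartheta_q\, x^d,
\]
where $d\overline{\Xi}(\z) \coloneqq \{q \in \mathcal{A}^\vee(\z^T) \mid q_t \in d\overline{\Xi}_t\}$. Since $\mathfrak{X}^\prime = {\rm Proj}(\widetilde{S}_{\widetilde{\Xi}})$ is the total space of a flat family over $T_M$ with fiber $\mathfrak{X}_e$ over the identity, the homogeneous coordinate ring of $\mathfrak{X}_e$ is $\widetilde{S}_{\widetilde{\Xi}} \otimes_{\c[N]} \c$. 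The identity $\vartheta_{\rho^T(q)} = \vartheta_q \otimes 1$ recorded in the paper, together with $\widetilde{\Xi} = \Xi + N_{\r}$ and the fact that $\rho^T$ induces a bijection $d\widetilde{\Xi}(\z)/N \xrightarrow{\sim} d\overline{\Xi}(\z)$, identifies this tensor product with $\bigoplus_d \bigoplus_{q \in d\overline{\Xi}(\z)} \c\, \vartheta_q\, x^d$ as a graded $\c$-algebra. Normality of $\mathfrak{X}_e$ together with \cref{r:homogeneous_coordinate} then permit us to replace the homogeneous coordinate ring by $\bigoplus_d H^0(\mathfrak{X}_e, \mathcal{L}^{\otimes d})$ when computing the Newton-Okounkov body.

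Next I would compute $v_t$ on the theta basis. Since each $\vartheta_q$ is pointed with $g_t(\vartheta_q) = q_t$, \cref{p:valuation_generalizing_g} yields $v_t(\vartheta_q) = q_t$ for every refinement of $\preceq_{\varepsilon_t}^{\rm op}$; this immediately gives the final assertion of the theorem on independence of the refinement. Taking $\tau = x$ as the distinguished global section of $\mathcal{L}$, we have $v_t(\vartheta_q x^d / x^d) = q_t$, and these values are pairwise distinct as $q$ ranges over $d\overline{\Xi}(\z)$. Applying \cref{prop1_val} to $H^0(\mathfrak{X}_e, \mathcal{L}^{\otimes d})$ then shows
\[
\{v_t(\sigma/x^d) \mid 0 \neq \sigma \in H^0(\mathfrak{X}_e, \mathcal{L}^{\otimes d})\} = \{q_t \mid q \in d\overline{\Xi}(\z)\}
\]
for all $d \gg 0$. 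Appealing to \cref{r:homogeneous_coordinate} once more, the real closed cone $C(\mathfrak{X}_e, \mathcal{L}, v_t, x)$ equals the smallest closed cone containing these slices; since $\overline{\Xi}_t$ is a rational convex polytope, this cone coincides with ${\bf C}(\overline{\Xi}_t) = \overline{\{(r,p) \mid r \geq 0,\ p \in r\overline{\Xi}_t\}}$, and slicing at height one gives $\Delta(\mathfrak{X}_e, \mathcal{L}, v_t, x) = \overline{\Xi}_t$.

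The principal obstacle I anticipate is the rigorous identification in the second paragraph, i.e., pinning down $H^0(\mathfrak{X}_e, \mathcal{L}^{\otimes d})$ in terms of the theta basis. This requires combining the flat family description $\mathfrak{X}^\prime \to T_M$, the triviality of $\pi \colon \mathcal{A}_{\rm prin} \to T_M$ furnished by condition (ii), and the compatibility of the theta basis with the canonical $N$-action under the specialization $\c[N] \to \c$ at $e$. Once this identification is in hand, the remainder of the argument is a formal combination of \cref{prop1_val} and \cref{p:valuation_generalizing_g} with rationality of $\overline{\Xi}_t$.
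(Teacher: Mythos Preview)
Your proposal is correct and follows essentially the same approach as the paper: identify the graded pieces of the homogeneous coordinate ring of $\mathfrak{X}_e$ with spans of theta functions indexed by $d\overline{\Xi}\cap\mathcal{A}^\vee(\z^T)$, use pointedness to get $v_t(\vartheta_q)=q_t$, and read off the Newton--Okounkov body via \cref{r:homogeneous_coordinate} and \cref{prop1_val}.

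The one place where the paper is slightly more careful than your sketch is the very obstacle you flag. \cref{r:homogeneous_coordinate} requires $\mathcal{L}$ to be \emph{very ample}, and since $\widetilde{S}_e=\widetilde{S}_{\widetilde{\Xi}}\otimes_{\c[N]}\c$ need not be generated in degree~$1$, the sheaf $\mathcal{O}_{\mathfrak{X}_e}(1)$ is not a priori very ample. The paper does not argue that $R_d=H^0(\mathfrak{X}_e,\mathcal{L}^{\otimes d})$ for $d\gg 0$ directly; instead it passes to a Veronese subring $\widetilde{S}_e^{(L)}$ for sufficiently divisible $L$ so that $\widetilde{S}_e^{(L)}$ is generated in degree~$1$, whence $(\mathfrak{X}_e,\mathcal{L}^{\otimes L})\simeq(\mathrm{Proj}(\widetilde{S}_e^{(L)}),\mathcal{O}(1))$ with $\mathcal{O}(1)$ very ample, and then uses the scaling identity $\Delta(\mathfrak{X}_e,\mathcal{L}^{\otimes L},v_t,x^L)=L\,\Delta(\mathfrak{X}_e,\mathcal{L},v_t,x)$ to reduce to proving $L\overline{\Xi}_t=\Delta(\mathfrak{X}_e,\mathcal{L}^{\otimes L},v_t,x^L)$. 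This is a standard maneuver, and once you insert it your argument matches the paper's.
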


\begin{proof}
By considering $\mathcal{L}^{\otimes L}$ for sufficiently divisible $L$, we will reduce the proof to the case that the theta function basis gives a basis of the homogeneous coordinate ring of $(\mathfrak{X}_e,\mathcal{L}^{\otimes L})$. 
For $d \in \z_{\ge 0}$, denote the degree $d$ part of $\widetilde{S}_e$ by $(\widetilde{S}_e)_d$. Then the definition of $\widetilde{S}_{\widetilde{\Xi}}$ implies that $(\widetilde{S}_e)_0 = \c$. In addition, since $\vartheta_{\rho^T (q)} = \vartheta_q \otimes 1$ for all $q \in \mathcal{A}_{\rm prin} ^\vee (\z^T)$, we see that 
\[\widetilde{S}_e = \bigoplus_{d \in \z_{\ge 0}} \bigoplus_{q \in d \overline{\Xi} \cap \mathcal{A}^\vee (\z^T)} \c \vartheta_q x^d.\]
Since $\widetilde{S}_{\widetilde{\Xi}}$ is finitely generated, the $\c$-algebra $\widetilde{S}_e$ is also finitely generated. Hence, for sufficiently divisible $L \in \z_{>0}$, the $\c$-subalgebra 
\[\widetilde{S}_e ^{(L)} \coloneqq \bigoplus_{k \in \z_{\ge 0}} (\widetilde{S}_e)_{k L}\]
is generated by $(\widetilde{S}_e)_{L}$, and we have 
\[(\mathfrak{X}_e, \mathcal{L}^{\otimes L}) \simeq ({\rm Proj} (\widetilde{S}_e ^{(L)}), \mathcal{O}(1)).\]
By the definition of Newton--Okounkov bodies, it follows that
\[\Delta(\mathfrak{X}_e, \mathcal{L}^{\otimes L}, v_t, x^L) = L \Delta(\mathfrak{X}_e, \mathcal{L}, v_t, x).\]
Hence it suffices to prove that 
\begin{equation}\label{eq:positive_polytope_NO_body}
\begin{aligned}
L \overline{\Xi}_t = \Delta(\mathfrak{X}_e, \mathcal{L}^{\otimes L}, v_t, x^L).
\end{aligned}
\end{equation}
Since $\widetilde{S}_e ^{(L)}$ is the homogeneous coordinate ring of $(\mathfrak{X}_e, \mathcal{L}^{\otimes L}) \simeq ({\rm Proj} (\widetilde{S}_e ^{(L)}), \mathcal{O}(1))$, it follows by Remark \ref{r:homogeneous_coordinate} that $C(\mathfrak{X}_e, \mathcal{L}^{\otimes L}, v_t, x^L)$ coincides with the smallest real closed cone containing 
\begin{equation}\label{eq:generating_lattice_points}
\begin{aligned}
\bigcup_{k \in \z_{>0}} \{(k, v_t(\sigma / x^{k L})) \mid \sigma \in (\widetilde{S}_e)_{k L} \setminus \{0\}\}.
\end{aligned}
\end{equation}
Since $v_t (\vartheta_q) = g_t (\vartheta_q) = q_t$ for all $k \in \z_{> 0}$ and $q \in k L \overline{\Xi} \cap \mathcal{A}^\vee (\z^T)$ by \cref{p:valuation_generalizing_g}, we see that 
\[\{v_t(\sigma / x^{k L}) \mid \sigma \in (\widetilde{S}_e)_{k L} \setminus \{0\}\} = (k L \overline{\Xi}_t) \cap \z^J.\]
Hence the set in \eqref{eq:generating_lattice_points} equals 
\[\bigcup_{k \in \z_{>0}} \{k\} \times ((k L \overline{\Xi}_t) \cap \z^J).\]
This implies \eqref{eq:positive_polytope_NO_body}, which proves the theorem.
\end{proof}

\begin{ex}[{see \cite[Example 8.31]{GHKK}}]
Set $N = N_{\rm uf} = N^\circ = \z^2$ and $d_1 = d_2 = 1$. 
We fix $t \in \widehat{\mathbb{T}}$, and consider a seed $\widehat{\bf s} = (e_1, e_2)$ with
\[\widehat{\varepsilon} = (\{e_i, e_j\})_{i,j = 1,2} = \begin{pmatrix}
0&1\\
-1&0
\end{pmatrix}.\]
Then there exists a rationally-defined positive convex polytope $\Xi \subset \mathcal{A}_{\rm prin} ^\vee (\r^T)$, which is full-dimensional and bounded, such that the rational convex polytope $\overline{\Xi}_t \subset \r^2$ coincides with the pentagon with vertices
\[(1,0), (0,1), (-1,0), (0, -1), (1, -1).\]
In addition, we see by \cite[Example 8.31]{GHKK} that $\mathfrak{X}_e$ is a smooth del Pezzo surface of degree $5$. 
Hence it follows by \cref{t:GHKK_NO} that the pentagon $\overline{\Xi}_t$ is a Newton--Okounkov body of the del Pezzo surface $\mathfrak{X}_e$ associated with the valuation $v_t$ arising from its cluster structure.
\end{ex}

\section{Cluster structures on unipotent cells}\label{s:unipcell}

In the rest of this paper, we study the Newton--Okounkov bodies of Schubert varieties associated with the valuation defined in \cref{d:main_valuation}. 
We use an (upper) cluster algebra structure on the coordinate ring of a unipotent cell $U^-_w$, which is an affine open subvariety of a Schubert variety $X(w)$ (in particular, $\mathbb{C}(U^-_w)\simeq \mathbb{C}(X(w))$). 
In this section, we first review the cluster structure on the coordinate ring of $U^-_w$, verified in \cite{BFZ2,GLS:Kac-Moody,Dem}; see also Appendix \ref{a:doubleBruhat} for its relation with that of the double Bruhat cell $G^{w, e}$.
We then study some properties of the associated Newton--Okounkov bodies of $X(w)$ by using the existence of a $\c$-basis of $\c[U_w ^-]$ with desirable properties.

\subsection{Unipotent cells}\label{ss:unipotent}

For $w\in W$, we set 
\begin{align*}
U^-_w&\coloneqq U^-\cap B\widetilde{w}B,
\end{align*}
where $\widetilde{w}\in N_G(H)$ is a lift for $w\in W=N_G(H)/H$. The space $U^-_w$ is called the \emph{unipotent cell} associated with $w$. Note that it does not depend on the choice of a lift $\widetilde{w}$ for $w$, and the open embedding $U^- \hookrightarrow G/B$ induces an open embedding $U_w^- \hookrightarrow X(w)$. In the following, we define specific regular functions on $U^-_w$. 

\begin{prop}
	For $\lambda\in P_+$, there exists a unique non-degenerate symmetric $\mathbb{C}$-bilinear form $(\cdot ,\cdot)_{\lambda}$ on $V(\lambda)$ such that
	\begin{align*}
	(v_{\lambda}, v_{\lambda})_{\lambda}&=1,&(gv, v')_{\lambda}&=(v, g^T v^\prime)_{\lambda}
	\end{align*}
	for $g\in G$ and $v, v'\in V(\lambda)$; here the correspondence $g\mapsto g^T$ is an anti-involution of the algebraic group $G$ given by $x_i(t)^T=y_i(t)$ and $h^T=h$ for $i\in I, t\in\mathbb{C}$, and $h\in H$.
\end{prop}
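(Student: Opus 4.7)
The plan is to reduce the statement to a Schur's lemma argument, by identifying the desired form with a $G$-equivariant map from $V(\lambda)$ to a suitably twisted dual module. First I would verify that $g \mapsto g^T$ is a well-defined anti-involution of $G$: the candidate differential $\tau$, determined on the Chevalley generators by $\tau(e_i) = f_i$, $\tau(f_i) = e_i$, $\tau|_{\mathfrak{h}} = \mathrm{id}$, visibly preserves the Serre relations (which are symmetric under $e_i \leftrightarrow f_i$) in the anti-Lie-algebra sense $\tau([x,y]) = [\tau(y), \tau(x)]$, so $\tau$ extends to an anti-involution of $\mathfrak{g}$; since $G$ is simply connected, $\tau$ integrates to a group anti-involution $T$ satisfying $T(x_i(t)) = T(\exp(t e_i)) = \exp(t \tau(e_i)) = y_i(t)$ and $T|_H = \mathrm{id}$, as required.

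Next, endow $V(\lambda)^{\ast}$ with the $G$-action $(g \cdot \xi)(v) \coloneqq \xi(g^T v)$; the anti-involution property of $T$ guarantees this is a left action, and I denote the resulting module by $V(\lambda)^{\vee}$. The invariance axiom for a bilinear form $(\cdot, \cdot)_\lambda$ translates exactly into the condition that $\phi \colon V(\lambda) \to V(\lambda)^{\vee}$, defined by $\phi(v)(v') \coloneqq (v, v')_\lambda$, is $G$-equivariant. The key observation is $V(\lambda)^{\vee} \simeq V(\lambda)$: because $h^T = h$, the functional $\xi_\mu \in V(\lambda)^{\ast}$ dual to a weight vector $v_\mu \in V(\lambda)_\mu$ has weight $\mu$ under the new action, so $V(\lambda)^{\vee}$ has the same formal character as $V(\lambda)$; moreover $\xi_\lambda$ is annihilated by each $e_i$ since $(e_i \cdot \xi_\lambda)(v) = \xi_\lambda(f_i v) = 0$ (the $\lambda$-weight component of $f_i v$ vanishes, as $V(\lambda)$ has no weight $\lambda + \alpha_i$). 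Hence $\xi_\lambda$ generates a highest weight submodule of weight $\lambda$ in $V(\lambda)^{\vee}$, which must equal the whole of $V(\lambda)^{\vee}$ by semisimplicity and character comparison.

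Existence and uniqueness now follow at once from Schur's lemma: the space of $G$-equivariant maps $V(\lambda) \to V(\lambda)^{\vee}$ is one-dimensional, and for any nonzero $\phi$, the image $\phi(v_\lambda)$ is necessarily a highest weight vector proportional to $\xi_\lambda$, whence $\phi(v_\lambda)(v_\lambda) \neq 0$. Rescaling so this value equals $1$ produces the unique bilinear form with $(v_\lambda, v_\lambda)_\lambda = 1$. Non-degeneracy is then automatic because $\phi$ is an isomorphism; symmetry follows again from uniqueness, since the flipped form $(v, v')' \coloneqq (v', v)_\lambda$ satisfies both the normalization and the invariance axiom --- indeed $(v', gv)_\lambda = (g^T v', v)_\lambda$ by applying the original axiom with group element $g^T$ to the pair $(v', v)$, so $(gv, v')' = (v', gv)_\lambda = (g^T v', v)_\lambda = (v, g^T v')'$ --- hence coincides with the original form. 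The only genuine technical point is the extension of $\tau$ to $\mathfrak{g}$ and its integration to $G$ in the first paragraph; once $V(\lambda)^{\vee}$ is identified with $V(\lambda)$, everything else is a transparent consequence of Schur's lemma.
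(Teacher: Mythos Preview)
Your argument is correct and follows the standard route: identify the form with a $G$-morphism $V(\lambda)\to V(\lambda)^{\vee}$ (the dual twisted by the transpose anti-involution), observe that $V(\lambda)^{\vee}\simeq V(\lambda)$ by comparing characters and locating a highest weight vector, and then invoke Schur's lemma for existence, uniqueness, and non-degeneracy; the symmetry check via uniqueness is clean. The paper itself states this proposition as a well-known fact without proof or explicit reference, so there is nothing to compare against --- your proof simply supplies what the paper omits.
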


For $v\in V(\lambda)$, define $v^{\vee} \in V(\lambda)^{\ast}$ by $v^\vee (v^\prime) \coloneqq (v, v')_{\lambda}$ for $v' \in V(\lambda)$. For $w \in W$, we set $f_{w\lambda} \coloneqq v_{w\lambda}^{\vee}\in V(\lambda)^{\ast}$; note that $f_{w\lambda}(v_{w\lambda}) = (v_{w\lambda}, v_{w\lambda})_{\lambda}=1$. For $f\in V(\lambda)^{\ast}$ and $v\in V(\lambda)$, define a function $C_{f, v} \in \mathbb{C}[G]$ by  
\[
C_{f, v} (g) \coloneqq \langle f, gv\rangle
\]
for $g\in G$. A function on $G$ of this form is called a \emph{matrix coefficient}. For $u,u'\in W$, we write 
\[
\Delta_{u\lambda,u'\lambda} \coloneqq C_{f_{u\lambda},v_{u'\lambda}},
\]
which is called a \emph{generalized minor}. For $f\in V(\lambda)^{\ast}$, $v\in V(\lambda)$, and $u, u'\in W$, we define $D_{f, v}, D_{u\lambda,u'\lambda} \in \c[U_w^-]$ by
\[
D_{f, v} \coloneqq C_{f, v}|_{U_w^-},\quad 
D_{u\lambda,u'\lambda}\coloneqq \Delta_{u\lambda,u'\lambda}|_{U_w^-},
\]
respectively. The domains of these functions depend on $w\in W$, but we omit it from the notations because there is no fear of confusion. The function $D_{u\lambda,u'\lambda}$ is called a \emph{unipotent minor}.

\subsection{Cluster structures on unipotent cells}\label{ss:cluster_structure_unipotent_cell}

Berenstein--Fomin--Zelevinsky \cite{BFZ2} proved that the coordinate ring of the double Bruhat cell $G^{w, e}\coloneqq B^-\cap B\widetilde{w}B$ has a structure of an upper cluster algebra. This double Bruhat cell $G^{w, e}$ is closely related to the unipotent cell $U^-_w$, and their result induces an upper cluster algebra structure on $\mathbb{C}[U^-_w]$. The precise demonstration of this implication is given in Appendix \ref{a:doubleBruhat}. 

\begin{ntn}\label{n:indexplus}
	Fix $w\in W$ and $\bm{i}=(i_{1},\dots,i_{m})\in R(w)$. For $1 \leq k \leq m$ and $i\in I$, we write 
	\begin{align*}
		w_{\leq k}&\coloneqq s_{i_1}\cdots s_{i_k},\ w_{\leq 0} \coloneqq e,\\
		k^{+} &\coloneqq \min(\{m+1\}\cup\{k+1\leq j\leq m \mid i_{j}=i_{k}\}),\\
		k^{-} &\coloneqq \max(\{0\}\cup\{1\leq j\leq k-1\mid i_{j}=i_{k}\}),\\
		k^-(i)&\coloneqq \max(\{0\}\cup \{1\leq j\leq k-1\mid i_{j}=i\}).
	\end{align*}
\end{ntn}

For $w \in W$, fix $\bm{i}=(i_1,\dots, i_m)\in R(w)$. We set 
	\[
	J \coloneqq \{1,\dots,m\},\ J_{\rm fr} = \{j\in J\mid j^+=m+1\},\ \text{and}\ J_{\rm uf} \coloneqq J\setminus J_{\rm fr}. 
	\]
	Define a $J_{\rm uf} \times J$-integer matrix $\varepsilon^{\bm i} = (\varepsilon_{s, t})_{s\in J_{\rm uf}, t\in J}$ by
	\[
	\varepsilon_{s, t} \coloneqq
	\begin{cases}
	-1&\text{if}\ s=t^+, \\
	-c_{i_t, i_s}&\text{if}\ t<s<t^+<s^+,\\
	1&\text{if}\ s^+=t, \\
	c_{i_t, i_s}&\text{if}\ s<t<s^+<t^+, \\
	0&\text{otherwise}.
	\end{cases}
	\]
	Recall that $c_{i, j}=\langle \alpha_j, h_i\rangle$. For $s\in J$, we set  
\[
D(s, \bm{i})\coloneqq D_{w_{\leq s}\varpi_{i_s}, \varpi_{i_s}},
\]
where $\varpi_i \in  P_+$ denotes the fundamental weight corresponding to $i\in I$. Let us consider the upper cluster algebra $\mathscr{U}(\mathbf{s}_{t_0})$ whose initial FZ-seed is given as $\mathbf{s}_{t_0} = ((A_{s; t_0})_{s \in J}, \varepsilon^{\bm{i}})$. 

\begin{thm}[{\cite[Theorem 2.10]{BFZ2} and \cite[Theorem 4.16]{Wil} (see also \cref{t:Bruhat-unip})}]\label{t:upperBruhat}
	There exists a $\mathbb{C}$-algebra isomorphism 
\[
\mathscr{U}({\bf s}_{t_0})\xrightarrow{\sim} \mathbb{C}[U^-_w]\ \text{given by}\ A_{s; t_0}\mapsto D(s, \bm{i})\ \text{for}\ s\in J.
\]
\end{thm}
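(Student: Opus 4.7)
The plan is to deduce this statement from Berenstein-Fomin-Zelevinsky's upper cluster algebra structure on the coordinate ring of the double Bruhat cell $G^{w, e} = B^- \cap B\widetilde{w} B$ in \cite[Theorem 2.10]{BFZ2}, combined with Williams's verification that the corresponding upper cluster algebra is exactly $\mathbb{C}[G^{w,e}]$ in \cite[Theorem 4.16]{Wil}. The detailed transfer from $G^{w, e}$ down to $U^-_w$ is the content of Appendix \ref{a:doubleBruhat}; here I sketch only the conceptual structure.

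First I would recall the BFZ seed attached to $\bm{i}$ on $\mathbb{C}[G^{w,e}]$: its extended cluster consists of the restrictions to $G^{w,e}$ of the generalized minors $\Delta_{w_{\leq s}\varpi_{i_s}, \varpi_{i_s}}$ for $s \in J$, together with additional frozen minors $\Delta_{\varpi_i, \varpi_i}$, $i \in I$, which encode the Cartan factor of $B^-$. Its exchange matrix, after discarding the columns indexed by these extra frozen variables, agrees with $\varepsilon^{\bm{i}}$ as defined in the statement (up to the transposition convention noted earlier in Section \ref{s:Clusterval}).

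Next I would exploit the algebraic isomorphism $G^{w, e} \xrightarrow{\sim} U^-_w \times H$ arising from the decomposition $B^- = U^- \rtimes H$. Under this isomorphism the generalized minor $\Delta_{w_{\leq s}\varpi_{i_s}, \varpi_{i_s}}$ restricts on the $U^-_w$-slice to the unipotent minor $D(s, \bm{i})$ by the very definition of $D$, while each Cartan minor $\Delta_{\varpi_i, \varpi_i}$ pulls back to an invertible coordinate on $H$. Specializing these invertible frozen variables to $1$ should then identify $\mathbb{C}[U^-_w]$ with the corresponding specialization of $\mathbb{C}[G^{w,e}]$, and this passage should translate the BFZ upper cluster algebra into $\mathscr{U}({\bf s}_{t_0})$ via $A_{s; t_0} \mapsto D(s, \bm{i})$.

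The main obstacle will be the compatibility of specialization with the upper cluster algebra formalism: one must verify that specializing an intersection $\bigcap_t \mathbb{C}[A_{j;t}^{\pm 1} \mid j \in J]$ along invertible frozen variables commutes with the intersection, and one must carefully reconcile the sign conventions, transposition conventions, and frozen-versus-unfrozen indexing between \cite{BFZ2}, \cite{Wil}, and the definition of $\varepsilon^{\bm{i}}$ given here. Once these bookkeeping checks are carried out in Appendix \ref{a:doubleBruhat}, the assignment $A_{s; t_0} \mapsto D(s, \bm{i})$ extends uniquely to the desired $\mathbb{C}$-algebra isomorphism.
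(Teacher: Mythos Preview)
Your conceptual outline matches the paper's approach in Appendix~\ref{a:doubleBruhat}: start from the Berenstein--Fomin--Zelevinsky/Williams upper cluster algebra structure on $\mathbb{C}[G^{w,e}]$, use the product decomposition $G^{w,e}\simeq U^-_w\times H$, and specialize the extra frozen Cartan minors $\widetilde{D}_{\varpi_i,\varpi_i}$ to $1$. Your inductive tracking of cluster variables under the quotient map is exactly how the paper obtains the inclusion $\mathbb{C}[U^-_w]\subset\mathscr{U}(\mathbf{A}_{t_0},\varepsilon^{\bm{i}})$.

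However, you misjudge the nature of the remaining obstacle. The statement that ``specializing an intersection commutes with the intersection'' is not a bookkeeping check, and the paper does \emph{not} verify it directly. Instead, to obtain the reverse inclusion $\mathscr{U}(\mathbf{A}_{t_0},\varepsilon^{\bm{i}})\subset\mathbb{C}[U^-_w]$ the paper proceeds indirectly: it first shows that the \emph{ordinary} cluster algebra $\mathscr{A}(\mathbf{A}_{t_0},\varepsilon^{\bm{i}})$ equals $\mathbb{C}[U^-_w]$ by checking that a set of algebra generators of $\mathbb{C}[U^-_w]$ (supplied by \cite[Proposition~8.5]{GLS:Kac-Moody}) arise as cluster variables (via \cref{r:minorvariable}); then it invokes the fact that $\mathbb{C}[U^-_w]$ is a unique factorization domain together with \cite[Corollary~1.5]{GLS:factorial} to conclude $\mathscr{A}=\mathscr{U}$. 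These two ingredients---that enough unipotent minors are cluster variables, and the UFD/factoriality argument---are the substantive content beyond your sketch, and your proposal would be incomplete without them.
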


Through the isomorphism in \cref{t:upperBruhat}, we obtain an FZ-seed of $\c (U^-_w)$ given by 
\[
\mathbf{s}_{\bm{i}}\coloneqq (\mathbf{D}_{\bm{i}}\coloneqq (D(s, \bm{i}))_{ s\in J}, \varepsilon^{\bm{i}}).
\]

\begin{thm}[{\cite[Theorem 3.5]{FG:amal}}]\label{t:independence_of_rex}
For $w\in W$, the cluster pattern associated with $\mathbf{s}_{\bm{i}}$ does not depend on the choice of $\bm{i}\in R(w)$. Namely, all $\mathbf{s}_{\bm{i}}$, $\bm{i}\in R(w)$, are mutually mutation equivalent. 
\end{thm}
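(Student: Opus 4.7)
The plan is to reduce, via Matsumoto's theorem, to the case in which $\bm{i}'$ is obtained from $\bm{i}$ by a single elementary braid move. Recall that any two elements of $R(w)$ are connected by a finite sequence of moves of two types: (a) a \emph{commutation move} swapping adjacent letters $i_k,i_{k+1}$ with $c_{i_k,i_{k+1}}=0$, and (b) a \emph{length-$m_{ij}$ braid move} (with $m_{ij}\in\{3,4,6\}$) replacing a consecutive block $s_is_js_i\cdots$ by $s_js_is_j\cdots$ inside $\bm{i}$. Since mutation-equivalence of FZ-seeds is transitive, it suffices to exhibit, for each type of elementary move, a mutation sequence (followed, if necessary, by a relabeling of the index set $J$) transforming $\mathbf{s}_{\bm{i}}$ into $\mathbf{s}_{\bm{i}'}$.

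The commutation case is essentially trivial: the two swapped simple reflections affect distinct fundamental weights, so the unipotent minors $D(s,\bm{i})$ and $D(s,\bm{i}')$ agree after swapping the two indices $k,k+1$, and the combinatorial formula defining $\varepsilon^{\bm{i}}$ shows that $\varepsilon^{\bm{i}'}$ differs from $\varepsilon^{\bm{i}}$ only by the corresponding permutation of rows and columns; no mutation is needed. For a longer braid move, the block of positions being modified lies within a single rank-two Levi subgroup, so one can separate the affected positions (on which an explicit mutation sequence must act) from the untouched positions (which behave as spectators whose rows in $\varepsilon^{\bm{i}}$ transform in a predictable way).

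The core of the proof is thus a case-by-case rank-two verification. In the $A_2$ case a single mutation at the middle position does the job: the defining exchange relation reduces to the classical three-term identity among generalized minors of $SL_3$, which is a standard consequence of the Plücker relation, and an elementary check shows that the mutation formula \eqref{eq_mutation_for_varepsilon} sends $\varepsilon^{\bm{i}}$ to $\varepsilon^{\bm{i}'}$. The $B_2$ and $G_2$ cases require longer explicit mutation sequences whose exchange relations are Chamber-Ansatz-type identities among generalized minors of the corresponding rank-two groups. The main obstacle is the $G_2$ verification, where six positions are simultaneously affected and the relations among the generalized minors are most intricate; to keep the bookkeeping manageable I would adopt the amalgamation viewpoint of Fock--Goncharov \cite{FG:amal}, which realizes $\mathbf{s}_{\bm{i}}$ as an amalgamation of elementary rank-one seeds along $\bm{i}$ and thereby reduces the global independence statement to the four rank-two verifications above.
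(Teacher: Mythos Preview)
The paper does not supply a proof of this theorem: it is quoted with attribution to \cite[Theorem~3.5]{FG:amal} and used as a black box thereafter, so there is no in-paper argument to compare against.

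Your outline is the standard one, and it is in substance the route taken in \cite{FG:amal}: reduce by Matsumoto's theorem to a single elementary braid move, treat commutation moves by a relabeling of $J$, and handle the genuine braid moves by an explicit rank-two check, organized via the amalgamation formalism you invoke at the end. Two remarks are in order. First, ``mutation equivalent'' as defined in Section~\ref{s:Clusterval} of the paper means related by a sequence of mutations $\mu_k$ with a fixed index set, so the commutation case strictly requires an additional relabeling of $J$; this is harmless for the intended application (the upper cluster algebra, the set of clusters, and the cluster pattern up to isomorphism are all insensitive to such a relabeling), but you should say so explicitly. Second, your treatment of the rank-two cases is only a gesture: the actual content of the theorem lives in writing down the specific mutation sequences for $B_2$ and $G_2$ and matching the resulting exchange relations with identities among generalized minors. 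What you have is a correct plan, not yet a proof.
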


When $\mathfrak{g}$ is of simply-laced, the entries of $\varepsilon^{\bm{i}}$ are $0$ or $\pm 1$. Hence $\varepsilon^{\bm{i}}$ is described as the quiver whose vertex set is $J$ and whose arrow set is given by 
\[
\{s \to t\mid \varepsilon_{s, t}=-1\ \text{or}\ \varepsilon_{t, s}=1 \}. 
\]
Note that there are no arrows between the vertices in $J_{\rm fr}$.

\begin{ex}\label{e:initial}
We denote by $w_0$ the longest element of the Weyl group $W$. 

Consider the case when $G = SL_4(\mathbb{C})$ and $\bm{i} = (2, 1, 2, 3, 2, 1) \in R(w_0)$ (see \cref{s:cluster_cone} for the labeling of the vertices of the Dynkin diagram). The initial FZ-seed $\mathbf{s}_{\bm{i}} = (\mathbf{D}_{\bm{i}}, \varepsilon^{\bm{i}})$ is given as follows:
	
	\hfill
	\scalebox{0.7}[0.7]{
		\begin{xy} 0;<1pt,0pt>:<0pt,-1pt>::
			(240,30) *+{D_{w_0 \varpi_3, \varpi_3}} ="4",
			(60,60) *+{D_{s_2 \varpi_2, \varpi_2}} ="1",
			(180,60) *+{D_{s_2 s_1 s_2 \varpi_2, \varpi_2}} ="3",
			(300,60) *+{D_{w_0 \varpi_2, \varpi_2}} ="5",
			(120,90) *+{D_{s_2 s_1 \varpi_1, \varpi_1}} ="2",
			(360,90) *+{D_{w_0 \varpi_1, \varpi_1}} ="6",
			"1", {\ar"2"},
			"3", {\ar"1"},
			"3", {\ar"4"},
			"5", {\ar"3"},
			"2", {\ar"5"},
			"6", {\ar"2"},
		\end{xy}
	}
	\hfill
	\hfill
	
When $G = SL_5(\mathbb{C})$ and $\bm{i} = (1, 2, 1, 3, 2, 1, 4, 3, 2, 1)\in R(w_0)$, the initial FZ-seed $\mathbf{s}_{\bm{i}} = (\mathbf{D}_{\bm{i}}, \varepsilon^{\bm{i}})$ is described as follows:
	
	\hfill
	\scalebox{0.7}[0.7]{
		\begin{xy} 0;<1pt,0pt>:<0pt,-1pt>::
			(180,0) *+{D_{w_0 \varpi_4, \varpi_4}} ="0",
			(120,30) *+{D_{s_1 s_2 s_1 s_3 \varpi_3, \varpi_3}} ="1",
			(240,30) *+{D_{w_0 \varpi_3, \varpi_3}} ="2",
			(60,60) *+{D_{s_1 s_2 \varpi_2, \varpi_2}} ="3",
			(180,60) *+{D_{s_1 s_2 s_1 s_3 s_2 \varpi_2, \varpi_2}} ="4",
			(300,60) *+{D_{w_0 \varpi_2, \varpi_2}} ="5",
			(0,90) *+{D_{s_1 \varpi_1, \varpi_1}} ="6",
			(120,90) *+{D_{s_1 s_2 s_1 \varpi_1, \varpi_1}} ="7",
			(240,90) *+{D_{s_1 s_2 s_1 s_3 s_2 s_1 \varpi_1, \varpi_1}} ="8",
			(360,90) *+{D_{w_0 \varpi_1, \varpi_1}} ="9",
			"1", {\ar"0"},
			"3", {\ar"1"},
			"1", {\ar"4"},
			"4", {\ar"2"},
			"6", {\ar"3"},
			"3", {\ar"7"},
			"7", {\ar"4"},
			"4", {\ar"8"},
			"8", {\ar"5"},
			"2", {\ar"1"},
			"4", {\ar"3"},
			"5", {\ar"4"},
			"7", {\ar"6"},
			"8", {\ar"7"},
			"9", {\ar"8"},
		\end{xy}
	}
	\hfill
	\hfill
	
\end{ex}

\subsection{Bases parametrized by tropical points}\label{ss:bases_tropical}

Consider the upper cluster algebra structure on $\c[U_w ^-]$ explained in Section \ref{ss:cluster_structure_unipotent_cell}, and let $\mathcal{S}=\{{\bf s}_t = (\mathbf{A}_t, \varepsilon_t)\}_{t \in \mathbb{T}}$ be the corresponding cluster pattern. 
Then each $t \in \mathbb{T}$ gives a valuation $v_t = v_{{\bf s}_t}$ on $\c(X(w)) = \c(U_w ^-)$ as in \cref{d:main_valuation}. 
To study the Newton--Okounkov body $\Delta(X(w), \mathcal{L}_\lambda, v_t, \tau_\lambda)$, we use a $\c$-basis ${\bf B}_w$ of $\c[U_w ^-]$ that has the following properties ${\rm (T)}_1$--${\rm (T)}_4$. 
\begin{enumerate}
\item[${\rm (T)}_1$] Every element in ${\bf B}_w$ is weakly pointed for all $t \in \mathbb{T}$. In particular, the extended $g$-vectors $g_t (b)$, $b \in {\bf B}_w$, are defined.
\item[${\rm (T)}_2$] For each $t \in \mathbb{T}$, the map ${\bf B}_w \rightarrow \z^J$ given by $b \mapsto g_t (b)$ is injective.
\item[${\rm (T)}_3$] For $t \overset{k}{\text{---}} t^\prime$ in $\mathbb{T}$ and $b \in {\bf B}_w$, the extended $g$-vectors $g_t (b) = (g_j)_{j \in J}$ and $g_{t^\prime} (b) = (g_j ^\prime)_{j \in J}$ satisfy the following relation;
\[g_j ^\prime \coloneqq
\begin{cases}
g_j + [-\varepsilon_{k, j} ^{(t)}]_+ g_k + \varepsilon_{k, j} ^{(t)} [g_k]_+ &(j \neq k),\\
-g_j &(j = k)
\end{cases}\] 
for $j \in J$. In other words, the equality $g_{t^\prime} (b) = \mu_k ^T (g_t (b))$ holds for all $b \in {\bf B}_w$, where $\mu_k^T$ is the tropicalized cluster mutation for $\mathcal{A}^\vee$ given in \eqref{eq:tropical_mutation}.
\item[${\rm (T)}_4$] For each $\lambda \in P_+$, there exists a subset ${\bf B}_w [\lambda] \subset {\bf B}_w$ such that 
\[\{\sigma / \tau_{\lambda} \mid \sigma \in H^0(X(w), \mathcal{L}_{\lambda})\} = \sum_{b \in {\bf B}_w [\lambda]} \c b.\]
\end{enumerate}

For each $b \in {\bf B}_w$, it follows from the property ${\rm (T)}_3$ that the extended $g$-vectors $g_t (b)$, $t \in \mathbb{T}$, form a well-defined element of $\mathcal{A}^\vee(\mathbb{R}^T)$; hence ${\bf B}_w$ is parametrized by tropical points in $\mathcal{A}^\vee(\mathbb{R}^T)$. 
Note that $\mathcal{A}^\vee(\mathbb{R}^T)$ is defined from an FZ-seed as in Section \ref{ss:tropicalization}.
The existence of such basis ${\bf B}_w$ was proved by Kashiwara--Kim \cite{KasKim} and Qin \cite{Qin3}.

\begin{thm}[{see \cite{KasKim, Qin3}}]\label{t:existence_of_bases}
For each $w \in W$, there exists a $\c$-basis ${\bf B}_w$ of $\c[U_w ^-]$ having the properties ${\rm (T)}_1$--${\rm (T)}_4$. 
\end{thm}

Indeed, in \cite{KasKim,Qin3}, they showed that the \emph{dual canonical basis}/\emph{upper global basis} of $\c[U_w ^-]$ in the sense of Lusztig \cite{Lus_can, Lus_quivers, Lus1} and Kashiwara \cite{Kas1,Kas2,Kas3} has the properties ${\rm (T)}_1$--${\rm (T)}_4$; see Appendix\ \ref{a:upper_global_bases} for more details.  
In the rest of this subsection, we give some corollaries of \cref{t:existence_of_bases}. 

\begin{cor}\label{c:relation_of_NO_by_tropicalized_mutations}
The following hold for all $w \in W$, $\lambda \in P_+$, and $t \in \mathbb{T}$.
\begin{enumerate}
\item[{\rm (1)}] The Newton--Okounkov body $\Delta(X(w), \mathcal{L}_\lambda, v_t, \tau_\lambda)$ is independent of the choice of a refinement of the opposite dominance order $\preceq_{\varepsilon_t} ^{\rm op}$.
\item[{\rm (2)}] If $t \overset{k}{\text{---}} t^\prime$, then 
\[\Delta(X(w), \mathcal{L}_\lambda, v_{t^\prime}, \tau_\lambda) = \mu_k ^T (\Delta(X(w), \mathcal{L}_\lambda, v_t, \tau_\lambda)).\]
\end{enumerate}
\end{cor}

\begin{proof}
Take a $\c$-basis ${\bf B}_w$ of $\c[U_w ^-]$ as in \cref{t:existence_of_bases}.
By Proposition \ref{p:valuation_generalizing_g} and the property ${\rm (T)}_1$, we have $v_t (b) = g_t (b)$ for all $t \in \mathbb{T}$ and $b \in {\bf B}_w$. Hence we deduce by \cref{prop1_val} and the property ${\rm (T)}_2$ that 
\begin{equation}
\begin{aligned}\label{eq:values_are_g_vectors}
v_t \left(\left(\sum_{b \in Y} \c b\right) \setminus \{0\}\right) = \{g_t (b) \mid b \in Y\}
\end{aligned}
\end{equation}
for an arbitrary subset $Y \subset {\bf B}_w$. Since $\mathcal{L}_{\lambda} ^{\otimes k} = \mathcal{L}_{k\lambda}$ and $\tau_\lambda ^k \in \c^\times \tau_{k\lambda}$ in $H^0(X(w), \mathcal{L}_{k\lambda})$ for all $k \in \z_{> 0}$, it follows that 
\[S(X(w), \mathcal{L}_\lambda, v_t, \tau_\lambda) = \bigcup_{k \in \z_{>0}} \{(k, v_t (\sigma / \tau_{k\lambda})) \mid \sigma \in H^0(X(w), \mathcal{L}_{k\lambda}) \setminus \{0\}\}.\] 
In addition, by the property ${\rm (T)}_4$, we have 
\[
\{\sigma / \tau_{k\lambda} \mid \sigma \in H^0(X(w), \mathcal{L}_{k\lambda})\} = \sum_{b \in {\bf B}_w[k\lambda]} \c b.
\]
Hence it follows by \eqref{eq:values_are_g_vectors} that 
\begin{equation}
\begin{aligned}\label{eq:semigroups_from_g_vectors}
S(X(w), \mathcal{L}_\lambda, v_t, \tau_\lambda) = \bigcup_{k \in \z_{>0}} \{(k, g_t (b)) \mid b \in {\bf B}_w [k\lambda]\},
\end{aligned}
\end{equation}
which implies part (1) of the corollary. 

By \eqref{eq:semigroups_from_g_vectors} and the property ${\rm (T)}_3$, we deduce that 
\begin{equation}
\begin{aligned}\label{eq:mutations_for_semigroups}
S(X(w), \mathcal{L}_\lambda, v_{t^\prime}, \tau_\lambda) = \tilde{\mu}_k ^T (S(X(w), \mathcal{L}_\lambda, v_t, \tau_\lambda))
\end{aligned}
\end{equation}
for $t \overset{k}{\text{---}} t^\prime$, where $\tilde{\mu}_k ^T \colon \r \times \r^J \rightarrow \r \times \r^J$ is defined by 
\begin{equation}
\begin{aligned}\label{eq:definition_tilde_mu}
\tilde{\mu}_k ^T (k, {\bm a}) \coloneqq (k, \mu_k ^T({\bm a}))
\end{aligned}
\end{equation}
for $(k, {\bm a}) \in \r \times \r^J$. Since $\mu_k ^T$ and hence $\tilde{\mu}_k ^T$ are piecewise-linear, we see by \eqref{eq:mutations_for_semigroups} that 
\begin{align*}
C(X(w), \mathcal{L}_\lambda, v_{t^\prime}, \tau_\lambda) = \tilde{\mu}_k ^T (C(X(w), \mathcal{L}_\lambda, v_t, \tau_\lambda)).
\end{align*}
This proves part (2) of the corollary. 
\end{proof}

By \cref{c:relation_of_NO_by_tropicalized_mutations} (2), the family $\{\Delta(X(w), \mathcal{L}_\lambda, v_{t}, \tau_\lambda)\}_{t\in \mathbb{T}}$ of Newton--Okounkov bodies can be viewed as a well-defined subset of $\mathcal{A}^\vee(\mathbb{R}^T)$. 

The following is also proved in the proof of \cref{c:relation_of_NO_by_tropicalized_mutations}.

\begin{cor}\label{c:mutations_for_semigroups_and_cones}
The following hold for all $w \in W$, $\lambda \in P_+$, and $t \in \mathbb{T}$.
\begin{enumerate}
\item[{\rm (1)}] The semigroup $S(X(w), \mathcal{L}_\lambda, v_t, \tau_\lambda)$ and the real closed cone $C(X(w), \mathcal{L}_\lambda, v_t, \tau_\lambda)$ are both independent of the choice of a refinement of the opposite dominance order $\preceq_{\varepsilon_t} ^{\rm op}$.
\item[{\rm (2)}] If $t \overset{k}{\text{---}} t^\prime$, then the equalities
\begin{align*}
&S(X(w), \mathcal{L}_\lambda, v_{t^\prime}, \tau_\lambda) = \tilde{\mu}_k ^T (S(X(w), \mathcal{L}_\lambda, v_t, \tau_\lambda))\ {\it and}\\
&C(X(w), \mathcal{L}_\lambda, v_{t^\prime}, \tau_\lambda) = \tilde{\mu}_k ^T (C(X(w), \mathcal{L}_\lambda, v_t, \tau_\lambda))
\end{align*}
hold, where $\tilde{\mu}_k ^T$ is the map defined in \eqref{eq:definition_tilde_mu}.
\end{enumerate}
\end{cor}

For $t \in \mathbb{T}$, the \emph{cluster cone} $C_{{\bf s}_t} \subset \r^J$ of $X(w)$ associated with ${\bf s}_t$ is defined as the smallest real closed cone containing $v_t (\c[U^- \cap X(w)] \setminus \{0\})$. 
Since $v_t (\c[U^- \cap X(w)] \setminus \{0\})$ forms a semigroup by the definition of valuations, it follows that the cone $C_{{\bf s}_t}$ is convex.
Note that
\[
\c[U^- \cap X(w)] = \bigcup_{\lambda \in P_+} \{\sigma/\tau_\lambda \mid \sigma \in H^0 (X(w), \mathcal{L}_\lambda)\}; 
\]
see, for instance, \cite[Corollary 3.20 (1)]{FO}.
Hence we obtain a $\c$-basis ${\bf B}[w]$ of $\c[U^- \cap X(w)]$ defined by
\[{\bf B}[w] \coloneqq \bigcup_{\lambda \in P_+} {\bf B}_w[\lambda].\]
Then it follows by \eqref{eq:values_are_g_vectors} that 
\[v_t (\c[U^- \cap X(w)] \setminus \{0\}) = \{g_t (b) \mid b \in {\bf B}[w]\}.\]
Hence the following holds. 

\begin{cor}
For all $w \in W$ and $t \in \mathbb{T}$, the cluster cone $C_{{\bf s}_t}$ of $X(w)$ is independent of the choice of a refinement of the opposite dominance order $\preceq_{\varepsilon_t} ^{\rm op}$.
\end{cor}

\section{Relation with representation-theoretic polytopes}\label{s:rel_with_stringNZ}

Recall from Section \ref{ss:NOSchu} the birational morphism 
\[
\c^m \to U^- \cap X(w),\ (t_1, \ldots, t_m) \mapsto y_{i_1}(t_1) \cdots y_{i_m}(t_m) \bmod B
\]
for $w \in W$ and ${\bm i} = (i_1, \ldots, i_m) \in R(w)$. Then the image of $(\mathbb{C}^{\times})^{m}$ under this morphism is contained in the unipotent cell $U^-_w$. Hence we obtain the following birational morphism: 
\[
y_{\bm{i}}\colon (\mathbb{C}^{\times})^{m} \to U^-_w, \ (t_1, \ldots, t_m) \mapsto y_{i_1}(t_1) \cdots y_{i_m}(t_m) \bmod B,
\]
which is indeed injective. It gives an embedding $y_{\bm{i}}^{\ast}\colon \c [U^-_w]\hookrightarrow \mathbb{C}[t_1^{\pm 1},\dots, t_m^{\pm 1}]$, which induces an isomorphism $y_{\bm{i}}^{\ast}\colon\c(U^-_w)\xrightarrow{\sim}\mathbb{C}(t_1,\dots, t_m)$. 
Hence we can regard the valuations $v_{\bm i} ^{\rm low}$ and $\tilde{v}_{\bm i} ^{\rm low}$ on $\c(X(w))$ defined in \cref{d:lowest_term_valuation_Schubert} as the ones on $\c(U^-_w)$. Note that we have reviewed an upper cluster algebra structure on $\c [U^-_w]$ 
in Section \ref{s:unipcell}, which induces a valuation on $\c(U^-_w)$ as in \cref{d:main_valuation}.

In this section, we clarify relations between these two kinds of valuations, and deduce some properties of Newton--Okounkov bodies associated with the valuations arising from the cluster structure. 
As an application, we construct a family of toric degenerations of $X(w)$ parametrized by the set of FZ-seeds for $\c [U^-_w]$. 

\begin{ntn}\label{n:fix_w}
    Throughout this section, we fix $w\in W$ and its reduced word $\bm{i}=(i_1, \ldots, i_m)\in R(w)$ (unless otherwise specified as in examples). We consider the cluster structure on $\mathbb{C}[U^-_w]$ given in \cref{t:upperBruhat}. In particular, we fix the index set for the FZ-seed $\mathbf{s}_{\bm{i}}$ as $J=\{1,\dots, m\}$, and sometimes identify $\mathbb{Z}^J$ with $\mathbb{Z}^m$ in an obvious way.
\end{ntn}

\subsection{Relation with string polytopes} 

In this subsection, we relate the valuation $\tilde{v}_{\bm i}^{\rm low}$ with $v_{\mathbf{s}_{\bm{i}}}$ associated with some specific refinement of $\preceq_{\varepsilon^{\bm{i}}}^{\rm op}$ (see \cref{d:order,d:main_valuation}). 
\begin{lem}[{\cite[Lemma 6.4]{BZ1}}]\label{l:monomial}
Let $u\in W$, $\bm{j}=(j_1,\dots, j_l)\in R(u)$, and $\lambda\in P_+$. Then the following equality holds:
	\begin{align*}
	y_{\bm{j}}^{\ast}(D_{u\lambda, \lambda})=t_1^{b_1}t_2^{b_2}\cdots t_{l}^{b_l},
	\end{align*}
	where $b_k \coloneqq \langle h_{j_k}, s_{j_{k+1}}\cdots s_{j_l}\lambda\rangle$ for $1 \leq k \leq l$.
\end{lem}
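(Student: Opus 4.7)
The plan is to proceed by induction on $l=\ell(u)$. The base case $l=0$ is immediate: then $u=e$, so $D_{\lambda,\lambda}(g)=(v_\lambda,gv_\lambda)_\lambda$ equals $1$ identically on $U^-$ (since for $g\in U^-$ the vector $gv_\lambda$ has $v_\lambda$-component $1$ and all other components in strictly lower weight spaces, which pair trivially with $v_\lambda$), and this matches the empty product on the right.

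For the inductive step, write $\bm{j}=(j_1,\bm{j}')$ with $\bm{j}'=(j_2,\ldots,j_l)\in R(u')$ for $u'\coloneqq s_{j_2}\cdots s_{j_l}$, so $u=s_{j_1}u'$ is a reduced factorization. Setting $v\coloneqq y_{j_2}(t_2)\cdots y_{j_l}(t_l)v_\lambda$ and invoking the induction hypothesis $y_{\bm{j}'}^{\ast}(D_{u'\lambda,\lambda})=t_2^{b_2}\cdots t_l^{b_l}$, it suffices to establish the key identity
\[
(v_{u\lambda}, y_{j_1}(t_1)v)_\lambda = t_1^{b_1}(v_{u'\lambda},v)_\lambda,\qquad b_1=\langle h_{j_1}, u'\lambda\rangle.
\]
Using the anti-involution $y_{j_1}(t_1)^T=x_{j_1}(t_1)$, the left-hand side becomes $(x_{j_1}(t_1)v_{u\lambda}, v)_\lambda$. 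The decisive structural input is that $v_{u'\lambda}$ is a highest-weight vector for the $\mathfrak{sl}_2$-triple $\langle e_{j_1},f_{j_1},h_{j_1}\rangle$: since $\ell(s_{j_1}u')>\ell(u')$ forces $(u')^{-1}(\alpha_{j_1})$ to be a positive root, the conjugate $\bar{u}'^{-1}e_{j_1}\bar{u}'$ is proportional to a positive-root vector and hence annihilates $v_\lambda$, so $e_{j_1}v_{u'\lambda}=0$. Therefore $v_{u'\lambda}$ generates an irreducible $\mathfrak{sl}_2$-submodule $V_0\subset V(\lambda)$ of dimension $b_1+1$, in which $v_{u\lambda}=\bar{s}_{j_1}v_{u'\lambda}=\tfrac{1}{b_1!}f_{j_1}^{b_1}v_{u'\lambda}$ is the lowest-weight vector. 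A direct $SL_2$-computation in $\mathrm{Sym}^{b_1}(\mathbb{C}^2)$ then gives
\[
x_{j_1}(t_1)v_{u\lambda} = \sum_{j=0}^{b_1}\frac{t_1^{b_1-j}}{j!}\, f_{j_1}^j v_{u'\lambda},
\]
so the pairing splits, after a second application of the anti-involution, as $\sum_{j=0}^{b_1}\frac{t_1^{b_1-j}}{j!}(v_{u'\lambda}, e_{j_1}^j v)_\lambda$.

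The $j=0$ term already contributes the desired $t_1^{b_1}(v_{u'\lambda},v)_\lambda$, so the genuine obstacle is to verify that the terms with $j\geq 1$ all vanish. When $j_1\notin\{j_2,\ldots,j_l\}$ this is immediate, because $[e_{j_1},f_{j_k}]=0$ for all $k\geq 2$, so $e_{j_1}v=y_{j_2}(t_2)\cdots y_{j_l}(t_l)e_{j_1}v_\lambda=0$. In the general case, where $j_1$ may repeat in $\bm{j}'$, I would use the Gauss-decomposition reformulation
\[
D_{u\lambda,\lambda}(g) = [\bar{u}^{-1}g]_0^{\lambda}
\]
(valid on $U^-_u$) together with the $SL_2$-identity $\bar{s}_i^{-1}y_i(t) = y_i(-1/t)\,\alpha_i^\vee(t)\,x_i(1/t)$. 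Iteratively applying the latter rewrites $\bar{u}^{-1}y_{j_1}(t_1)\cdots y_{j_l}(t_l)$ in $U^-HU^+$-form, extracting one $\alpha_{j_k}^\vee$-factor from each $y_{j_k}(t_k)$; conjugating these toral factors past the remaining $\bar{s}_{j_m}^{-1}$'s converts each $\alpha_{j_k}^\vee$ into $(s_{j_{k+1}}\cdots s_{j_l})^{-1}(\alpha_{j_k}^\vee)$, and pairing with $\lambda$ produces the exponent $\langle h_{j_k}, s_{j_{k+1}}\cdots s_{j_l}\lambda\rangle=b_k$ on $t_k$. The technical heart—and the step I expect to be the main obstacle—is precisely this bookkeeping: carefully commuting the higher-root $x$- and $y$-factors produced along the way through the remaining lifts $\bar{s}_{j_m}^{-1}$ and tracking how the torus contributions accumulate. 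This is the calculation carried out in the cited \cite[Lemma~6.4]{BZ1}, which I would adapt for a self-contained proof or invoke directly.
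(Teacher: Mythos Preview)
The paper does not prove this lemma; it simply cites \cite[Lemma 6.4]{BZ1}. So there is no ``paper's own proof'' to compare against, and your task amounts to supplying a self-contained argument.

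Your inductive strategy is correct, and the $\mathfrak{sl}_2$-computation yielding
\[
x_{j_1}(t_1)v_{u\lambda}=\sum_{j=0}^{b_1}\frac{t_1^{\,b_1-j}}{j!}\,f_{j_1}^{\,j}v_{u'\lambda}
\]
is accurate. But you abandon the induction prematurely: the vanishing of the $j\ge 1$ terms does \emph{not} require $j_1\notin\{j_2,\dots,j_l\}$, and the switch to the Gauss-decomposition approach is both unnecessary and, as written, circular (you end by invoking the very lemma under proof).

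The missing observation is this. The element $y_{j_2}(t_2)\cdots y_{j_l}(t_l)$ lies in $U^-\cap X(u')$, hence $v=y_{j_2}(t_2)\cdots y_{j_l}(t_l)v_\lambda$ lies in the Demazure module $V_{u'}(\lambda)=U(\mathfrak{n}^+)v_{u'\lambda}$. Every weight of $V_{u'}(\lambda)$ therefore lies in $u'\lambda+\sum_{i\in I}\mathbb{Z}_{\ge 0}\alpha_i$. Now
\[
(v_{u'\lambda},e_{j_1}^{\,j}v)_\lambda=(f_{j_1}^{\,j}v_{u'\lambda},v)_\lambda,
\]
and $f_{j_1}^{\,j}v_{u'\lambda}$ has weight $u'\lambda-j\alpha_{j_1}$, which for $j\ge 1$ is \emph{not} a weight of $V_{u'}(\lambda)$. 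Since the form $(\cdot,\cdot)_\lambda$ is orthogonal on distinct weight spaces, the pairing vanishes. This single line closes the induction uniformly, regardless of whether $j_1$ repeats in $\bm{j}'$.
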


\begin{prop}\label{p:transform}
	For $s\in J$, the following equality holds:
	\[
	\tilde{v}_{\bm i}^{\rm low}(D(s, \bm{i}))=(d_1^{(s)}, d_2^{(s)},\dots, d_m^{(s)}),\text{ where }d_k^{(s)}\coloneqq \begin{cases}
	\langle h_{i_k}, s_{i_{k+1}}\cdots s_{i_s}\varpi_{i_s}\rangle&\text{if }k\leq s,\\
	0&\text{if }k>s.
	\end{cases}
	\]
\end{prop}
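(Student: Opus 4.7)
The plan is to compute $y_{\bm i}^{\ast}(D(s,\bm i))$ explicitly enough to identify its $\prec$-lowest monomial, using \cref{l:monomial} as the sole substantive input. The key observation is that $D(s,\bm i)$ is the restriction of the generalized minor $\Delta_{w_{\leq s}\varpi_{i_s},\varpi_{i_s}}$, which is a regular function on the whole group $G$. Consequently, the pullback
\[
F(t_1,\dots,t_m)\coloneqq \Delta_{w_{\leq s}\varpi_{i_s},\varpi_{i_s}}(y_{i_1}(t_1)\cdots y_{i_m}(t_m))
\]
extends from $(\mathbb{C}^{\times})^m$ to a genuine polynomial in $t_1,\dots,t_m$, so every coordinate may be specialized to $0$ without leaving the domain.

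The crux of the argument is the specialization $t_{s+1}=\cdots=t_m=0$. Since $y_{i_k}(0)$ is the identity of $G$ for each $k$, this reduces $F$ to $\Delta_{w_{\leq s}\varpi_{i_s},\varpi_{i_s}}(y_{i_1}(t_1)\cdots y_{i_s}(t_s))$. Applying \cref{l:monomial} with $u=w_{\leq s}$, $\bm j=(i_1,\dots,i_s)\in R(w_{\leq s})$, and $\lambda=\varpi_{i_s}$, I would conclude that
\[
F(t_1,\dots,t_s,0,\dots,0)=t_1^{d_1^{(s)}}\cdots t_s^{d_s^{(s)}}.
\]
Expanding $F=\sum_{\bm a}c_{\bm a}\,t^{\bm a}$, the left-hand side of this display is precisely the sum of the monomials whose ``tail'' $(a_{s+1},\dots,a_m)$ vanishes. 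Since the right-hand side is a single monomial, this forces $(d_1^{(s)},\dots,d_s^{(s)},0,\dots,0)$ to appear in $F$ with coefficient $1$, and to be the \emph{only} exponent of $F$ with vanishing tail.

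To finish, I invoke the definition of the lexicographic order $\prec$, which compares exponents reading from the last coordinate toward the first. Any exponent $\bm a$ of $F$ with nonzero tail has a largest index $k^{\ast}>s$ at which $a_{k^{\ast}}>0$; comparing $\bm a$ with $(d_1^{(s)},\dots,d_s^{(s)},0,\dots,0)$ under $\prec$, the positions $k^{\ast}+1,\dots,m$ agree (all zero), and at position $k^{\ast}$ we have $0<a_{k^{\ast}}$, so $(d_1^{(s)},\dots,d_s^{(s)},0,\dots,0)\prec \bm a$. Combined with the uniqueness of the zero-tail exponent, this identifies $(d_1^{(s)},\dots,d_m^{(s)})$ as the $\prec$-minimum exponent of $F$, yielding the stated formula.

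The only delicate point is verifying that the pullback of $D(s,\bm i)$ genuinely extends to a polynomial on $\mathbb{C}^m$ rather than merely a Laurent polynomial on $(\mathbb{C}^{\times})^m$, so that evaluation at $t_{s+1}=\cdots=t_m=0$ is meaningful; this is automatic from the description of $D(s,\bm i)$ as the restriction of a matrix coefficient on $G$. Apart from this bookkeeping, the proof is essentially a one-line reduction to \cref{l:monomial} combined with the shape of $\prec$, so I do not anticipate any serious obstacle.
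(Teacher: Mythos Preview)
Your proposal is correct and follows essentially the same approach as the paper: specialize $t_{s+1}=\cdots=t_m=0$, invoke \cref{l:monomial} on the reduced word $(i_1,\dots,i_s)$ to obtain a single monomial, and then use the shape of the order $\prec$ (which reads from the last coordinate) to conclude that this monomial is the $\prec$-lowest term of $y_{\bm i}^{\ast}(D(s,\bm i))$. The paper packages steps 3--4 more tersely as ``by the definition of $\tilde v_{\bm i}^{\rm low}$, the valuation is unchanged by $\mathrm{ev}_0$ provided the result is nonzero,'' but the content is identical.
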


\begin{proof}
    Let $\iota \colon \mathbb{C}[t_1,\dots, t_s]\hookrightarrow \mathbb{C}[t_1,\dots, t_m]$ denote the inclusion homomorphism, and consider the evaluation homomorphism $\mathrm{ev}_0\colon \mathbb{C}[t_1,\dots, t_m]\to \mathbb{C}[t_1,\dots, t_s]$ given by 
	\[
	t_k\mapsto \begin{cases}
	t_k&\text{if }k\leq s,\\
	0&\text{if }k>s.
		\end{cases}
	\]
	Since we have $y_{\bm{i}}^{\ast}(D(s, \bm{i}))\in \mathbb{C}[t_1,\dots, t_m]$, we can consider $\mathrm{ev}_0(y_{\bm{i}}^{\ast}(D(s, \bm{i})))$. 
	By the definition of $\tilde{v}_{\bm i}^{\rm low}$, we have  
	\[
	\tilde{v}_{\bm i}^{\rm low}(\iota(\mathrm{ev}_0(y_{\bm{i}}^{\ast}(D(s, \bm{i})))))=\tilde{v}_{\bm i}^{\rm low}(y_{\bm{i}}^{\ast}(D(s, \bm{i})))
	\]
	if $\mathrm{ev}_0(y_{\bm{i}}^{\ast}(D(s, \bm{i})))\neq 0$. 
	It follows from \cref{l:monomial} and from the definition of $y_{\bm{i}}$ that  
	\begin{align*}
	t_1^{d_1}t_2^{d_2}\cdots t_{s}^{d_s}
	=\iota(y_{(i_1,\dots, i_s)}^{\ast}(D(s, \bm{i})))
	=\iota(\mathrm{ev}_0(y_{\bm{i}}^{\ast}(D(s, \bm{i})))), 
	\end{align*}
	where $d_k \coloneqq \langle h_{i_k}, s_{i_{k+1}}\cdots s_{i_s}\varpi_{i_s}\rangle$ for $1 \leq k \leq s$. Hence we have 
	\[
	\tilde{v}_{\bm i}^{\rm low}(y_{\bm{i}}^{\ast}(D(s, \bm{i})))
	=(d_1, \ldots, d_s, 0, \ldots, 0),
	\]
	which completes the proof of the proposition.
\end{proof}
\begin{prop}\label{p:refine}
	The total order $\prec$ on $\mathbb{Z}^m=\mathbb{Z}^J$ defined in \cref{d:lowest_term_valuation_Schubert} refines the partial order $\preceq_{\varepsilon^{\bm{i}}}^{\rm op}$. 
\end{prop}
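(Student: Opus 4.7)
The plan is to translate the refinement claim into an explicit sign statement. Unwinding \cref{d:order,d:lowest_term_valuation_Schubert}, it suffices to show that for every nonzero $\bm{v} = (v_s)_{s \in J_{\rm uf}} \in \mathbb{Z}_{\geq 0}^{J_{\rm uf}}$, the rightmost nonzero coordinate of the row vector $\bm{v}\varepsilon^{\bm{i}} \in \mathbb{Z}^J$ is strictly positive. This is exactly what is needed: a nontrivial step $\bm{a}' = \bm{a} + \bm{v}\varepsilon^{\bm{i}}$ in the dominance order will then force $\bm{a} \prec \bm{a}'$.

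The first preparatory step is a row-by-row analysis of $\varepsilon^{\bm{i}}$. A direct inspection of the four defining cases in Sect.\ \ref{s:unipcell} shows that for any $s \in J_{\rm uf}$ the entry $\varepsilon^{\bm{i}}_{s,t}$ vanishes whenever $t > s^+$: each of the four cases requires either $s \geq t$, or $t = s^+$, or $t < s^+$, all of which are incompatible with $t > s^+$. Moreover, case~(3) records $\varepsilon^{\bm{i}}_{s, s^+} = +1$. I would then set
\[
t^{\ast} \coloneqq \max\{\, s^+ \mid s \in J_{\rm uf},\ v_s > 0 \,\},
\]
which is a well-defined element of $J$ because $s^+ \leq m$ for $s \in J_{\rm uf}$. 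Combined with the vanishing observation, this forces $(\bm{v}\varepsilon^{\bm{i}})_t = 0$ for all $t > t^{\ast}$.

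To pinpoint $(\bm{v}\varepsilon^{\bm{i}})_{t^{\ast}}$, I would first verify that the assignment $s \mapsto s^+$ is injective on $J_{\rm uf}$: if $s_1 < s_2$ in $J_{\rm uf}$ satisfied $s_1^+ = s_2^+$, then $i_{s_1} = i_{s_1^+} = i_{s_2^+} = i_{s_2}$ would force $s_1^+ \leq s_2 < s_2^+$, a contradiction. Hence there is a unique $s^{\ast}$ with $v_{s^{\ast}} > 0$ and $(s^{\ast})^+ = t^{\ast}$. Now I would re-examine which of the four cases can contribute to column $t^{\ast}$ from an $s$ with $v_s > 0$: cases~(1) and~(2) require $s > t^{\ast}$ and hence $s^+ > t^{\ast}$, violating the maximality of $t^{\ast}$; case~(4) requires $s^+ > t^{\ast}$, again impossible. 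Thus only case~(3) contributes, it is realised solely by $s = s^{\ast}$, and it yields $(\bm{v}\varepsilon^{\bm{i}})_{t^{\ast}} = v_{s^{\ast}} > 0$, as desired.

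The only real obstacle is bookkeeping: one must walk through the four cases in the definition of $\varepsilon^{\bm{i}}$ and verify both the vanishing past column $s^+$ and the non-interference of the other three cases at column $t^{\ast}$. Once the key observation ``row $s$ of $\varepsilon^{\bm{i}}$ vanishes past column $s^+$, where it takes value $+1$'' is isolated, the argument becomes essentially automatic, reducing global positivity of the last nonzero coordinate to the single column $t^{\ast}$ controlled by the maximal index of the support of $\bm{v}$ under $s \mapsto s^+$.
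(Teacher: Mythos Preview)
Your proof is correct and takes essentially the same approach as the paper: both rest on the observation that the rightmost nonzero entry of row $s$ of $\varepsilon^{\bm{i}}$ sits at column $s^+$ and equals $+1$. The paper's version is marginally shorter, checking only that each individual row satisfies $(0,\dots,0) \prec (\varepsilon_{s,1},\dots,\varepsilon_{s,m})$ and then invoking the additive compatibility of $\prec$ to handle nonnegative combinations, whereas you treat a general $\bm{v}$ directly via the injectivity of $s \mapsto s^+$.
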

\begin{proof}
	It suffices to show that 
	\begin{equation}\label{eq:refinement_desired_claim}
	\begin{aligned}
	(0, 0,\dots, 0)\prec (\varepsilon_{s, 1}, \varepsilon_{s, 2},\dots, \varepsilon_{s, m})
	\end{aligned}
	\end{equation}
	for all $s\in J_{\rm uf}$. Fix an arbitrary $s\in J_{\rm uf}$. By the definition of $\varepsilon^{\bm i}$, we see that
	\[
	\{t\in J \mid \varepsilon_{s,t}\neq 0 \}=\{s^+\}\cup \{t\mid s<t<s^{+}<t^{+}\text{ or }t<s<t^+<s^+\text{ or }t^+=s \}. 
	\]
	Hence it follows that $\max \{ t\in J \mid \varepsilon_{s, t}\neq 0 \}=s^+$. Moreover, we have $\varepsilon_{s,s^+}=1$. From these, we conclude \eqref{eq:refinement_desired_claim}. This implies the proposition.
\end{proof}

The upper cluster algebra structure on $\mathbb{C}[U^-_w]$ gives an identification 
\[
\mathbb{C}(X(w))\simeq \mathbb{C}(U^-_w)\simeq \mathbb{C}(D(1, \bm{i}),\dots, D(m, \bm{i})).
\]
The lexicographic order $\prec$ on $\mathbb{Z}^m$ defines a total order $\preceq_{\bm{i}}$ on the set of Laurent monomials in $D(1, \bm{i}),\dots, D(m, \bm{i})$ by identifying $(a_1,\dots, a_m)\in \mathbb{Z}^m$ with $D(1, \bm{i})^{a_1}\cdots D(m, \bm{i})^{a_m}$. By \cref{p:refine}, we can use it to define a valuation $v_{\mathbf{s}_{\bm{i}}}$ in \cref{d:main_valuation}. Using the notation in \cref{p:transform}, we 
define a $J \times J$-matrix $M_{\bm i}$ by $M_{\bm{i}}\coloneqq (d_{t}^{(s)})_{s, t\in J}$, where $s$ is a row index, and $t$ is a column index. 

\begin{thm}\label{t:relstring}
	Consider the valuation $v_{\mathbf{s}_{\bm{i}}}$ on $\mathbb{C}(X(w))$ defined above. Then the equality
	\[
	\tilde{v}_{\bm i}^{\rm low}(f)=v_{\mathbf{s}_{\bm{i}}}(f)M_{\bm{i}}
	\] 	
	holds in $\z^J$ for $f\in \mathbb{C}(X(w)) \setminus \{0\}$. Here elements of $\mathbb{Z}^J$ are regarded as $1\times J$-matrices. 
\end{thm}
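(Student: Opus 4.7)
The plan is to reduce the identity to the case of Laurent monomials in $D(1,\bm{i}),\dots, D(m,\bm{i})$, where both valuations can be computed by additivity, and then extend to the full field $\c(X(w))$ via the representation of a general rational function as a ratio of Laurent polynomials in those minors.

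First I would observe that by \cref{p:transform} the row vector $\tilde{v}_{\bm i}^{\rm low}(D(s,\bm{i}))$ is precisely the $s$-th row of $M_{\bm i}$, that its entries in columns $t>s$ vanish, and that its diagonal entry equals $\langle h_{i_s}, \varpi_{i_s}\rangle = 1$. Consequently $M_{\bm i}$ is lower triangular with ones on the diagonal; in particular it is invertible over $\z$, so the map $\phi\colon \z^J \to \z^J$, $\bm a \mapsto \bm a\, M_{\bm i}$, is a bijection. I would then verify that $\phi$ preserves the lexicographic order $\prec$ of \cref{d:lowest_term_valuation_Schubert}: for any $\bm a \in \z^J$ and any $t \in J$, the $t$-th coordinate $(\bm a\, M_{\bm i})_t = \sum_{s \ge t} a_s\, d_t^{(s)}$ depends only on $a_t, a_{t+1}, \dots, a_m$, and the coefficient of $a_t$ is $d_t^{(t)} = 1$. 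Hence if $\bm a$ and $\bm a'$ agree in coordinates $k+1, \dots, m$ but $a_k < a_k'$, then $\bm a\, M_{\bm i}$ and $\bm a'\, M_{\bm i}$ agree in coordinates $k+1, \dots, m$ while $(\bm a\, M_{\bm i})_k < (\bm a'\, M_{\bm i})_k$; thus $\phi$ is an order isomorphism of $(\z^J,\prec)$.

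With these preliminaries the identity on Laurent monomials is immediate: by property~(i) of valuations we have $\tilde{v}_{\bm i}^{\rm low}\bigl(\prod_{s \in J} D(s,\bm i)^{a_s}\bigr) = \bm a\, M_{\bm i}$, while by construction $v_{\mathbf{s}_{\bm i}}\bigl(\prod_{s \in J} D(s,\bm i)^{a_s}\bigr) = \bm a$. For a Laurent polynomial $f = \sum_{\bm a} c_{\bm a}\prod_{s \in J} D(s,\bm i)^{a_s}$ of finite support, the values $\bm a\, M_{\bm i}$ attached to distinct exponent vectors are mutually distinct (since $\phi$ is bijective), so \cref{prop1_val} yields
\[
\tilde{v}_{\bm i}^{\rm low}(f) = \min_{\prec} \{\bm a\, M_{\bm i} \mid c_{\bm a}\neq 0\}, \qquad v_{\mathbf{s}_{\bm i}}(f) = \min_{\prec}\{\bm a \mid c_{\bm a}\neq 0\}.
\]
Because $\phi$ commutes with $\min_{\prec}$, this gives $\tilde{v}_{\bm i}^{\rm low}(f) = v_{\mathbf{s}_{\bm i}}(f)\, M_{\bm i}$ for every such $f$.

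Finally, since the $D(s,\bm i)$ form a transcendence basis of $\c(X(w)) = \c(U_w^-)$, every $f \in \c(X(w)) \setminus \{0\}$ can be written as $P/Q$ with $P, Q$ nonzero Laurent polynomials in the $D(s,\bm i)$. Property~(i) applied to both valuations then extends the identity from $P$ and $Q$ to $f = P/Q$, completing the proof. The main obstacle will be the order-preservation of $\phi$ with respect to $\prec$; everything else is a direct application of \cref{p:transform,prop1_val} together with the axioms of valuations.
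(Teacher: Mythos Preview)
Your proof is correct and follows essentially the same approach as the paper's own proof. The paper's argument is more compressed: it verifies the identity on monomials via \cref{p:transform} and then simply asserts that lower-unitriangularity of $M_{\bm i}$ yields the general case, whereas you spell out explicitly why lower-unitriangularity makes $\bm a \mapsto \bm a\,M_{\bm i}$ an order isomorphism for $\prec$ and invoke \cref{prop1_val} to pass to sums; the underlying mechanism is identical.
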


\begin{proof}
	It suffices to show that 
	\[
	\tilde{v}_{\bm i}^{\rm low}(f)=v_{\mathbf{s}_{\bm{i}}}(f)M_{\bm{i}}
	\]
	for a nonzero polynomial $f=\sum_{\bm{a}=(a_1,\dots, a_m)\in \mathbb{Z}_{\geq 0}^m}c_{\bm{a}}D(1, \bm{i})^{a_1}\cdots D(m, \bm{i})^{a_m} \in \c[D(1, \bm{i}), \dots, D(m, \bm{i})]$, where $c_{\bm{a}}\in \mathbb{C}$. By \cref{p:transform}, we have 
	\begin{align*}
	\tilde{v}_{\bm i}^{\rm low}(D(1, \bm{i})^{a_1}\cdots D(m, \bm{i})^{a_m})&=
	\sum_{s=1}^ma_s\tilde{v}_{\bm i}^{\rm low}(D(s, \bm{i}))\\
	&=\sum_{s=1}^ma_s(0,\dots,\overset{\overset{s}{\vee}}{1},\dots, 0)M_{\bm{i}}\\
	&=\sum_{s=1}^ma_sv_{\mathbf{s}_{\bm{i}}}(D(s, \bm{i}))M_{\bm{i}}\\
	&=v_{\mathbf{s}_{\bm{i}}}(D(1, \bm{i})^{a_1}\cdots D(m, \bm{i})^{a_m})M_{\bm{i}}
	\end{align*}
	for all $\bm{a}=(a_1,\dots, a_m)\in \mathbb{Z}_{\geq 0}^m$. Since $M_{\bm{i}}$ is lower unitriangular as an $m \times m$-matrix, this implies that  
\[
\tilde{v}_{\bm i}^{\rm low}(f)=v_{\mathbf{s}_{\bm{i}}}(f)M_{\bm{i}},
\]
which proves the theorem. 
\end{proof}

Combining Theorems \ref{t:NO_body_crystal_basis}, \ref{t:saturatedness_of_semigroups}, and \ref{t:relstring} with the lower-unitriangularity of $M_{\bm i}$, we obtain the following.

\begin{cor}\label{c:relstring}
Let $\lambda \in P_+$. 
\begin{enumerate}
\item[{\rm (1)}] The equalities 
\begin{align*}
&S(X(w), \mathcal{L}_\lambda, \tilde{v}_{\bm i} ^{\rm low}, \tau_\lambda) = \{(k, {\bm a} M_{\bm i}) \mid (k, {\bm a}) \in S(X(w), \mathcal{L}_\lambda, v_{\mathbf{s}_{\bm{i}}}, \tau_\lambda)\},\\
&C(X(w), \mathcal{L}_\lambda, \tilde{v}_{\bm i} ^{\rm low}, \tau_\lambda) = \{(k, {\bm a} M_{\bm i}) \mid (k, {\bm a}) \in C(X(w), \mathcal{L}_\lambda, v_{\mathbf{s}_{\bm{i}}}, \tau_\lambda)\},\ {\it and}\\
&\Delta(X(w), \mathcal{L}_\lambda, \tilde{v}_{\bm i} ^{\rm low}, \tau_\lambda) = \Delta(X(w), \mathcal{L}_\lambda, v_{\mathbf{s}_{\bm{i}}}, \tau_\lambda) M_{\bm i}
\end{align*}
hold. In particular, the Newton--Okounkov body $\Delta(X(w), \mathcal{L}_\lambda, v_{\mathbf{s}_{\bm{i}}}, \tau_\lambda)$ is a rational convex polytope which is unimodularly equivalent to the string polytope $\Delta_{\bm i} (\lambda)$. 
\item[{\rm (2)}] The real closed cone $C(X(w), \mathcal{L}_\lambda, v_{\mathbf{s}_{\bm i}}, \tau_\lambda)$ is a rational convex polyhedral cone.
\item[{\rm (3)}] The following equality holds: 
\begin{align*}
&S(X(w), \mathcal{L}_\lambda, v_{{\bf s}_{\bm i}}, \tau_\lambda) = C(X(w), \mathcal{L}_\lambda, v_{{\bf s}_{\bm i}}, \tau_\lambda) \cap (\z_{>0} \times \z^m).
\end{align*}
\end{enumerate}
\end{cor}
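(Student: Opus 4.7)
The plan is to derive the three equalities in part (1) directly from the linear relation $\tilde{v}_{\bm i}^{\rm low}(f) = v_{\mathbf{s}_{\bm i}}(f)\,M_{\bm i}$ of \cref{t:relstring}, and then to transport the conclusions of \cref{t:NO_body_crystal_basis} and \cref{t:saturatedness_of_semigroups} for $\tilde{v}_{\bm i}^{\rm low}$ across the resulting $\z$-linear change of coordinates. The first step is therefore to verify that this change of coordinates is unimodular: by inspection of the formula for $d_t^{(s)}$, the entries strictly above the diagonal vanish, while the diagonal entries $d_s^{(s)} = \langle h_{i_s}, s_{i_{s+1}} \cdots s_{i_s}\varpi_{i_s}\rangle$ equal $\langle h_{i_s},\varpi_{i_s}\rangle = 1$, since the product $s_{i_{s+1}} \cdots s_{i_s}$ is empty. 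Hence $M_{\bm i}$ is lower unitriangular, so right multiplication by $M_{\bm i}$ is both a $\z$-module automorphism of $\z^m$ and an $\r$-linear automorphism of $\r^m$.

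Next, I apply the identity of \cref{t:relstring} to elements of the form $f = \sigma/\tau_\lambda^k$ with $\sigma \in H^0(X(w), \mathcal{L}_\lambda^{\otimes k}) \setminus \{0\}$. Since $\tau_\lambda$ is the same fixed section for both valuations, the $\z_{>0}$-grading is preserved and the first equality of (1) follows immediately. Taking the smallest real closed cone on each side and then intersecting with $\{1\} \times \r^m$ gives the cone and polytope equalities in (1), using that $\mathrm{id}_\r \times M_{\bm i}$ is a linear homeomorphism of $\r_{\ge 0} \times \r^m$. The final assertion of (1) then follows from \cref{t:NO_body_crystal_basis}(3), which identifies $\Delta(X(w), \mathcal{L}_\lambda, \tilde{v}_{\bm i}^{\rm low}, \tau_\lambda)$ with the string polytope $\Delta_{\bm i}(\lambda)$, combined with the unimodularity of $M_{\bm i}$.

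Parts (2) and (3) are then obtained by pulling the corresponding statements of \cref{t:saturatedness_of_semigroups} back through $M_{\bm i}^{-1}$. Because $M_{\bm i}$ is a $\z$-lattice automorphism of $\z^m$ and a linear automorphism of $\r^m$, a subset of $\r^m$ is a rational convex polyhedral cone if and only if its image under right multiplication by $M_{\bm i}$ is, and a subsemigroup of $\z_{>0}\times\z^m$ is saturated (resp.\ finitely generated) if and only if its image is. I do not anticipate any genuine obstacle; the only point requiring care is the verification that $M_{\bm i}$ has integer entries and unit diagonal, which is the structural content that upgrades the general linear relation of \cref{t:relstring} to an actual unimodular equivalence at the level of Newton--Okounkov polytopes.
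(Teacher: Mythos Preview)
Your proposal is correct and follows essentially the same approach as the paper: the corollary is stated there as an immediate consequence of combining \cref{t:NO_body_crystal_basis}, \cref{t:saturatedness_of_semigroups}, and \cref{t:relstring} with the lower-unitriangularity of $M_{\bm i}$ (which is in fact already noted in the proof of \cref{t:relstring}). Your write-up simply makes explicit the routine transport of the semigroup, cone, and polytope through the unimodular change of coordinates $M_{\bm i}$.
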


Recall from Section \ref{ss:bases_tropical} the cluster cone $C_{{\bf s}_{\bm i}}$ of $X(w)$ associated with the seed ${\bf s}_{\bm i}$. 
By \cref{c:relation_with_string_cones} and \cref{t:relstring}, we obtain the following.

\begin{cor}\label{c:relations_string_cone}
The equality 
\[
\widetilde{C}_{\bm i} = C_{{\bf s}_{\bm i}} M_{\bm i}
\]
holds. 
In particular, the cluster cone $C_{{\bf s}_{\bm i}}$ is unimodularly equivalent to the string cone associated with ${\bm i}$, and it holds that
\[
C_{{\bf s}_{\bm i}} = \bigcup_{\lambda \in P_+} \Delta(X(w), \mathcal{L}_\lambda, v_{{\bf s}_{\bm i}}, \tau_\lambda).
\]
\end{cor} 

\begin{thm}\label{t:main_result_toric_degenerations}
Let $w \in W$, $\lambda \in P_+$, ${\bm i} \in R(w)$, and ${\bf s}$ an FZ-seed of $\c(U_w^-) = \c(X(w))$ which is mutation equivalent to ${\bf s}_{\bm i}$.
\begin{enumerate}
\item[{\rm (1)}] The Newton--Okounkov body $\Delta(X(w), \mathcal{L}_\lambda, v_{\bf s}, \tau_\lambda)$ is a rational convex polytope.
\item[{\rm (2)}] The following equality holds:
\begin{align*}
&\Delta(X(w), \mathcal{L}_\lambda, v_{\bf s}, \tau_\lambda) \cap \z^J = \{g_{\bf s}(b) \mid b \in {\bf B}_w [\lambda]\}.
\end{align*}
\item[{\rm (3)}] If $\mathcal{L}_\lambda$ is very ample on $X(w)$, then there exists a flat degeneration of $X(w)$ to the normal toric variety corresponding to the rational convex polytope $\Delta(X(w), \mathcal{L}_\lambda, v_{\bf s}, \tau_\lambda)$.
\end{enumerate}
\end{thm}

\begin{proof}
\cref{c:relstring} (2) implies that $C(X(w), \mathcal{L}_\lambda, v_{{\bf s}_{\bm i}}, \tau_\lambda)$ is a rational convex polyhedral cone. 
Hence we see by \cref{c:mutations_for_semigroups_and_cones} that $C(X(w), \mathcal{L}_\lambda, v_{\bf s}, \tau_\lambda)$ is given by a finite number of piecewise-linear inequalities, which implies that $C(X(w), \mathcal{L}_\lambda, v_{\bf s}, \tau_\lambda)$ is a finite union of rational convex polyhedral cones. 
However, since $C(X(w), \mathcal{L}_\lambda, v_{\bf s}, \tau_\lambda)$ is convex, this is a rational convex polyhedral cone, which implies part (1) of the theorem. 

Since we have 
\begin{align*}
&S(X(w), \mathcal{L}_\lambda, v_{{\bf s}_{\bm i}}, \tau_\lambda) = C(X(w), \mathcal{L}_\lambda, v_{{\bf s}_{\bm i}}, \tau_\lambda) \cap (\z_{>0} \times \z^J)
\end{align*}
by \cref{c:relstring} (3), we deduce by \cref{c:mutations_for_semigroups_and_cones} that 
\begin{align*}
&S(X(w), \mathcal{L}_\lambda, v_{\bf s}, \tau_\lambda) = C(X(w), \mathcal{L}_\lambda, v_{\bf s}, \tau_\lambda) \cap (\z_{>0} \times \z^J).
\end{align*}
This implies part (2) of the theorem by \eqref{eq:semigroups_from_g_vectors}.

Now part (3) of the theorem follows from Theorem \ref{t:toric deg} as follows. 
Since $C(X(w), \mathcal{L}_\lambda, v_{\bf s}, \tau_\lambda)$ is a rational convex polyhedral cone as we have proved above, the semigroup $S(X(w), \mathcal{L}_\lambda, v_{\bf s}, \tau_\lambda)$ is finitely generated and saturated by Gordan's lemma (see, for instance, \cite[Proposition 1.2.17]{CLS}). 
This implies that ${\rm Proj} (\c[S(X(w), \mathcal{L}_\lambda, v_{\bf s}, \tau_\lambda)])$ is normal by \cite[Theorem 1.3.5]{CLS}; hence if $\mathcal{L}_\lambda$ is very ample on $X(w)$, then ${\rm Proj} (\c[S(X(w), \mathcal{L}_\lambda, v_{\bf s}, \tau_\lambda)])$ is identical to the normal toric variety corresponding to the $|J|$-dimensional rational convex polytope $\Delta(X(w), \mathcal{L}_\lambda, v_{\bf s}, \tau_\lambda)$. 
Thus, we obtain part (3) by Theorem \ref{t:toric deg}.
\end{proof}

The following is also proved in the proof of \cref{t:main_result_toric_degenerations}.

\begin{cor}
Let $\lambda \in P_+$, and ${\bf s}$ an FZ-seed of $\c(U_w^-)$ which is mutation equivalent to ${\bf s}_{\bm i}$.
\begin{enumerate}
\item[{\rm (1)}] The real closed cone $C(X(w), \mathcal{L}_\lambda, v_{\bf s}, \tau_\lambda)$ is a rational convex polyhedral cone.
\item[{\rm (2)}] The following equality holds:
\begin{align*}
&S(X(w), \mathcal{L}_\lambda, v_{\bf s}, \tau_\lambda) = C(X(w), \mathcal{L}_\lambda, v_{\bf s}, \tau_\lambda) \cap (\z_{>0} \times \z^J).
\end{align*}
In particular, the semigroup $S(X(w), \mathcal{L}_\lambda, v_{\bf s}, \tau_\lambda)$ is finitely generated and saturated.
\end{enumerate}
\end{cor}

In a way similar to the proof of Theorem \ref{t:main_result_toric_degenerations}, we obtain the following by \cref{c:relations_string_cone}.

\begin{cor}
Let ${\bf s}$ be an FZ-seed of $\c(U_w^-)$ which is mutation equivalent to ${\bf s}_{\bm i}$. 
Then the cluster cone $C_{\bf s}$ is a rational convex polyhedral cone, and it holds that
\begin{align*}
&C_{\bf s} = \bigcup_{\lambda \in P_+} \Delta(X(w), \mathcal{L}_\lambda, v_{\bf s}, \tau_\lambda),\\
&C_{\bf s} \cap \z^J = v_{\bf s} (\c[U^- \cap X(w)] \setminus \{0\}).
\end{align*}
\end{cor}

\subsection{Relation with Nakashima--Zelevinsky polytopes}\label{s:mutation_sequence}

In this subsection, we relate the valuation $v_{\bm i}^{\rm low}$ with the one arising from the cluster structure. To do that, we need to find a nice FZ-seed $\mathbf{s}_{\bm{i}}^{\rm mut}$ by applying iterated mutations to $\mathbf{s}_{\bm{i}}$. For this purpose, we first summarize results on specific mutation sequences in Section \ref{ss:mutation_sequence} without proofs. Since the proofs of these statements are technical and might be known to experts, we give them in Appendix \ref{a:mutation_sequence}. Indeed, our first mutation sequence $\overleftarrow{\boldsymbol{\mu}_{\bm{i}}}$ was essentially introduced in \cite[Section 13.1]{GLS:Kac-Moody}. However, in \cite{GLS:Kac-Moody}, the Cartan matrix of $\mathfrak{g}$ is assumed to be \emph{symmetric}, and the explicit formula of the exchange matrix (in particular, the arrows among the frozen and unfrozen variables) after the mutation sequence is not given. Hence we provide the corresponding statements in \emph{symmetrizable} case; their proofs are given in Appendix \ref{a:mutation_sequence}.

\subsubsection{Specific mutation sequences}\label{ss:mutation_sequence}

We define an injective map $\xi_{\bm{i}}\colon J\to I\times \mathbb{Z}_{>0}$ by 
\[
\xi_{\bm{i}} (s) \coloneqq (i_s, k[s]),\text{ where } k[s] \coloneqq |\{1 \leq t \leq s\mid i_t = i_s\}|. 
\]
Let $\mathrm{pr}_1\colon I\times \mathbb{Z}_{>0}\to I$ denote the first projection. For $i\in \mathrm{pr}_1(\Image (\xi_{\bm{i}}))$, we have
\[
\Image (\xi_{\bm{i}})\cap \mathrm{pr}_1^{-1}(i)= \{(i, 1), (i, 2),\dots,  (i, m_i)\},
\]
where $m_i \coloneqq |\{1 \leq s \leq m \mid i_s = i\}|$.
Note that 
\[
\xi_{\bm{i}}(J_{\rm fr})=\{(i, m_i)\mid i\in \mathrm{pr}_1(\Image \xi_{\bm{i}}) \}. 
\]
We consider a mutation sequence $\overleftarrow{\boldsymbol{\mu}_{\bm{i}}}$ defined as follows:
\[
\overleftarrow{\boldsymbol{\mu}_{\bm{i}}}\coloneqq \overleftarrow{\boldsymbol{\mu}_{\bm{i}}}[m]\circ \cdots \circ \overleftarrow{\boldsymbol{\mu}_{\bm{i}}}[2]\circ \overleftarrow{\boldsymbol{\mu}_{\bm{i}}}[1],
\]
where 
\[
\overleftarrow{\boldsymbol{\mu}_{\bm{i}}}[s]\coloneqq 
\begin{cases}
\mu_{(i_s, m_{i_s}-k[s])}\circ \cdots \circ \mu_{(i_s, 2)}\circ \mu_{(i_s, 1)}&\text{if }s\not\in J_{\rm fr},\\
\mathrm{id}&\text{if }s\in J_{\rm fr}. 
\end{cases}
\]
We investigate this mutation sequence by using quivers. For a $J_{\rm uf}\times J$-matrix $(\varepsilon_{s, t})_{s \in J_{\rm uf}, t \in J}$ whose $J_{\rm uf}\times J_{\rm uf}$-submatrix is skew-symmetrizable, we define a quiver $\Gamma$ by the following rules:
\begin{itemize}
	\item[(I)] the vertex set is $J$;
	\item[(II)] write an arrow $s\to t$ if $\varepsilon_{s, t}<0$ or $\varepsilon_{t, s}>0$ for $s, t\in J$ (note that there exists no arrow between any two elements of $J_{\rm fr}$).
\end{itemize}
We denote by $\Gamma_{\bm{i}}$ the quiver associated with $\varepsilon^{\bm{i}}$. 

\begin{ex}\label{e:quiverexample}	
Let $w_0$ denote the longest element of the Weyl group $W$. See \cref{s:cluster_cone} for the labeling of the vertices of the Dynkin diagram. 

When $G = SL_4(\mathbb{C})$ (of type $A_3$) and $\bm{i}=(2, 1, 2, 3, 2, 1)\in R(w_0)$, the corresponding quiver $\Gamma_{\bm{i}}$ is described as the following:

\hfill
\begin{xy} 0;<1pt,0pt>:<0pt,-1pt>::
	(-18,0) *+{3},
(-10,0) *+{\scalebox{0.65}{$>$}},	
(-18,30) *+{2},
(-10,30) *+{\scalebox{0.65}{$>$}},		
(-18,60) *+{1},
(-10,60) *+{\scalebox{0.65}{$>$}},			
	(120,0) *+{4} ="1",
	(40,60) *+{2} ="3",
	(160,30) *+{5} ="4",
	(0,30) *+{1} ="6",
	(80,30) *+{3} ="7",
	(200,60) *+{6} ="8",
	"7", {\ar"6"},
	"4", {\ar"7"},
	"8", {\ar"3"},
	"6", {\ar"3"},
	"3", {\ar"4"},
	"7", {\ar"1"},
\end{xy}
\hfill
\hfill

	When $G = Sp_4(\c)$ (of type $C_2$) and $\bm{i}=(1, 2, 1, 2)\in R(w_0)$, the corresponding quiver $\Gamma_{\bm{i}}$ is given by the following:

\hfill
	\begin{xy} 0;<1pt,0pt>:<0pt,-1pt>::
			(-18,0) *+{2},
		(-10,0) *+{\scalebox{0.65}{$>$}},	
		(-18,30) *+{1},
		(-10,30) *+{\scalebox{0.65}{$>$}},		
		(0,30) *+{1} ="1",
		(40,0) *+{2} ="2",
		(80,30) *+{3} ="3",
		(120,0) *+{4} ="4",
		"1", {\ar"2"},
		"2", {\ar"3"},
		"3", {\ar"1"},
		"4", {\ar"2"},
	\end{xy}
\hfill
\hfill

\end{ex}
An arrow $s\to t$ in the quiver $\Gamma_{\bm{i}}$ is said to be \emph{horizontal} if the first components of $\xi_{\bm{i}}(s)$ and $\xi_{\bm{i}}(t)$ are the same. Otherwise, it is said to be \emph{inclined}. From $\Gamma_{\bm{i}}$, we can recover $\varepsilon^{\bm{i}}=(\varepsilon_{s, t})_{s\in J_{\rm uf}, t\in J}$ by the following rule: 
\begin{align}
	\varepsilon_{s, t}=
\begin{cases}
-1&\text{if there exists a horizontal arrow } s\to t, \\
c_{i_t,i_s}&\text{if there exists an inclined arrow } s\to t, \\
1&\text{if there exists a horizontal arrow } t\to s, \\
-c_{i_t,i_s}&\text{if there exists an inclined arrow } t\to s,\\
0&\text{otherwise}.
\end{cases}
\label{eq:recoverexchange}
\end{align}

We set $\bm{i}^{\mathrm{op}}\coloneqq (i_m,\dots, i_1)\in R(w^{-1})$. Then we obtain a bijection $\mathsf{R}_{\bm{i}}\colon J \to J$ given by $\mathsf{R}_{\bm{i}} \coloneqq (\xi_{\bm{i}^{\mathrm{op}}})^{-1}\circ \xi_{\bm{i}}$. Note that $\mathsf{R}_{\bm{i}}(J_{\rm uf}) = J_{\rm uf}$ and $\mathsf{R}_{\bm{i}}(J_{\rm fr}) = J_{\rm fr}$. The following theorem describes the exchange matrix of the FZ-seed $\overleftarrow{\boldsymbol{\mu}_{\bm{i}}}(\mathbf{s}_{\bm{i}})$. Its proof is given in Appendix \ref{a:mutation_sequence}. 

\begin{thm}\label{t:mutation_max}
	The $J_{\rm uf}\times J$-matrix $\overleftarrow{\boldsymbol{\mu}_{\bm{i}}}(\varepsilon^{\bm{i}})=(\overline{\varepsilon}_{s, t})_{s\in J_{\rm uf}, t\in J}$ is given by 
	\begin{align*}
	\overline{\varepsilon}_{s,t}&=
		\begin{cases}
	1&\text{if}\ s=t^+, \\
	-1&\text{if}\ s^+=t, \\
	c_{i_t, i_s}&\text{if}\ \mathsf{R}_{\bm{i}}(t)<\mathsf{R}_{\bm{i}}(s)<\mathsf{R}_{\bm{i}}(t^+)<\mathsf{R}_{\bm{i}}(s^+),\\
	-c_{i_t, i_s}&\text{if}\ \mathsf{R}_{\bm{i}}(s)<\mathsf{R}_{\bm{i}}(t)<\mathsf{R}_{\bm{i}}(s^+)<\mathsf{R}_{\bm{i}}(t^+), \\
    0&\text{otherwise}.
	\end{cases} 
	\end{align*}
\end{thm}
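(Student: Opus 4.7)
The plan is to proceed by induction on the index $s$ in the partial mutation sequence $\overleftarrow{\boldsymbol{\mu}_{\bm{i}}}^{(s)} \coloneqq \overleftarrow{\boldsymbol{\mu}_{\bm{i}}}[s] \circ \cdots \circ \overleftarrow{\boldsymbol{\mu}_{\bm{i}}}[1]$, maintaining an explicit formula for the exchange matrix $\varepsilon^{(s)}$ at each stage. The base case $s=0$ simply records the matrix $\varepsilon^{\bm{i}}$, while the case $s=m$ yields the statement of the theorem. The natural interpolating formula at stage $s$ should encode the principle that, for vertices whose letter-enumeration has already been fully reversed by one of the sub-sequences $\overleftarrow{\boldsymbol{\mu}_{\bm{i}}}[t]$ with $t \leq s$, the inequalities controlling the arrows involve $\mathsf{R}_{\bm{i}}$, whereas for the remaining vertices the inequalities still use the original linear ordering coming from $\bm{i}$. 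Translating the problem into the language of quivers via the dictionary \eqref{eq:recoverexchange} allows one to visualize each mutation as a local reconfiguration around the mutated vertex.

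For the inductive step, I would assume the explicit form of $\varepsilon^{(s-1)}$ and apply the individual mutations in $\overleftarrow{\boldsymbol{\mu}_{\bm{i}}}[s] = \mu_{\xi_{\bm{i}}^{-1}(i_s,m_{i_s}-k[s])} \circ \cdots \circ \mu_{\xi_{\bm{i}}^{-1}(i_s,1)}$ one at a time (when $s \notin J_{\rm fr}$; the frozen case is trivial). Each individual mutation $\mu_k$ is governed by \eqref{eq_mutation_for_varepsilon}: it reverses the arrows incident to $k$ and creates new 2-path arrows $i \to j$ with signs prescribed by $\sgn(\varepsilon_{k,j})[\varepsilon_{i,k}\varepsilon_{k,j}]_+$. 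The chosen order — from the earliest occurrence $(i_s, 1)$ up to $(i_s, m_{i_s}-k[s])$ — is essential: at each step, the newly-mutated vertex interacts with the not-yet-mutated structure in a controlled way, so that the horizontal-arrow reversals and inclined-arrow reorganizations accumulate into the expected form of $\varepsilon^{(s)}$.

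The main obstacle will be the combinatorial bookkeeping in the symmetrizable (non-simply-laced) setting. While the symmetric case is essentially contained in \cite[Sect.~13.1]{GLS:Kac-Moody}, in the general case the Cartan integers $c_{i_t,i_s}$ can be arbitrary non-positive integers, so one must track weighted multiplicities rather than mere presence of arrows and verify delicate cancellations among the signed 2-path contributions. Specifically, for each pair $(s,t)$ with $i_s,i_t$ adjacent in the Dynkin diagram, one must show that the sum of contributions from the various 2-paths through the intermediate mutated vertices yields exactly the inclined arrow prescribed by the inequalities on $\mathsf{R}_{\bm{i}}(s),\mathsf{R}_{\bm{i}}(t),\mathsf{R}_{\bm{i}}(s^+),\mathsf{R}_{\bm{i}}(t^+)$. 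Careful treatment of the boundary cases — when $s^+$ or $t^+$ equals $m+1$ (so the vertex lies in $J_{\rm fr}$), or when $i_s = i_t$ and the horizontal-arrow rules dominate — will also be required, as will verifying that the not-yet-reversed part of $\varepsilon^{(s-1)}$ is genuinely preserved by $\overleftarrow{\boldsymbol{\mu}_{\bm{i}}}[s]$ outside the vertices of color $i_s$.
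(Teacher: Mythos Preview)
Your plan is sound and would work, but it differs from the paper's argument in a meaningful way. You propose induction on the stage $s$ of the partial mutation sequence $\overleftarrow{\boldsymbol{\mu}_{\bm{i}}}^{(s)}$, maintaining an explicit interpolating formula for $\varepsilon^{(s)}$ at each step. The paper instead proves the stronger statement for the extended skew-symmetrizable matrix $\varepsilon^{\bm{i},\mathrm{ex}}$ (\cref{t:mutation_max_ex}) by induction on the length $m$ of $w$: it first computes $\overleftarrow{\boldsymbol{\mu}_{\bm{i}}}[1](\varepsilon^{\bm{i},\mathrm{ex}})$ explicitly (\cref{p:mutation_oneline}, via the local quiver analysis you describe), and then observes that after removing the frozen vertex $\xi_{\bm{i}}^{-1}(i_1,m_{i_1})$ together with certain frozen-to-frozen arrows and shifting the remaining $i_1$-colored vertices, the result is exactly $\Gamma_{\bm{i}_{2\le}}$ for the shorter reduced word $\bm{i}_{2\le}=(i_2,\dots,i_m)$. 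Thus $\overleftarrow{\boldsymbol{\mu}_{\bm{i}}}[m]\circ\cdots\circ\overleftarrow{\boldsymbol{\mu}_{\bm{i}}}[2]$ acts as $\overleftarrow{\boldsymbol{\mu}_{\bm{i}_{2\le}}}$ and the induction hypothesis applies directly.

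The main advantage of the paper's approach is that it completely avoids formulating your interpolating formula for $\varepsilon^{(s)}$, which would be awkward to state precisely (it must distinguish vertices already ``reversed'' from those not yet reversed, and handle the mixed arrows between them). Your approach is more hands-on and self-contained, but the bookkeeping you anticipate in the symmetrizable case is genuinely heavier; the paper's reduction-to-shorter-word trick packages all of that into a single clean application of \cref{p:mutation_oneline}. If you pursue your route, be aware that the intermediate formula will also need to account for the extra frozen-to-frozen arrows that appear in $\varepsilon^{\bm{i},\mathrm{ex}}$ but are invisible in $\varepsilon^{\bm{i}}$ itself---this is why the paper works with the extended matrix throughout Appendix~\ref{a:mutation_sequence}.
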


\begin{rem}\label{r:opquiver}
	The quiver associated with $\overleftarrow{\boldsymbol{\mu}_{\bm{i}}}(\varepsilon^{\bm{i}})$ is the same as the quiver obtained from $\Gamma_{\bm{i}^{\mathrm{op}}}$ by reversing directions of all arrows. 
\end{rem}
\begin{ex}\label{e:mutseq}
	For the examples in \cref{e:quiverexample}, we obtain the following.
	\begin{itemize}
	
\item The case of $G = SL_4(\mathbb{C})$ and $\bm{i}=(2, 1, 2, 3, 2, 1)\in R(w_0)$: we have 
\[
 \overleftarrow{\boldsymbol{\mu}_{\bm{i}}}[1]=\mu_3 \mu_1,\ 
 \overleftarrow{\boldsymbol{\mu}_{\bm{i}}}[2]=\mu_2,\ 
 \overleftarrow{\boldsymbol{\mu}_{\bm{i}}}[3]=\mu_1,\ 
 \overleftarrow{\boldsymbol{\mu}_{\bm{i}}}[4]=\overleftarrow{\boldsymbol{\mu}_{\bm{i}}}[5]=\overleftarrow{\boldsymbol{\mu}_{\bm{i}}}[6]=\mathrm{id}. 
\]
Hence $\overleftarrow{\boldsymbol{\mu}_{\bm{i}}} = \mu_1 \mu_2 \mu_3 \mu_1$, and the quiver associated with $\overleftarrow{\boldsymbol{\mu}_{\bm{i}}}(\varepsilon^{\bm{i}})$ is given by 
	
	\hfill
	\begin{xy} 0;<1pt,0pt>:<0pt,-1pt>::
		(-18,0) *+{3},
		(-10,0) *+{\scalebox{0.65}{$>$}},	
		(-18,30) *+{2},
		(-10,30) *+{\scalebox{0.65}{$>$}},		
		(-18,60) *+{1},
		(-10,60) *+{\scalebox{0.65}{$>$}},			
		(0,30) *+{1} ="1",
		(40,60) *+{2} ="2",
		(80,30) *+{3} ="3",
		(120,0) *+{4} ="4",		
		(160,30) *+{5} ="5",
		(200,60) *+{6} ="6",
		"1", {\ar"3"},
		"3", {\ar"5"},
		"3", {\ar"2"},
		"6", {\ar"3"},
		"2", {\ar"6"},
		"4", {\ar"1"},
	\end{xy}
	\hfill
	\hfill
	
\item The case of $G = Sp_4(\c)$ and $\bm{i}=(1, 2, 1, 2)\in R(w_0)$: we have 
\[
 \overleftarrow{\boldsymbol{\mu}_{\bm{i}}}[1]=\mu_1,\ 
 \overleftarrow{\boldsymbol{\mu}_{\bm{i}}}[2]=\mu_2,\ 
 \overleftarrow{\boldsymbol{\mu}_{\bm{i}}}[3]=\overleftarrow{\boldsymbol{\mu}_{\bm{i}}}[4]=\mathrm{id}. 
\]
Hence $\overleftarrow{\boldsymbol{\mu}_{\bm{i}}} = \mu_2 \mu_1$, and the quiver associated with $\overleftarrow{\boldsymbol{\mu}_{\bm{i}}}(\varepsilon^{\bm{i}})$ is given by 

	\hfill
	\begin{xy} 0;<1pt,0pt>:<0pt,-1pt>::
		(-18,0) *+{2},
		(-10,0) *+{\scalebox{0.65}{$>$}},	
		(-18,30) *+{1},
		(-10,30) *+{\scalebox{0.65}{$>$}},		
		(0,30) *+{1} ="1",
		(40,0) *+{2} ="2",
		(80,30) *+{3} ="3",
		(120,0) *+{4} ="4",
		"1", {\ar"2"},
		"1", {\ar"3"},
		"2", {\ar"4"},
		"4", {\ar"1"},
	\end{xy}
	\hfill
	\hfill
	
	\end{itemize}	
\end{ex}

Next we describe the explicit form of $\overleftarrow{\boldsymbol{\mu}_{\bm{i}}}(\mathbf{D}_{\bm{i}})$. Its proof is again postponed to Appendix \ref{a:mutation_sequence}. 
\begin{thm}\label{t:mutation_max_variable}
The equality
$\overleftarrow{\boldsymbol{\mu}_{\bm{i}}}(\mathbf{D}_{\bm{i}})=(D^{\vee}(s, \bm{i}) )_{s\in J}$ holds, where 
\[
D^{\vee}(s, \bm{i})\coloneqq\begin{cases}
D_{w\varpi_{i_s}, w_{\leq s^{\vee}}\varpi_{i_s}}\text{ with }s^{\vee}\coloneqq \xi_{\bm{i}}^{-1}(i_s, m_{i_s}-k[s])&\text{if }k[s]<m_{i_s},\\
D_{w\varpi_{i_s}, \varpi_{i_s}}&\text{if }k[s]=m_{i_s}.
\end{cases}
\]
\end{thm}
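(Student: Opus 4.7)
The plan is to prove \cref{t:mutation_max_variable} by induction along the mutation sequence $\overleftarrow{\boldsymbol{\mu}_{\bm{i}}}$, matching each cluster mutation with a classical Pl\"ucker-type exchange identity among unipotent minors. The analytic input I would use is the $T$-system identity: for $u, v \in W$ and $i \in I$ satisfying $\ell(us_i) = \ell(u) + 1$ and $\ell(vs_i) = \ell(v) + 1$,
\[
D_{us_i \varpi_i,\, vs_i \varpi_i} \cdot D_{u\varpi_i,\, v\varpi_i}
= D_{us_i \varpi_i,\, v\varpi_i} \cdot D_{u\varpi_i,\, vs_i \varpi_i}
+ \prod_{j \ne i} D_{u\varpi_j,\, v\varpi_j}^{-c_{i,j}},
\]
which is the restriction to $U^-_w$ of the standard Pl\"ucker relation for generalized minors on $G$ (see \cite{BZ1, BFZ2}).

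Next, I would set up an outer induction on $s = 1, 2, \ldots, m$. The inductive claim is that after applying $\overleftarrow{\boldsymbol{\mu}_{\bm{i}}}[s] \circ \cdots \circ \overleftarrow{\boldsymbol{\mu}_{\bm{i}}}[1]$, each vertex $t \in J$ whose label $i_t$ has been ``fully exhausted'' by the sub-sequences up to stage $s$ already carries the claimed cluster variable $D^{\vee}(t, \bm i)$, while every other vertex carries an explicit intermediate minor of the form $D_{w_{\leq a}\varpi_{i_t},\, w_{\leq b}\varpi_{i_t}}$ whose indices $a, b$ interpolate between the initial values from $\mathbf{D}_{\bm{i}}$ and the target values. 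Within each stage $\overleftarrow{\boldsymbol{\mu}_{\bm{i}}}[s] = \mu_{(i_s, m_{i_s}-k[s])} \circ \cdots \circ \mu_{(i_s, 1)}$, I would run a sub-induction on $k$: at the mutation $\mu_{(i_s, k)}$, the four factors in the cluster mutation formula \eqref{eq_mutation_for_cluster_variable} are identified with the four terms of the $T$-system above, where $u, v$ are read off from the reduced subword associated with the $k$-th occurrence of $i_s$ and the current stage $s$. This matching produces the next intermediate minor, and at $k = m_{i_s} - k[s]$ it yields $D^{\vee}$ at the vertex $\xi_{\bm{i}}^{-1}(i_s, m_{i_s} - k[s])$.

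The technical crux, and the main obstacle, is the combinatorial bookkeeping: the exchange matrix is mutating in parallel via \eqref{eq_mutation_for_varepsilon}, so at each step I must verify that the current arrows incident to the vertex being mutated produce exactly the exponents appearing in the $T$-system (that is, $\pm 1$ between same-label vertices and $\pm c_{i,j}$ for distinct labels, as dictated by \eqref{eq:recoverexchange}). This forces a simultaneous induction tracking the intermediate exchange matrices, of which \cref{t:mutation_max} is the final output, while carefully separating horizontal and inclined arrows and following how they evolve after each mutation. Each individual Pl\"ucker check is classical and routine, but maintaining the alignment between the arrows of the evolving quiver and the exponents of the evolving Pl\"ucker identity throughout the entire sequence is where the argument becomes delicate.
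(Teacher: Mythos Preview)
Your proposal is correct and follows essentially the same approach as the paper: both arguments feed the $T$-system identity (\cref{p:T-sys}) into an induction along the mutation sequence $\overleftarrow{\boldsymbol{\mu}_{\bm{i}}}$, verifying that each exchange relation produced by \eqref{eq_mutation_for_cluster_variable} coincides with an instance of \eqref{eq:T-sys}, so that the cluster variable at the mutating vertex moves from $D_{\bm{i}}(t_1,t_2)$ to $D_{\bm{i}}(t_1^+,t_2^+)$.

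The one organizational difference worth noting is how the bookkeeping you flag as ``the main obstacle'' is handled. You propose to track the evolving exchange matrix step by step through a double induction on $(s,k)$. The paper avoids this by a recursive reduction on the length of $\bm{i}$: after carrying out $\overleftarrow{\boldsymbol{\mu}_{\bm{i}}}[1]$ (whose quiver effect is isolated in \cref{p:mutation_oneline}), it observes that the resulting seed, once one strips off the vertex $\xi_{\bm{i}}^{-1}(i_1,m_{i_1})$ and the arrows among frozen vertices and shifts the $i_1$-column, is exactly the initial seed $\mathbf{s}_{\bm{i}_{2\le}}$ for the shorter reduced word $\bm{i}_{2\le}=(i_2,\ldots,i_m)$, with each initial minor $D_{\bm{i}_{2\le}}(s,0)$ replaced by the shifted minor $D_{\bm{i}}(s+1,1)$. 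Since the $T$-system \eqref{eq:T-sys} is invariant under this shift (\cref{r:T-systemshift}), the remaining stages $\overleftarrow{\boldsymbol{\mu}_{\bm{i}}}[m]\circ\cdots\circ\overleftarrow{\boldsymbol{\mu}_{\bm{i}}}[2]$ reduce to the same problem for $\bm{i}_{2\le}$, and one finishes by induction on $m$. This packages the quiver bookkeeping into a single stage plus a structural comparison, rather than carrying explicit intermediate exchange matrices through all stages; the two arguments are logically equivalent, but the paper's recursion is what makes the combinatorics tractable in practice. (A small point: check your Cartan exponent---the paper's version of the $T$-system has $-c_{j,i}$ rather than $-c_{i,j}$.)
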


\begin{rem}\label{r:FZ-twist}
By \cref{t:mutation_max,t:mutation_max_variable}, there exists an isomorphism of $\mathbb{C}$-algebras $\mathbb{C}[U_{w^{-1}}^{-}]\to \mathbb{C}[U_w^{-}]$ compatible with the cluster structures given by 
\[
D(s, \bm{i}^{\rm op})\mapsto D^{\vee}(s, \bm{i})
\]
for $s\in \{1, \ldots, \ell(w^{-1})(=m)\}$. Indeed, it coincides with the map induced from the map \cite[(2.56) and Lemma 2.25]{FZ:Double} (see \cite[Proposition 8.5]{GLS:Kac-Moody} and \cite[Corollary 2.22]{KimOya} for the relation between unipotent cells and unipotent subgroups). Moreover, the quantum lift of this isomorphism becomes an anti-isomorphism which is studied in \cite{LY,KimOya:qtwist}. See \cite[Theorem 3.22]{KimOya:qtwist}. By \cite[Theorem 3.8]{KimOya:qtwist}, it preserves the upper global basis.
\end{rem}

\subsubsection{Seeds adapted to Nakashima--Zelevinsky polytopes}

As a prerequisite, we recall an automorphism $\eta_w^{\ast}$ on $\mathbb{C}[U^-_w]$, called the \emph{Berenstein--Fomin--Zelevinsky twist automorphism}. 
\begin{thm}[{\cite[Lemma 1.3]{BFZ1} and \cite[Theorem 1.2]{BZ1} (cf.~\cite[Proposition 2.23]{KimOya})}]\label{t:twistautom}
	There exists a $\mathbb{C}$-algebra automorphism $\eta_w^{\ast}\colon \mathbb{C}[U^-_w]\to \mathbb{C}[U^-_w]$ given by 
	\[
	D_{v^{\vee}, v_{\lambda}}\mapsto \frac{D_{f_{w\lambda}, v}}{D_{w\lambda, \lambda}}
	\]
	for $\lambda\in P_+$ and $v\in V(\lambda)$. 
\end{thm}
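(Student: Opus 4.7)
The plan is to construct the map geometrically via a biregular self-morphism of $U_w^-$ and then verify the claimed pull-back formula on the matrix coefficient generators $D_{v^\vee, v_\lambda}$ of $\mathbb{C}[U_w^-]$.

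First I would verify well-definedness of the right-hand side. The function $D_{w\lambda,\lambda}$ is the restriction of the generalized minor $\Delta_{w\lambda,\lambda}$ to $U_w^-$, and a standard fact from Bruhat theory identifies $U_w^-$ with the non-vanishing locus
\[
U_w^- = \bigl\{ u \in U^- \;\bigm|\; \Delta_{w\varpi_i,\varpi_i}(u) \neq 0 \text{ for all } i \in I \bigr\}.
\]
Consequently $D_{w\lambda,\lambda}$ is invertible in $\mathbb{C}[U_w^-]$ for every $\lambda \in P_+$, so $D_{f_{w\lambda},v}/D_{w\lambda,\lambda}$ is a genuine regular function on $U_w^-$.

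Second, I would define a morphism $\eta_w\colon U_w^- \to U_w^-$ geometrically. For $u \in U_w^-$, the element $\overline{w}^{-1}u$ lies in $\overline{w}^{-1} B\overline{w}B$, which meets the open Gauss cell $U^- H U$; hence $\overline{w}^{-1}u$ admits a unique Gauss factorization $\overline{w}^{-1}u = [\overline{w}^{-1}u]_- \,[\overline{w}^{-1}u]_0\, [\overline{w}^{-1}u]_+$. Following \cite{BFZ1, BZ1}, one sets
\[
\eta_w(u) \coloneqq \overline{w} \bigl( [\overline{w}^{-1}u]_+ \bigr)^T \overline{w}^{-1},
\]
conjugated by the anti-involution $g \mapsto g^T$ as needed, and then checks that this element again lies in $U_w^-$ (i.e., is in $U^-$ and satisfies the non-vanishing conditions above). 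Here the crucial input is that the transpose of the $U$-part of the Gauss decomposition, together with conjugation by $\overline{w}$, reshuffles weights exactly so that the resulting element belongs to $U^- \cap B\overline{w}B$.

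Third, and this is where the work lies, I would verify the pull-back formula. Unpacking the definitions, $D_{v^\vee,v_\lambda}(\eta_w(u)) = (v, \eta_w(u) v_\lambda)_\lambda$. Writing $\overline{w}^{-1}u = u_- t u_+$ with $u_\pm \in U^\mp/U^\pm$ and $t \in H$, and using $(gv,v')_\lambda = (v,g^T v')_\lambda$ together with the fact that $v_\lambda$ is fixed up to scalar by $U$ and an eigenvector of $H$, one obtains
\[
(v,\eta_w(u) v_\lambda)_\lambda \;=\; t^{-\lambda}\,(v, u^T \overline{w} v_\lambda)_\lambda \;=\; \frac{\langle f_{w\lambda}, u v\rangle}{D_{w\lambda,\lambda}(u)},
\]
after recognizing $t^\lambda = D_{w\lambda,\lambda}(u)$ (which is exactly the torus contribution of the Gauss decomposition paired against $v_\lambda$) and rewriting the pairing via the bilinear form. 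The main obstacle is keeping track of all these factors: one must carefully verify the identification $t^\lambda = D_{w\lambda,\lambda}(u)$, control the compatibility of the anti-involution $g \mapsto g^T$ with $(\cdot,\cdot)_\lambda$ on weight vectors paired with $\overline{w}v_\lambda$, and check that passing $u^T$ through the pairing reproduces $D_{f_{w\lambda},v}(u) = \langle f_{w\lambda}, uv\rangle$.

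Finally, to upgrade the homomorphism $\eta_w^\ast$ to an automorphism, I would produce an inverse map either as a formally analogous twist (BZ's construction gives $\eta_w^{-1}$ in closed form via the same Gauss-decomposition procedure applied after an appropriate involution) or by directly reading off biregularity of $\eta_w$ from its Gauss-decomposition definition, since each factor depends algebraically and invertibly on $u$. Combined with the previous steps this proves that $\eta_w^\ast$ is a $\mathbb{C}$-algebra automorphism of $\mathbb{C}[U_w^-]$ with the prescribed action on $D_{v^\vee,v_\lambda}$.
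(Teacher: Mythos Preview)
The paper does not prove this theorem; it is quoted from \cite{BFZ1,BZ1}, with only a remark recording the geometric formula $\eta_w(u)=[u^T\overline{w}]_-$. Your overall strategy---construct a biregular automorphism of $U_w^-$ via Gauss decomposition and then verify the pull-back formula on matrix coefficients---is exactly the approach of those references and is correct in outline.

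However, your explicit formula $\eta_w(u)=\overline{w}\bigl([\overline{w}^{-1}u]_+\bigr)^T\overline{w}^{-1}$ is wrong, and this propagates into the verification step. In your Gauss notation $\overline{w}^{-1}u=u_-\,t\,u_+$, taking transposes (and using $\overline{w}^T=\overline{w}^{-1}$) gives $u^T\overline{w}=u_+^T\cdot t\cdot u_-^T$, which \emph{is} already the Gauss decomposition of $u^T\overline{w}$; hence the correct formula is $\eta_w(u)=[u^T\overline{w}]_-=u_+^T$, with no outer conjugation by $\overline{w}$. Your extra conjugation takes the result out of $U^-$: already for $G=SL_2$, $w=s$, $u=y(t)$ one finds $u_+^T=y(1/t)\in U^-$ but $\overline{s}\,u_+^T\,\overline{s}^{-1}=x(-1/t)\in U$. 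The hedge ``conjugated by the anti-involution $g\mapsto g^T$ as needed'' does not rescue this.

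Once the formula is corrected to $\eta_w(u)=u_+^T$, the pull-back computation is clean and your identification $t^\lambda=D_{w\lambda,\lambda}(u)$ is right: one has $D_{v^\vee,v_\lambda}(\eta_w(u))=(v,u_+^Tv_\lambda)_\lambda=(u_+v,v_\lambda)_\lambda$, while $D_{f_{w\lambda},v}(u)=(\overline{w}v_\lambda,uv)_\lambda=(v_\lambda,u_-tu_+v)_\lambda=t^\lambda(u_+v,v_\lambda)_\lambda$, so the ratio is as claimed.
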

\begin{rem}
	The automorphism $\eta_w^{\ast}$ is induced from a regular automorphism $\eta_w\colon U^-_w\to U^-_w$ given by 
	\[
	u\mapsto [u^T\overline{w}]_-,
	\]
	where $u^T$ is the transpose of $u$ in $G$, and $[u^T\overline{w}]_-$ is the $U^-$-component of $u^T\overline{w}\in U^- B$. 
\end{rem}
Recall the isomorphism $y_{\bm{i}}^{\ast}\colon \mathbb{C}(U^-_w) \xrightarrow{\sim} \mathbb{C}(t_1,\dots, t_m)$ explained in the beginning of Section \ref{s:rel_with_stringNZ}. 
The Berenstein--Fomin--Zelevinsky twist automorphism $\eta_w^{\ast}$ is used for describing the variables $t_s$, $s\in J$, in terms of unipotent minors. The formula for $t_s$ in the following theorem is called the \emph{Chamber Ansatz formula}. 

\begin{thm}[{\cite[Theorems 1.4, 2.7.1]{BFZ1} and \cite[Theorems 1.4, 4.3]{BZ1} (cf.\ \cite[Section 1.3]{Oya})}]\label{t:Chamber_Ansatz}
For $s\in J$, the equality
\[
t_s=y_{\bm{i}}^{\ast}\left(\frac{\prod_{t<s<t^+}(\eta_w^{\ast})^{-1}(D(t, \bm{i}))^{-c_{i_t, i_s}}}{(\eta_w^{\ast})^{-1}(D(s^-, \bm{i}))(\eta_w^{\ast})^{-1}(D(s, \bm{i}))}\right)
\]
holds, where $D(0, \bm{i})\coloneqq 1$. Moreover, these formulas are equivalent to the equalities
\[
(y_{\bm{i}}^{\ast}\circ (\eta_w^{\ast})^{-1})(D(s, \bm{i}))=t_1^{-d_1^{(s)}}\cdots t_s^{-d_s^{(s)}}
\]
for $s\in J$, where $d_k^{(s)}\coloneqq \langle h_{i_k}, s_{i_{k+1}}\dots s_{i_s}\varpi_{i_s}\rangle$ for $1 \le k \le s$.
\end{thm}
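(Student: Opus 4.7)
The strategy is to first establish the second family of identities, computing $X_s \coloneqq (y_{\bm i}^{\ast} \circ (\eta_w^{\ast})^{-1})(D(s, \bm i))$ as an explicit Laurent monomial in $t_1, \ldots, t_s$, and then deduce the first (Chamber Ansatz) formula by a purely linear-algebraic inversion. Taking logarithms of both displayed formulas, the equivalence amounts to checking that the vector $(-\log t_s)_{s \in J}$ is obtained from $(\log X_s)_{s \in J}$ by the explicit linear relation dictated by the first formula. Because $d_s^{(s)} = \langle h_{i_s}, \varpi_{i_s}\rangle = 1$ and $d_k^{(s)}=0$ for $k>s$, the matrix $M_{\bm i} \coloneqq (d_k^{(s)})_{s,k\in J}$ is lower unitriangular and hence invertible over $\mathbb{Z}$, so at most one such linear relation can hold; thus the equivalence reduces to verifying the identity
\[
d_k^{(s)} + d_k^{(s^-)} + \sum_{t:\, t<s<t^+} c_{i_t, i_s}\, d_k^{(t)} = \delta_{k, s}
\]
for every $k\le s$ (with the conventions $d_k^{(u)}=0$ whenever $k>u$ and $d_k^{(0)}=0$). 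The case $k=s$ is immediate from $d_s^{(s)}=1$, while the case $k<s$ follows from the recursion $\beta_{k-1}^{(s)} = s_{i_k}\beta_k^{(s)}$ for the weights $\beta_k^{(s)}\coloneqq s_{i_{k+1}}\cdots s_{i_s}\varpi_{i_s}$ together with the fact that $s_j\varpi_{i_s}=\varpi_{i_s}$ whenever $j\neq i_s$, after grouping terms by the positions of occurrences of the index $i_s$ in $\bm i$.

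For the second family, the cleanest approach is to reduce to the original Chamber Ansatz of Berenstein-Fomin-Zelevinsky for the $x$-parametrization via the transpose anti-involution $g\mapsto g^{T}$ of $G$. Unfolding \cref{t:twistautom}, the automorphism $\eta_w^{\ast}$ sends $D_{v^{\vee}, v_\lambda}$ to $D_{f_{w\lambda}, v}/D_{w\lambda,\lambda}$, so choosing $v = v_{w_{\le s}\varpi_{i_s}}$ and $\lambda = \varpi_{i_s}$ expresses $(\eta_w^{\ast})^{-1}(D(s, \bm i))$ as a ratio of matrix coefficients evaluated at $\eta_w^{-1}(y_{\bm i}(\mathbf{t}))$. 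The geometric definition $\eta_w(u) = [u^T \overline{w}]_-$ together with $y_i(t)^T = x_i(t)$ then converts this evaluation into a matrix coefficient along the upper unipotent parametrization $x_{i_1}(t_1)\cdots x_{i_m}(t_m)$, where one can invoke the BFZ/BZ factorization formula directly; the resulting exponent of $t_k$ coincides with $-d_k^{(s)}$ because the $k$-th factor $x_{i_k}(t_k)$ contributes precisely the weight $\langle h_{i_k}, \beta_k^{(s)}\rangle$ after one commutes the lift $\overline{w}$ past the matrix coefficient using $\overline{w}\,v_{\varpi_{i_s}} = v_{w\varpi_{i_s}}$.

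The main obstacle will be the reduction step in the second paragraph, namely tracking how the matrix coefficient $\langle f_{w\varpi_{i_s}},\; \eta_w^{-1}(y_{\bm i}(\mathbf{t}))\cdot v_{w_{\le s}\varpi_{i_s}}\rangle$ factorizes through the Bruhat decomposition of $u^{T}\overline{w}\in U^{-}B$; the standard trick is to exploit the integrable weight structure of $V(\varpi_{i_s})$ so that only finitely many terms in the expansion of $x_{i_1}(t_1)\cdots x_{i_m}(t_m)\,v_{\varpi_{i_s}}$ pair nontrivially with $f_{w\varpi_{i_s}}$, leaving precisely a monomial. Once the second family is proved, the first family follows formally from the combinatorial identity above, with no further geometric input required.
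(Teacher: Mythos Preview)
This theorem is not proved in the paper; it is quoted from \cite{BFZ1} and \cite{BZ1} (with the reformulation of \cite{Oya}), so there is no in-paper proof to compare your proposal against. Your outline is in fact a fair summary of how the original arguments run: the monomial formula for $(y_{\bm i}^{\ast}\circ(\eta_w^{\ast})^{-1})(D(s,\bm i))$ comes from the factorization results in \cite{BFZ1,BZ1} via the transpose identification $y_i(t)^T=x_i(t)$, and the Chamber Ansatz formula is the inverse relation, so the two families are indeed equivalent through the unitriangular matrix $M_{\bm i}$.

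Two places in your write-up deserve tightening. First, the key combinatorial identity
\[
d_k^{(s)}+d_k^{(s^-)}+\sum_{t:\,t<s<t^+} c_{i_t,i_s}\,d_k^{(t)}=\delta_{k,s}
\]
is correct (and equivalent to $M_{\bm i}^{-1}$ having the stated form), but your justification ``after grouping terms by the positions of occurrences of the index $i_s$'' is too vague: the clean argument uses that for $j\neq i_s$ the unique $t$ in the sum with $i_t=j$ is $t=s^{-}(j)$, together with the relation $\sum_{j\in I}c_{j,i_s}\varpi_j=\alpha_{i_s}$ and the observation that $s_{i_{u+1}}\cdots s_{i_{s-1}}\varpi_j=\varpi_j$ whenever $u=s^{-}(j)$; one then expands $d_k^{(s)}=\langle h_{i_k},s_{i_{k+1}}\cdots s_{i_{s-1}}(s_{i_s}\varpi_{i_s})\rangle$ using $s_{i_s}\varpi_{i_s}=\varpi_{i_s}-\alpha_{i_s}$ to produce the cancellation. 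Second, your paragraph on reducing to the $x$-parametrization acknowledges that tracking the matrix coefficient through the Bruhat decomposition of $u^{T}\overline{w}$ is the ``main obstacle'' and then waves at it; this is precisely the content of the cited theorems, so if you intend to give a self-contained proof you must actually carry out that computation (or else simply cite \cite[Theorem~4.3]{BZ1} for the monomial formula, as the paper does).
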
  

Since $\eta_w^{\ast}\colon \mathbb{C}[U^-_w]\to \mathbb{C}[U^-_w]$ is a $\mathbb{C}$-algebra automorphism, it induces a $\mathbb{C}$-algebra automorphism $\eta_w^{\ast}\colon \mathbb{C}(U^-_w)\to \mathbb{C}(U^-_w)$. Hence, for each FZ-seed $\mathbf{s}=(\mathbf{A}=(A_j)_{j\in J}, \varepsilon)$ of $\mathbb{C}(U^-_w)$, we obtain a new FZ-seed 
\begin{align*}
(\eta_w^{\ast})^{-1}(\mathbf{s})\coloneqq (\{(\eta_w^{\ast})^{-1}(A_j)\}_{j\in J}, \varepsilon)
\end{align*}
of $\mathbb{C}(U^-_w)$. Moreover, if $\mathbf{s}$ satisfies $\mathscr{U}(\mathbf{s})=\mathbb{C}[U^-_w]$, then $\mathscr{U}((\eta_w^{\ast})^{-1}(\mathbf{s}))=\mathbb{C}[U^-_w]$. Note that we have
\begin{align}
    (\eta_w^{\ast})^{-1}(\mu_{s}(\mathbf{s}))=\mu_{s}((\eta_w^{\ast})^{-1}(\mathbf{s}))\text{ for all }s\in J_{\rm uf}.\label{eq:twistmutation}
\end{align}
For $\bm{i}\in R(w)$, we set 
\[
\mathbf{s}_{\bm{i}}^{\rm mod}\coloneqq(\eta_w^{\ast})^{-1}(\overleftarrow{\boldsymbol{\mu}_{\bm{i}}}(\mathbf{s}_{\bm{i}})).
\]

\begin{prop}\label{p:modified_seed}
Write 
\[
\mathbf{s}_{\bm{i}}^{\rm mod}=((D^{\rm mod}(s,\bm{i}))_{s\in J}, \varepsilon^{\bm{i},{\rm mod}}=(\varepsilon'_{s, t})_{s\in J_{\rm uf}, t\in J}).
\]
Then the following equalities hold (here $s^\vee \coloneqq 0$ if $k[s]=m_{i_s}$):
\[
D^{\rm mod}(s,\bm{i})=\frac{D_{w_{\leq s^{\vee}}\varpi_{i_s}, \varpi_{i_s}}}{D_{w\varpi_{i_s}, \varpi_{i_s}}},
\]
and 
\[
\varepsilon'_{s, t}=
		\begin{cases}
	1&\text{if}\ s=t^+, \\
	-1&\text{if}\ s^+=t, \\
	c_{i_t, i_s}&\text{if}\ \mathsf{R}_{\bm{i}}(t)<\mathsf{R}_{\bm{i}}(s)<\mathsf{R}_{\bm{i}}(t^+)<\mathsf{R}_{\bm{i}}(s^+),\\
	-c_{i_t, i_s}&\text{if}\ \mathsf{R}_{\bm{i}}(s)<\mathsf{R}_{\bm{i}}(t)<\mathsf{R}_{\bm{i}}(s^+)<\mathsf{R}_{\bm{i}}(t^+), \\
    0&\text{otherwise};
	\end{cases} 
\]
recall the notation in \cref{t:mutation_max,t:mutation_max_variable}.
\end{prop}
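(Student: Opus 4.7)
The plan is as follows. The exchange matrix statement is immediate: since $(\eta_w^{\ast})^{-1}$ is a $\mathbb{C}$-algebra automorphism of $\mathbb{C}(U^-_w)$, the FZ-seed $(\eta_w^{\ast})^{-1}(\mathbf{s})$ has by construction the same exchange matrix as $\mathbf{s}$. Hence $\varepsilon^{\bm{i},\mathrm{mod}}$ coincides with $\overleftarrow{\boldsymbol{\mu}_{\bm{i}}}(\varepsilon^{\bm{i}})$, whose entries are given exactly by the claimed formula in \cref{t:mutation_max}.

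For the cluster variables, I intend to apply \cref{t:twistautom} with $\lambda = \varpi_{i_s}$ and $v = v_{w_{\leq s^{\vee}}\varpi_{i_s}}$ (using the convention $s^{\vee}=0$ and $w_{\leq 0}=e$ when $k[s]=m_{i_s}$, so that $D_{w_{\leq 0}\varpi_{i_s},\varpi_{i_s}} = D_{\varpi_{i_s},\varpi_{i_s}} = 1$ on $U^-_w$). Since $v^{\vee} = f_{w_{\leq s^{\vee}}\varpi_{i_s}}$, this gives
\[
\eta_w^{\ast}(D_{w_{\leq s^{\vee}}\varpi_{i_s},\,\varpi_{i_s}}) \;=\; \frac{D_{w\varpi_{i_s},\,w_{\leq s^{\vee}}\varpi_{i_s}}}{D_{w\varpi_{i_s},\,\varpi_{i_s}}} \;=\; \frac{D^{\vee}(s,\bm{i})}{D_{w\varpi_{i_s},\,\varpi_{i_s}}},
\]
where the second equality is \cref{t:mutation_max_variable}. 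Rearranging and applying $(\eta_w^{\ast})^{-1}$ yields
\[
(\eta_w^{\ast})^{-1}(D^{\vee}(s,\bm{i})) \;=\; D_{w_{\leq s^{\vee}}\varpi_{i_s},\,\varpi_{i_s}} \cdot (\eta_w^{\ast})^{-1}(D_{w\varpi_{i_s},\,\varpi_{i_s}}),
\]
so the whole proposition reduces to the single identity
\begin{equation}\label{eq:keyfrozen}
(\eta_w^{\ast})^{-1}(D_{w\varpi_i,\,\varpi_i}) \;=\; D_{w\varpi_i,\,\varpi_i}^{-1}
\end{equation}
for each $i \in \mathrm{pr}_1(\Image\,\xi_{\bm{i}})$.

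To establish \eqref{eq:keyfrozen}, I will compare both sides under the isomorphism $y_{\bm{i}}^{\ast}\colon \mathbb{C}(U^-_w) \xrightarrow{\sim} \mathbb{C}(t_1,\dots,t_m)$. Applying \cref{l:monomial} with $u=w$, $\bm{j}=\bm{i}$, and $\lambda=\varpi_i$ gives
\[
y_{\bm{i}}^{\ast}(D_{w\varpi_i,\,\varpi_i}) \;=\; \prod_{k=1}^{m} t_k^{b_k}, \qquad b_k \coloneqq \langle h_{i_k},\, s_{i_{k+1}}\cdots s_{i_m}\varpi_i\rangle.
\]
If $s$ denotes the largest index with $i_s=i$, then $s_{i_{k'}}\varpi_i=\varpi_i$ for all $k'>s$, which forces $b_k=0$ for $k>s$ and $b_k=d_k^{(s)}$ for $k\leq s$; thus $y_{\bm{i}}^{\ast}(D_{w\varpi_i,\,\varpi_i}) = t_1^{d_1^{(s)}}\cdots t_s^{d_s^{(s)}}$. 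For this same maximal $s$ one has $D(s,\bm{i}) = D_{w\varpi_i,\,\varpi_i}$, and \cref{t:Chamber_Ansatz} yields
\[
(y_{\bm{i}}^{\ast}\circ (\eta_w^{\ast})^{-1})(D_{w\varpi_i,\,\varpi_i}) \;=\; t_1^{-d_1^{(s)}}\cdots t_s^{-d_s^{(s)}} \;=\; y_{\bm{i}}^{\ast}(D_{w\varpi_i,\,\varpi_i}^{-1}).
\]
Injectivity of $y_{\bm{i}}^{\ast}$ then delivers \eqref{eq:keyfrozen}, and substituting back into the displayed expression for $(\eta_w^{\ast})^{-1}(D^{\vee}(s,\bm{i}))$ produces exactly $D_{w_{\leq s^{\vee}}\varpi_{i_s},\,\varpi_{i_s}}/D_{w\varpi_{i_s},\,\varpi_{i_s}}$, as claimed.

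The part that requires some care is choosing the right instance of \cref{t:twistautom} so that $D^{\vee}(s,\bm{i})$ itself appears on the right-hand side; once that reduction is made, the problem collapses to the single identity \eqref{eq:keyfrozen}, and both of its sides are then pure monomials in the $t$-coordinates whose agreement is a direct bookkeeping of weight pairings using \cref{l:monomial} and the Chamber Ansatz.
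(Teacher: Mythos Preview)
Your argument is correct and follows essentially the same route as the paper: both reduce the cluster-variable claim, via \cref{t:twistautom} and \cref{t:mutation_max_variable}, to the single identity $(\eta_w^{\ast})^{-1}(D_{w\varpi_i,\varpi_i}) = D_{w\varpi_i,\varpi_i}^{-1}$, and both read off the exchange matrix directly from \cref{t:mutation_max}. The paper's proof leaves that frozen-variable identity implicit, whereas you supply a self-contained verification of it using \cref{l:monomial} and the Chamber Ansatz, which is a welcome addition.
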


\begin{proof}
By \cref{t:mutation_max_variable}, the cluster variable of  $\overleftarrow{\boldsymbol{\mu}_{\bm{i}}}(\mathbf{s}_{\bm{i}})$ at $s\in J$ is $D^{\vee}(s, \bm{i})=D_{w\varpi_{i_s}, w_{\leq s^{\vee}}\varpi_{i_s}}$. Hence we have 
\[
D^{\rm mod}(s,\bm{i})=(\eta_w^{\ast})^{-1}(D_{w\varpi_{i_s}, w_{\leq s^{\vee}}\varpi_{i_s}})=(\eta_w^{\ast})^{-1}\left(\frac{D_{w\varpi_{i_s}, w_{\leq s^{\vee}}\varpi_{i_s}}}{D_{w\varpi_{i_s}, \varpi_{i_s}}}D_{w\varpi_{i_s}, \varpi_{i_s}}\right)=\frac{D_{w_{\leq s^{\vee}}\varpi_{i_s}, \varpi_{i_s}}}{D_{w\varpi_{i_s}, \varpi_{i_s}}}. 
\]
The explicit form of $\varepsilon'_{s, t}$ immediately follows from \cref{t:mutation_max}. 
\end{proof}

Write $\varepsilon^{\bm{i}}=(\varepsilon_{s, t})_{s\in J_{\rm uf}, t\in J}$ and $\varepsilon^{\bm{i},{\rm mod}}=(\varepsilon'_{s, t})_{s\in J_{\rm uf}, t\in J}$. Set 
\begin{align*}
\widehat{X}_{s; \bm{i}} \coloneqq \prod_{t \in J} D(t, \bm{i})^{\varepsilon_{s, t}}, \qquad
\widehat{X}_{s; \bm{i}}^{\rm mod} \coloneqq \prod_{t \in J} D^{\rm mod}(t,\bm{i})^{\varepsilon'_{s, t}} 
\end{align*}
for $s \in J_{\rm uf}$. 

\begin{thm}\label{t:Xhat_modified}
$\widehat{X}_{s^{\vee}; \bm{i}}^{\rm mod}=\widehat{X}_{s; \bm{i}}$ for all $s \in J_{\rm uf}$.
\end{thm}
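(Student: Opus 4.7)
The plan is to reduce the claimed identity to matching exponents of the algebraically independent elements $\{D(t,\bm{i})\}_{t\in J}$ (which form a cluster in $\mathbb{C}[U^-_w]$). Using \cref{p:modified_seed} and the convention $D(0,\bm{i})\coloneqq 1$, every $D^{\rm mod}(t,\bm{i})$ can be rewritten as $D(t^\vee,\bm{i})/D(\xi_{\bm{i}}^{-1}(i_t,m_{i_t}),\bm{i})$, the denominator being the unique element of $J_{\rm fr}$ carrying letter $i_t$. Substituting this into the definition \eqref{eq:Xhat_variable} of $\widehat{X}_{s^\vee;\bm{i}}^{\rm mod}$ and reindexing the numerator via $\tau=t^\vee$ (legitimate since $(\cdot)^\vee$ is an involution on $J_{\rm uf}$) yields, after collecting powers,
\[
\widehat{X}_{s^\vee;\bm{i}}^{\rm mod}=\prod_{\tau\in J_{\rm uf}}D(\tau,\bm{i})^{\varepsilon'_{s^\vee,\tau^\vee}}\cdot\prod_{r\in J_{\rm fr}}D(r,\bm{i})^{-\sum_{t\in J,\,i_t=i_r}\varepsilon'_{s^\vee,t}}.
\]
Comparing with $\widehat{X}_{s;\bm{i}}=\prod_{\tau\in J}D(\tau,\bm{i})^{\varepsilon_{s,\tau}}$ thus reduces the theorem to the two combinatorial identities
\textbf{(a)} $\varepsilon'_{s^\vee,\tau^\vee}=\varepsilon_{s,\tau}$ for every $\tau\in J_{\rm uf}$, and
\textbf{(b)} $-\sum_{t\in J,\,i_t=i_r}\varepsilon'_{s^\vee,t}=\varepsilon_{s,r}$ for every $r\in J_{\rm fr}$.

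The crucial input for both (a) and (b) is the pair of equalities
\[
\mathsf{R}_{\bm{i}}(\sigma)=m+1-(\sigma^\vee)^+,\qquad \mathsf{R}_{\bm{i}}(\sigma^+)=m+1-\sigma^\vee\qquad(\sigma\in J_{\rm uf}),
\]
which I would derive directly from $\mathsf{R}_{\bm{i}}=(\xi_{\bm{i}^{\rm op}})^{-1}\circ\xi_{\bm{i}}$ by identifying the $k$-th occurrence of letter $i_\sigma$ in $\bm{i}^{\rm op}$ with the $(m_{i_\sigma}-k+1)$-th occurrence in $\bm{i}$. For (a), substituting these into the description of $\varepsilon'$ in \cref{p:modified_seed} converts the interleaving inequality $\mathsf{R}_{\bm{i}}(\tau^\vee)<\mathsf{R}_{\bm{i}}(s^\vee)<\mathsf{R}_{\bm{i}}((\tau^\vee)^+)<\mathsf{R}_{\bm{i}}((s^\vee)^+)$ into $s<\tau<s^+<\tau^+$, which is exactly the condition for $\varepsilon_{s,\tau}=c_{i_\tau,i_s}$; the reversed interleaving is symmetric, and the cases $s^\vee=(\tau^\vee)^+$ and $(s^\vee)^+=\tau^\vee$ match respectively with $s^+=\tau$ and $s=\tau^+$ by tracking $k[\,\cdot\,]$ through the involution.

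For (b), I split on whether $i_r=i_s$. If $i_r=i_s$, only the $s^\vee=t^+$ and $(s^\vee)^+=t$ cases from \cref{p:modified_seed} yield nonzero $\varepsilon'_{s^\vee,t}$ (the Cartan-type cases degenerate to these when $i_t=i_{s^\vee}$), and a direct count gives $\sum_{t:\,i_t=i_r}\varepsilon'_{s^\vee,t}=-1$ precisely when $s^+=r$, matching $\varepsilon_{s,r}$. If $i_r\neq i_s$, I parametrize the occurrences of $i_r$ in $\bm{i}$ as $p_1<p_2<\cdots<p_{m_{i_r}}=r$ and, via the dictionary above, translate each interleaving condition defining $\varepsilon'_{s^\vee,p_k}$ into an elementary condition on the location of $s$ and $s^+$ relative to consecutive $p_j$'s. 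A pairing argument then shows that the positive ($c_{i_r,i_s}$) and negative ($-c_{i_r,i_s}$) contributions cancel in pairs \emph{except} when $r<s^+$ and some $p_j$ lies in $(s,s^+)$ --- equivalently, when $s<r<s^+$ --- in which case the unique surviving contribution is $-c_{i_r,i_s}$, as required. The main obstacle is this pairing-and-cancellation analysis for (b), together with the careful handling of boundary cases where $(\cdot)^+$ reaches $m+1$ or where $(s^\vee)^+$ abuts a frozen index; once the dictionary of the second paragraph is in place, however, the remaining work is systematic bookkeeping.
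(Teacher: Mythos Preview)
Your proposal is correct and follows essentially the same route as the paper's proof. Both arguments rewrite $\widehat{X}_{s^\vee;\bm{i}}^{\rm mod}$ in terms of the original cluster $\{D(t,\bm{i})\}_{t\in J}$ via \cref{p:modified_seed}, then compare exponents; your involution $(\cdot)^\vee$ on $J_{\rm uf}$ is exactly the paper's bijection $\xi^\vee$ in the $\xi_{\bm{i}}$-coordinates, and your identity (a) is precisely the equality $\varepsilon'_{s^\vee,t^\vee}=\varepsilon_{s,t}$ for $s,t\in J_{\rm uf}$ that the paper isolates. The dictionary $\mathsf{R}_{\bm{i}}(\sigma)=m+1-(\sigma^\vee)^+$, $\mathsf{R}_{\bm{i}}(\sigma^+)=m+1-\sigma^\vee$ is the same translation mechanism the paper uses (there phrased as equivalences between $\xi_{\bm{i}^{\rm op}}^{-1}$-inequalities and $\xi_{\bm{i}}^{-1}$-inequalities).

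The only organizational difference lies in the treatment of the frozen exponents. The paper handles these by directly expanding $\prod_{(j,\ell)\in J_{\rm uf}} D_{w\varpi_j,\varpi_j}^{-\varepsilon_{(i,k),(j,\ell)}}$ from the explicit formula for $\varepsilon^{\bm{i}}$ and then matching the leftover factors against the frozen part of $\widehat{X}_{(i,k);\bm{i}}$. You instead package this as the single identity (b), $-\sum_{t:\,i_t=i_r}\varepsilon'_{s^\vee,t}=\varepsilon_{s,r}$, and verify it by a telescoping/pairing count over the occurrences $p_1<\cdots<p_{m_{i_r}}=r$. Both computations are equivalent and equally elementary; your framing is a bit cleaner in that it isolates exactly what must be checked, while the paper's is more explicit about where each $D_{w\varpi_j,\varpi_j}$-power comes from.
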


\begin{proof}
If we regard the index set $J$ for ${\bf s}_{\bm{i}}$ as $\xi_{\bm{i}}(J)$ via $\xi_{\bm{i}}$, then the FZ-seed 
\[
{\bf s}_{\bm{i}}=((D(s, \bm{i}))_{s\in \xi_{\bm{i}}(J)}, (\varepsilon_{s, t})_{s\in \xi_{\bm{i}}(J_{\rm uf}), t\in \xi_{\bm{i}}(J)})
\]
is described as follows (here $\xi_{\bm{i}}^{-1}((i,m_i+1))\coloneqq m+1$ for all $i\in I$):
	\[
	\varepsilon_{(i, k), (j,\ell)}=
	\begin{cases}
	-1&\text{if}\ i=j\text{ and }k=\ell+1, \\
	1&\text{if}\  i=j\text{ and }k+1=\ell, \\
	-c_{j, i}&\text{if}\ \xi_{\bm{i}}^{-1}(j,\ell)<\xi_{\bm{i}}^{-1}(i,k)<\xi_{\bm{i}}^{-1}(j,\ell+1)<\xi_{\bm{i}}^{-1}(i,k+1),\\
	c_{j, i}&\text{if}\ \xi_{\bm{i}}^{-1}(i,k)<\xi_{\bm{i}}^{-1}(j,\ell)<\xi_{\bm{i}}^{-1}(i,k+1)<\xi_{\bm{i}}^{-1}(j,\ell +1), \\
	0&\text{otherwise},
	\end{cases}
	\]
	and 
\[
	D((i, k), \bm{i})=D_{w_{\leq s}\varpi_i, \varpi_i},\text{where }s=\xi_{\bm{i}}^{-1}(i,k). 
\]	
If we regard the index set $J$ for ${\bf s}_{\bm{i}}^{\rm mod}$ as $\xi_{\bm{i}}(J)$ via $\xi_{\bm{i}}$, then the FZ-seed 
\[
\mathbf{s}_{\bm{i}}^{\rm mod}=((D^{\rm mod}(s, \bm{i}))_{s\in \xi_{\bm{i}}(J)}, (\varepsilon'_{s, t})_{s\in \xi_{\bm{i}}(J_{\rm uf}), t\in \xi_{\bm{i}}(J)})
\]
is described as follows (here $\xi_{\bm{i}^{\rm op}}^{-1}((i,m_i+1))\coloneqq m+1$, and $\xi_{\bm{i}}^{-1}((i,0))\coloneqq 0$ for all $i\in I$):
	\[
	\varepsilon'_{(i, k), (j,\ell)}=
	\begin{cases}
	1&\text{if}\ i=j\text{ and }k=\ell+1, \\
	-1&\text{if}\  i=j\text{ and }k+1=\ell, \\
	c_{j, i}&\text{if}\ \xi_{\bm{i}^{\rm op}}^{-1}(j,\ell)<\xi_{\bm{i}^{\rm op}}^{-1}(i,k)<\xi_{\bm{i}^{\rm op}}^{-1}(j,\ell+1)<\xi_{\bm{i}^{\rm op}}^{-1}(i,k+1),\\
	-c_{j, i}&\text{if}\ \xi_{\bm{i}^{\rm op}}^{-1}(i,k)<\xi_{\bm{i}^{\rm op}}^{-1}(j,\ell)<\xi_{\bm{i}^{\rm op}}^{-1}(i,k+1)<\xi_{\bm{i}^{\rm op}}^{-1}(j,\ell +1), \\
	0&\text{otherwise},
	\end{cases}
	\]
	and 
\[
	D^{\rm mod}((i, k), \bm{i})=\frac{D_{w_{\leq s}\varpi_i, \varpi_i}}{D_{w\varpi_i, \varpi_i}}\quad \text{with }s=\xi_{\bm{i}}^{-1}(i,m_i-k), 
\]	
where we note that $\xi_{\bm{i}}(J)=\xi_{\bm{i}^{\rm op}}(J)$. For $(i, k), (j,\ell)\in \xi_{\bm{i}}(J_{\rm uf})=\xi_{\bm{i}^{\rm op}}(J_{\rm uf})$, the condition 
\[
\xi_{\bm{i}^{\rm op}}^{-1}(j,\ell)<\xi_{\bm{i}^{\rm op}}^{-1}(i,k)<\xi_{\bm{i}^{\rm op}}^{-1}(j,\ell+1)<\xi_{\bm{i}^{\rm op}}^{-1}(i,k+1)
\]
is equivalent to 
\[
\xi_{\bm{i}}^{-1}(i,m_i-k)<\xi_{\bm{i}}^{-1}(j,m_j-\ell)<\xi_{\bm{i}}^{-1}(i,m_i-k+1)<\xi_{\bm{i}}^{-1}(j,m_j-\ell +1).
\]
Moreover, for $(i, k)\in  \xi_{\bm{i}}(J_{\rm uf})=\xi_{\bm{i}^{\rm op}}(J_{\rm uf})$ and $(j, m_j)\in  \xi_{\bm{i}}(J_{\rm fr})=\xi_{\bm{i}^{\rm op}}(J_{\rm fr})$, the condition
\[
\xi_{\bm{i}^{\rm op}}^{-1}(i,k)<\xi_{\bm{i}^{\rm op}}^{-1}(j,m_j)<\xi_{\bm{i}^{\rm op}}^{-1}(i,k+1)
\]
is equivalent to 
\[
\xi_{\bm{i}}^{-1}(i,m_i-k)<\xi_{\bm{i}}^{-1}(j,1)<\xi_{\bm{i}}^{-1}(i,m_i-k+1).
\]
Hence if we consider a bijection $\xi^{\vee}\colon \xi_{\bm{i}}(J)\to \xi_{\bm{i}}(J)$ given by 
\[
(i, k)\mapsto \begin{cases}
(i, m_i-k)&\text{if }k<m_i,\\
(i, m_{i})&\text{if }k=m_i,  
\end{cases}
\]
then we can describe $\varepsilon'_{s, t}$ as follows:
\begin{align*}
    \varepsilon'_{\xi^{\vee}(i, k), \xi^{\vee}(j,\ell)}&=
	\begin{cases}
	1&\text{if}\ i=j\text{ and }k+1=\ell<m_j, \\
	-1&\text{if}\  i=j\text{ and }k=\ell+1, \\
	-1&\text{if}\ i=j,\ k=1,\ \text{ and }\ell= m_j, \\
	-c_{j, i}&\text{if}\ \xi_{\bm{i}}^{-1}(j,\ell)<\xi_{\bm{i}}^{-1}(i,k)<\xi_{\bm{i}}^{-1}(j,\ell +1)<\xi_{\bm{i}}^{-1}(i,k+1), \\	
	-c_{j, i}&\text{if}\ \xi_{\bm{i}}^{-1}(i,k)<\xi_{\bm{i}}^{-1}(j,1)<\xi_{\bm{i}}^{-1}(i,k+1)\text{ and }\ell=m_j, \\	
	c_{j, i}&\text{if}\ \xi_{\bm{i}}^{-1}(i,k)<\xi_{\bm{i}}^{-1}(j,\ell)<\xi_{\bm{i}}^{-1}(i,k+1)<\xi_{\bm{i}}^{-1}(j,\ell +1)\text{ and }\ell<m_j,\\
	0&\text{otherwise}
	\end{cases}\\
	&=\begin{cases}
	\varepsilon_{(i, k), (j,\ell)}&\text{if}\ (j,\ell)\in \xi_{\bm{i}}(J_{\rm uf}), \\
	-1&\text{if}\ i=j,\ k=1,\ \text{ and }\ell= m_j, \\
    -c_{j, i}&\text{if}\ \xi_{\bm{i}}^{-1}(i,k)<\xi_{\bm{i}}^{-1}(j,1)<\xi_{\bm{i}}^{-1}(i,k+1)\text{ and }\ell=m_j, \\	
	0&\text{otherwise}.
	\end{cases}
\end{align*}
Moreover, it follows that
\[
	D^{\rm mod}(\xi^{\vee}(i, k), \bm{i})=
	\begin{cases}
	\displaystyle\frac{D_{w_{\leq s}\varpi_i, \varpi_i}}{D_{w\varpi_i, \varpi_i}}\quad \text{with }s=\xi_{\bm{i}}^{-1}(i,k)&\text{if }k<m_i,\\
	\displaystyle\frac{1}{D_{w\varpi_{i}, \varpi_{i}}}&\text{if }k=m_i.
	\end{cases}
\]	
Hence, for $(i,k)\in \xi_{\bm{i}}(J_{\rm uf})$, we have
\begin{align*}
\widehat{X}_{\xi^{\vee}(i, k); \bm{i}}^{\rm mod}
    &=\prod_{(j,\ell) \in \xi_{\bm{i}}(J)} D^{\rm mod}(\xi^{\vee}(j,\ell),\bm{i})^{\varepsilon'_{\xi^{\vee}(i,k), \xi^{\vee}(j,\ell)}}\\
    &=\left(\prod_{(j,\ell) \in \xi_{\bm{i}}(J_{\rm uf})} D^{\rm mod}(\xi^{\vee}(j,\ell),\bm{i})^{\varepsilon_{(i,k), (j,\ell)}}\right)\\
    &\phantom{=}\cdot\left(\prod_{\substack{j\in I;\\ \xi_{\bm{i}}^{-1}(i,k)<\xi_{\bm{i}}^{-1}(j,1)<\xi_{\bm{i}}^{-1}(i,k+1)}} D^{\rm mod}(\xi^{\vee}(j,m_j),\bm{i})^{-c_{j, i}}\right)D^{\rm mod}(\xi^{\vee}(i,m_i),\bm{i})^{-\delta_{k,1}}\\
    &=\left(\prod_{(j,\ell) \in \xi_{\bm{i}}(J_{\rm uf})} \left(\frac{D((j,\ell),\bm{i})}{D_{w\varpi_j, \varpi_j}}\right)^{\varepsilon_{(i,k), (j,\ell)}}\right)\left(\prod_{\substack{j\in I;\\\xi_{\bm{i}}^{-1}(i,k)<\xi_{\bm{i}}^{-1}(j,1)<\xi_{\bm{i}}^{-1}(i,k+1)}} D_{w\varpi_j, \varpi_j}^{c_{j, i}}\right)D_{w\varpi_i, \varpi_i}^{\delta_{k,1}}.
\end{align*}
Here the explicit formula of $\varepsilon_{(i, k), (j,\ell)}$ implies that  
\begin{align*}
\prod_{(j,\ell) \in \xi_{\bm{i}}(J_{\rm uf})} D_{w\varpi_j, \varpi_j}^{-\varepsilon_{(i,k), (j,\ell)}}&=D_{w\varpi_i, \varpi_i}^{-\delta_{k,1}+\delta_{k,m_i-1}} \left(\prod_{\substack{j\in I;\\ \xi_{\bm{i}}^{-1}(i,k)<\xi_{\bm{i}}^{-1}(j,1)<\xi_{\bm{i}}^{-1}(i,k+1)<
\xi_{\bm{i}}^{-1}(j, m_j)}} D_{w\varpi_j, \varpi_j}^{-c_{j, i}}\right)\\
&\phantom{=}\cdot \left(\prod_{\substack{j\in I;\\ \xi_{\bm{i}}^{-1}(j,1)<\xi_{\bm{i}}^{-1}(i,k)<\xi_{\bm{i}}^{-1}(j,m_j)<
\xi_{\bm{i}}^{-1}(i, k+1)}} D_{w\varpi_j, \varpi_j}^{c_{j, i}}\right).
\end{align*}
Hence, for $(i,k)\in \xi_{\bm{i}}(J_{\rm uf})$, we deduce that 
\begin{align*}
&\widehat{X}_{\xi^{\vee}(i, k); \bm{i}}^{\rm mod}\\
    &=\left(\prod_{(j,\ell) \in \xi_{\bm{i}}(J_{\rm uf})} D((j,\ell),\bm{i})^{\varepsilon_{(i,k), (j,\ell)}}\right)\left(\prod_{\substack{j\in I;\\ \xi_{\bm{i}}^{-1}(i,k)<\xi_{\bm{i}}^{-1}(j,1)<\xi_{\bm{i}}^{-1}(j, m_j)<\xi_{\bm{i}}^{-1}(i,k+1)}} D_{w\varpi_j, \varpi_j}^{c_{j, i}}\right)\\
    &\phantom{=}\cdot \left(\prod_{\substack{j\in I;\\ \xi_{\bm{i}}^{-1}(j,1)<\xi_{\bm{i}}^{-1}(i,k)<\xi_{\bm{i}}^{-1}(j,m_j)<
\xi_{\bm{i}}^{-1}(i, k+1)}} D_{w\varpi_j, \varpi_j}^{c_{j, i}}\right)D_{w\varpi_i, \varpi_i}^{\delta_{k,m_i-1}}\\
&=\left(\prod_{(j,\ell) \in \xi_{\bm{i}}(J_{\rm uf})} D((j,\ell),\bm{i})^{\varepsilon_{(i,k), (j,\ell)}}\right)\left(\prod_{\substack{j\in I;\\ \xi_{\bm{i}}^{-1}(i,k)<\xi_{\bm{i}}^{-1}(j, m_j)<\xi_{\bm{i}}^{-1}(i,k+1)}} D_{w\varpi_j, \varpi_j}^{c_{j, i}}\right)D_{w\varpi_i, \varpi_i}^{\delta_{k,m_i-1}}\\
&=\widehat{X}_{(i, k); \bm{i}}.
\end{align*}
Since $s^{\vee}=(\xi_{\bm{i}}^{-1}\circ \xi^{\vee}\circ \xi_{\bm{i}})(s)$ for $s\in J_{\rm uf}$, we obtain the desired result. 
\end{proof}

\begin{cor}\label{c:modified_Xvariable}
Let $(s_1,\dots, s_k)$ be a sequence of elements of $J_{\rm uf}$. Then the following equality holds:
\begin{align*}
    \mu_{s_k^{\vee}}\cdots \mu_{s_1^{\vee}}(\widehat{X}_{s^{\vee}; \bm{i}}^{\rm mod})=\mu_{s_k}\cdots \mu_{s_1}(\widehat{X}_{s; \bm{i}}).
\end{align*}
\end{cor}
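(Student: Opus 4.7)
\medskip

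\noindent\textbf{Proof plan.} The plan is to proceed by induction on $k$, using the fact (visible from equation \eqref{eq:X-mutation}) that the mutation of $\widehat{X}$-variables depends only on the principal part of the exchange matrix, together with an analysis of how the bijection $s \mapsto s^{\vee}$ on $J_{\rm uf}$ intertwines the two principal parts.

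\smallskip

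\noindent\emph{Setup.} For $j \geq 0$, set ${\bf s}^{(j)} \coloneqq \mu_{s_j} \cdots \mu_{s_1}({\bf s}_{\bm{i}})$ and ${\bf s}^{(j),\mathrm{mod}} \coloneqq \mu_{s_j^{\vee}} \cdots \mu_{s_1^{\vee}}({\bf s}_{\bm{i}}^{\rm mod})$, and write $\varepsilon^{(j)}$ and $\varepsilon^{(j),\mathrm{mod}}$ for their exchange matrices. I will prove the following strengthened statement simultaneously by induction on $j$:
\begin{enumerate}
\item[(I)] $\widehat{X}_{s^{\vee}; {\bf s}^{(j),\mathrm{mod}}} = \widehat{X}_{s; {\bf s}^{(j)}}$ for every $s \in J_{\rm uf}$;
\item[(II)] $\varepsilon^{(j),\mathrm{mod}}_{s^{\vee},t^{\vee}} = \varepsilon^{(j)}_{s,t}$ for every $s,t \in J_{\rm uf}$.
\end{enumerate}
The base case $j=0$ of (I) is precisely \cref{t:Xhat_modified}. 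For $j=0$, statement (II) is extracted directly from the explicit computation of $\varepsilon'_{\xi^{\vee}(i,k),\xi^{\vee}(j,\ell)}$ displayed in the proof of \cref{t:Xhat_modified}, where it was shown that the coincidence with $\varepsilon_{(i,k),(j,\ell)}$ holds whenever $(j,\ell) \in \xi_{\bm{i}}(J_{\rm uf})$; this is exactly (II) rewritten through the identification $J_{\rm uf} \simeq \xi_{\bm{i}}(J_{\rm uf})$.

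\smallskip

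\noindent\emph{Inductive step.} Assume (I) and (II) hold at stage $j-1$. For the inductive step of (II), I apply the matrix mutation rule \eqref{eq_mutation_for_varepsilon} at $s_j$ (resp.\ $s_j^{\vee}$). Since $s_j \in J_{\rm uf}$ (so that $s_j^{\vee} \in J_{\rm uf}$ as well), every matrix entry appearing on the right-hand side of \eqref{eq_mutation_for_varepsilon} has both indices in $J_{\rm uf}$, and the induction hypothesis (II) guarantees that those entries match under $s \mapsto s^{\vee}$. Hence (II) is preserved. For (I), I invoke the $\widehat{X}$-mutation formula \eqref{eq:X-mutation}: for $s \neq s_j$ in $J_{\rm uf}$,
\[
\widehat{X}_{s; {\bf s}^{(j)}} = \widehat{X}_{s; {\bf s}^{(j-1)}}\,\widehat{X}_{s_j; {\bf s}^{(j-1)}}^{[\varepsilon^{(j-1)}_{s, s_j}]_+}\bigl(1 + \widehat{X}_{s_j; {\bf s}^{(j-1)}}\bigr)^{-\varepsilon^{(j-1)}_{s, s_j}},
\]
and the analogous expression holds for $\widehat{X}_{s^{\vee}; {\bf s}^{(j),\mathrm{mod}}}$. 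Both $\widehat{X}$-factors agree by the induction hypothesis (I), and the exponents agree by the induction hypothesis (II); the case $s = s_j$ is the inversion rule and is immediate from (I). This completes the induction, and the desired equality is (I) at $j = k$.

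\smallskip

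\noindent\emph{Main obstacle.} The genuine content of the corollary lies in statement (II) at $j=0$, i.e.\ in recognizing that the principal parts of $\varepsilon^{\bm{i}}$ and $\varepsilon^{\bm{i},\mathrm{mod}}$ coincide via ${}^{\vee}$ even though the full $J_{\rm uf} \times J$ matrices differ in the frozen columns (the discrepancy visible in \cref{t:Xhat_modified}'s proof involves precisely the frozen indices). Once this is in hand, the inductive propagation is routine because mutation of $\widehat{X}$ and of the principal part of $\varepsilon$ are both intrinsic to the principal data. I do not foresee any further difficulty beyond invoking \cref{t:Xhat_modified} and the explicit formulas already appearing in its proof.
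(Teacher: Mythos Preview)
Your argument is correct and follows essentially the same route as the paper's own proof: the paper simply states that $\varepsilon'_{s^{\vee},t^{\vee}}=\varepsilon_{s,t}$ for $s,t\in J_{\rm uf}$ (extracted from the proof of \cref{t:Xhat_modified}) and then combines this with \cref{t:Xhat_modified} and the $\widehat{X}$-mutation formula \eqref{eq:X-mutation}, whereas you have spelled out the underlying induction on $k$ explicitly.
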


\begin{proof}
In the proof of \cref{t:Xhat_modified}, we have proved that 
\[
\varepsilon'_{s^{\vee}, t^{\vee}}=\varepsilon_{s, t}
\]
for all $s, t\in J_{\rm uf}$. Therefore, we obtain the desired assertion from \cref{t:Xhat_modified} by computing $\mu_{s_k^{\vee}}\cdots \mu_{s_1^{\vee}}(\widehat{X}_{s^{\vee}; \bm{i}}^{\rm mod})$ and $\mu_{s_k}\cdots \mu_{s_1}(\widehat{X}_{s; \bm{i}})$ inductively, using the relation \eqref{eq:X-mutation}. 
\end{proof}
\begin{cor}\label{c:modified_Avariable}
Let $\bm{\sigma}=(s_1,\dots, s_k)$ be a sequence of elements of  $J_{\rm uf}$, and write 
\[
\mu_{s_k}\cdots \mu_{s_1}(\mathbf{D}_{\bm{i}})=(D_{s; \bm{\sigma}})_{s\in J},\quad
\mu_{s_k^{\vee}}\cdots \mu_{s_1^{\vee}}(\mathbf{D}_{\bm{i}}^{\rm mod})=(D_{s; \bm{\sigma}}^{\rm mod})_{s\in J},
\]
where $\mathbf{D}_{\bm{i}}$ and $\mathbf{D}_{\bm{i}}^{\rm mod}$ are the clusters of $\mathbf{s}_{\bm{i}}$ and $\mathbf{s}_{\bm{i}}^{\rm mod}$, respectively. Then 
$D_{s^{\vee}; \bm{\sigma}}^{\rm mod}/D_{s; \bm{\sigma}}$ is equal to a Laurent monomial in $D_{w\varpi_i, \varpi_i}, i\in I$, for all $s\in J_{\rm uf}$. 
\end{cor}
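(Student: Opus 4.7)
The plan is to induct on the length $k$ of $\bm\sigma$, using \cref{p:modified_seed} for the base case and the $\widehat X$-identity of \cref{c:modified_Xvariable} to drive the inductive step. For $k=0$ and $s\in J_{\rm uf}$, the involution $\vee$ satisfies $(s^\vee)^\vee=s$ and $i_{s^\vee}=i_s$, so applying \cref{p:modified_seed} at $s^\vee$ gives $D^{\rm mod}(s^\vee,\bm i)=D_{w_{\le s}\varpi_{i_s},\varpi_{i_s}}/D_{w\varpi_{i_s},\varpi_{i_s}}=D(s,\bm i)/D_{w\varpi_{i_s},\varpi_{i_s}}$, and the required ratio is the Laurent monomial $D_{w\varpi_{i_s},\varpi_{i_s}}^{-1}$.

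For the inductive step, set $\bm\sigma'=(\bm\sigma,r)$ with $r=s_k\in J_{\rm uf}$. If $s\in J_{\rm uf}\setminus\{r\}$ then $s^\vee\neq r^\vee$ (since $\vee$ is a bijection of $J_{\rm uf}$), so neither $D_{s;\bm\sigma'}$ nor $D^{\rm mod}_{s^\vee;\bm\sigma'}$ changes, and the claim follows from the inductive hypothesis. For $s=r$, write each exchange relation in the factored form
\[
D_{r;\bm\sigma'}\,D_{r;\bm\sigma}=\Bigl(\prod_{t\in J}D_{t;\bm\sigma}^{[\varepsilon^{\bm\sigma}_{r,t}]_+}\Bigr)\bigl(1+\widehat X_{r;\bm\sigma}^{-1}\bigr),
\]
together with the analogous identity for $D^{\rm mod}_{r^\vee;\bm\sigma'}$, $D^{\rm mod}_{r^\vee;\bm\sigma}$, $\varepsilon^{\bm\sigma,\rm mod}_{r^\vee,\bullet}$ and $\widehat X^{\rm mod}_{r^\vee;\bm\sigma}$. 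Because $\widehat X^{\rm mod}_{r^\vee;\bm\sigma}=\widehat X_{r;\bm\sigma}$ by \cref{c:modified_Xvariable}, the trailing factor $1+\widehat X^{-1}$ is common to both relations and cancels in the ratio, yielding
\[
\frac{D^{\rm mod}_{r^\vee;\bm\sigma'}}{D_{r;\bm\sigma'}}=\frac{D_{r;\bm\sigma}}{D^{\rm mod}_{r^\vee;\bm\sigma}}\cdot\frac{\prod_{t\in J}(D^{\rm mod}_{t;\bm\sigma})^{[\varepsilon^{\bm\sigma,\rm mod}_{r^\vee,t}]_+}}{\prod_{t\in J}D_{t;\bm\sigma}^{[\varepsilon^{\bm\sigma}_{r,t}]_+}}.
\]
The first factor is a Laurent monomial in $\{D_{w\varpi_i,\varpi_i}\}_{i\in I}$ by the inductive hypothesis.

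For the second factor the key observation is that for every $t\in J$ one has $D^{\rm mod}_{t;\bm\sigma}=M_t\,D_{\iota(t);\bm\sigma}$ for some Laurent monomial $M_t$ in $\{D_{w\varpi_i,\varpi_i}\}_{i\in I}$, where $\iota(t)=t^\vee$ for $t\in J_{\rm uf}$ (by the inductive hypothesis, after inverting and using $(t^\vee)^\vee=t$) and $\iota(t)=t$, $M_t=D_{w\varpi_{i_t},\varpi_{i_t}}^{-2}$ for $t\in J_{\rm fr}$ (by \cref{p:modified_seed} together with the invariance of frozen variables under mutation). Substituting this into $\widehat X^{\rm mod}_{r^\vee;\bm\sigma}=\widehat X_{r;\bm\sigma}$ and invoking algebraic independence of the cluster $(D_{t;\bm\sigma})_{t\in J}$ forces $\varepsilon^{\bm\sigma,\rm mod}_{r^\vee,t^\vee}=\varepsilon^{\bm\sigma}_{r,t}$ for all $t\in J_{\rm uf}$, with any residual discrepancy pushed into the $M_t$'s and into powers of $D_{t;\bm\sigma}=D_{w\varpi_{i_t},\varpi_{i_t}}$ for $t\in J_{\rm fr}$. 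Re-indexing the numerator of the second factor by $t\mapsto\iota(t)$ then cancels the unfrozen cluster variables against the denominator, and what remains is a Laurent monomial in $\{D_{w\varpi_i,\varpi_i}\}_{i\in I}$.

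The main obstacle is precisely this last bookkeeping: the exchange matrices $\varepsilon^{\bm\sigma,\rm mod}$ and $\varepsilon^{\bm\sigma}$ may genuinely disagree in their frozen columns even after the $\vee$-permutation of rows, and one must verify that every such difference contributes only powers of $\{D_{w\varpi_i,\varpi_i}\}_{i\in I}$. The identity $\widehat X^{\rm mod}_{r^\vee;\bm\sigma}=\widehat X_{r;\bm\sigma}$ supplied by \cref{c:modified_Xvariable} is exactly the constraint that confines this mismatch to the multiplicative subgroup generated by $\{D_{w\varpi_i,\varpi_i}\}_{i\in I}$; handling it while keeping track of the truncations $[\cdot]_+$ is the delicate part of the argument.
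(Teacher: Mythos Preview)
Your approach is correct and essentially matches the paper's: both induct on $k$, use \cref{p:modified_seed} for the base case, and in the inductive step factor the exchange relation so that the common factor $(1+\widehat X^{\pm1})$ cancels via \cref{c:modified_Xvariable}, leaving only a comparison of monomials in cluster variables. The paper streamlines your last step by directly citing the identity $\varepsilon'_{s^\vee,t^\vee}=\varepsilon_{s,t}$ for $s,t\in J_{\rm uf}$ (established in the proof of \cref{t:Xhat_modified} and hence in \cref{c:modified_Xvariable}, and automatically inherited by all mutated seeds), which makes the truncation bookkeeping you flag as ``delicate'' entirely routine: the unfrozen contributions to the two $[\cdot]_+$-products then match exactly after the $\vee$-reindexing, and every remaining factor is already a Laurent monomial in the $D_{w\varpi_i,\varpi_i}$ by \cref{r:modified_frozenvariable} and the inductive hypothesis.
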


\begin{rem}\label{r:modified_frozenvariable}
Note that we have
\[
D_{s; \bm{\sigma}}=D_{w\varpi_{i_s}, \varpi_{i_s}}\quad \text{and} \quad 
D_{s; \bm{\sigma}}^{\rm mod}=\frac{1}{D_{w\varpi_{i_s}, \varpi_{i_s}}}
\]
for all $s\in J_{\rm fr}$.
\end{rem}

\begin{proof}[{Proof of \cref{c:modified_Avariable}}]
We prove our statement by induction on $k$. When $k=0$, it follows from \cref{p:modified_seed}. Next suppose that the proposition holds for $\bm{\sigma}'=(s_1,\dots, s_{k-1})$. By \eqref{eq_mutation_for_cluster_variable}, we have 
\begin{align*}
D_{s; \bm{\sigma}}&=\begin{cases}
\displaystyle\frac{\prod_{t\in J}D_{t; \bm{\sigma}'}^{[-\varepsilon_{s_k, t}^{(k-1)}]_+}}{D_{s_k; \bm{\sigma}'}}(1+\widehat{X}_{s_k; \bm{\sigma}'})&\text{if }s= s_k,\\
D_{s; \bm{\sigma}'}&\text{if }s\neq s_k,
\end{cases} \\
D_{s; \bm{\sigma}}^{\rm mod}&=\begin{cases}
\displaystyle\frac{\prod_{t\in J}(D_{t; \bm{\sigma}'}^{\rm mod})^{[-\varepsilon_{s_k^{\vee}, t}^{(k-1), {\rm mod}}]_+}}{D_{s_k^{\vee}; \bm{\sigma}'}^{\rm mod}}(1+\widehat{X}_{s_k^{\vee}; \bm{\sigma}'}^{\rm mod})&\text{if }s= s_k^{\vee},\\
D_{s; \bm{\sigma}'}^{\rm mod}&\text{if }s\neq s_k^{\vee},
\end{cases}
\end{align*}
where we set $\mu_{s_{k-1}}\cdots \mu_{s_1}(\varepsilon^{\bm{i}})=(\varepsilon_{s, t}^{(k-1)})_{s\in J_{\rm uf}, t\in J}$, $\mu_{s_{k-1}^{\vee}}\cdots \mu_{s_1^{\vee}}(\varepsilon^{\bm{i},{\rm mod}})=(\varepsilon_{s, t}^{(k-1), {\rm mod}})_{s\in J_{\rm uf}, t\in J}$, $\mu_{s_{k-1}}\cdots \mu_{s_1}(\widehat{X}_{s_k; \bm{i}})=\widehat{X}_{s_k; \bm{\sigma}'}$, and $\mu_{s_{k-1}^{\vee}}\cdots \mu_{s_1^{\vee}}(\widehat{X}_{s_k^{\vee}; \bm{i}}^{\rm mod})=\widehat{X}_{s_k^{\vee}; \bm{\sigma}'}^{\rm mod}$. 

Hence, by \cref{c:modified_Xvariable}, \cref{r:modified_frozenvariable}, the equalities $\varepsilon'_{s^{\vee}, t^{\vee}}=\varepsilon_{s, t}$ for $s, t\in J_{\rm uf}$, and our induction hypothesis, we deduce the corollary. 
\end{proof}

By \eqref{eq:twistmutation} and the definition of $\mathbf{s}_{\bm{i}}^{\rm mod}$, we have 
\[
\overrightarrow{\boldsymbol{\mu}_{\bm{i}}}(\mathbf{s}_{\bm{i}}^{\rm mod})=(\eta_w^{\ast})^{-1}(\mathbf{s}_{\bm{i}}),
\]
where $\overrightarrow{\boldsymbol{\mu}_{\bm{i}}} \coloneqq (\overleftarrow{\boldsymbol{\mu}_{\bm{i}}})^{-1}$ (the mutation sequence obtained from $\overleftarrow{\boldsymbol{\mu}_{\bm{i}}}$ by reversing the order of the composition). Set 
\[
\widehat{X}_{s; \bm{i}}^{\rm mod, mut}\coloneqq \overrightarrow{\boldsymbol{\mu}_{\bm{i}}}(\widehat{X}_{s; \bm{i}}^{\rm mod})
\]
for $s\in J_{\rm uf}$. 

\begin{thm}\label{t:XT-rel}
Recall the isomorphism $y_{\bm{i}}^{\ast}\colon \mathbb{C}(U^-_w)\xrightarrow{\sim} \mathbb{C}(t_1,\dots, t_m)$ explained in the beginning of Section \ref{s:rel_with_stringNZ}. Then it holds that
\[
y_{\bm{i}}^{\ast}(\widehat{X}_{s; \bm{i}}^{\rm mod, mut})=t_st_{s^{+}}^{-1}
\]
for all $s\in J_{\rm uf}$. 
\end{thm}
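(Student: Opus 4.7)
The plan is to expand $\widehat{X}_{s; \bm{i}}^{\rm mod, mut}$ as a Laurent monomial in the twisted minors $(\eta_w^{\ast})^{-1}(D(t, \bm{i}))$, and then identify its image under $y_{\bm{i}}^{\ast}$ with $t_s t_{s^+}^{-1}$ by means of the Chamber Ansatz (\cref{t:Chamber_Ansatz}). Recalling from the discussion just before the theorem that $\overrightarrow{\boldsymbol{\mu}_{\bm{i}}}(\mathbf{s}_{\bm{i}}^{\rm mod}) = (\eta_w^{\ast})^{-1}(\mathbf{s}_{\bm{i}})$, and observing that $\widehat{X}$-variables transform under mutation according to the rule \eqref{eq:X-mutation} (which depends only on the ambient seed, not on the particular realization of its cluster), the first step gives
\[
\widehat{X}_{s; \bm{i}}^{\rm mod, mut} \;=\; \prod_{t \in J} \bigl((\eta_w^{\ast})^{-1}(D(t, \bm{i}))\bigr)^{\varepsilon_{s, t}},
\]
where $\varepsilon^{\bm{i}} = (\varepsilon_{s, t})$.

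Next, the Chamber Ansatz writes $t_s = y_{\bm{i}}^{\ast}\bigl(\prod_{t \in J}(\eta_w^{\ast})^{-1}(D(t, \bm{i}))^{\alpha_{s, t}}\bigr)$, where $\alpha_{s, s^-} = \alpha_{s, s} = -1$ (with $D(0, \bm{i}) = 1$ so that the $s^- = 0$ term is trivial), $\alpha_{s, t} = -c_{i_t, i_s}$ whenever $t < s < t^+$, and $\alpha_{s, t} = 0$ otherwise. Consequently,
\[
t_s \, t_{s^+}^{-1} \;=\; y_{\bm{i}}^{\ast}\!\left(\prod_{t \in J} \bigl((\eta_w^{\ast})^{-1}(D(t, \bm{i}))\bigr)^{\alpha_{s, t} - \alpha_{s^+, t}}\right),
\]
so the theorem is reduced to verifying the combinatorial identity $\alpha_{s, t} - \alpha_{s^+, t} = \varepsilon_{s, t}$ for every $t \in J$.

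The final step is a direct case analysis organized by the position of $t$ relative to $\{s^-, s, s^+\}$ and by the interleaving configuration of the pairs $(t, t^+)$ and $(s, s^+)$. For $t \in \{s^-, s, s^+\}$ the differences are $-1 - 0$, $-1 - (-1)$, $0 - (-1)$, matching $\varepsilon_{s, s^-} = -1$, $\varepsilon_{s, s} = 0$, $\varepsilon_{s, s^+} = 1$, respectively. The interleaving patterns $t < s < t^+ < s^+$ and $s < t < s^+ < t^+$ produce $-c_{i_t, i_s}$ and $c_{i_t, i_s}$, precisely the two remaining nonzero branches of $\varepsilon^{\bm{i}}$. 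In all other positions, both $\alpha_{s, t}$ and $\alpha_{s^+, t}$ vanish, and so does $\varepsilon_{s, t}$.

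The main obstacle is not conceptual but rather the bookkeeping in the interleaving case. In particular, one must rule out the borderline configuration $t^+ = s^+$ --- which would force $i_t = i_s$ together with $t < s < t^+$, contradicting the minimality in the definition of $t^+$ in \cref{n:indexplus} --- so that the $-c_{i_t, i_s}$-type contributions appearing in both $t_s$ and $t_{s^+}$ either cancel exactly or leave the correct sign, thereby aligning precisely with the four nonzero branches of $\varepsilon^{\bm{i}}$.
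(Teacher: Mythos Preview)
Your proof is correct and follows essentially the same route as the paper: both arguments identify $\widehat{X}_{s;\bm{i}}^{\rm mod,mut}$ with the $\widehat{X}$-variable of the seed $(\eta_w^{\ast})^{-1}(\mathbf{s}_{\bm{i}})$, i.e., $\prod_{t\in J}\bigl((\eta_w^{\ast})^{-1}(D(t,\bm{i}))\bigr)^{\varepsilon_{s,t}}$, and then compare with $t_s t_{s^+}^{-1}$ via the Chamber Ansatz. The only cosmetic difference is that the paper carries out the simplification by manipulating the two products $\prod_{t<s<t^+}$ and $\prod_{t<s^+<t^+}$ directly, whereas you phrase the same cancellation as the exponent identity $\alpha_{s,t}-\alpha_{s^+,t}=\varepsilon_{s,t}$ and verify it case by case.
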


\begin{proof}
By \cref{t:Chamber_Ansatz,t:upperBruhat}, we have 
\begin{align*}
    &t_st_{s^{+}}^{-1}\\
    &=(y_{\bm{i}}^{\ast}\circ (\eta_w^{\ast})^{-1})\left(\frac{\prod_{t<s<t^+}D(t, \bm{i})^{-c_{i_t, i_s}}}{D(s^-, \bm{i})D(s, \bm{i})}\cdot 
    \frac{D(s, \bm{i})D(s^+, \bm{i})}{\prod_{t<s^+<t^+}D(t, \bm{i})^{-c_{i_t, i_s}}}\right)\\
    &=(y_{\bm{i}}^{\ast}\circ (\eta_w^{\ast})^{-1})\\
    &\left(\frac{\prod\limits_{t<s<s^+<t^+}D(t, \bm{i})^{-c_{i_t, i_s}}\prod\limits_{t<s<t^+<s^+}D(t, \bm{i})^{-c_{i_t, i_s}}}{D(s^-, \bm{i})D(s, \bm{i})}\cdot
    \frac{D(s, \bm{i})D(s^+, \bm{i})}{\prod\limits_{s<t<s^+<t^+}D(t, \bm{i})^{-c_{i_t, i_s}} \prod\limits_{t<s<s^+<t^+}D(t, \bm{i})^{-c_{i_t, i_s}}}\right)\\
       &=(y_{\bm{i}}^{\ast}\circ (\eta_w^{\ast})^{-1})\left(\frac{\prod_{t<s<t^+<s^+}D(t, \bm{i})^{-c_{i_t, i_s}}}{D(s^-, \bm{i})}\cdot
    \frac{D(s^+, \bm{i})}{\prod_{s<t<s^+<t^+}D(t, \bm{i})^{-c_{i_t, i_s}}}\right)=y_{\bm{i}}^{\ast}(\widehat{X}_{s; \bm{i}}^{\rm mod, mut}). 
\end{align*}
\end{proof}

Let $\overrightarrow{\boldsymbol{\mu}_{\bm{i}}}^{\vee}$ be a mutation sequence obtained from $\overrightarrow{\boldsymbol{\mu}_{\bm{i}}}$ by replacing each $\mu_s$ with $\mu_{s^{\vee}}$. Set 
\[
\mathbf{s}_{\bm{i}}^{\rm mut}\coloneqq\overrightarrow{\boldsymbol{\mu}_{\bm{i}}}^{\vee}(\mathbf{s}_{\bm{i}}),
\]
and write 
\begin{align*}
&\mathbf{D}_{\bm{i}}^{\rm mut}=(D^{\rm mut}(s, \bm{i}))_{s\in J}\coloneqq \overrightarrow{\boldsymbol{\mu}_{\bm{i}}}^{\vee}(\mathbf{D}_{\bm{i}}),\\ 
&\varepsilon^{\bm{i}, {\rm mut}} = (\varepsilon_{s, t}^{\rm mut})_{s\in J_{\rm uf}, t\in J}\coloneqq \overrightarrow{\boldsymbol{\mu}_{\bm{i}}}^{\vee}(\varepsilon^{\bm{i}}),\\
&\widehat{X}_{s; \bm{i}}^{\rm mut}\coloneqq \prod_{t \in J} D^{\rm mut}(t, \bm{i})^{\varepsilon_{s, t}^{\rm mut}}\text{ for }s\in J_{\rm uf}.  
\end{align*}
Then, by \cref{c:modified_Xvariable}, we have 
\[
\widehat{X}_{s; \bm{i}}^{\rm mut}=\widehat{X}_{s^{\vee}; \bm{i}}^{\rm mod, mut}
\]
for $s\in J_{\rm uf}$. 

\begin{ex}\label{e:smut}
	For the examples in \cref{e:quiverexample}, we obtain the following.
	\begin{itemize}
	
\item The case of $G = SL_4(\mathbb{C})$ and $\bm{i} = (2, 1, 2, 3, 2, 1)\in R(w_0)$: we have $\overrightarrow{\boldsymbol{\mu}_{\bm{i}}}^{\vee} = \mu_3 \mu_1 \mu_2 \mu_3$, and the quiver associated with $\varepsilon^{\bm{i}, {\rm mut}}$ is given by 
	
	\hfill
	\begin{xy} 0;<1pt,0pt>:<0pt,-1pt>::
		(-18,0) *+{3},
		(-10,0) *+{\scalebox{0.65}{$>$}},	
		(-18,30) *+{2},
		(-10,30) *+{\scalebox{0.65}{$>$}},		
		(-18,60) *+{1},
		(-10,60) *+{\scalebox{0.65}{$>$}},			
		(0,30) *+{1} ="1",
		(40,60) *+{2} ="2",
		(80,30) *+{3} ="3",
		(120,0) *+{4} ="4",		
		(160,30) *+{5} ="5",
		(200,60) *+{6} ="6",
		"1", {\ar"3"},
		"4", {\ar"1"},
		"3", {\ar"4"},
		"3", {\ar"2"},
		"5", {\ar"3"},
		"2", {\ar"6"},
	\end{xy}
	\hfill
	\hfill
	
\item The case of $G = Sp_4(\mathbb{C})$ and $\bm{i}=(1, 2, 1, 2)\in R(w_0)$: we have $\overrightarrow{\boldsymbol{\mu}_{\bm{i}}}^{\vee} = \mu_1 \mu_2$, and the quiver associated with $\varepsilon^{\bm{i}, {\rm mut}}$ is given by 
	
	\hfill
	\begin{xy} 0;<1pt,0pt>:<0pt,-1pt>::
		(-18,0) *+{2},
		(-10,0) *+{\scalebox{0.65}{$>$}},	
		(-18,30) *+{1},
		(-10,30) *+{\scalebox{0.65}{$>$}},		
		(0,30) *+{1} ="1",
		(40,0) *+{2} ="2",
		(80,30) *+{3} ="3",
		(120,0) *+{4} ="4",
		"1", {\ar"2"},
		"2", {\ar"4"},
		"3", {\ar"2"},
	\end{xy}
	\hfill
	\hfill

	\end{itemize}	
\end{ex}

The following is the main result of this subsection.

\begin{thm}\label{t:relNZ}
Let $w\in W$, and $\bm{i}\in R(w)$. 
Then there exist a refinement of the partial order $\preceq_{\varepsilon^{\bm{i}, {\rm mut}}}^{\rm op}$ (\cref{d:order,d:main_valuation}) and a unimodular $J\times J$-matrix $N_{\bm{i}}$ such that the corresponding valuation $v_{\mathbf{s}_{\bm{i}}^{\rm mut}}$ on $\mathbb{C}(X(w))$ associated with $\mathbf{s}_{\bm{i}}^{{\rm mut}}$ (\cref{d:main_valuation}) satisfies 
	\[
	v_{\bm i}^{\rm low}(f)=v_{\mathbf{s}_{\bm{i}}^{\rm mut}}(f)N_{\bm{i}} 
	\] 	
	in $\mathbb{Z}^J$ for all $f\in \mathbb{C}(X(w)) \setminus \{0\}$. Here we consider elements of $\mathbb{Z}^J$ as $1\times J$-matrices.
\end{thm}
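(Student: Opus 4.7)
The plan is to mimic the proof of \cref{t:relstring}, producing a unimodular change-of-coordinates matrix $N_{\bm{i}}$ by evaluating $v_{\bm{i}}^{\mathrm{low}}$ on the cluster $\mathbf{D}_{\bm{i}}^{\mathrm{mut}}$ and then pulling back the lex order $<$ on $\mathbb{Z}^m$ through $N_{\bm{i}}$. First I would compute $y_{\bm{i}}^{\ast}(D^{\mathrm{mut}}(s,\bm{i}))$ for every $s\in J$. For $s\in J_{\mathrm{fr}}$ the cluster variable is unchanged, $D^{\mathrm{mut}}(s,\bm{i})=D(s,\bm{i})=D_{w\varpi_{i_s},\varpi_{i_s}}$, and \cref{l:monomial} identifies its $y_{\bm{i}}^{\ast}$-image with an explicit monomial in $t_1,\dots,t_m$. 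For $s\in J_{\mathrm{uf}}$, applying \cref{c:modified_Avariable} to the sequence realizing $\overrightarrow{\boldsymbol{\mu}_{\bm{i}}}^{\vee}$ (whose image under $\vee$ is $\overrightarrow{\boldsymbol{\mu}_{\bm{i}}}$) identifies $D^{\mathrm{mut}}(s,\bm{i})$, up to a Laurent monomial in the frozens $D_{w\varpi_i,\varpi_i}$, with the $s^{\vee}$-component of $\overrightarrow{\boldsymbol{\mu}_{\bm{i}}}(\mathbf{D}_{\bm{i}}^{\mathrm{mod}})=(\eta_w^{\ast})^{-1}(\mathbf{D}_{\bm{i}})$. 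The Chamber Ansatz (\cref{t:Chamber_Ansatz}) then gives
\[
y_{\bm{i}}^{\ast}(D^{\mathrm{mut}}(s,\bm{i})) = t_1^{-d_1^{(s^{\vee})}}\cdots t_{s^{\vee}}^{-d_{s^{\vee}}^{(s^{\vee})}}\cdot (\text{Laurent monomial in frozens}).
\]
In particular every $y_{\bm{i}}^{\ast}(D^{\mathrm{mut}}(s,\bm{i}))$ is a Laurent monomial, so $n_s \coloneqq v_{\bm{i}}^{\mathrm{low}}(D^{\mathrm{mut}}(s,\bm{i}))\in\mathbb{Z}^J$ is an integer vector; let $N_{\bm{i}}$ be the $J\times J$ matrix whose $s$-th row is $n_s$.

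I would then prove that $N_{\bm{i}}$ is unimodular, by showing that its rows become lower-triangular with $\pm 1$ on the diagonal after a suitable row reduction. For $s\in J_{\mathrm{uf}}$ the $s^{\vee}$-coordinate of $n_s$ equals $-d_{s^{\vee}}^{(s^{\vee})}=-\langle h_{i_s},\varpi_{i_s}\rangle=-1$, and $n_s$ vanishes beyond position $s^{\vee}$ up to contributions from the frozens. For $s\in J_{\mathrm{fr}}$, since $s$ is the last occurrence of the letter $i_s$ in $\bm{i}$ we have $s_j\varpi_{i_s}=\varpi_{i_s}$ for every $j=i_k$ with $k>s$, so the $k$-th coordinate of $n_s$ vanishes for $k>s$ and equals $\langle h_{i_s},\varpi_{i_s}\rangle=1$ at $k=s$. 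Subtracting appropriate integer multiples of the frozen rows from the unfrozen rows and permuting the unfrozen rows by the involution $\vee$ of $J_{\mathrm{uf}}$ reduces $N_{\bm{i}}$ to a lower-triangular matrix with diagonal entries $\pm 1$, so $|\det N_{\bm{i}}|=1$.

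Now define the total order $<_{\mathrm{t}}$ on $\mathbb{Z}^J$ by pullback, $\mathbf{a}<_{\mathrm{t}}\mathbf{a}'$ iff $\mathbf{a}N_{\bm{i}}<\mathbf{a}'N_{\bm{i}}$; this is well-defined by unimodularity. To see that $<_{\mathrm{t}}$ refines $\preceq^{\mathrm{op}}_{\varepsilon^{\bm{i},\mathrm{mut}}}$, combine \cref{t:XT-rel} with the identity $\widehat{X}^{\mathrm{mut}}_{s;\bm{i}}=\widehat{X}^{\mathrm{mod,mut}}_{s^{\vee};\bm{i}}$ (implicit in \cref{c:modified_Xvariable}) to obtain $v_{\bm{i}}^{\mathrm{low}}(\widehat{X}^{\mathrm{mut}}_{s;\bm{i}})=\mathbf{e}_{s^{\vee}}-\mathbf{e}_{(s^{\vee})^+}$ for $s\in J_{\mathrm{uf}}$; equivalently, $\varepsilon^{\bm{i},\mathrm{mut}}N_{\bm{i}}=P_{\bm{i}}$, where $P_{\bm{i}}$ is the $J_{\mathrm{uf}}\times J$ matrix with rows $\mathbf{e}_{s^{\vee}}-\mathbf{e}_{(s^{\vee})^+}$. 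For nonzero $\mathbf{v}=(v_s)\in\mathbb{Z}_{\ge 0}^{J_{\mathrm{uf}}}$, a straightforward induction on $k$ in lex order shows that the ``$-v_s$'' contributions at position $k$ (which come from $s$ with $(s^{\vee})^+=k$) are forced to vanish by the vanishing of earlier coordinates, so the first nonzero coordinate of $\mathbf{v}P_{\bm{i}}$ is positive; hence $\mathbf{v}P_{\bm{i}}\ge 0$ in lex with equality iff $\mathbf{v}=0$. Finally, \cref{prop1_val} applied to the expansion $f=\sum_{\mathbf{a}} c_{\mathbf{a}}\prod_s D^{\mathrm{mut}}(s,\bm{i})^{a_s}$, together with the distinctness of the vectors $\{\mathbf{a}N_{\bm{i}}\}_{\mathbf{a}}$ coming from unimodularity, yields $v_{\bm{i}}^{\mathrm{low}}(f)=v_{\mathbf{s}_{\bm{i}}^{\mathrm{mut}}}(f)N_{\bm{i}}$ for polynomials; additivity of $v$ extends the equality to all nonzero $f\in\mathbb{C}(X(w))$. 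The main obstacle is the row-reduction bookkeeping for unimodularity and the inductive lex argument for refinement; both rely crucially on $\vee$ being an involution of $J_{\mathrm{uf}}$ and on the last-occurrence role played by the frozens.
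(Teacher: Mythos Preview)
Your proposal is correct and follows essentially the same route as the paper's proof: both express each $D^{\mathrm{mut}}(s,\bm{i})$ as a Laurent monomial in $t_1,\dots,t_m$ via \cref{c:modified_Avariable} and the Chamber Ansatz (\cref{t:Chamber_Ansatz}), define $N_{\bm{i}}$ as the resulting exponent matrix, pull back the lexicographic order $<$ through $N_{\bm{i}}$, and verify refinement using $v_{\bm{i}}^{\mathrm{low}}(\widehat{X}^{\mathrm{mut}}_{s;\bm{i}})=\mathbf{e}_{s^{\vee}}-\mathbf{e}_{(s^{\vee})^+}$ from \cref{t:XT-rel}. Two minor execution differences: for unimodularity the paper applies the first Chamber Ansatz formula to write each $t_s$ directly as a Laurent monomial in the $D^{\mathrm{mut}}(t,\bm{i})$'s, producing an explicit integer inverse $\widetilde{N}_{\bm{i}}$ without any row-reduction bookkeeping; and for refinement the paper simply checks that each row $\mathbf{e}_{s^{\vee}}-\mathbf{e}_{(s^{\vee})^+}$ of $\varepsilon^{\bm{i},\mathrm{mut}}N_{\bm{i}}$ is lex-positive (which suffices since the pulled-back order respects addition), rather than arguing inductively on $\mathbf{v}P_{\bm{i}}$.
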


\begin{proof}
By \cref{c:modified_Avariable} and \cref{r:modified_frozenvariable}, we have 
\[
D^{\rm mut}(s, \bm{i})=\begin{cases}
(\eta_w^{\ast})^{-1}(D(s^{\vee}, \bm{i}))\prod_{i\in I}D_{w\varpi_i, \varpi_i}^{f_i^{(s)}}&\text{ if } s\in J_{\rm uf},\\
D_{w\varpi_{i_s}, \varpi_{i_s}}&\text{ if } s\in J_{\rm fr}
\end{cases}
\]
for some $f_i^{(s)}\in \mathbb{Z}$ ($i\in I$, $s\in J$). Hence, by \cref{t:Chamber_Ansatz}, if we set 
\[
n_k^{(s)}\coloneqq \begin{cases}
-d_k^{(s^{\vee})}+\sum_{i\in I}f_i^{(s)}\langle h_{i_k}, s_{i_{k+1}}\cdots s_{i_{m}}\varpi_i\rangle&\text{ if }s\in J_{\rm uf},\\
d_k^{(s)}&\text{ if }s\in J_{\rm fr}
\end{cases}
\]
for $k \in J$, then we have  
\[
y_{\bm{i}}^{\ast}(D^{\rm mut}(s, \bm{i}))=t_1^{n_1^{(s)}}\cdots t_m^{n_m^{(s)}}
\]
for all $s\in J$. Moreover, by \cref{t:Chamber_Ansatz} again, there exist $\tilde{n}_{t}^{(s)}\in\mathbb{Z}$, $s,  t\in J$, such that 
\[
t_s=y_{\bm{i}}^{\ast}(D^{\rm mut}(1, \bm{i}))^{\tilde{n}_{1}^{(s)}}\cdots y_{\bm{i}}^{\ast}(D^{\rm mut}(m, \bm{i}))^{\tilde{n}_{m}^{(s)}}
\]
for all $s\in J$. Thus, if we set $N_{\bm{i}}\coloneqq (n^{(s)}_{t})_{s, t\in J}$ (we consider $s$ as a row index and $t$ as a column index), then $N_{\bm{i}}$ is a unimodular $J\times J$-matrix. Indeed, for $\widetilde{N}_{\bm{i}}\coloneqq (\tilde{n}^{(s)}_{t})_{s, t\in J}$ (we consider $s$ as a row index and $t$ as a column index), we have  $N_{\bm{i}}^{-1}=\widetilde{N}_{\bm{i}}$. We consider the \emph{total} order $<_{\widetilde{N}_{\bm{i}}}$ on $\mathbb{Z}^{J}$ defined by
	\[
	\bm{a} <_{\widetilde{N}_{\bm{i}}} \bm{a}'\ \text{if and only if }
	\bm{a}'-\bm{a}=\bm{v}\widetilde{N}_{\bm{i}} \ \text{for some }\bm{v}\in \mathbb{Z}^{J}=\mathbb{Z}^m\text{ such that }\bm{v}>(0,\dots, 0),
	\]
	where we recall the lexicographic order $<$ from \cref{d:lowest_term_valuation_Schubert}. Then we claim that the total order $<_{\widetilde{N}_{\bm{i}}}$ on $\mathbb{Z}^m$ refines the partial order $\preceq_{\varepsilon^{\bm{i}, {\rm mut}}}^{\rm op}$. Indeed, it suffices to show that  
	\[
	(0,\dots, 0)<_{\widetilde{N}_{\bm{i}}}(\varepsilon_{s, 1}^{\rm mut},\dots, \varepsilon_{s, m}^{\rm mut})
	\]
	for all $s\in J_{\rm uf}$. This is equivalent to the condition that
	\[
	(0,\dots, 0)<(\varepsilon_{s, 1}^{\rm mut},\dots, \varepsilon_{s, m}^{\rm mut})N_{\bm{i}}=v_{\bm i}^{\rm low}(\widehat{X}_{s; \bm{i}}^{\rm mut})
	\]
    for all $s\in J_{\rm uf}$. By \cref{t:XT-rel}, we have 
    \[
    v_{\bm i}^{\rm low}(\widehat{X}_{s; \bm{i}}^{\rm mut})=(0, \ldots,0, \overset{\overset{s^{\vee}}{\vee}}{1},0, \ldots, 0,\overset{\overset{(s^{\vee})^+}{\vee}}{-1},0, \ldots, 0),
    \]
    which proves our claim. Thus, by using $<_{\widetilde{N}_{\bm{i}}}$, we define the valuation $v_{\mathbf{s}_{\bm{i}}^{\rm mut}}$ on $\mathbb{C}(X(w))$. Then, by definition, 
    \[
    v_{\mathbf{s}_{\bm{i}}^{{\rm mut}}}(D^{\rm mut}(1, \bm{i})^{a_1}\cdots D^{\rm mut}(m, \bm{i})^{a_m})\leq_{\widetilde{N}_{\bm{i}}}v_{\mathbf{s}_{\bm{i}}^{{\rm mut}}}(D^{\rm mut}(1, \bm{i})^{a_1 ^\prime}\cdots D^{\rm mut}(m, \bm{i})^{a_m ^\prime})
    \]
    if and only if 
    \[
    v_{\bm i}^{\rm low}(D^{\rm mut}(1, \bm{i})^{a_1}\cdots D^{\rm mut}(m, \bm{i})^{a_m})\leq v_{\bm i}^{\rm low}(D^{\rm mut}(1, \bm{i})^{a_1 ^\prime}\cdots D^{\rm mut}(m, \bm{i})^{a_m ^\prime}). 
    \]
	Moreover, it follows that
\[
v_{\bm i}^{\rm low}(D^{\rm mut}(1, \bm{i})^{a_1}\cdots D^{\rm mut}(m, \bm{i})^{a_m})=v_{\mathbf{s}_{\bm{i}}^{\rm mut}}(D^{\rm mut}(1, \bm{i})^{a_1}\cdots D^{\rm mut}(m, \bm{i})^{a_m})N_{\bm{i}} 
\]
for all $(a_1, \ldots, a_m) \in \z^m$. Hence we have 
\[
	v_{\bm i}^{\rm low}(f)=v_{\mathbf{s}_{\bm{i}}^{\rm mut}}(f)N_{\bm{i}}
	\] 	
	for all $f\in \mathbb{C}(X(w)) \setminus \{0\}$. 
\end{proof}

Combining Theorems \ref{t:NO_body_crystal_basis} and \ref{t:relNZ} with the unimodularity of $N_{\bm i}$, we obtain the following.

\begin{cor}\label{c:relNZ}
Let $w \in W$, $\lambda \in P_+$, and ${\bm i} \in R(w)$. 
Define $v_{\mathbf{s}_{\bm{i}}^{\rm mut}}$ and $N_{\bm{i}}$ as in \cref{t:relNZ}. 
Then the equalities 
\begin{align*}
&S(X(w), \mathcal{L}_\lambda, v_{\bm i} ^{\rm low}, \tau_\lambda) = \{(k, {\bm a} N_{\bm i}) \mid (k, {\bm a}) \in S(X(w), \mathcal{L}_\lambda, v_{\mathbf{s}_{\bm{i}}^{\rm mut}}, \tau_\lambda)\},\\
&C(X(w), \mathcal{L}_\lambda, v_{\bm i} ^{\rm low}, \tau_\lambda) = \{(k, {\bm a} N_{\bm i}) \mid (k, {\bm a}) \in C(X(w), \mathcal{L}_\lambda, v_{\mathbf{s}_{\bm{i}}^{\rm mut}}, \tau_\lambda)\},\ {\it and}\\
&\Delta(X(w), \mathcal{L}_\lambda, v_{\bm i} ^{\rm low}, \tau_\lambda) = \Delta(X(w), \mathcal{L}_\lambda, v_{\mathbf{s}_{\bm{i}}^{\rm mut}}, \tau_\lambda) N_{\bm i}
\end{align*}
hold. 
In particular, the Newton--Okounkov body $\Delta(X(w), \mathcal{L}_\lambda, v_{\mathbf{s}_{\bm{i}}^{\rm mut}}, \tau_\lambda)$ is unimodularly equivalent to the Nakashima--Zelevinsky polytope $\widetilde{\Delta}_{\bm i} (\lambda)$. 
\end{cor}

Moreover, by \cref{c:relation_with_string_cones} and \cref{t:relNZ}, we obtain the following.

\begin{cor}
Define $v_{\mathbf{s}_{\bm{i}}^{\rm mut}}$ and $N_{\bm{i}}$ as in \cref{t:relNZ}. 
Then the equality
\[C_{\bm i} = C_{{\bf s}_{\bm i}^{\rm mut}} N_{\bm i}\]
holds. 
In particular, the cluster cone $C_{{\bf s}_{\bm i}^{\rm mut}}$ is unimodularly equivalent to the string cone associated with ${\bm i}^{\rm op}$.
\end{cor} 

By combining Corollaries \ref{c:relation_of_NO_by_tropicalized_mutations}, \ref{c:relstring}, and \ref{c:relNZ}, we obtain the following corollaries.

\begin{cor}\label{c:relation_string_NZ}
For $w \in W$ and $\lambda \in P_+$, the string polytopes $\Delta_{\bm i} (\lambda)$ and the Nakashima--Zelevinsky polytopes $\widetilde{\Delta}_{\bm i} (\lambda)$ associated with ${\bm i} \in R(w)$ are all related by tropicalized cluster mutations up to unimodular transformations.
\end{cor}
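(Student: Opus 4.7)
The plan is to deduce the corollary by combining the polytope identifications from Corollaries \ref{c:relstring} and \ref{c:relNZ} with the mutation-equivariance statement in \cref{t:main_result_for_all_seeds}(3), using \cref{t:independence_of_rex} to connect the seeds for varying reduced words.

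First I would observe that by \cref{t:independence_of_rex}, for any two reduced words $\bm{i}, \bm{j} \in R(w)$, the FZ-seeds $\mathbf{s}_{\bm{i}}$ and $\mathbf{s}_{\bm{j}}$ are mutually mutation equivalent, i.e.\ there exists a sequence of mutations $\mu_{k_r}\cdots\mu_{k_1}$ with $\mu_{k_r}\cdots\mu_{k_1}(\mathbf{s}_{\bm{i}}) = \mathbf{s}_{\bm{j}}$. Moreover, by the very definition of $\mathbf{s}_{\bm{i}}^{\mathrm{mut}} = \overrightarrow{\boldsymbol{\mu}_{\bm{i}}}^{\vee}(\mathbf{s}_{\bm{i}})$, each $\mathbf{s}_{\bm{i}}^{\mathrm{mut}}$ lies in the same mutation equivalence class. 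Thus the set of seeds
\[
\{\mathbf{s}_{\bm{i}} \mid \bm{i} \in R(w)\} \cup \{\mathbf{s}_{\bm{i}}^{\mathrm{mut}} \mid \bm{i} \in R(w)\}
\]
is contained in a single mutation class, and each pair is joined by a finite sequence of cluster mutations $\mu_{k_r}\cdots\mu_{k_1}$.

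Next I would apply \cref{t:main_result_for_all_seeds}(3) iteratively along such a mutation sequence: if $\mathbf{s}' = \mu_{k_r}\cdots\mu_{k_1}(\mathbf{s})$, then
\[
\Delta(X(w),\mathcal{L}_\lambda, v_{\mathbf{s}'}, \tau_\lambda) = \mu_{k_r}^T\cdots\mu_{k_1}^T\bigl(\Delta(X(w),\mathcal{L}_\lambda, v_{\mathbf{s}}, \tau_\lambda)\bigr),
\]
where each $\mu_{k_j}^T$ is the tropicalized mutation on $\mathcal{A}^\vee(\mathbb{R}^T)$. In particular, for any $\bm{i}, \bm{j} \in R(w)$, the four Newton-Okounkov bodies
\[
\Delta(X(w),\mathcal{L}_\lambda, v_{\mathbf{s}_{\bm{i}}}, \tau_\lambda),\ \Delta(X(w),\mathcal{L}_\lambda, v_{\mathbf{s}_{\bm{i}}^{\mathrm{mut}}}, \tau_\lambda),\ \Delta(X(w),\mathcal{L}_\lambda, v_{\mathbf{s}_{\bm{j}}}, \tau_\lambda),\ \Delta(X(w),\mathcal{L}_\lambda, v_{\mathbf{s}_{\bm{j}}^{\mathrm{mut}}}, \tau_\lambda)
\]
are all related by compositions of tropicalized cluster mutations, and the use of \cref{t:main_result_for_all_seeds}(1) shows that no choice of refining total order intervenes.

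Finally I would invoke \cref{c:relstring}(1), which gives a unimodular transformation $M_{\bm{i}}$ identifying the first of these with $\Delta_{\bm{i}}(\lambda)$ (and similarly $M_{\bm{j}}$ for $\Delta_{\bm{j}}(\lambda)$), and \cref{c:relNZ}(1), which gives a unimodular transformation $N_{\bm{i}}$ identifying the second with $\widetilde{\Delta}_{\bm{i}}(\lambda)$ (and $N_{\bm{j}}$ for $\widetilde{\Delta}_{\bm{j}}(\lambda)$). Composing with these unimodular equivalences yields the corollary.

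There is no serious obstacle here once the earlier machinery is in place: the result is essentially a bookkeeping corollary combining \cref{t:independence_of_rex}, Corollaries \ref{c:relstring} and \ref{c:relNZ}, and the mutation-equivariance Theorem 3. The one point requiring care is interpreting the compositions of unimodular transformations $M_{\bm{i}}, N_{\bm{i}}$ on either end together with the tropicalized mutations in the middle as a single statement of the stated form; here the symmetry hypothesis on $C(\mathfrak{g})$ enters only through its role in \cref{t:main_result_for_all_seeds}, whereas Corollaries \ref{c:relstring} and \ref{c:relNZ} hold in the general symmetrizable setting.
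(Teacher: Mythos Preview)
Your proposal is correct and takes essentially the same approach as the paper, which simply states that the corollary follows by combining Corollaries \ref{c:relstring} and \ref{c:relNZ} with \cref{t:main_result_for_all_seeds}. Your version spells out the argument in more detail, in particular making explicit the role of \cref{t:independence_of_rex} in placing all the seeds $\mathbf{s}_{\bm{i}}$ and $\mathbf{s}_{\bm{i}}^{\mathrm{mut}}$ in a common mutation class, but the logical structure is identical.
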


\begin{cor}\label{c:relation_string_NZ_twist_auto}
The tropicalization $(\overrightarrow{\boldsymbol{\mu}_{\bm{i}}}^{\vee})^T$ of the mutation sequence $\overrightarrow{\boldsymbol{\mu}_{\bm{i}}}^{\vee}$ gives a bijective piecewise-linear map from the string polytope $\Delta(X(w), \mathcal{L}_\lambda, v_{{\bf s}_{\bm i}}, \tau_\lambda)$ onto the Nakashima--Zelevinsky polytope $\Delta(X(w), \mathcal{L}_\lambda, v_{{\bf s}_{\bm i} ^{\rm mut}}, \tau_\lambda)$.
\end{cor}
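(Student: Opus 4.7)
The plan is to assemble this corollary directly from \cref{t:main_result_for_all_seeds}(3) applied iteratively along the mutation sequence $\overrightarrow{\boldsymbol{\mu}_{\bm{i}}}^{\vee}$, combined with the identifications of the two Newton-Okounkov bodies already established in \cref{c:relstring} and \cref{c:relNZ}.

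First, I would write $\overrightarrow{\boldsymbol{\mu}_{\bm{i}}}^{\vee}$ explicitly as a composition $\mu_{k_\ell}\circ\cdots\circ\mu_{k_1}$ of single mutations in directions $k_1,\ldots,k_\ell\in J_{\rm uf}$, and track the resulting chain of FZ-seeds
\[
\mathbf{s}_{\bm i}=\mathbf{s}^{(0)},\ \mathbf{s}^{(1)},\ \ldots,\ \mathbf{s}^{(\ell)}=\mathbf{s}_{\bm i}^{\rm mut},\qquad \mathbf{s}^{(r)}=\mu_{k_r}(\mathbf{s}^{(r-1)}).
\]
Since each $\mathbf{s}^{(r)}$ lies in the cluster pattern $\mathcal{S}$ on $\mathbb{C}[U_w^-]$, it corresponds to some vertex $t^{(r)}\in\mathbb{T}$ with $t^{(r-1)}\overset{k_r}{\text{---}}t^{(r)}$.

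Next, I would iterate \cref{t:main_result_for_all_seeds}(3): at each step,
\[
\Delta(X(w),\mathcal{L}_\lambda,v_{t^{(r)}},\tau_\lambda)=\mu_{k_r}^T\bigl(\Delta(X(w),\mathcal{L}_\lambda,v_{t^{(r-1)}},\tau_\lambda)\bigr),
\]
and composing over $r=1,\ldots,\ell$ yields
\[
\Delta(X(w),\mathcal{L}_\lambda,v_{\mathbf{s}_{\bm i}^{\rm mut}},\tau_\lambda)=(\overrightarrow{\boldsymbol{\mu}_{\bm{i}}}^{\vee})^T\bigl(\Delta(X(w),\mathcal{L}_\lambda,v_{\mathbf{s}_{\bm i}},\tau_\lambda)\bigr),
\]
where, by definition, $(\overrightarrow{\boldsymbol{\mu}_{\bm{i}}}^{\vee})^T\coloneqq \mu_{k_\ell}^T\circ\cdots\circ\mu_{k_1}^T$ is the tropicalization of the mutation sequence built from the formulas \eqref{eq:tropical_mutation} at each intermediate seed. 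By \cref{c:relstring}(1) the domain is the string polytope $\Delta_{\bm i}(\lambda)$ up to a unimodular transformation, and by \cref{c:relNZ}(1) the codomain is the Nakashima-Zelevinsky polytope $\widetilde{\Delta}_{\bm i}(\lambda)$ up to a unimodular transformation; hence the displayed equality really does exhibit a bijection between the two polytopes.

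Finally, bijectivity and piecewise-linearity follow formally: each individual tropicalized mutation $\mu_k^T$ on $\mathcal{A}^\vee(\mathbb{R}^T)$ is a bijective piecewise-linear map (with inverse again of this form, since involutivity of $\mu_k$ at the level of seeds passes through the tropicalization), and arbitrary compositions of such maps are still bijective and piecewise-linear. There is no serious obstacle to overcome: all deep input, namely pointedness of the upper global basis of Kashiwara-Kim and the independence of the refinement of $\preceq_{\varepsilon_t}^{\rm op}$, is already packaged into \cref{t:main_result_for_all_seeds}. The only point that warrants care is bookkeeping: one must check that the $\mu_{k_r}^T$ appearing in the iteration are precisely the factors entering the definition of $(\overrightarrow{\boldsymbol{\mu}_{\bm{i}}}^{\vee})^T$ recalled in Sect.~\ref{ss:tropicalization}, so that the composite piecewise-linear map on $\mathcal{A}^\vee(\mathbb{R}^T)$ matches the one declared in the statement.
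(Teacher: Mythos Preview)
Your proposal is correct and matches the paper's own approach: the corollary is stated without proof because it follows immediately by iterating \cref{t:main_result_for_all_seeds}(3) along the mutation sequence $\overrightarrow{\boldsymbol{\mu}_{\bm{i}}}^{\vee}$, exactly as you outline. The references to \cref{c:relstring} and \cref{c:relNZ} are not even strictly needed here, since the statement names the polytopes directly as $\Delta(X(w),\mathcal{L}_\lambda,v_{\mathbf{s}_{\bm i}},\tau_\lambda)$ and $\Delta(X(w),\mathcal{L}_\lambda,v_{\mathbf{s}_{\bm i}^{\rm mut}},\tau_\lambda)$; part (1) of \cref{t:main_result_for_all_seeds} ensures these are well-defined independently of the refinement.
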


\begin{ex}
Let $G = SL_3(\c)$, $\lambda \in P_+$, and consider the upper cluster algebra structure on $\c[U_{w_0} ^-]$. Since $X(w_0) = G/B$, we obtain the Newton--Okounkov body $\Delta(G/B, \mathcal{L}_\lambda, v_{{\bf s}}, \tau_\lambda)$ for each FZ-seed ${\bf s}$ for $\c[U_{w_0} ^-]$. In this case, there are only two FZ-seeds: ${\bf s}_{\bm i}$ and ${\bf s}_{{\bm i}^\prime}$, where ${\bm i}, {\bm i}^\prime \in R(w_0)$ are defined by ${\bm i} \coloneqq (1, 2, 1)$ and ${\bm i}^\prime \coloneqq (2, 1, 2)$. The quivers corresponding to $\varepsilon^{\bm i}$ and $\varepsilon^{{\bm i}^\prime}$ are both given as follows:

	\hfill
	\scalebox{1.0}[1.0]{
\begin{xy} 0;<1pt,0pt>:<0pt,-1pt>::
			(120,10) *+{2} ="1",
			(60,40) *+{1} ="3",
			(180,40) *+{3.} ="4",
			"3", {\ar"1"},
			"4", {\ar"3"},
		\end{xy}
	}
	\hfill
	\hfill

\noindent Then we have ${\bf s}_{{\bm i}^\prime} = \mu_1 ({\bf s}_{\bm i})$. We deduce by \cref{t:NO_body_crystal_basis} (3) and \cite[Section 1]{Lit} that the Newton--Okounkov body $\Delta(G/B, \mathcal{L}_\lambda, \tilde{v}_{\bm i} ^{\rm low}, \tau_\lambda)$ (resp., $\Delta(G/B, \mathcal{L}_\lambda, \tilde{v}_{{\bm i}^\prime} ^{\rm low}, \tau_\lambda)$) coincides with 
\begin{align*}
&\{(a_1, a_2, a_3) \in \r_{\ge 0} ^3 \mid a_3 \le \lambda_1,\ a_3 \le a_2 \le a_3 + \lambda_2,\ a_1 \le a_2 -2a_3 +\lambda_1\}\\
({\rm resp}.,\ &\{(a_1, a_2, a_3) \in \r_{\ge 0} ^3 \mid a_3 \le \lambda_2,\ a_3 \le a_2 \le a_3 + \lambda_1,\ a_1 \le a_2 -2a_3 +\lambda_2\}),
\end{align*}
where $\lambda_i \coloneqq \langle \lambda, h_i\rangle$ for $i = 1, 2$. Since we have 
\[
M_{\bm i} = M_{{\bm i}^\prime} = \begin{pmatrix}
1 & 0 & 0\\
1 & 1 & 0 \\
0 & 1 & 1
\end{pmatrix},
\]
it follows by \cref{c:relstring} (1) that $\Delta(G/B, \mathcal{L}_\lambda, v_{{\bf s}_{\bm i}}, \tau_\lambda)$ (resp., $\Delta(G/B, \mathcal{L}_\lambda, v_{{\bf s}_{{\bm i}^\prime}}, \tau_\lambda)$) coincides with the following polytope:
\begin{align*}
&\{(g_1, g_2, g_3) \in \r^3 \mid 0 \le g_3 \le \lambda_1,\ 0 \le g_2 \le \lambda_2,\ -g_2 \le g_1 \le -g_3 +\lambda_1\}\\
({\rm resp}.,\ &\{(g_1, g_2, g_3) \in \r^3 \mid 0 \le g_3 \le \lambda_2,\ 0 \le g_2 \le \lambda_1,\ -g_2 \le g_1 \le -g_3 +\lambda_2\}).
\end{align*}
We define an $\r$-linear automorphism $\omega \colon \r^3 \xrightarrow{\sim} \r^3$ by $\omega (a_1, a_2, a_3) \coloneqq (a_1, a_3, a_2)$. Then the composite map $\omega \circ \mu_1 ^T \colon \r^3 \rightarrow \r^3$ is given by 
\[
\omega \circ \mu_1 ^T (g_1, g_2, g_3) \coloneqq (-g_1, g_3 +[g_1]_+, g_2 -[-g_1]_+),
\]
which gives a bijective piecewise-linear map from $\Delta(G/B, \mathcal{L}_\lambda, v_{{\bf s}_{\bm i}}, \tau_\lambda)$ onto $\Delta(G/B, \mathcal{L}_\lambda, v_{{\bf s}_{{\bm i}^\prime}}, \tau_\lambda)$. In addition, we see that $\overrightarrow{\boldsymbol{\mu}_{\bm{i}}}^{\vee} = \overrightarrow{\boldsymbol{\mu}_{\bm{i}^\prime}}^{\vee} = \mu_1$, and hence that 
\[
{\bf s}_{\bm i} ^{\rm mut} = {\bf s}_{{\bm i}^\prime},\quad {\bf s}_{{\bm i}^\prime} ^{\rm mut} = {\bf s}_{\bm i}.
\]
If $\lambda = \varpi_1 + \varpi_2 \in P_+$, then we have $\lambda_1 = \lambda_2 = 1$. Hence the Newton--Okounkov bodies $\Delta(G/B, \mathcal{L}_\lambda, v_{{\bf s}_{\bm i}}, \tau_\lambda)$ and $\Delta(G/B, \mathcal{L}_\lambda, v_{{\bf s}_{{\bm i}^\prime}}, \tau_\lambda)$ both coincide with the following polytope:
\begin{align*}
&\{(g_1, g_2, g_3) \in \r^3 \mid 0 \le g_3 \le 1,\ 0 \le g_2 \le 1,\ -g_2 \le g_1 \le -g_3 +1\};
\end{align*}
see Figure \ref{figure_ex_NOBY_cluster_1}. In this case, $\omega \circ \mu_1 ^T$ gives a bijective piecewise-linear map from this polytope to itself.

\begin{figure}[!ht]
\begin{center}
   \includegraphics[width=5.0cm,bb=80mm 190mm 130mm 230mm,clip]{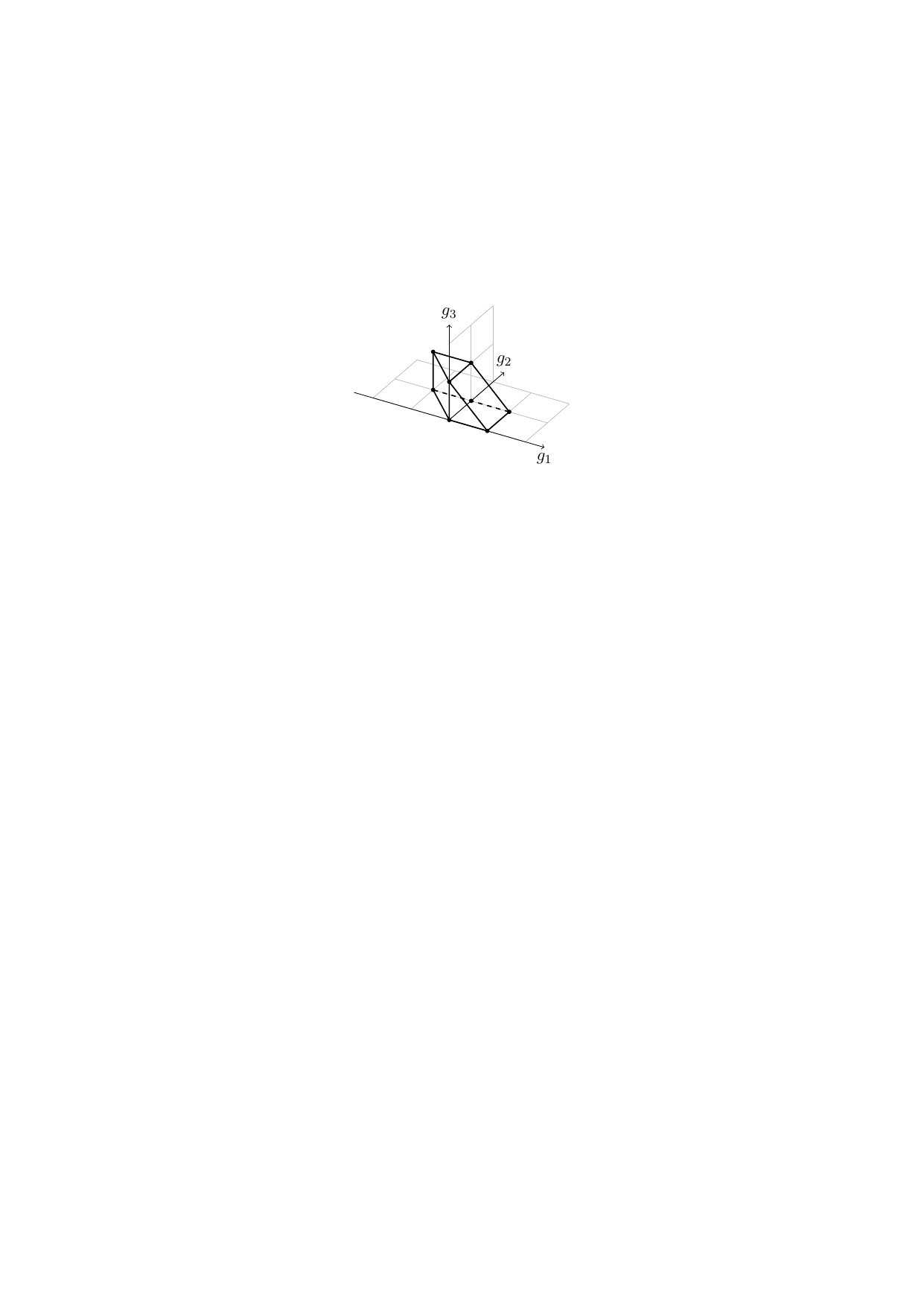}
	\caption{The Newton--Okounkov body $\Delta(G/B, \mathcal{L}_\lambda, v_{{\bf s}_{\bm i}}, \tau_\lambda)$ for $\lambda = \varpi_1 + \varpi_2$.}
	\label{figure_ex_NOBY_cluster_1}
\end{center}
\end{figure}
\end{ex}

\begin{ex}\label{ex:NO_body_string_C2}
Let $G = Sp_4(\c)$, $\lambda \in P_+$, and ${\bm i} = (1, 2, 1, 2) \in R(w_0)$. 
Then the quiver $\Gamma_{\bm{i}}$ associated with $\varepsilon^{\bm i}$ is given in \cref{e:quiverexample}. 
By \cref{t:NO_body_crystal_basis} (3) and \cite[Section 1]{Lit}, the Newton--Okounkov body $\Delta(G/B, \mathcal{L}_\lambda, \tilde{v}_{\bm i} ^{\rm low}, \tau_\lambda)$ coincides with the set of $(a_1, \ldots, a_4) \in \r_{\ge 0} ^4$ satisfying the following inequalities:
\begin{align*}
&a_4 \le \lambda_2,\ a_4 \le a_3 \le a_4 + \lambda_1,\ a_3 \le a_2 \le 2a_3 -2a_4 +\lambda_2,\ a_1 \le a_2 -2a_3 +a_4 +\lambda_1,
\end{align*}
where $\lambda_i \coloneqq \langle \lambda, h_i\rangle$ for $i = 1, 2$. Since we have 
\[
M_{\bm i} = \begin{pmatrix}
1 & 0 & 0 & 0 \\
1 & 1 & 0 & 0 \\
1 & 2 & 1 & 0 \\
0 & 1 & 1 & 1 
\end{pmatrix},
\]
it follows by \cref{c:relstring} (1) that $\Delta(G/B, \mathcal{L}_\lambda, v_{{\bf s}_{\bm i}}, \tau_\lambda)$ coincides with the set of $(g_1, \ldots, g_4) \in \r^4$ satisfying the following inequalities:
\begin{align*}
&0 \le g_4 \le \lambda_2,\ 0 \le g_3 \le \lambda_1,\ -g_3 \le g_2 \le -g_4 +\lambda_2,\ -g_2 -g_3 \le g_1 \le -g_3 +\lambda_1.
\end{align*}
In addition, by \cref{t:NO_body_crystal_basis} (2) and \cite[Theorem 6.1]{Nak1} (see also \cite[Example 3.12]{Fuj}), we see that $\Delta(G/B, \mathcal{L}_\lambda, v_{\bm i} ^{\rm low}, \tau_\lambda)$ coincides with the set of $(a_1, \ldots, a_4) \in \r_{\ge 0} ^4$ satisfying the following inequalities:
\begin{align*}
&a_4 \le \lambda_2,\ a_3 \le a_4 + \lambda_1,\ a_2 \le \min\{2a_3, a_3 +\lambda_1\},\ 2a_1 \le \min\{a_2, 2\lambda_1\}.
\end{align*}
By computing $\overrightarrow{\boldsymbol{\mu}_{\bm{i}}}^{\vee} ({\bf D}_{\bm i}) = \mu_1 \mu_2 ({\bf D}_{\bm i})$, we deduce that
\[
N_{\bm i} = \begin{pmatrix}
0 & 2 & 1 & 0 \\
0 & 1 & 1 & 0 \\
1 & 2 & 1 & 0 \\
0 & 1 & 1 & 1
\end{pmatrix}.
\]
This implies by \cref{c:relNZ} that $\Delta(G/B, \mathcal{L}_\lambda, v_{{\bf s}_{\bm i} ^{\rm mut}}, \tau_\lambda)$ coincides with the set of $(g_1, \ldots, g_4) \in \r^4$ satisfying the following inequalities:
\begin{align*}
&0 \le g_4 \le \lambda_2,\ 0 \le g_3 \le \lambda_1,\ -g_4 \le g_2 \le -g_1 -g_3 +\lambda_1,\ -g_2 -g_4 \le 2g_1 \le -2g_3 +2\lambda_1.
\end{align*}
By \cref{c:relation_string_NZ_twist_auto}, the tropicalization $(\overrightarrow{\boldsymbol{\mu}_{\bm{i}}}^{\vee})^T$ gives a bijective piecewise-linear map from $\Delta(G/B, \mathcal{L}_\lambda, v_{{\bf s}_{\bm i}}, \tau_\lambda)$ onto $\Delta(G/B, \mathcal{L}_\lambda, v_{{\bf s}_{\bm i} ^{\rm mut}}, \tau_\lambda)$. The tropicalization $(\overrightarrow{\boldsymbol{\mu}_{\bm{i}}}^{\vee})^T$ is given by 
\begin{align*}
(\overrightarrow{\boldsymbol{\mu}_{\bm{i}}}^{\vee})^T (g)& = (-g_1 -[-g_2]_+, -g_2 +2[g_1 +[g_2]_+]_+, g_3 -[-g_2]_+ -[-g_1 -[g_2]_+]_+, g_4 +[g_2]_+)
\end{align*}
for $g = (g_1, \ldots, g_4) \in \r^4$.
\end{ex}

\section{Relation with GHKK superpotential polytopes}\label{s:superpotential_polytopes}

In this section, we relate our Newton--Okounkov bodies arising from $\mathcal{A}$-cluster structures with superpotential polytopes discussed in \cite{Mag, GHKK}.
Let $U$ be the unipotent radical of $B$. 
Then the quotient space $G/U$ is called a \emph{base affine space}. 
It is not an affine variety, but a quasi-affine variety. 
We define a left $G \times H$-action on $G/U$ by 
\[(g, h) \cdot (g^\prime \bmod U) \coloneqq g g^\prime h^{-1} \bmod U\] 
for $g, g^\prime \in G$ and $h \in H$, which is well-defined since $U h^{-1} = h^{-1} U$. 
The natural right $G \times H$-action on the ring $\mathbb{C}[G/U]$ of regular functions on $G/U$ is transformed into the left action by the inverse map of $G \times H$.
Define a left $G \times G$-action on $\mathbb{C}[G]$ by $((g_1, g_2) \cdot \psi) (g) \coloneqq \psi (g_1^{-1} g g_2)$ for $\psi \in \mathbb{C}[G]$ and $g_1, g_2, g \in G$.
Then the $\c$-algebra $\mathbb{C}[G/U]$ is naturally isomorphic to the ring 
\[\mathbb{C}[G]^{\{e\} \times U} \coloneqq \{\psi \in \mathbb{C}[G] \mid (e, u) \cdot \psi = \psi\ \text{for\ all}\ u \in U\}\] 
of invariant functions, where $e \in G$ denotes the identity element.
The $\c$-algebra $\mathbb{C}[G]^{\{e\} \times U}$ is a $G \times H$-submodule of $\mathbb{C}[G]$, and the isomorphism $\mathbb{C}[G/U] \simeq \mathbb{C}[G]^{\{e\} \times U}$ is compatible with $G \times H$-actions.
By the algebraic Peter--Weyl theorem (see, for instance, \cite[Theorem 4.2.7]{GW}), we see that
\[\mathbb{C}[G] \simeq \bigoplus_{\lambda \in P_+} V(\lambda)^\ast \otimes V(\lambda)\]
as $G \times G$-modules, where $V(\lambda)^\ast \otimes V(\lambda) \hookrightarrow \mathbb{C}[G]$ is given as $f \otimes v \mapsto C_{f, v}$ for $f \in V(\lambda)^\ast$ and $v \in V(\lambda)$ (see Section \ref{ss:unipotent} for the definition of $C_{f, v}$). 
Hence it follows that 
\begin{equation}\label{eq:base_affine_isotypic}
\begin{aligned}
\mathbb{C}[G/U] \simeq \bigoplus_{\lambda \in P_+} V(\lambda)^\ast
\end{aligned}
\end{equation}
as $G$-modules, where we regard $\mathbb{C}[G/U]$ as a $G$-module by $G \simeq G \times \{e\}\ (\subseteq G \times H)$, and $\kappa_\lambda \colon V(\lambda)^\ast \hookrightarrow \mathbb{C}[G/U] \simeq \mathbb{C}[G]^{\{e\} \times U}$ is given by $f \mapsto C_{f, v_\lambda}$ for $f \in V(\lambda)^\ast$. 
For each $f \in V(\lambda)^\ast$, the matrix coefficient $C_{f, v_\lambda}$ is a weight vector with weight $\lambda$ under the action of $H \simeq \{e\} \times H\ (\subseteq G \times H)$. 
Hence \eqref{eq:base_affine_isotypic} is precisely the decomposition of the $\{e\} \times H$-module $\mathbb{C}[G/U]$ into isotypic components. 
Let us use the notation in Appendix \ref{a:doubleBruhat}, and regard the double Bruhat cell $G^{w_0, e}$ as an affine open subvariety of $G/U$ by the following open embedding: 
\[G^{w_0, e} \hookrightarrow G/U,\quad g \mapsto g \bmod U.\]
Then the $\c$-algebra $\mathbb{C}[G/U]$ is identified with a $\mathbb{C}$-subalgebra of the upper cluster algebra $\mathbb{C}[G^{w_0, e}]$.
In addition, we see that
\[G^{w_0, e} = \{g \bmod U \in G/U \mid \Delta_{\varpi_i, \varpi_i}(g), \Delta_{w_0 \varpi_i, \varpi_i}(g) \neq 0\ \text{for\ all}\ i \in I\},\]
and that 
\[\mathbb{C}[G/U][\Delta_{\varpi_i, \varpi_i}^{-1}, \Delta_{w_0 \varpi_i, \varpi_i}^{-1} \mid i \in I] \simeq \mathbb{C}[G^{w_0, e}].\]
Take a projection $\pi \colon \mathbb{C}[G^{w_0, e}] \rightarrow \c[U_{w_0}^-]$ given by \eqref{eq:Bruhatunip}. 
Since $\kappa_\lambda (V(\lambda)^\ast) = \{C_{f, v_\lambda} \mid f \in V(\lambda)^\ast\}$, we obtain the following by the proof of \cite[Lemma 4.5]{FN}. 

\begin{lem}\label{l:base_affine_and unipotent_cell_lambda}
For $\lambda \in P_+$, the composite map $\pi \circ \kappa_\lambda$ gives a $\c$-linear isomorphism from $V(\lambda)^\ast$ onto $\{(\sigma/\tau_\lambda)|_{U_{w_0}^-} \mid \sigma \in H^0(G/B, \mathcal{L}_\lambda)\}$. 
\end{lem}

In the rest of this section, we assume that $G = SL_{n+1}(\mathbb{C})$. 
Let $\widetilde{\mathcal{S}} = \{\tilde{\bf s}_t = (\widetilde{\mathbf{A}}_t, \widetilde{\varepsilon}_t)\}_{t \in \mathbb{T}}$ be the cluster pattern for the upper cluster algebra $\mathbb{C}[G^{w_0, e}]$. 
We write $\widetilde{\mathbf{A}}_t = (\widetilde{A}_{j; t})_{j \in \widetilde{J}}$ and $\widetilde{\varepsilon}_t = (\widetilde{\varepsilon}_{i, j}^{(t)})_{i \in J_{\rm uf}, j \in \widetilde{J}}$. 

\begin{thm}[{see \cite[Section 10.4]{GLS:partial}}]
Let $G = SL_{n+1}(\mathbb{C})$. 
Then the $\mathbb{C}$-algebra $\mathbb{C}[G/U]$ coincides with the $\mathbb{C}$-subalgebra of $\mathbb{C}[G^{w_0, e}]$ generated by $\{\widetilde{A}_{j; t} \mid t \in \mathbb{T},\ j \in \widetilde{J}\}$. 
\end{thm}

Let $\mathcal{A}$ be the $\mathcal{A}$-cluster variety corresponding to the upper cluster algebra $\mathbb{C}[G^{w_0, e}]$. 
Then Williams \cite[Theorem 4.16]{Wil} proved that there exists a morphism $\mathcal{A} \rightarrow G^{w_0, e}$ that induces the isomorphism in \cref{t:Double_cluster}. 
Let $W_{\rm GHKK}^T \colon \mathcal{A}^\vee(\mathbb{R}^T) \rightarrow \mathbb{R}$ be the tropicalization of the Gross--Hacking--Keel--Kontsevich superpotential $W_{\rm GHKK}$ (see \cite[Corollary 9.17]{GHKK}). 
We set
\[\Xi \coloneqq \{x \in \mathcal{A}^\vee(\mathbb{R}^T) \mid W_{\rm GHKK}^T(x) \geq 0\},\]
and write $\Xi_{\mathbb{Z}} \coloneqq \Xi \cap \mathcal{A}^\vee(\mathbb{Z}^T)$. 
Then, for each $t \in \mathbb{T}$, the set $\Xi_t\ (\subseteq \mathcal{A}^\vee_t(\mathbb{R}^T) = \r^{\widetilde{J}})$ is a rational convex polyhedral cone. 

\begin{thm}[{see \cite[Corollary 3]{Mag} and \cite[Corollary 0.20]{GHKK}}]\label{t:Mag_GHKK}
Let $G = SL_{n+1}(\mathbb{C})$. 
\begin{enumerate}
\item[{\rm (1)}] The $\mathbb{C}$-algebra $\mathbb{C}[G/U]$ has the theta function basis $\{\vartheta_q \mid q \in \Xi_{\mathbb{Z}}\}$. 
\item[{\rm (2)}] For each $q \in \Xi_\z$, the theta function $\vartheta_q$ is a weight vector with respect to the action of $H \simeq \{e\} \times H$ on $\mathbb{C}[G/U]$.
Let ${\rm wt}(\vartheta_q) \in P$ denote the corresponding weight. 
\item[{\rm (3)}] The map 
\[\Xi_{\mathbb{Z}} \rightarrow P,\quad q \mapsto {\rm wt}(\vartheta_q),\]
is naturally lifted to a function $\underline{\rm wt} \colon \mathcal{A}^\vee(\mathbb{R}^T) \rightarrow P \otimes_\z \mathbb{R}$ which is $\mathbb{R}$-linear with respect to the $\mathbb{R}$-linear structure $\mathcal{A}^\vee(\mathbb{R}^T) \simeq \mathcal{A}^{\vee} _t (\r^T) = \r^{\widetilde{J}}$ for every $t \in \mathbb{T}$. 
\item[{\rm (4)}] For each $\lambda \in P_+$, the set 
\[\{\vartheta_q \mid q \in \Xi_{\mathbb{Z}} \cap \underline{\rm wt}^{-1} (\lambda)\}\]
forms a $\mathbb{C}$-basis of $V(\lambda)^\ast$ in \eqref{eq:base_affine_isotypic}.
\end{enumerate}
\end{thm}

For each $\lambda \in P_+$, we set 
\[\Xi(\lambda) \coloneqq \Xi \cap \underline{\rm wt}^{-1} (\lambda).\]
Recall that $\widetilde{J} = \overline{I} \cup J$ for $\overline{I} = \{\overline{i} \mid i \in I\}$ and $J = \{1, \ldots, m\}$. 
Hence we have $\r^{\widetilde{J}} = \r^{\overline{I}} \oplus \r^J$. 
Let ${\rm pr}_{\r^J} \colon \r^{\widetilde{J}} \rightarrow \r^J$ be the canonical projection, and set 
\[\Xi(\tilde{\bf s}_t, \lambda) \coloneqq {\rm pr}_{\r^J} (\Xi(\lambda)_t)\]
for each $t \in \mathbb{T}$, where $\Xi(\lambda)_t \subseteq \mathcal{A}^{\vee} _t (\r^T) = \r^{\widetilde{J}}$. 
The set $\Xi(\tilde{\bf s}_t, \lambda) \subseteq \r^J$ is called a \emph{superpotential polytope}.
We set 
\[S(\tilde{\bf s}_t, \lambda) \coloneqq \bigcup_{k \in \z_{>0}} \{(k, {\bm a}) \mid {\bm a} \in \Xi(\tilde{\bf s}_t, k\lambda) \cap \z^J\} \subseteq \z_{>0} \times \z^J,\]
and denote by $C(\tilde{\bf s}_t, \lambda)\ (\subseteq \r_{\geq 0} \times \r^J)$ the smallest real closed cone containing $S(\tilde{\bf s}_t, \lambda)$. 
Since $\Xi_t$ is a rational convex polyhedral cone, we have 
\[\Xi(\tilde{\bf s}_t, \lambda) = \{{\bm a} \in \r^J \mid (1, {\bm a}) \in C(\tilde{\bf s}_t, \lambda)\}.\]
For $t \in \mathbb{T}$, let ${\bf s}_t = (\mathbf{A}_t, \varepsilon_t)$ be the seed for $\c[U_{w_0}^-]$ corresponding to $\tilde{\bf s}_t$.
We write $\mathbf{A}_t = (A_{j; t})_{j \in J}$ and $\varepsilon_t = (\varepsilon_{i, j}^{(t)})_{i \in J_{\rm uf}, j \in J}$. 

\begin{thm}\label{t:relation_with_superpotential}
Let $G = SL_{n+1}(\mathbb{C})$.
Then, for each $t \in \mathbb{T}$ and $\lambda \in P_+$, the superpotential polytope $\Xi(\tilde{\bf s}_t, \lambda)$ coincides with the Newton--Okounkov body $\Delta(G/B, \mathcal{L}_\lambda, v_{{\bf s}_t}, \tau_\lambda)$.  
\end{thm}

\begin{proof}
It suffices to prove that 
\begin{equation}\label{eq:relation_with_superpotential_semigroup}
\begin{aligned}
S(\tilde{\bf s}_t, \lambda) = S(G/B, \mathcal{L}_\lambda, v_{{\bf s}_t}, \tau_\lambda).
\end{aligned}
\end{equation}
Let $\mu \in P_+$. 
By \cref{l:base_affine_and unipotent_cell_lambda}, the set $\{\pi(\vartheta_q) \mid q \in \Xi_{\mathbb{Z}} \cap \underline{\rm wt}^{-1} (\mu)\}$ forms a $\c$-basis of $\{(\sigma/\tau_\mu)|_{U_{w_0}^-} \mid \sigma \in H^0(G/B, \mathcal{L}_\mu)\}$. 
In particular, the elements $\pi(\vartheta_q)$, $q \in \Xi_{\mathbb{Z}} \cap \underline{\rm wt}^{-1} (\mu)$, are all distinct.
Take $q \in \Xi_{\mathbb{Z}} \cap \underline{\rm wt}^{-1} (\mu)$. 
Since $\vartheta_q$ is pointed, we can write
\[\vartheta_q = \left(\prod_{j \in \widetilde{J}} \widetilde{A}_{j; t} ^{g_j}\right) \left(\sum_{{\bm a}=(a_i)_i \in \mathbb{Z}_{\geq 0}^{J_{\rm uf}}} c_{\bm a} \prod_{i \in J_{\rm uf}} \left(\prod_{j \in \widetilde{J}} \widetilde{A}_{j; t} ^{\widetilde{\varepsilon}_{i, j} ^{(t)}}\right)^{a_i}\right)\]
for some $\{c_{\bm a} \in \c \mid {\bm a} \in \mathbb{Z}_{\geq 0}^{J_{\rm uf}}\}$ such that $c_0 = 1$, where $g_{\tilde{\bf s}_t} (\vartheta_q) = (g_j)_{j \in \widetilde{J}}$.
Hence it holds by the definition of $\pi$ that
\[\pi(\vartheta_q) = \left(\prod_{j \in J} A_{j; t} ^{g_j}\right) \left(\sum_{{\bm a} = (a_i)_i \in \mathbb{Z}_{\geq 0}^{J_{\rm uf}}} c_{\bm a} \prod_{i \in J_{\rm uf}} \left(\prod_{j \in J} A_{j; t} ^{\varepsilon_{i, j} ^{(t)}}\right)^{a_i}\right),\]
which implies that $\pi(\vartheta_q)$ is pointed and that 
\begin{equation}\label{eq:projection_g-vector}
\begin{aligned}
g_{{\bf s}_t} (\pi(\vartheta_q)) = {\rm pr}_{\r^J} (g_{\tilde{\bf s}_t} (\vartheta_q)).
\end{aligned}
\end{equation}

Suppose for a contradiction that $g_{{\bf s}_t}(\pi(\vartheta_{q_1})) = g_{{\bf s}_t}(\pi(\vartheta_{q_2}))$ for some $q_1 \neq q_2$ in $\Xi_{\mathbb{Z}} \cap \underline{\rm wt}^{-1} (\mu)$. 
Then we see by \eqref{eq:projection_g-vector} that the extended $g$-vectors $g_{\tilde{\bf s}_t}(\vartheta_{q_1}), g_{\tilde{\bf s}_t}(\vartheta_{q_2}) \in \r^{\widetilde{J}} = \r^{\overline{I}} \oplus \r^J$ can differ only at coordinates in $\r^{\overline{I}}$.
Since the theta function basis is stable under the multiplication by frozen variables (see \cite[Theorem 0.3]{GHKK} and \cite[Lemma 4.7]{She}), it follows that $\vartheta_{q_1}$ is obtained from $\vartheta_{q_2}$ by multiplying a Laurent monomial in $\widetilde{D}_{\varpi_i, \varpi_i}$, $i \in I$.
However, this implies that $\pi(\vartheta_{q_1}) = \pi(\vartheta_{q_2})$, which gives a contradiction. 
Thus, we have proved that the extended $g$-vectors $g_{{\bf s}_t}(\pi(\vartheta_{q}))$, $q \in \Xi_{\mathbb{Z}} \cap \underline{\rm wt}^{-1} (\mu)$, are all distinct. 
Hence it follows by Propositions \ref{prop1_val} and \ref{p:valuation_generalizing_g} that 
\[\{v_{{\bf s}_t}(\sigma/\tau_\mu) \mid \sigma \in H^0(G/B, \mathcal{L}_\mu) \setminus \{0\}\} = \{g_{{\bf s}_t}(\pi(\vartheta_{q})) \mid q \in \Xi_{\mathbb{Z}} \cap \underline{\rm wt}^{-1} (\mu)\}.\]
Since this equality holds for all $\mu \in P_+$, we conclude \eqref{eq:relation_with_superpotential_semigroup}, which implies that 
\[\Xi(\tilde{\bf s}_t, \lambda) = \Delta(G/B, \mathcal{L}_\lambda, v_{{\bf s}_t}, \tau_\lambda).\]
\end{proof}

\cref{t:relation_with_superpotential} implies that we can construct a system of affine inequalities defining the Newton--Okounkov polytope $\Delta(G/B, \mathcal{L}_\lambda, v_{{\bf s}_t}, \tau_\lambda)$ from the superpotential $W_{\rm GHKK}$.

\begin{rem}
Let ${\bm i} \in R(w_0)$, and consider the seed $\tilde{\mathbf{s}}_{\bm i}$ for $\mathbb{C}[G^{w_0, e}]$ that corresponds to the seed $\mathbf{s}_{\bm i}$ for $\c[U_{w_0}^-]$. 
Then, under the unimodular transformation $\Delta(G/B, \mathcal{L}_\lambda, v_{{\bf s}_{\bm i}}, \tau_\lambda) \simeq \Delta_{\bm i} (\lambda)$ in \cref{c:relstring} (1), \cref{t:relation_with_superpotential} for $\Xi(\tilde{\bf s}_{\bm i}, \lambda)$ and $\Delta(G/B, \mathcal{L}_\lambda, v_{{\bf s}_{\bm i}}, \tau_\lambda)$ was previously proved by Magee \cite{Mag} and Bossinger--Fourier \cite{BF}. 
\end{rem}

\begin{rem}
In \cref{t:relation_with_superpotential}, we assumed that $G = SL_{n+1}(\mathbb{C})$ only to use \cref{t:Mag_GHKK}. 
Hence, if \cref{t:Mag_GHKK} is extended to other semisimple groups $G$, then \cref{t:relation_with_superpotential} is also extended. 
\end{rem}

\begin{ex}
Let $G = SL_4(\c)$, ${\bm i} = (1, 2, 1, 3, 2, 1) \in R(w_0)$ (see \cref{s:cluster_cone} for the labeling of the vertices of the Dynkin diagram), and consider the corresponding seed $\tilde{\mathbf{s}}_{\bm i}$ for $\mathbb{C}[G^{w_0, e}]$. 
We fix $t \in \mathbb{T}$ such that $\tilde{\bf s}_t = \tilde{\mathbf{s}}_{\bm i}$. 
Then we see by \cite[Corollary 24]{Mag} that the restriction $W_{\rm GHKK}|_{\mathcal{A}^{\vee} _t} \colon \mathcal{A}^{\vee} _t = \Spec (\mathbb{C}[X_{j; t}^{\pm 1} \mid j \in \widetilde{J}]) \rightarrow \c$ is given by 
\begin{align*}
W_{\rm GHKK}|_{\mathcal{A}^{\vee} _t} =& X_{\overline{1}; t}^{-1} + X_{\overline{1}; t}^{-1} X_{1; t}^{-1} + X_{\overline{1}; t}^{-1} X_{1; t}^{-1} X_{3; t}^{-1} + X_{\overline{2}; t}^{-1} + X_{\overline{2}; t}^{-1} X_{2; t}^{-1} + X_{\overline{3}; t}^{-1} \\
+& X_{4; t}^{-1} + X_{2; t}^{-1} X_{4; t}^{-1} + X_{1; t}^{-1} X_{2; t}^{-1}X_{4; t}^{-1} + X_{5; t}^{-1} + X_{3; t}^{-1} X_{5; t}^{-1} + X_{6; t}^{-1};
\end{align*}
see also \cite[Section 5]{BF}.
Hence the cone $\Xi_t$ coincides with the set of $(g_{\overline{1}}, g_{\overline{2}}, g_{\overline{3}}, g_1, \ldots, g_6) \in \r^{\widetilde{J}}$ satisfying the following inequalities:
\begin{align*}
&g_{\overline{1}} \geq 0, g_{\overline{1}} +g_1 \geq 0,\ g_{\overline{1}} +g_1 + g_3 \geq 0,\ g_{\overline{2}} \geq 0,\ g_{\overline{2}} + g_2 \geq 0,\ g_{\overline{3}} \geq 0,\\
&g_4 \geq 0,\ g_2 + g_4 \geq 0,\ g_1 + g_2 + g_4 \geq 0,\ g_5 \geq 0,\ g_3 + g_5 \geq 0,\ g_6 \geq 0. 
\end{align*}
In addition, under the $\mathbb{R}$-linear structure $\mathcal{A}^\vee(\mathbb{R}^T) \simeq \mathcal{A}^{\vee} _t (\r^T) = \r^{\widetilde{J}}$, we see by \cite[Proposition 31 and the paragraph before it]{Mag} that the function $\underline{\rm wt} \colon \mathcal{A}^{\vee} _t (\r^T) = \r^{\widetilde{J}} \rightarrow P \otimes_\z \r$ is written as 
\begin{align*}
\underline{\rm wt} (g_{\overline{1}}, g_{\overline{2}}, g_{\overline{3}}, g_1, \ldots, g_6) =  (g_{\overline{1}} + g_1 + g_3 + g_6) \varpi_1 + (g_{\overline{2}} + g_2 + g_5) \varpi_2 + (g_{\overline{3}} + g_4) \varpi_3.
\end{align*}
Hence for $\lambda = \lambda_1 \varpi_1 + \lambda_2 \varpi_2 + \lambda_3 \varpi_3 \in P_+$, the superpotential polytope $\Xi(\tilde{\bf s}_t, \lambda)$ is given as the set of $(g_1, \ldots, g_6) \in \r^J$ satisfying the following inequalities:
\begin{align*}
&\lambda_1 - g_1 - g_3  -g_6 \geq 0, \lambda_1 - g_3  -g_6 \geq 0,\ \lambda_1 -g_6 \geq 0,\ \lambda_2 - g_2 - g_5 \geq 0,\ \lambda_2 - g_5 \geq 0,\ \lambda_3 -g_4 \geq 0,\\
&g_4 \geq 0,\ g_2 + g_4 \geq 0,\ g_1 + g_2 + g_4 \geq 0,\ g_5 \geq 0,\ g_3 + g_5 \geq 0,\ g_6 \geq 0,
\end{align*}
which coincides with $\Delta(G/B, \mathcal{L}_\lambda, v_{{\bf s}_{\bm i}}, \tau_\lambda)$. 
Now we have $\overrightarrow{\boldsymbol{\mu}_{\bm{i}}}^{\vee} = \mu_3 \mu_1 \mu_2 \mu_3$. 
Take $t^\prime \in \mathbb{T}$ such that $\tilde{\bf s}_{t^\prime} = \mu_3 \mu_1 \mu_2 \mu_3 (\tilde{\mathbf{s}}_{\bm i})$, which corresponds to the seed $\mathbf{s}_{\bm i}^{\rm mut}$ for $\c[U_{w_0}^-]$. 
Then we can compute $W_{\rm GHKK}|_{\mathcal{A}^{\vee} _{t^\prime}}$ from $W_{\rm GHKK}|_{\mathcal{A}^{\vee} _t}$ by applying the mutation sequence $\mu_3 \mu_1 \mu_2 \mu_3$, and we see that 
\begin{align*}
W_{\rm GHKK}|_{\mathcal{A}^{\vee} _{t^\prime}} =& X_{\overline{1}; t}^{-1} + X_{\overline{2}; t}^{-1} + X_{\overline{2}; t}^{-1} X_{1; t}^{-1} + X_{\overline{3}; t}^{-1} + X_{\overline{3}; t}^{-1} X_{2; t}^{-1} + X_{\overline{3}; t}^{-1} X_{2; t}^{-1} X_{3; t}^{-1} \\
+& X_{4; t}^{-1} + X_{5; t}^{-1} + X_{3; t}^{-1} X_{5; t}^{-1} + X_{6; t}^{-1} + X_{1; t}^{-1} X_{6; t}^{-1} + X_{1; t}^{-1} X_{2; t}^{-1}X_{6; t}^{-1}.
\end{align*}
In addition, the function $\underline{\rm wt} \colon \mathcal{A}^{\vee} _{t^\prime} (\r^T) = \r^{\widetilde{J}} \rightarrow P \otimes_\z \r$ is given by 
\begin{align*}
\underline{\rm wt} (g_{\overline{1}}, g_{\overline{2}}, g_{\overline{3}}, g_1, \ldots, g_6) = (g_{\overline{1}} + g_6) \varpi_1 + (g_{\overline{2}} + g_1 + g_5) \varpi_2 + (g_{\overline{3}} + g_2 + g_3 + g_4) \varpi_3.
\end{align*}
Hence the superpotential polytope $\Xi(\tilde{\bf s}_{t^\prime}, \lambda)$ for $\lambda = \lambda_1 \varpi_1 + \lambda_2 \varpi_2 + \lambda_3 \varpi_3 \in P_+$ is the set of $(g_1, \ldots, g_6) \in \r^J$ satisfying the following inequalities:
\begin{align*}
&\lambda_1 -g_6 \geq 0, \lambda_2 - g_1  -g_5 \geq 0,\ \lambda_2 -g_5 \geq 0,\ \lambda_3 - g_2 - g_3 -g_4 \geq 0,\ \lambda_3 - g_3 - g_4 \geq 0,\ \lambda_3 -g_4 \geq 0,\\
&g_4 \geq 0,\ g_5 \geq 0,\ g_3 + g_5 \geq 0,\ g_6 \geq 0,\ g_1 + g_6 \geq 0,\ g_1 + g_2 + g_6 \geq 0,
\end{align*}
which coincides with $\Delta(G/B, \mathcal{L}_\lambda, v_{{\bf s}_{\bm i} ^{\rm mut}}, \tau_\lambda)$.
\end{ex}

\section{Generators of cluster cones}\label{s:cluster_cone}

In this section, we address the following question: which elements of $\c[U^- _w]$ do give generators of the cluster cone $C_{\bf s}$? In some specific cases when $w=w_0$, we compute the minimal generators of the cluster cone, which are given as extended $g$-vectors of specific cluster variables. 
Let $C \subset \r^m$ be a rational strongly convex polyhedral cone. Since $C$ is strongly convex, a $1$-dimensional face $\sigma$ of $C$ is a ray, i.e., a half line. Then, since $C$ is rational, the ray $\sigma$ can be written as $\sigma = \r_{\ge 0} u_\sigma$ for some primitive element $u_\sigma \in \z^m$. We call $u_\sigma$ a \emph{ray generator} of $C$. 
Denoting by $\mathcal{R}(C)$ the set of ray generators of $C$, we have
\[C = \sum_{u \in \mathcal{R}(C)} \r_{\ge 0} u.\]
Assume that $G$ is simple of classical type or of type $G_2$. 
Let us identify the index set $I$ of vertices of the Dynkin diagram with $\{1, 2, \ldots, n\}$ as follows:
\begin{align*}
&A_n\ \begin{xy}
\ar@{-} (50,0) *++!D{1} *\cir<3pt>{};
(60,0) *++!D{2} *\cir<3pt>{}="C"
\ar@{-} "C";(65,0) \ar@{.} (65,0);(70,0)^*!U{}
\ar@{-} (70,0);(75,0) *++!D{n-1} *\cir<3pt>{}="D"
\ar@{-} "D";(85,0) *++!D{n} *\cir<3pt>{}="E"
\end{xy}\hspace{-1mm},& &B_n\ \begin{xy}
\ar@{<=} (50,0) *++!D{1} *\cir<3pt>{};
(60,0) *++!D{2} *\cir<3pt>{}="C"
\ar@{-} "C";(65,0) \ar@{.} (65,0);(70,0)^*!U{}
\ar@{-} (70,0);(75,0) *++!D{n-1} *\cir<3pt>{}="D"
\ar@{-} "D";(85,0) *++!D{n} *\cir<3pt>{}="E"
\end{xy}\hspace{-1mm}, & &\\ 
&C_n\ \begin{xy}
\ar@{=>} (50,0) *++!D{1} *\cir<3pt>{};
(60,0) *++!D{2} *\cir<3pt>{}="C"
\ar@{-} "C";(65,0) \ar@{.} (65,0);(70,0)^*!U{}
\ar@{-} (70,0);(75,0) *++!D{n-1} *\cir<3pt>{}="D"
\ar@{-} "D";(85,0) *++!D{n} *\cir<3pt>{}="E"
\end{xy}\hspace{-1mm},& &D_n\ \begin{xy}
\ar@{-} (30,0) *++!D{3} *\cir<3pt>{}="C";
(35,0)
\ar@{.} (35,0);(40,0)^*!U{}
\ar@{-} (40,0);(45,0) *++!D{n-1} *\cir<3pt>{}="D"
\ar@{-} "D";(55,0) *++!D{n} *\cir<3pt>{}
\ar@{-} "C";(20,4) *++!R{1} *\cir<3pt>{}
\ar@{-} "C";(20,-4) *++!R{2} *\cir<3pt>{},
\end{xy}\hspace{-1mm},& &G_2\ \begin{xy}
\ar@3{->} (50,0) *++!D{1} *\cir<3pt>{};
(60,0) *++!D{2} *\cir<3pt>{}
\end{xy},
\end{align*}
where we set $n=2$ when we consider type $G_2$.
Define ${\bm i}_1 \in R(w_0)$ as follows. 
\begin{itemize}
\item If $G$ is of type $A_n$, then 
\[
{\bm i}_1 \coloneqq (n, n-1, n, n-2, n-1, n, \ldots, 1, 2, \ldots, n) \in I^{\frac{n(n+1)}{2}}.
\]
\item If $G$ is of type $B_n$ or $C_n$, then
\[{\bm i}_1 \coloneqq (1, \underbrace{2, 1, 2}_3, \underbrace{3, 2, 1, 2, 3}_5, \ldots, \underbrace{n, n-1, \ldots, 1, \ldots, n-1, n}_{2n-1}) \in I^{n^2}.\] 
\item If $G$ is of type $D_n$, then
\[{\bm i}_1 \coloneqq (1, 2, \underbrace{3, 1, 2, 3}_4, \underbrace{4, 3, 1, 2, 3, 4}_6, \ldots, \underbrace{n, n-1, \ldots, 3, 1, 2, 3, \ldots, n-1, n}_{2n-2}) \in I^{n (n-1)}.\]
\item If $G$ is of type $G_2$, then ${\bm i}_1 \coloneqq (1, 2, 1, 2, 1, 2) \in I^6$.
\end{itemize}
Recall from \cref{c:relation_with_string_cones} (1) that $\widetilde{C}_{{\bm i}_1}$ coincides with the string cone associated with ${\bm i}_1$. 
Littelmann \cite[Corollary 2 and Theorems 5.1, 6.1, 7.1]{Lit} gave a system of explicit linear inequalities defining the string cone $\widetilde{C}_{{\bm i}_1}$ for the reduced word ${\bm i}_1$ above. 
By this description, we obtain the following. 
\begin{itemize}
\item If $G$ is of type $A_n$, then 
\[\mathcal{R}(\widetilde{C}_{{\bm i}_1}) = \{(\underbrace{0, \ldots, 0}_{\frac{i (i-1)}{2}}, \underbrace{1, \ldots, 1}_j, 0, \ldots, 0) \in \r^{\frac{n(n+1)}{2}} \mid 1 \le i \le n,\ 1 \le j \le i\}.\]
\item If $G$ is of type $B_n$, then 
\begin{align*}
\mathcal{R}(\widetilde{C}_{{\bm i}_1}) &= \{(\underbrace{0, \ldots, 0}_{(i-1)^2}, \underbrace{1, \ldots, 1}_j, 0, \ldots, 0) \in \r^{n^2} \mid 2 \le i \le n,\ 1 \le j \le i -1\}\\
&\cup \{(\underbrace{0, \ldots, 0}_{(i-1)^2}, \underbrace{1, \ldots, 1}_{i-1}, 2, \underbrace{1, \ldots, 1}_j, 0, \ldots, 0) \in \r^{n^2} \mid 2 \le i \le n,\ 0 \le j \le i-1\}\\
&\cup \{(1, 0, 0, \ldots, 0) \in \r^{n^2}\}.
\end{align*}
\item If $G$ is of type $C_n$, then 
\[\mathcal{R}(\widetilde{C}_{{\bm i}_1}) = \{(\underbrace{0, \ldots, 0}_{(i-1)^2}, \underbrace{1, \ldots, 1}_j, 0, \ldots, 0) \in \r^{n^2} \mid 1 \le i \le n,\ 1 \le j \le 2i -1\}.\]
\item If $G$ is of type $D_n$, then 
\begin{align*}
\mathcal{R}(\widetilde{C}_{{\bm i}_1}) &= \{(\underbrace{0, \ldots, 0}_{i(i-1)}, \underbrace{1, \ldots, 1}_j, 0, \ldots, 0) \in \r^{n(n-1)} \mid 2 \le i \le n-1,\ 1 \le j \le 2i\}\\
&\cup \{(\underbrace{0, \ldots, 0}_{i(i-1)}, \underbrace{1, \ldots, 1}_{i-1}, 0, 1, 0, \ldots, 0) \in \r^{n(n-1)} \mid 2 \le i \le n-1\} \\
&\cup \{(1, 0, 0, \ldots, 0), (0, 1, 0, \ldots, 0) \in \r^{n(n-1)}\}.
\end{align*}
\item If $G$ is of type $G_2$, then 
\begin{align*}
\mathcal{R}(\widetilde{C}_{{\bm i}_1}) &= \{(1, 0, 0, 0, 0, 0), (0, 1, 0, 0, 0, 0), (0, 1, 1, 0, 0, 0),\\
&\quad\quad (0, 1, 1, 2, 0, 0), (0, 1, 1, 2, 1, 0), (0, 1, 1, 2, 1, 1)\}.
\end{align*}
\end{itemize}

For $1 \le k \le n$, let $\mathfrak{g}_k \subset \mathfrak{g}$ denote the Lie subalgebra generated by $\{e_i, f_i, h_i \mid 1 \le i \le k\}$, $W^{(k)}$ the Weyl group of $\mathfrak{g}_k$, and $w_0 ^{(k)} \in W^{(k)}$ the longest element. 
We define a set $\mathcal{D}_1$ of unipotent minors by
\[
\mathcal{D}_1 \coloneqq \bigcup_{1 \le k \le n} \{D_{w_0 ^{(k)} \varpi_k, w \varpi_k} \mid w \in W^{(k)},\ w \varpi_k \neq w_0 ^{(k)} \varpi_k\}.
\]
Then we obtain the following lemma by \cref{r:minorvariable}.

\begin{lem}\label{l:explicit_cluster_variables}
All elements of $\mathcal{D}_1$ are cluster variables of $\c[U_{w_0} ^-]$.
\end{lem}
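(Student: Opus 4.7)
The plan is to apply \cref{r:minorvariable}, which provides the criterion under which a unipotent minor $D_{u\varpi_i, v\varpi_i}$ appears as a cluster variable of $\c[U_{w_0}^-]$. I will verify this criterion for each pair $(w_0^{(k)}, w)$ with $k \in \{1, \ldots, n\}$ and $w \in W^{(k)}$ satisfying $w\varpi_k \ne w_0^{(k)}\varpi_k$. The argument splits naturally into the base case $w = e$, which is visible in an initial cluster, and the general case, which requires a mutation.

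For $w = e$, I will choose a reduced word $\bm{i} \in R(w_0)$ whose initial segment of length $\ell(w_0^{(k)})$ is a reduced word $\bm{j}$ for $w_0^{(k)}$; such an $\bm{i}$ is obtained by concatenating $\bm{j}$ with a reduced word for $(w_0^{(k)})^{-1}w_0$, and in each classical type this concatenation can be arranged to be reduced. Letting $s_0$ denote the last position within the initial segment with $i_{s_0} = k$, the partial product $w_{\le s_0}$ differs from $w_0^{(k)}$ only by a product of simple reflections $s_j$ with $j < k$ (which occur after the final $k$ in $\bm{j}$), and these fix $\varpi_k$. Hence $w_{\le s_0}\varpi_k = w_0^{(k)}\varpi_k$, so that $D(s_0, \bm{i}) = D_{w_0^{(k)}\varpi_k, \varpi_k}$ is a member of the initial cluster of $\mathbf{s}_{\bm{i}}$ by \cref{t:upperBruhat}, hence a cluster variable of $\c[U^-_{w_0}]$.

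For general $w \in W^{(k)}$, I plan to apply within $\mathbf{s}_{\bm{i}}$ a sub-mutation sequence modelled on $\overleftarrow{\boldsymbol{\mu}_{\bm{j}}}$ from Sect.~\ref{s:mutation_sequence}, mutating only at the $\bm{j}$-unfrozen positions located in $\{1, \ldots, \ell(w_0^{(k)})\}$. A localization of \cref{t:mutation_max_variable} then produces cluster variables of the form $D_{w_0^{(k)}\varpi_k, w_{\le s^\vee}\varpi_k}$ for each $s$ with $i_s = k$; as $s$ ranges over the relevant positions in $\bm{j}$, the indices $w_{\le s^\vee} \in W^{(k)}$ exhaust the $W^{(k)}$-orbit of $\varpi_k$ minus its lowest weight $w_0^{(k)}\varpi_k$, giving exactly the remaining elements of $\mathcal{D}$ for this $k$. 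Summing over $k$ yields the lemma.

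The main obstacle will be justifying the sub-mutation step: unlike in the setting of \cref{t:mutation_max_variable}, where one works in the full cluster pattern of $\c[U^-_{w_0^{(k)}}]$, here the same positions sit inside $\mathbf{s}_{\bm{i}}$ and carry additional exchange-matrix contributions coming from indices $t > \ell(w_0^{(k)})$ in $\bm{j}'$. To resolve this, I will verify that for $s \in \{1, \ldots, \ell(w_0^{(k)})\}$ mutated in $\overleftarrow{\boldsymbol{\mu}_{\bm{j}}}$ and $t > \ell(w_0^{(k)})$, the entries $\varepsilon_{s, t}$ contribute only through the frozen monomials $D_{w_0\varpi_i, \varpi_i}$, so that the exchange relation reduces, after absorbing a Laurent monomial in frozens, to the exchange relation computed inside $\mathbf{s}_{\bm{j}}$; this is the precise content that \cref{r:minorvariable} is expected to encapsulate, and once invoked, the identification of the resulting cluster variables with the claimed unipotent minors is immediate from \cref{t:mutation_max_variable}.
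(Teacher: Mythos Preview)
Your first sentence is exactly right and is essentially the paper's entire proof: \cref{r:minorvariable} states a criterion on $(w_1, w_2, i)$ guaranteeing that $D_{w_1\varpi_i, w_2\varpi_i}$ is a cluster variable of $\c[U_{w_0}^-]$, and one only has to check it with $w_1 = w_0^{(k)}$, $i = k$, and $w_2 \in W^{(k)}$ satisfying $w_2\varpi_k \neq w_0^{(k)}\varpi_k$. The verification is immediate: $w_0^{(k)} s_k < w_0^{(k)}$ since $s_k \in W^{(k)}$; every element of $W^{(k)}$ is a left weak-order prefix of its longest element $w_0^{(k)}$; and $w_0^{(k)}$ is a left weak-order prefix of $w_0$ because $W^{(k)}$ is a standard parabolic subgroup. (When $k = n$ the strict inequality $\ell(w_0) > \ell(w_1)$ in \cref{r:minorvariable} fails, but then $D_{w_0\varpi_n, w\varpi_n}$ is already a frozen variable or one of the $D^\vee(s, \bm{i})$ produced by \cref{t:mutation_max_variable}.)

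Everything after your first paragraph is a detour based on a misreading of \cref{r:minorvariable}. That remark does not embed the mutation sequence $\overleftarrow{\boldsymbol{\mu}_{\bm{j}}}$ for $\bm{j} \in R(w_0^{(k)})$ into $\mathbf{s}_{\bm{i}}$; rather, it chooses a reduced word for $w_0$ itself, with prescribed prefixes $w_2 \le w_1$, and observes that the desired minor appears as an intermediate value in the \emph{full} sequence $\overleftarrow{\boldsymbol{\mu}}$ on that seed. So your ``main obstacle'' never arises. Worse, your proposed resolution of it is wrong as stated: a cluster variable multiplied by a nontrivial Laurent monomial in frozens is not in general a cluster variable, so ``absorbing'' such a monomial would not yield the lemma. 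What actually happens is stronger: for $\bm{j}$-unfrozen $s$ one has $s^+ \le \ell(w_0^{(k)})$, and inspection of the definition of $\varepsilon^{\bm{i}}$ shows $\varepsilon^{\bm{i}}_{s,t} = 0$ for every $t > \ell(w_0^{(k)})$, so there is nothing to absorb. This fact salvages your sub-mutation route, but going through \cref{r:minorvariable} as a black box, as the paper does, avoids the issue entirely.
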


\begin{prop}\label{p:minimal_generators_cluster_variables}
Let $G$ and ${\bm i}_1$ be as above, and use the lexicographic order $\prec$ in \cref{d:lowest_term_valuation_Schubert} to construct the valuation $v_{{\bf s}_{{\bm i}_1}}$. 
Then the equality 
\[\mathcal{R}(C_{{\bf s}_{{\bm i}_1}}) = v_{{\bf s}_{{\bm i}_1}} (\mathcal{D}_1)\]
holds. In particular, all elements of $\mathcal{R}(C_{{\bf s}_{{\bm i}_1}})$ are extended $g$-vectors of cluster variables.
\end{prop}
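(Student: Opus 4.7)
The strategy is to transport the claim to the string cone side, where the ray generators are known explicitly, and then compute $\tilde v_{\bm i}^{\mathrm{low}}$ directly on each element of $\mathcal{D}$.

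By Corollary \ref{c:relations_string_cone} the matrix $M_{\bm i}$ is lower unitriangular, hence unimodular over $\z$, and satisfies $\widetilde{C}_{\bm i} = C_{{\bf s}_{\bm i}}\,M_{\bm i}$; consequently right multiplication by $M_{\bm i}$ induces a bijection $\mathcal{R}(C_{{\bf s}_{\bm i}}) \to \mathcal{R}(\widetilde{C}_{\bm i})$ preserving lattice-primitivity. By Theorem \ref{t:relstring} the same matrix intertwines the two valuations via $\tilde v_{\bm i}^{\mathrm{low}}(D) = v_{{\bf s}_{\bm i}}(D)\,M_{\bm i}$ for every nonzero $D \in \c[U^-_{w_0}]$. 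The assertion $\mathcal{R}(C_{{\bf s}_{\bm i}}) = v_{{\bf s}_{\bm i}}(\mathcal{D})$ is therefore equivalent to
\[
\mathcal{R}(\widetilde{C}_{\bm i}) \;=\; \tilde v_{\bm i}^{\mathrm{low}}(\mathcal{D}),
\]
which is a concrete statement about polynomial pullbacks under $y_{\bm i}^{\ast}$.

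The core calculation is carried out by exploiting the nested block structure of ${\bm i}$: in each of the four classical types the initial segment of ${\bm i}$ of length $\ell(w_0^{(k)})$ is a reduced word for $w_0^{(k)} \in W^{(k)}$ and uses only the letters $\{1,\dots,k\}$. Consequently, for $D = D_{w_0^{(k)}\varpi_k, w\varpi_k}$ with $w \in W^{(k)}$, the pullback $y_{\bm i}^{\ast}(D)$ is a polynomial in $t_1,\dots,t_{\ell(w_0^{(k)})}$ only, and extracting its $\prec$-lowest term becomes an internal computation inside the $\mathfrak{g}_k$-representation $V(\varpi_k)$. In type $A$ this is a straightforward matrix-minor evaluation on a lower unitriangular factorization; in types $B$, $C$, $D$ it uses the standard or spinor representation of $\mathfrak{g}_k$. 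In every case one obtains a \emph{single} monomial whose exponent vector is a $0$/$1$ sequence with a connected block of $1$'s, with one exceptional entry $2$ arising in type $B_n$ from the double-bond letter. A comparison with the list of $\mathcal{R}(\widetilde{C}_{\bm i})$ recalled before the proposition, combined with a cardinality count $|\mathcal{D}| = |\mathcal{R}(\widetilde{C}_{\bm i})|$ verified type by type (for instance $\sum_{k=1}^n k = \tfrac{n(n+1)}{2}$ in type $A_n$), then yields the desired set equality after remarking that the computed lowest-term monomials are pairwise distinct, so $\tilde v_{\bm i}^{\mathrm{low}}|_{\mathcal{D}}$ is injective.

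The ``in particular'' clause is then immediate: every element of $\mathcal{D}$ is a cluster variable by Lemma \ref{l:explicit_cluster_variables}, so Corollary \ref{c:cluster_monomial_valuation} gives $v_{{\bf s}_{\bm i}}(D) = g_{{\bf s}_{\bm i}}(D)$ for each $D \in \mathcal{D}$, exhibiting every ray generator of $C_{{\bf s}_{\bm i}}$ as an extended $g$-vector of a cluster variable. The main technical obstacle will be the non-simply-laced and type $D$ computations, where one must pin down which $w \in W^{(k)}$ produces the longer rays (length $2i-1$ in type $C_n$, length $2i$ in type $D_n$) and the anomalous $2$-entry ray in type $B_n$; I expect that the cleanest way to organize the calculation is by induction on $k$, using Lemma \ref{l:monomial} as the base case $w\varpi_k = \varpi_k$ and reducing other weights $w\varpi_k$ to it by a single application of a Chevalley generator encoded in the crystal structure of $V(\varpi_k)$.
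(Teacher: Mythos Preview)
Your approach is essentially the same as the paper's: reduce via Theorem~\ref{t:relstring} and Corollary~\ref{c:relations_string_cone} to the identity $\mathcal{R}(\widetilde{C}_{\bm i}) = \tilde v_{\bm i}^{\rm low}(\mathcal{D})$, then compute $\tilde v_{\bm i}^{\rm low}$ on each $D_{w_0^{(k)}\varpi_k,\,w\varpi_k}$ by expanding the matrix coefficient $\langle f_{w_0^{(k)}\varpi_k},\, y_{i_1}(t_1)\cdots y_{i_m}(t_m)\, v_{w\varpi_k}\rangle$ inside the fundamental representation $V(\varpi_k)$. The paper simply packages the last step by citing the Kashiwara--Nakashima crystal graphs of the fundamental representations \cite{KasNak}, whereas you propose to do it by hand via induction on $k$ and Lemma~\ref{l:monomial}; these are two phrasings of the same computation, and the ``in particular'' clause is handled identically via Lemma~\ref{l:explicit_cluster_variables} and Corollary~\ref{c:cluster_monomial_valuation}.

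One small inaccuracy to fix before carrying out the explicit version: your description of the lowest-term exponent vectors as ``a $0/1$ sequence with a connected block of $1$'s'' is not quite right in type $D_n$, where the list of ray generators also contains vectors of the form $(\underbrace{0,\ldots,0}_{i(i-1)},\underbrace{1,\ldots,1}_{i-1},0,1,0,\ldots,0)$ with a gap; this reflects the branching at the fork of the $D$ diagram (and, correspondingly, with the paper's labeling the relevant $V(\varpi_k)$ for $\mathfrak g_k$ of type $D_k$ is the vector representation rather than a spinor). This does not affect the strategy, only the bookkeeping in the type-$D$ case.
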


\begin{proof}
It suffices to prove the first assertion since the second assertion follows from the first one by \cref{c:cluster_monomial_valuation} and \cref{l:explicit_cluster_variables}. 
By \cref{t:relstring} and \cref{c:relations_string_cone}, it is enough to show that
\begin{equation}\label{eq:ray_generators_valuations}
\begin{aligned}
\mathcal{R}(\widetilde{C}_{{\bm i}_1}) = \tilde{v}_{{\bm i}_1} ^{\rm low} (\mathcal{D}_1).
\end{aligned}
\end{equation}
We write ${\bm i}_1 = (i_1, \ldots, i_m)$. 
For $1 \le k \le n$ and $w \in W^{(k)}$, the definitions of $D_{w_0 ^{(k)} \varpi_k, w \varpi_k}$ and $y_{{\bm i}_1} ^\ast \colon \c [U^- _{w_0}] \hookrightarrow \c[t_1 ^{\pm 1}, \ldots, t_m ^{\pm 1}]$ imply that 
\[
y_{{\bm i}_1} ^\ast (D_{w_0 ^{(k)} \varpi_k, w \varpi_k}) = \langle f_{w_0 ^{(k)} \varpi_k}, y_{i_1} (t_1) y_{i_2} (t_2) \cdots y_{i_m} (t_m) v_{w \varpi_k} \rangle.
\]
Hence \eqref{eq:ray_generators_valuations} follows by the graphs of the crystal bases for the fundamental representations $V(\varpi_i)$, $i \in I$, given in \cite{KasNak, KanMis}; see \cite{Kas5} for a survey on crystal bases. 
This proves the proposition.
\end{proof}

\begin{ex}
Let $G = Sp_4(\c)$.
Then we have ${\bm i}_1 = (1, 2, 1, 2) \in R(w_0)$ and
\[\mathcal{R}(\widetilde{C}_{{\bm i}_1}) = \{(1,0,0,0), (0,1,0,0), (0,1,1,0), (0,1,1,1)\}.\]
By \cref{c:relations_string_cone}, it follows that 
\[\mathcal{R}(C_{{\bf s}_{{\bm i}_1}}) = \{{\bm a} M_{{\bm i}_1} \mid {\bm a} \in \mathcal{R}(\widetilde{C}_{{\bm i}_1})\} = \{(1,0,0,0), (-1,1,0,0), (0,-1,1,0), (0,0,0,1)\},\]
where we used the matrix $M_{{\bm i}_1}$ computed in \cref{ex:NO_body_string_C2}. 
In addition, the set $\mathcal{D}_1$ is given by 
\[
\mathcal{D}_1 = \{D_{s_1 \varpi_1, \varpi_1}, D_{w_0 \varpi_2, s_1 s_2 \varpi_2}, D_{w_0 \varpi_2, s_2 \varpi_2}, D_{w_0 \varpi_2, \varpi_2}\},
\]
which corresponds to $\mathcal{R}(C_{{\bf s}_{{\bm i}_1}})$ by \cref{p:minimal_generators_cluster_variables}.
Indeed, we see that 
\begin{align*}
v_{{\bf s}_{{\bm i}_1}}(D_{s_1 \varpi_1, \varpi_1}) &= (1,0,0,0), &v_{{\bf s}_{{\bm i}_1}}(D_{w_0 \varpi_2, s_1 s_2 \varpi_2}) &= (-1,1,0,0),\\
v_{{\bf s}_{{\bm i}_1}}(D_{w_0 \varpi_2, s_2 \varpi_2}) &= (0,-1,1,0), &v_{{\bf s}_{{\bm i}_1}}(D_{w_0 \varpi_2, \varpi_2}) &= (0,0,0,1).
\end{align*}
\end{ex}

Let $i \mapsto i^\ast$ be the involution on $I$ defined by $w_0 (\alpha_i) = - \alpha_{i^\ast}$ for $i \in I$.
We define ${\bm i}_2 \in R(w_0)$ by applying this involution to all entries of ${\bm i}_1$. 
More concretely, ${\bm i}_2$ is given as follows. 
\begin{itemize}
\item If $G$ is of type $A_n$, then ${\bm i}_2 = (1, 2, 1, 3, 2, 1, \ldots, n, n-1, \ldots, 1) \in I^{\frac{n(n+1)}{2}}$. 
\item If $G$ is of type $B_n$, $C_n$, or $G_2$, then ${\bm i}_2 = {\bm i}_1$.
\item If $G$ is of type $D_n$ and $n$ is even, then ${\bm i}_2 = {\bm i}_1$. 
If $G$ is of type $D_n$ and $n$ is odd, then ${\bm i}_2$ is obtained from ${\bm i}_1$ by replacing $1$ and $2$ with $2$ and $1$, respectively.
\end{itemize}
Since the equality $C_{{\bm i}_2^{\rm op}} = \widetilde{C}_{{\bm i}_2}^{\rm op}$ holds by \cref{c:relation_with_string_cones} (3), we see that 
\[\mathcal{R} (C_{{\bm i}_2^{\rm op}}) = \mathcal{R} (\widetilde{C}_{{\bm i}_2}^{\rm op}) = \mathcal{R} (\widetilde{C}_{{\bm i}_1}^{\rm op}).\]
We define a set $\mathcal{D}_2$ of unipotent minors by
\[
\mathcal{D}_2 \coloneqq \bigcup_{1 \le k \le n} \{D_{w \varpi_k, \varpi_k} \mid w \in W^{(k)},\ w \varpi_k \neq \varpi_k\}.
\]
Then all elements of $\mathcal{D}_2$ are cluster variables of $\c[U_{w_0} ^-]$ by \cref{r:minorvariable}.
In a way similar to the proof of \cref{p:minimal_generators_cluster_variables}, we obtain the following. 

\begin{prop}\label{p:i3mut}
Let $G$ and ${\bm i}_2$ be as above, and set ${\bm i}_3 \coloneqq {\bm i}_2 ^{\rm op}$. 
Define $v_{{\bf s}_{{\bm i}_3}^{\rm mut}}$ as in \cref{t:relNZ}.
Then the equality 
\[\mathcal{R}(C_{{\bf s}_{{\bm i}_3}^{\rm mut}}) = v_{{\bf s}_{{\bm i}_3}^{\rm mut}} (\mathcal{D}_2)\]
holds. 
In particular, all elements of $\mathcal{R}(C_{{\bf s}_{{\bm i}_3}^{\rm mut}})$ are extended $g$-vectors of cluster variables.
\end{prop}

\begin{rem}
    In general, there is no $\bm{j}\in R(w_0)$ satisfying ${\bf s}_{{\bm j}}={\bf s}_{{\bm i}_3}^{\rm mut}$. For example, when $G$ is of type $B_2$, $\bm{i}_3=(2,1,2,1)$ and the quiver associated with $\varepsilon^{\bm{i}_3, {\rm mut}}$ is given in \cref{e:smut}. Here note that the FZ-seed ${\bf s}_{{\bm i}_3}$ can be identified with the FZ-seed ${\bf s}_{{\bm i}}$ associated with $G=Sp_4(\mathbb{C})$ (type $C_2$) and $\bm{i}=(1,2,1,2)$. Then we can show that it is different from the one associated with $\varepsilon^{\bm{i}}$ for $\bm{i}\in R(w_0)=\{(1,2,1,2), (2,1,2,1)\}$. 
\end{rem}

On the other hand, when $G$ is of type $A_n$, we have the following.
\begin{prop}
Let $G=SL_{n+1}(\c)$ and ${\bm i}_1, {\bm i}_3$ be as above. Then 
\[
{\bf s}_{{\bm i}_3}^{\rm mut}={\bf s}_{{\bm i}_1}
\]
up to a relabeling of the index set of FZ-seeds. 
\end{prop}
\begin{proof}
Set $m\coloneqq \frac{n(n+1)}{2}, J=\{1,\dots,m\}$, and write the sets $J_{\rm uf}$ and $J_{\rm fr}$ associated with $\bm{i}_t$ ($t=1, 3$) as $J_{\rm uf}^{(\bm{i}_t)}$ and $J_{\rm fr}^{(\bm{i}_t)}$, respectively. By the argument in the proof of \cref{t:relNZ}, we have 
    \[
D^{\rm mut}(s, \bm{i}_3)=\begin{cases}
(\eta_{w_0}^{\ast})^{-1}(D(s^{\vee}, \bm{i}_3))\prod_{i\in I}D_{w_0\varpi_i, \varpi_i}^{f_i^{(s)}}&\text{ if } s\in J_{\rm uf}^{(\bm{i}_3)},\\
D_{w_0\varpi_{i_s}, \varpi_{i_s}}&\text{ if } s\in J_{\rm fr}^{(\bm{i}_3)}
\end{cases}
\]
for some $f_i^{(s)}\in \mathbb{Z}$ ($i\in I$, $s\in J$). When $s^{\vee}\in J_{\rm uf}^{(\bm{i}_3)}$ satisfies $\xi_{\bm{i}_3}(s^{\vee})=(i, k)$ (recall Section \ref{ss:mutation_sequence}), we have 
\[
D(s^{\vee}, \bm{i}_3)=D_{[k+1, k+i], [1,i]}=D_{[k+1, n+1], [1,i]\cup [k+i+1,n+1]}
\]
Here $[a, b] \coloneqq \{a, a+1,\ldots, b\}$ for $a, b \in \z_{>0}$ with $a\leq b$, and for $K, K'\subset [1, n+1]$ with $|K|=|K'|$, $D_{K, K'}$ denotes the minor associated with the row set $K$ and the column set $K'$ on the unipotent subgroup $U^-$ consisting of the unipotent lower triangular matrices in $SL_{n+1}(\c)$. Then there exists $\widetilde{s}\in J_{\rm uf}^{(\bm{i}_1)}$ satisfying $\xi_{\bm{i}_1}(\widetilde{s})=(n-k+1, n-k+1-i)$ and 
\[
D_{[k+1, n+1], [1,i]\cup [k+i+1,n+1]}=D_{w_0\varpi_{n-k+1}, w_{\leq \widetilde{s}}\varpi_{n-k+1}}, 
\]
here we consider the element $w_{\leq \widetilde{s}}$ associated with $\bm{i}_1$. Note that 
\begin{align*}
&\xi_{\bm{i}_3}(J_{\rm uf}^{(\bm{i}_3)})=\{(i, k)\mid 1\leq i\leq n-1,\ 1\leq k\leq n-i\},\\
&\xi_{\bm{i}_1}(J_{\rm uf}^{(\bm{i}_1)})=\{(i, k)\mid 2\leq i\leq n,\ 1\leq k\leq i-1\},
\end{align*}
and the correspondence $J_{\rm uf}^{(\bm{i}_3)}\to J_{\rm uf}^{(\bm{i}_1)},\ s^{\vee}\mapsto \widetilde{s},$ gives a bijection. Hence we have 
\begin{align*}
(\eta_{w_0}^{\ast})^{-1}(D(s^{\vee}, \bm{i}_3))&=(\eta_{w_0}^{\ast})^{-1}(D_{w_0\varpi_{n-k+1}, w_{\leq \widetilde{s}}\varpi_{n-k+1}})\\
&=\frac{D_{w_{\leq \widetilde{s}}\varpi_{n-k+1}, \varpi_{n-k+1}}}{D_{w_{0}\varpi_{n-k+1}, \varpi_{n-k+1}}}\\
&=\frac{D( \widetilde{s}, \bm{i}_1)}{D_{w_{0}\varpi_{n-k+1}, \varpi_{n-k+1}}}.
\end{align*}
Therefore,
\[
D^{\rm mut}(s, \bm{i}_3)=\frac{D( \widetilde{s}, \bm{i}_1)}{D_{w_{0}\varpi_{n-k+1}, \varpi_{n-k+1}}}\prod_{i\in I}D_{w_0\varpi_i, \varpi_i}^{f_i^{(s)}}\in \c[U^-]. 
\]
By \cite[Corollary 2.3]{GLS:factorial}, it implies
\[
D^{\rm mut}(s, \bm{i}_3)=D(\widetilde{s}, \bm{i}_1).
\]
Therefore, as a set, 
\begin{align*}
   \{ D^{\rm mut}(s, \bm{i}_3)\mid s\in J\}&=\{D(s, \bm{i}_1)\mid s\in J\}. 
\end{align*}
Hence the fact \cite[Proposition 3]{CL} implies ${\bf s}_{{\bm i}_3}^{\rm mut}={\bf s}_{{\bm i}_1}$ up to a relabeling of the index set of FZ-seeds. 
\end{proof}

\appendix

\section{Investigation of specific mutation sequences}\label{a:mutation_sequence}

In this appendix, we give proofs of \cref{t:mutation_max,t:mutation_max_variable}. We adopt \cref{n:fix_w} in this appendix, and recall the notation in Section \ref{ss:mutation_sequence}. 

First it is convenient to introduce a skew-symmetrizable matrix $\varepsilon^{\bm{i}, \mathrm{ex}}=(\varepsilon_{s, t})_{s, t\in J}$ obtained from $\Gamma_{\bm{i}}$ by the rule \eqref{eq:recoverexchange}. Note that $\varepsilon_{s, t} = 0$ for $s, t\in J_{\rm fr}$, and $\varepsilon^{\bm{i}}$ is the $J_{\rm uf} \times J$-submatrix of $\varepsilon^{\bm{i}, \mathrm{ex}}$. We can consider the mutations of $\varepsilon^{\bm{i}, \mathrm{ex}}$ by \eqref{eq_mutation_for_varepsilon}, and if we apply a mutation sequence $\mu_{s_k}\cdots \mu_{s_1}$ such that $s_1, \ldots, s_k\in J_{\rm uf}$, then the $J_{\rm uf}
 \times J$-submatrix of $\mu_{s_k}\cdots \mu_{s_1}(\varepsilon^{\bm{i}, \mathrm{ex}})$ is equal to $\mu_{s_k}\cdots \mu_{s_1}(\varepsilon^{\bm{i}})$. Hence, in the following, we study mutations of $\varepsilon^{\bm{i}, \mathrm{ex}}$ (in directions $s\in J_{\rm uf}$). 

Assume that we have a skew-symmetrizable $J\times J$-matrix $\varepsilon$ which is recovered from the quiver of the following form by the rule \eqref{eq:recoverexchange}: 

\hfill
\begin{xy} 0;<1pt,0pt>:<0pt,-1pt>::
	(-73,60) *+{i},
	(-65,60) *+{\scalebox{0.65}{$>$}},	
	(-73,30) *+{j'},
	(-65,30) *+{\scalebox{0.65}{$>$}},		
	(-73,90) *+{j''},
	(-65,90) *+{\scalebox{0.65}{$>$}},			
	(-30,60) *+{\scalebox{0.9}{$s^-$}} ="-1",	
	(40,30) *+{\scalebox{0.9}{$t'$}}="0",
	(0,60) *+{\scalebox{0.9}{$s$}} ="1",
	(60,60) *+{\scalebox{0.9}{$s^+$}} ="2",
	(200,30) *+{\scalebox{0.9}{$(t')^+$}}="3",
	(120,60) *+{\scalebox{0.9}{$s^{(2)}$}} ="4",
	(135,60) *+{\cdots},
	(155,60) *+{\scalebox{0.9}{$s^{(k-1)}$}} ="10",		
	(190,60) *+{\scalebox{0.9}{$s^{(k)}$}} ="5",	
	(300,60) *+{\scalebox{0.9}{$s^{(k+1)}$}} ="6",
	(220,30) *+{\cdots},
	(260,30) *+{\cdots},
	(280,30) *+{\scalebox{0.9}{$(t')^{(l')}$}}="8",	
	(325,30) *+{\scalebox{0.9}{$(t')^{(l'+1)}$}}="9",
	(30,90) *+{\scalebox{0.9}{$t''$}} ="11",									
	(70,90) *+{\scalebox{0.9}{$(t'')^+$}} ="12",
	(87,90) *+{\cdots},
	(110,90) *+{\scalebox{0.9}{$(t'')^{(l'')}$}} ="13",
	(-50,60) *+{\cdots},	
(-20,30) *+{\cdots},
(-20,90) *+{\cdots},
(130,90) *+{\cdots},
(320,60) *+{\cdots},
(348,30) *+{\cdots},	
	(-73,120) *+{j'''},
(-65,120) *+{\scalebox{0.65}{$>$}},
	(20,120) *+{\scalebox{0.9}{$t'''$}} ="14",									
(75,120) *+{\scalebox{0.9}{$(t''')^+$}} ="15",
(95,120) *+{\cdots},
(118,120) *+{\scalebox{0.9}{$(t''')^{(l''')}$}} ="16",
(0,120) *+{\cdots},
(138,120) *+{\cdots},
(-73,0) *+{j},
(-65,0) *+{\scalebox{0.65}{$>$}},
(25,0) *+{\cdots},	
(45,0) *+{\scalebox{0.9}{$t$}}="17",
(210,0) *+{\scalebox{0.9}{$t^+$}}="18",
(248,0) *+{\scalebox{0.9}{$t^{(2)}$}}="19",
(265,0) *+{\cdots},
(285,0) *+{\scalebox{0.9}{$t^{(l)}$}}="20",	
(320,0) *+{\scalebox{0.9}{$t^{(l+1)}$}}="21",
(340,0) *+{\cdots},
	"18", {\ar"17"},	
	"19", {\ar"18"},		
	"21", {\ar"20"},	
	"1", {\ar"17"},	
	"17", {\ar"5"},	
	"5", {\ar"20"},				
	"14", {\ar"-1"},	
	"1", {\ar"14"},
	"15", {\ar"14"},
	"14", {\ar"2"},
	"2", {\ar"16"},			
	"17", {\ar"-1"},
	"-1", {\ar"1"},
	"1", {\ar"0"},
	"2", {\ar"1"},
	"4", {\ar"2"},
	"3", {\ar"0"},				
	"0", {\ar"5"},
	"6", {\ar"5"},		
	"9", {\ar"8"},
	"5", {\ar"8"},
	"5", {\ar"10"},					
	"12", {\ar"11"},
	"1", {\ar"11"},
	"11", {\ar"2"},
	"2", {\ar"13"},
\end{xy}
\hfill
\hfill

Here $r^{(k)}$ ($r\in J,\ k\in\mathbb{Z}_{>0}$) is defined as 
\begin{align*}
r^{(1)}&\coloneqq r^+,&
r^{(k)}&\coloneqq (r^{(k-1)})^+\text{ for }k\in \mathbb{Z}_{>1};
\end{align*}
note that we consider $r^+$ with respect to $\bm{i}$.

Applying the mutation $\mu_s$ to $\varepsilon$ in direction $s$, we obtain a skew-symmetrizable $J\times J$-matrix which is recovered from the following quiver by the rule \eqref{eq:recoverexchange}:

\hfill
\begin{xy} 0;<1pt,0pt>:<0pt,-1pt>::
	(-73,60) *+{i},
(-65,60) *+{\scalebox{0.65}{$>$}},	
(-73,30) *+{j'},
(-65,30) *+{\scalebox{0.65}{$>$}},		
(-73,90) *+{j''},
(-65,90) *+{\scalebox{0.65}{$>$}},			
(-30,60) *+{\scalebox{0.9}{$s^-$}} ="-1",	
(40,30) *+{\scalebox{0.9}{$t'$}}="0",
(0,60) *+{\scalebox{0.9}{$s$}} ="1",
(60,60) *+{\scalebox{0.9}{$s^+$}} ="2",
(200,30) *+{\scalebox{0.9}{$(t')^+$}}="3",
(120,60) *+{\scalebox{0.9}{$s^{(2)}$}} ="4",
(135,60) *+{\cdots},
(155,60) *+{\scalebox{0.9}{$s^{(k-1)}$}} ="10",		
(190,60) *+{\scalebox{0.9}{$s^{(k)}$}} ="5",	
(300,60) *+{\scalebox{0.9}{$s^{(k+1)}$}} ="6",
	(220,30) *+{\cdots},
(260,30) *+{\cdots},
(280,30) *+{\scalebox{0.9}{$(t')^{(l')}$}}="8",	
(325,30) *+{\scalebox{0.9}{$(t')^{(l'+1)}$}}="9",
(30,90) *+{\scalebox{0.9}{$t''$}} ="11",									
(70,90) *+{\scalebox{0.9}{$(t'')^+$}} ="12",
(87,90) *+{\cdots},
(110,90) *+{\scalebox{0.9}{$(t'')^{(l'')}$}} ="13",
(-50,60) *+{\cdots},	
(-5,30) *+{\cdots},
(-20,90) *+{\cdots},
(130,90) *+{\cdots},
(320,60) *+{\cdots},
(348,30) *+{\cdots},
	(-73,120) *+{j'''},
(-65,120) *+{\scalebox{0.65}{$>$}},
(20,120) *+{\scalebox{0.9}{$t'''$}} ="14",									
(75,120) *+{\scalebox{0.9}{$(t''')^+$}} ="15",
(95,120) *+{\cdots},
(118,120) *+{\scalebox{0.9}{$(t''')^{(l''')}$}} ="16",
(0,120) *+{\cdots},
(138,120) *+{\cdots},
(-73,0) *+{j},
(-65,0) *+{\scalebox{0.65}{$>$}},
(25,0) *+{\cdots},	
(45,0) *+{\scalebox{0.9}{$t$}}="17",
(210,0) *+{\scalebox{0.9}{$t^+$}}="18",
(248,0) *+{\scalebox{0.9}{$t^{(2)}$}}="19",
(265,0) *+{\cdots},
(285,0) *+{\scalebox{0.9}{$t^{(l)}$}}="20",	
(320,0) *+{\scalebox{0.9}{$t^{(l+1)}$}}="21",
(340,0) *+{\cdots},
"18", {\ar"17"},	
"19", {\ar"18"},		
"21", {\ar"20"},	
"17", {\ar"1"},	
"17", {\ar"5"},	
"5", {\ar"20"},	
	"14", {\ar"1"},
	"15", {\ar"14"},
	"2", {\ar"16"},
	"1", {\ar"-1"},
	"0", {\ar"1"},
	"1", {\ar"2"},
	"4", {\ar"2"},
	"3", {\ar"0"},				
	"0", {\ar"5"},
	"6", {\ar"5"},		
	"9", {\ar"8"},
	"5", {\ar"8"},
	"5", {\ar"10"},					
	"12", {\ar"11"},
	"11", {\ar"1"},
	"-1", {\ar"11"},	
	"2", {\ar"17"},
	"2", {\ar"0"},	
	"2", {\ar"13"},
	"-1", {\ar"0"},		
\end{xy}
\hfill
\hfill

Then $s^+$ in this mutated quiver is in the same situation as $s$ of the original quiver. Indeed, 
\begin{itemize}
	\item the line $j$ for new $s^+$ corresponds to the line $j$ or $j'''$ for previous $s$,
	\item the line $j'$ for new $s^+$ corresponds to the line $j$ or $j'''$ for previous $s$,
	\item the line $j''$ for new $s^+$ corresponds to the line $j'$ or $j''$ for previous $s$,
	\item the line $j'''$ for new $s^+$ corresponds to the line $j'$ or $j''$ for previous $s$.		
\end{itemize}
Moreover, even if 
\begin{itemize}
	\item $s^-$ does not exist (that is, $\xi_{\bm{i}}(s)=(i, 1)$), or 
	\item there exist arrows among the lines $j$, $j'$, $j''$, and $j'''$, or 
	\item there are more (or no) lines of the same form as the lines $j$, $j'$, $j''$, or $j'''$, 
\end{itemize}
it does not make our situation more complicated (we only need an obvious modification). Hence we can calculate $\overleftarrow{\boldsymbol{\mu}_{\bm{i}}}[1](\varepsilon^{\bm{i}, \mathrm{ex}})$ by iterated application of the above argument. More precisely, we obtain the following.

\begin{prop}\label{p:mutation_oneline}
Let $w\in W$, and $\bm{i}\in R(w)$. Assume that $m_{i_1}>1$ (that is, $\overleftarrow{\boldsymbol{\mu}_{\bm{i}}}[1]\neq \mathrm{id}$). Then the $J\times J$-matrix $\overleftarrow{\boldsymbol{\mu}_{\bm{i}}}[1](\varepsilon^{\bm{i}, \mathrm{ex}})=(\varepsilon'_{s, t})_{s, t\in J}$ is given by 
\begin{align}
	\varepsilon'_{s, t}=
\begin{cases}
-1&\text{if}\ i_s=i_1\text{ and }s=t^+\not\in J_{\rm fr}, \\
1&\text{if}\ i_s=i_1\text{ and }s=t^+\in J_{\rm fr}, \\
c_{i_t,i_s}&\text{if}\ i_s=i_1\text{ and }s^+<t<s^{(2)}<t^+, \\
-c_{i_t,i_s}&\text{if}\ i_s=i_1\text{ and }t<s^+<t^+\leq s^{(2)}, \\
1&\text{if}\ i_t=i_1\text{ and }s^+=t\not\in J_{\rm fr}, \\
-1&\text{if}\ i_t=i_1\text{ and }s^+=t\in J_{\rm fr}, \\
c_{i_t,i_s}&\text{if}\ i_t=i_1\text{ and }s<t^+<s^+\leq t^{(2)},\\
-c_{i_t,i_s}&\text{if}\ i_t=i_1\text{ and }t^+<s<t^{(2)}< s^+,\\
c_{i_t,i_s}&\text{if}\ i_s=i_1\text{ and }t<s<t^+=s^+\not\in J,\\
-c_{i_t,i_s}&\text{if}\ i_t=i_1\text{ and }s<t<s^+=t^+\not\in J,\\
\varepsilon_{s, t}&\text{if}\ i_s\neq i_1\text{ and }i_t\neq i_1,\\
0&\text{otherwise}.
\end{cases}\label{eq:mutation_oneline}
\end{align}
\end{prop}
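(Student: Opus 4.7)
The plan is to prove Proposition~\ref{p:mutation_oneline} by induction on the number of steps in the mutation sequence $\overleftarrow{\boldsymbol{\mu}_{\bm{i}}}[1]=\mu_{\xi_{\bm{i}}^{-1}(i_1,m_{i_1}-1)}\circ\cdots\circ\mu_{\xi_{\bm{i}}^{-1}(i_1,2)}\circ\mu_{\xi_{\bm{i}}^{-1}(i_1,1)}$, using precisely the local quiver picture exhibited in the excerpt. First I would set up the local configuration of $\Gamma_{\bm{i}}$ around the ``horizontal row'' consisting of the vertices $s^{(0)}\coloneqq \xi_{\bm{i}}^{-1}(i_1,1),\ s^{(1)}\coloneqq \xi_{\bm{i}}^{-1}(i_1,2),\ \ldots,\ s^{(m_{i_1}-1)}\coloneqq \xi_{\bm{i}}^{-1}(i_1,m_{i_1})$. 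By the explicit form of $\varepsilon^{\bm{i}}$ recalled in Sect.~\ref{s:unipcell} (equivalently, by the rule \eqref{eq:recoverexchange} applied to $\Gamma_{\bm{i}}$), the only arrows incident to $s^{(0)}$ besides the horizontal one $s^{(1)}\to s^{(0)}$ are inclined arrows between $s^{(0)}$ and vertices $t$ with $i_t$ adjacent to $i_1$ in the Dynkin diagram, subject to the interval conditions $t<s^{(0)}<t^+<(s^{(0)})^+$ or $s^{(0)}<t<(s^{(0)})^+<t^+$. This is exactly the local picture drawn in the excerpt (with $i=i_1$ playing the role of the row of $s$, and $j,j',j'',j'''$ playing the roles of neighbours of $i_1$).

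Next I would perform the induction itself. The key observation, already highlighted right after the ``mutated quiver'' figure, is that a single mutation $\mu_{s^{(k)}}$ transforms the local picture around $s^{(k)}$ into the same shape centred at $s^{(k+1)}$; the vertex $s^{(k+1)}$ of the mutated quiver inherits precisely the role that $s^{(k)}$ played in the unmutated one, with only the list of neighbouring rows $j,j',j'',j'''$ permuted in the controlled way described there. Inductively applying this pattern $m_{i_1}-1$ times, the net effect of $\overleftarrow{\boldsymbol{\mu}_{\bm{i}}}[1]$ is to reverse all horizontal arrows in the $i_1$-row, and to reverse each inclined arrow adjacent to this row while simultaneously shifting its ``interval data'' by one step along the row; a careful bookkeeping of the four case types (horizontal/inclined $\times$ direction) then produces the ten cases listed in \eqref{eq:mutation_oneline}. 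The special treatment of the terminal vertex $\xi_{\bm{i}}^{-1}(i_1,m_{i_1})\in J_{\rm fr}$ accounts for the sign flip in the first two and fifth/sixth clauses of \eqref{eq:mutation_oneline}, and the vanishing in the ``otherwise'' clause reflects the fact that no new arrows are created outside the $i_1$-row because all mutations take place along that row.

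Finally I would verify the boundary and degeneracy situations: the case $m_{i_1}=2$ (only one mutation is performed), the case where some $s^{(k)}$ has no $s^{(k)-}$ of the appropriate type, the absence of certain neighbouring rows $j,j',j'',j'''$, and the possibility of existing arrows among these rows. As pointed out in the excerpt, none of these alter the recursion, since each one only suppresses some of the arrows in the picture without introducing new interactions; hence formula \eqref{eq:mutation_oneline} persists in all these degenerate situations.

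The routine part is the algebraic mutation formula \eqref{eq_mutation_for_varepsilon}; the main obstacle is purely bookkeeping, namely verifying that each of the ten clauses in \eqref{eq:mutation_oneline} is produced correctly by the inductive step, with the right signs and the right interval conditions (for example, distinguishing $t<s^+<t^+\le s^{(2)}$ from $s^+<t<s^{(2)}<t^+$). I expect this to be manageable by a direct case analysis based on the local quiver pictures drawn above, so no essentially new idea beyond the ``shift along the row'' induction is required.
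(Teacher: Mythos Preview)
Your proposal is correct and follows essentially the same approach as the paper: the paper's argument for Proposition~\ref{p:mutation_oneline} is precisely the local-quiver discussion preceding its statement, which establishes that a single mutation $\mu_s$ shifts the role of $s$ to $s^+$ in the same configuration, and then concludes by iterating this observation along the $i_1$-row while noting that the boundary and degeneracy cases cause no difficulty. Your induction on the number of mutations in $\overleftarrow{\boldsymbol{\mu}_{\bm{i}}}[1]$ formalizes exactly this ``shift along the row'' recursion, and your treatment of the terminal frozen vertex and the degenerate configurations mirrors the paper's remarks verbatim.
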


\begin{ex}\label{e:oneline_mutatedquiverexample}
	For the examples in \cref{e:quiverexample}, the quivers associated with $\overleftarrow{\boldsymbol{\mu}_{\bm{i}}}[1](\varepsilon^{\bm{i}, \mathrm{ex}})$
	are given as follows.
	\begin{itemize}
	
\item The case of $G = SL_4(\mathbb{C})$ and $\bm{i}=(2, 1, 2, 3, 2, 1)\in R(w_0)$:  $\overleftarrow{\boldsymbol{\mu}_{\bm{i}}}[1]=\mu_3\mu_1$, and
	
	\hfill
	\begin{xy} 0;<1pt,0pt>:<0pt,-1pt>::
		(-18,0) *+{3},
		(-10,0) *+{\scalebox{0.65}{$>$}},	
		(-18,30) *+{2},
		(-10,30) *+{\scalebox{0.65}{$>$}},		
		(-18,60) *+{1},
		(-10,60) *+{\scalebox{0.65}{$>$}},			
		(0,30) *+{1} ="1",
		(40,60) *+{2} ="2",
		(80,30) *+{3} ="3",
		(120,0) *+{4} ="4",		
		(160,30) *+{5} ="5",
		(200,60) *+{6} ="6",
		"3", {\ar"1"},
		"3", {\ar"5"},
		"2", {\ar"3"},
		"6", {\ar"2"},
		"1", {\ar"4"},
		"4", {\ar"3"},
		"5", {\ar"4"},
	\end{xy}
	\hfill
	\hfill
	
\item The case of $G = Sp_4(\mathbb{C})$ and $\bm{i}=(1, 2, 1, 2)\in R(w_0)$: $\overleftarrow{\boldsymbol{\mu}_{\bm{i}}}[1]=\mu_1$, and
	
	\hfill
	\begin{xy} 0;<1pt,0pt>:<0pt,-1pt>::
		(-18,0) *+{2},
		(-10,0) *+{\scalebox{0.65}{$>$}},	
		(-18,30) *+{1},
		(-10,30) *+{\scalebox{0.65}{$>$}},		
		(0,30) *+{1} ="1",
		(40,0) *+{2} ="2",
		(80,30) *+{3} ="3",
		(120,0) *+{4} ="4",
		"2", {\ar"1"},
		"1", {\ar"3"},
		"4", {\ar"2"},
	\end{xy}
	\hfill
	\hfill

	\end{itemize}	
\end{ex}

For $\bm{i}=(i_1,\dots, i_m)\in R(w)$, set $\bm{i}^{\mathrm{op}}\coloneqq (i_m,\dots, i_1)\in R(w^{-1})$. Then we have a bijection $\mathsf{R}_{\bm{i}}\colon J \to J$ given by $\mathsf{R}_{\bm{i}} \coloneqq (\xi_{\bm{i}^{\mathrm{op}}})^{-1}\circ \xi_{\bm{i}}$. Note that $\mathsf{R}_{\bm{i}}(J_{\rm uf}) = J_{\rm uf}$ and $\mathsf{R}_{\bm{i}}(J_{\rm fr}) = J_{\rm fr}$. 

\begin{thm}\label{t:mutation_max_ex}
	Let $w\in W$, and $\bm{i}\in R(w)$. Then the $J\times J$-matrix $\overleftarrow{\boldsymbol{\mu}_{\bm{i}}}(\varepsilon^{\bm{i}, \mathrm{ex}})=(\overline{\varepsilon}_{s, t})_{s, t\in J}$ is given by 
	\begin{align}
	\overline{\varepsilon}_{s,t}&=
		\begin{cases}
	1&\text{if}\ s=t^+, \\
	-1&\text{if}\ s^+=t, \\
	c_{i_t, i_s}&\text{if}\ \mathsf{R}_{\bm{i}}(t)<\mathsf{R}_{\bm{i}}(s)<\mathsf{R}_{\bm{i}}(t^+)<\mathsf{R}_{\bm{i}}(s^+),\\
	-c_{i_t, i_s}&\text{if}\ \mathsf{R}_{\bm{i}}(s)<\mathsf{R}_{\bm{i}}(t)<\mathsf{R}_{\bm{i}}(s^+)<\mathsf{R}_{\bm{i}}(t^+), \\
	c_{i_t, i_s}&\text{if}\ s=\xi_{\bm{i}^{\mathrm{op}}}^{-1}(i_s, m_{i_s})>\xi_{\bm{i}^{\mathrm{op}}}^{-1}(i_t, m_{i_t})=t\text{ and }\xi_{\bm{i}^{\mathrm{op}}}^{-1}(i_s, 1)<\xi_{\bm{i}^{\mathrm{op}}}^{-1}(i_t, 1),\\
	-c_{i_t, i_s}&\text{if}\ t=\xi_{\bm{i}^{\mathrm{op}}}^{-1}(i_t, m_{i_t})>\xi_{\bm{i}^{\mathrm{op}}}^{-1}(i_s, m_{i_s})=s\text{ and }\xi_{\bm{i}^{\mathrm{op}}}^{-1}(i_t, 1)<\xi_{\bm{i}^{\mathrm{op}}}^{-1}(i_s, 1),\\	
	0&\text{otherwise}.
	\end{cases} 
	\end{align}
\end{thm}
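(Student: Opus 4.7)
\textbf{Proof plan for Theorem \ref{t:mutation_max_ex}.} The natural approach is induction on $m = \ell(w)$, with the base cases $m \le 1$ being a direct check. For the inductive step, set $\bm{i}' = (i_2, \ldots, i_m) \in R(s_{i_1} w)$, and identify positions in $\bm{i}'$ with $\{2, \ldots, m\} \subset J$ via the shift $s \leftrightarrow s-1$. The key combinatorial compatibility is that $m_{i_1}' = m_{i_1} - 1$ and $m_i' = m_i$ for $i \ne i_1$, which ensures that, for each $s \ge 2$ with $i_s = i_1$, the mutation range $\{(i_s, 1), \ldots, (i_s, m_{i_s} - k[s])\}$ in $\overleftarrow{\boldsymbol{\mu}_{\bm{i}}}[s]$ matches $\{(i_s, 1), \ldots, (i_s, m_{i_s}' - k[s-1]')\}$ in $\overleftarrow{\boldsymbol{\mu}_{\bm{i}'}}[s-1]$; for $s$ with $i_s \ne i_1$ the ranges agree tautologically. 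Hence the mutation subsequence $\overleftarrow{\boldsymbol{\mu}_{\bm{i}}}[m] \circ \cdots \circ \overleftarrow{\boldsymbol{\mu}_{\bm{i}}}[2]$ is, after the shift, exactly the full sequence $\overleftarrow{\boldsymbol{\mu}_{\bm{i}'}}$, applied to a matrix on $J$ in which vertex $1$ is treated as frozen for the remainder of the computation.

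The first concrete step is to apply Proposition \ref{p:mutation_oneline} to obtain the explicit form of $\overleftarrow{\boldsymbol{\mu}_{\bm{i}}}[1](\varepsilon^{\bm{i}, \mathrm{ex}})$. The next step is to show that the restriction of this matrix to indices $\{2, \ldots, m\}$ coincides, under the shift, with $\varepsilon^{\bm{i}', \mathrm{ex}}$. This is essentially a local verification: since $\overleftarrow{\boldsymbol{\mu}_{\bm{i}}}[1]$ mutates only at vertices of color $i_1$, the interaction of $\overleftarrow{\boldsymbol{\mu}_{\bm{i}}}[1]$ with pairs $(s, t)$ having $s, t \ne 1$ either leaves $\varepsilon_{s, t}$ unchanged or produces, via the last two cases of \eqref{eq:mutation_oneline}, exactly the contributions predicted by the $\bm{i}'$-rule. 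Applying the inductive hypothesis to $\bm{i}'$ then yields the claimed formula on the block $\{2, \ldots, m\} \times \{2, \ldots, m\}$, since the description in terms of $\mathsf{R}_{\bm{i}'}$ translates under the shift to the description in terms of $\mathsf{R}_{\bm{i}}$ (the positions of the entries of color $\ne i_1$ within $\bm{i}'^{\rm op}$ agree with their positions in $\bm{i}^{\rm op}$ up to a uniform shift, and the $i_1$-entries fit in at the front in the correct relative order).

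The remaining task is to determine the entries $\overline{\varepsilon}_{1, t}$ and $\overline{\varepsilon}_{s, 1}$ of the final matrix, and here the main obstacle lies: although vertex $1$ is not mutated after $\overleftarrow{\boldsymbol{\mu}_{\bm{i}}}[1]$, its adjacencies continue to change, because whenever a neighbor of vertex $1$ is mutated, the mutation rule \eqref{eq_mutation_for_varepsilon} modifies the arrows incident to $1$ as well. These changes must be tracked through the entire sequence $\overleftarrow{\boldsymbol{\mu}_{\bm{i}}}[m] \circ \cdots \circ \overleftarrow{\boldsymbol{\mu}_{\bm{i}}}[2]$ and compared with the formula in the theorem, which for $s = 1$ or $t = 1$ predicts the values $\overline{\varepsilon}_{1, 1^+} = -1$, and more generally an entry $c_{i_t, i_1}$ or $-c_{i_t, i_1}$ (or zero) depending on the relative positions of $1, 1^+, t, t^+$ under $\mathsf{R}_{\bm{i}}$. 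One natural way to organize this verification is to introduce an auxiliary quantity $\tilde{\varepsilon}^{(k)}_{1, t}$ recording the entry after the $k$-th intermediate step, and inductively show by a case analysis based on the colors $i_t, i_1$ and on whether $k[t]$ falls in certain ranges that the claimed formula holds; the horizontal/inclined arrow dichotomy used in Sect.~\ref{ss:mutation_sequence} will be useful to organize the casework. Once these boundary entries are verified to match the theorem's formula, the proof is complete. The same inductive framework will then yield Theorem \ref{t:mutation_max_variable} in parallel, by tracking the evolution of the cluster variables $D(s, \bm{i})$ under the same mutation sequence via \eqref{eq_mutation_for_cluster_variable} and identifying the output with $D^{\vee}(s, \bm{i})$ using standard identities among generalized minors.
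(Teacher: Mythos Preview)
Your inductive framework has the right spirit, but two of its load-bearing claims are false, and both stem from choosing the wrong vertex to excise.

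First, the assertion that the restriction of $\overleftarrow{\boldsymbol{\mu}_{\bm{i}}}[1](\varepsilon^{\bm{i},\mathrm{ex}})$ to $\{2,\ldots,m\}$ agrees with $\varepsilon^{\bm{i}',\mathrm{ex}}$ is incorrect. Already for $\bm{i}=(1,2,1)$ in type $A_2$, the sequence $\overleftarrow{\boldsymbol{\mu}_{\bm{i}}}[1]=\mu_1$ produces $\varepsilon'_{2,3}=1$, whereas $\varepsilon^{\bm{i}',\mathrm{ex}}$ for $\bm{i}'=(2,1)$ is the zero matrix. In the $\mathfrak{sl}_4$ example $\bm{i}=(1,2,1,3,2,1)$ (see \cref{e:oneline_mutatedquiverexample}), the restriction to $\{2,\ldots,6\}$ has arrows $3\to 6$, $5\to 3$, $6\to 5$, $5\to 2$, $2\to 4$, while $\Gamma_{\bm{i}'}$ under your shift has $2\to 3$, $2\to 4$, $5\to 2$, $3\to 5$, $6\to 3$; these are different quivers.

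Second, and relatedly, vertex $1$ is \emph{not} frozen for the remainder: for each $s\ge 2$ with $i_s=i_1$, the sequence $\overleftarrow{\boldsymbol{\mu}_{\bm{i}}}[s]$ begins with $\mu_{(i_1,1)}=\mu_1$, so vertex $1$ is mutated again. Consequently, under your positional shift the remaining sequence $\overleftarrow{\boldsymbol{\mu}_{\bm{i}}}[m]\circ\cdots\circ\overleftarrow{\boldsymbol{\mu}_{\bm{i}}}[2]$ does \emph{not} coincide with $\overleftarrow{\boldsymbol{\mu}_{\bm{i}'}}$: for $i_s=i_1$ the former mutates at position $\xi_{\bm{i}}^{-1}(i_1,k)$ while the latter (shifted) mutates at $\xi_{\bm{i}}^{-1}(i_1,k+1)$.

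The correct reduction, which the paper carries out, removes the \emph{frozen} $i_1$-vertex $\xi_{\bm{i}}^{-1}(i_1,m_{i_1})$ rather than vertex $1$, and relabels each unfrozen $i_1$-vertex $s$ by moving it to position $s^+$. Under this bijection the remaining mutation sequence does match $\overleftarrow{\boldsymbol{\mu}_{\bm{i}_{2\le}}}$, and the crucial point is that the arrows one discards in this process connect only vertices that are frozen for $\bm{i}_{2\le}$; hence they are untouched by all subsequent mutations and can simply be restored at the end. This eliminates entirely the ``track vertex $1$ through every later mutation'' step you anticipated, which would otherwise be the hardest part of the argument.
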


Note that \cref{t:mutation_max} is an immediate consequence of \cref{t:mutation_max_ex}.

\begin{rem}\label{r:opquiver_ex}
	The quiver associated with $\overleftarrow{\boldsymbol{\mu}_{\bm{i}}}(\varepsilon^{\bm{i}, \mathrm{ex}})$ is the same as the one obtained from $\Gamma_{\bm{i}^{\mathrm{op}}}$ by reversing directions of all arrows and adding arrows $s\to t$ for $s, t$ such that
	\begin{align*}
	s=\xi_{\bm{i}^{\mathrm{op}}}^{-1}(i_s, m_{i_s})>\xi_{\bm{i}^{\mathrm{op}}}^{-1}(i_t, m_{i_t})=t \text{ and }\xi_{\bm{i}^{\mathrm{op}}}^{-1}(i_s, 1)<\xi_{\bm{i}^{\mathrm{op}}}^{-1}(i_t, 1). 
	\end{align*}
	Note that, for $i,j\in I$, the condition 
	\[
		\xi_{\bm{i}^{\mathrm{op}}}^{-1}(i, m_{i})>\xi_{\bm{i}^{\mathrm{op}}}^{-1}(j, m_{j}) \text{ and }\xi_{\bm{i}^{\mathrm{op}}}^{-1}(i, 1)<\xi_{\bm{i}^{\mathrm{op}}}^{-1}(j, 1)
	\]
	is equivalent to 
	\[
	\xi_{\bm{i}}^{-1}(i, m_{i})>\xi_{\bm{i}}^{-1}(j, m_{j}) \text{ and }\xi_{\bm{i}}^{-1}(i, 1)<\xi_{\bm{i}}^{-1}(j, 1). 
	\]
\end{rem}

\begin{ex}
	For the examples in \cref{e:quiverexample}, we obtain the following (cf.~\cref{e:mutseq}). 
	\begin{itemize}
	
\item The case of $G = SL_4(\mathbb{C})$ and $\bm{i}=(2, 1, 2, 3, 2, 1)\in R(w_0)$: we have $\overleftarrow{\boldsymbol{\mu}_{\bm{i}}} = \mu_1 \mu_2 \mu_3 \mu_1$, and the quiver associated with $\overleftarrow{\boldsymbol{\mu}_{\bm{i}}}(\varepsilon^{\bm{i}, \mathrm{ex}})$ is given by 
	
	\hfill
	\begin{xy} 0;<1pt,0pt>:<0pt,-1pt>::
		(-18,0) *+{3},
		(-10,0) *+{\scalebox{0.65}{$>$}},	
		(-18,30) *+{2},
		(-10,30) *+{\scalebox{0.65}{$>$}},		
		(-18,60) *+{1},
		(-10,60) *+{\scalebox{0.65}{$>$}},			
		(0,30) *+{1} ="1",
		(40,60) *+{2} ="2",
		(80,30) *+{3} ="3",
		(120,0) *+{4} ="4",		
		(160,30) *+{5} ="5",
		(200,60) *+{6} ="6",
		"1", {\ar"3"},
		"3", {\ar"5"},
		"3", {\ar"2"},
		"6", {\ar"3"},
		"2", {\ar"6"},
		"4", {\ar"1"},
		"5", {\ar"4"},
	\end{xy}
	\hfill
	\hfill
	
\item The case of $G = Sp_4(\c)$ and $\bm{i}=(1, 2, 1, 2)\in R(w_0)$: we have $\overleftarrow{\boldsymbol{\mu}_{\bm{i}}} = \mu_2 \mu_1$, and the quiver associated with $\overleftarrow{\boldsymbol{\mu}_{\bm{i}}}(\varepsilon^{\bm{i}, \mathrm{ex}})$ is given by 

	\hfill
	\begin{xy} 0;<1pt,0pt>:<0pt,-1pt>::
		(-18,0) *+{2},
		(-10,0) *+{\scalebox{0.65}{$>$}},	
		(-18,30) *+{1},
		(-10,30) *+{\scalebox{0.65}{$>$}},		
		(0,30) *+{1} ="1",
		(40,0) *+{2} ="2",
		(80,30) *+{3} ="3",
		(120,0) *+{4} ="4",
		"1", {\ar"2"},
		"1", {\ar"3"},
		"2", {\ar"4"},
		"4", {\ar"1"},
	\end{xy}
	\hfill
	\hfill
	
	\end{itemize}	
\end{ex}

\begin{proof}[Proof of \cref{t:mutation_max_ex}]
	We prove the theorem by induction on the length $m$ of $w$. When $m=1$, there is nothing to prove. 
	
	Suppose that $m>1$. Set $\bm{i}_{2\leq}\coloneqq (i_2,\dots, i_m)$. If $m_{i_1}=1$, then $\overleftarrow{\boldsymbol{\mu}_{\bm{i}}}[1]=\mathrm{id}$, and $1$ is not connected with other vertices by arrows. Moreover, we have $\mathsf{R}_{\bm{i}_{2\leq}}(s)=\mathsf{R}_{\bm{i}}(s+1)$ and $s^{+, 2\leq}+1=(s+1)^{+}$ for all $s\in J\setminus \{ m \}$, where, to avoid confusion, we write $s^{+, 2\leq}$ for $s^+$ with respect to $\bm{i}_{2\leq}$. Hence we obtain the desired result since the theorem holds for $s_{i_1}w$ by our induction hypothesis. 
	
	Assume that $m_{i_1}>1$. Consider the quiver $\Gamma_{\bm{i}}^{(1)}$ associated with $\overleftarrow{\boldsymbol{\mu}_{\bm{i}}}[1](\varepsilon^{\bm{i}, \mathrm{ex}})$. By \cref{p:mutation_oneline}, if we 
\begin{itemize}
	\item[(i)] remove the vertex $s\in J$ such that $\xi_{\bm{i}}(s)=(i_1, m_{i_1})$ and the arrows attached to $s$, 
	\item[(ii)] remove the arrows among $(i_1, m_{i_1}-1)$ and $(i, m_i)$, $i\in I\setminus \{i_1\}$, 
	\item[(iii)] move each vertex $s$ of $J_{\rm uf}$ such that $i_s=i_1$ to the place of $s^+$, 
\end{itemize}
then $\Gamma_{\bm{i}}^{(1)}$ becomes the quiver $\Gamma_{\bm{i}_{2\leq}}$ associated with $\varepsilon^{\bm{i}_{2\leq}, \mathrm{ex}}$. After this procedure, the calculation of $\overleftarrow{\boldsymbol{\mu}_{\bm{i}}}[m]\circ \cdots \circ \overleftarrow{\boldsymbol{\mu}_{\bm{i}}}[2]$ is nothing but that of $\overleftarrow{\boldsymbol{\mu}_{\bm{i}_{2\leq}}}(\varepsilon^{\bm{i}_{2\leq}, \mathrm{ex}})$. Moreover, in the processes (i), (ii), (iii), we do not remove the arrows connected with the vertices where mutations are performed in $\overleftarrow{\boldsymbol{\mu}_{\bm{i}}}[m]\circ \cdots \circ \overleftarrow{\boldsymbol{\mu}_{\bm{i}}}[2]$. Hence, to calculate $\overleftarrow{\boldsymbol{\mu}_{\bm{i}}}(\varepsilon^{\bm{i}, \mathrm{ex}})$, we may first consider $\overleftarrow{\boldsymbol{\mu}_{\bm{i}_{2\leq}}}(\varepsilon^{\bm{i}_{2\leq}, \mathrm{ex}})$, and next recall the arrows which are removed by (i) and (ii) via identification above. By our induction hypothesis and \cref{r:opquiver_ex}, the quiver associated with $\overleftarrow{\boldsymbol{\mu}_{\bm{i}_{2\leq}}}(\varepsilon^{\bm{i}_{2\leq}, \mathrm{ex}})$ is obtained from $\Gamma_{(\bm{i}_{2\leq})^{\mathrm{op}}}$ by reversing directions of all arrows and adding arrows $s\to t$ for $s, t$ such that
\[
s=\xi_{(\bm{i}_{2\leq})^{\mathrm{op}}}^{-1}(i_s, m'_{i_s})>\xi_{(\bm{i}_{2\leq})^{\mathrm{op}}}^{-1}(i_t, m'_{i_t})=t \text{ and }\xi_{(\bm{i}_{2\leq})^{\mathrm{op}}}^{-1}(i_s, 1)<\xi_{(\bm{i}_{2\leq})^{\mathrm{op}}}^{-1}(i_t, 1),
\] 
where 
\[
m'_i\coloneqq
\begin{cases}
m_{i_1}-1&\text{if }i=i_1,\\
m_i&\text{otherwise}.
\end{cases}
\]
Moreover, in the processes (i) and (ii), we remove the arrows $s\to t$ such that 
\begin{itemize}
	\item $s=\xi_{\bm{i}}^{-1}(i_1, m_{i_1}-1)$ and $t=\xi_{\bm{i}}^{-1}(i_1, m_{i_1})$, 
	\item $s=\xi_{\bm{i}}^{-1}(i_1, m_{i_1})>\xi_{\bm{i}}^{-1}(i_t, m_{i_t})=t$ (note that we also have $\xi_{\bm{i}}^{-1}(i_1, 1)=1<\xi_{\bm{i}}^{-1}(i_t, 1)$), 
	\item $\xi_{\bm{i}}^{-1}(i_1, m_{i_1})>\xi_{\bm{i}}^{-1}(i_s, m_{i_s})=s$ and $t=\xi_{\bm{i}}^{-1}(i_1, m_{i_1}-1)$.  	
\end{itemize}
Since $\xi_{(\bm{i}_{2\leq})^{\mathrm{op}}}=\xi_{\bm{i}^{\mathrm{op}}}\mid_{J\setminus \{m\}}$, $\xi_{\bm{i}^{\mathrm{op}}}^{-1}(i_1, m_{i_1})=m$, and $\Gamma_{(\bm{i}_{2\leq})^{\mathrm{op}}}$ is a subquiver of $\Gamma_{\bm{i}^{\mathrm{op}}}$, we obtain the desired result.  
\end{proof}

Next we calculate the explicit form of $\overleftarrow{\boldsymbol{\mu}_{\bm{i}}}(\mathbf{D}_{\bm{i}})$. Indeed, the exchange relations appearing in $\overleftarrow{\boldsymbol{\mu}_{\bm{i}}}$ correspond to the following well-known determinantal identities as pointed out in \cite{GLS:Kac-Moody}; recall \cref{n:indexplus} for the notation.

\begin{prop}[{A system of determinantal identities \cite[Theorem 1.17]{FZ:Double}}]\label{p:T-sys}
	Let $\bm{i}=(i_1,\dots, i_m)\in R(w)$. For $s, t\in J\cup\{ 0\}$, set  
	\[
	D_{\bm{i}}(s, t)\coloneqq \begin{cases}
	D_{w_{\leq s}\varpi_{i_s}, w_{\leq t}\varpi_{i_s}}&\text{if }s\in J,\\
	0&\text{if }s=0\text{ and }t\in J,\\	
	1&\text{if }s=t=0.\\
	\end{cases}
	\]
	Then, for $s, t\in J$ with $i_s=i_t$ and $s> t$, the following equality holds: 
	\begin{align}
	D_{\bm{i}}(s^-, t^-)D_{\bm{i}}(s, t)&=D_{\bm{i}}(s, t^-)D_{\bm{i}}(s^-, t)+\prod_{j\in I\setminus\{i_s\}}D_{\bm{i}}(s^-(j), t^-(j))^{-c_{j, i_s}}. \label{eq:T-sys}
	\end{align}
\end{prop}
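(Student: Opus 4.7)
The plan is to derive the identity \eqref{eq:T-sys} from the classical generalized minor identity of Fomin--Zelevinsky \cite[Theorem 1.17]{FZ:Double}, which states that for any $u, v \in W$ and $i \in I$ satisfying $\ell(us_i) = \ell(u) + 1$ and $\ell(vs_i) = \ell(v) + 1$, the equality
\[
\Delta_{u\varpi_i, v\varpi_i} \, \Delta_{u s_i \varpi_i, v s_i \varpi_i} = \Delta_{u s_i\varpi_i, v\varpi_i} \, \Delta_{u\varpi_i, v s_i\varpi_i} + \prod_{j \in I \setminus \{i\}} \Delta_{u\varpi_j, v\varpi_j}^{-c_{j,i}}
\]
holds in $\c[G]$. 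The proof then reduces to carefully matching the factors and restricting to $U_w^-$.

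First, I would specialize the above identity to $u = w_{\leq s-1}$, $v = w_{\leq t-1}$, and $i = i_s \, (= i_t)$. Since $\bm i$ is reduced, $\ell(w_{\leq s-1}s_{i_s}) = \ell(w_{\leq s}) = s$ and $\ell(w_{\leq t-1}s_{i_t}) = \ell(w_{\leq t}) = t$, so the length hypotheses are satisfied; moreover $w_{\leq s-1}s_{i_s}\varpi_{i_s} = w_{\leq s}\varpi_{i_s}$ and $w_{\leq t-1}s_{i_s}\varpi_{i_s} = w_{\leq t}\varpi_{i_s}$. The crucial observation for matching all the other factors is that, because $s_k \varpi_j = \varpi_j$ whenever $k \neq j$, the weight $w_{\leq k}\varpi_j$ depends only on the position of the last occurrence of $j$ in $(i_1, \ldots, i_k)$. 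Consequently, for every $j \in I$,
\[
w_{\leq s-1}\varpi_j = w_{\leq s^-(j)}\varpi_j, \qquad w_{\leq t-1}\varpi_j = w_{\leq t^-(j)}\varpi_j
\]
(with the convention $w_{\leq 0} = e$). In particular, taking $j = i_s$ recovers $w_{\leq s^-}\varpi_{i_s}$ and $w_{\leq t^-}\varpi_{i_s}$ on the left-hand side. Substituting these expressions into the Fomin--Zelevinsky identity and restricting from $\c[G]$ to $\c[U_w^-]$ produces \eqref{eq:T-sys}.

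The remaining task is to verify compatibility with the boundary conventions $D_{\bm i}(0, 0) = 1$ and $D_{\bm i}(0, t) = 0$ for $t \in J$. Note that $s > t \geq 1$ and $i_t = i_s$ force $s^- \geq t \geq 1$, so none of the four factors $D_{\bm i}(s^-, t^-)$, $D_{\bm i}(s, t)$, $D_{\bm i}(s^-, t)$, $D_{\bm i}(s, t^-)$ triggers the degenerate case in the first coordinate; each is a genuine unipotent minor. For the product on the right-hand side, if $s^-(j) = 0$ (i.e.\ $j$ is absent from $i_1, \ldots, i_{s-1}$), then automatically $t^-(j) = 0$, and the corresponding factor equals $\Delta_{\varpi_j, \varpi_j}|_{U_w^-} = 1$---since any $g \in U^-$ acts on $v_{\varpi_j}$ by itself plus strictly lower-weight terms---consistent with $D_{\bm i}(0, 0)^{-c_{j, i_s}} = 1$; mixed cases $s^-(j) \neq 0$, $t^-(j) = 0$ match via $w_{\leq 0}\varpi_j = \varpi_j$. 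The main point requiring care is this weight-matching and boundary-case bookkeeping, after which the identity is an immediate consequence of \cite[Theorem 1.17]{FZ:Double}.
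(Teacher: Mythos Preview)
The paper does not give its own proof of this proposition; it states the identity as a direct citation of \cite[Theorem~1.17]{FZ:Double}. Your proposal correctly supplies the standard specialization argument that makes this citation precise: choosing $u=w_{\le s-1}$, $v=w_{\le t-1}$, $i=i_s$, using $s_k\varpi_j=\varpi_j$ for $k\neq j$ to rewrite the weights in terms of $s^-$, $t^-$, $s^-(j)$, $t^-(j)$, restricting to $U_w^-$, and handling the boundary conventions. This is exactly the intended derivation, so there is nothing to contrast.
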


\begin{rem}\label{r:T-systemshift}
We should note the following property of the system of identities \eqref{eq:T-sys}. Let $\bm{i}=(i_1,\dots, i_m)\in R(w)$, and $1\leq k\leq m$. Write $\bm{i}_{k\leq}\coloneqq (i_k,\dots, i_m)=(i'_1,\dots, i'_{m-k+1})$. For $s\in \{ 1,\dots, m-k+1\}$, the numbers $s^{-}$ and $s^-(j)$ ($j\in I$) associated with $\bm{i}_{k\leq}$ are denoted by $s^{-, k}$ and $s^{-, k}(j)$, respectively. Then, by \cref{p:T-sys}, we have 
\[
	D_{\bm{i}_{k\leq}}(s^{-,k}, t^{-,k})D_{\bm{i}_{k\leq}}(s, t)=D_{\bm{i}_{k\leq}}(s, t^{-, k})D_{\bm{i}_{k\leq}}(s^{-, k}, t)+\prod_{j\in I\setminus\{i'_s\}}D_{\bm{i}_{k\leq}}(s^{-, k}(j), t^{-, k}(j))^{-c_{j, i'_s}}
\]
for $s, t\in \{1,\dots, m-k+1\}$ with $i'_s=i'_t$ and $s> t$. Then the equality still holds if we substitute 
\begin{itemize}
    \item $D_{\bm{i}}(s^{-,k}+k-1, t^{-,k}+k-1)$ for $D_{\bm{i}_{k\leq}}(s^{-,k}, t^{-,k})$,
    \item $D_{\bm{i}}(s+k-1, t+k-1)$ for $D_{\bm{i}_{k\leq}}(s, t)$,
    \item $D_{\bm{i}}(s+k-1, t^{-, k}+k-1)$ for $D_{\bm{i}_{k\leq}}(s, t^{-, k})$,
    \item $D_{\bm{i}}(s^{-, k}+k-1, t+k-1)$ for $D_{\bm{i}_{k\leq}}(s^{-, k}, t)$,
    \item $D_{\bm{i}}(s^{-, k}(j)+k-1, t^{-, k}(j)+k-1)$ for $D_{\bm{i}_{k\leq}}(s^{-, k}(j), t^{-, k}(j))$ ($j\in I\setminus\{i'_s\}$). 
\end{itemize}
\end{rem}

\begin{proof}[{Proof of \cref{t:mutation_max_variable}}]
In the mutation sequence $\overleftarrow{\boldsymbol{\mu}_{\bm{i}}}$, the mutation $\mu_s$ appears $(m_{i_s}-k[s])$ times for $s\in J$. Hence it suffices to show that by each mutation $\mu_s$ in $\overleftarrow{\boldsymbol{\mu}_{\bm{i}}}$, the cluster variable at $s$ changes from an element of the form $D_{\bm{i}}(t_1, t_2)$ to $D_{\bm{i}}(t_1^+, t_2^+)$; recall the notation in \cref{p:T-sys}. Here we set $t_2^+\coloneqq \xi_{\bm{i}}^{-1}(i_{t_1}, 1)$ if $t_2=0$. 
First we consider the mutation sequence $\overleftarrow{\boldsymbol{\mu}_{\bm{i}}}[1]$. If $m_{i_1}=1$, then $\overleftarrow{\boldsymbol{\mu}_{\bm{i}}}[1]=\mathrm{id}$. If $m_{i_1}>1$, then the observation before \cref{p:mutation_oneline} shows that the desired statement holds for $\overleftarrow{\boldsymbol{\mu}_{\bm{i}}}[1]$ because each exchange relation is of the form \eqref{eq:T-sys}. More precisely, the exchange relation for the mutation $\mu_{\xi_{\bm{i}}^{-1}(i_1, k)}$ in  $\overleftarrow{\boldsymbol{\mu}_{\bm{i}}}[1]$ corresponds to the equality \eqref{eq:T-sys} for $s=\xi_{\bm{i}}^{-1}(i_1, k+1)$ and $t=1$. 

Let $\bm{i}_{2\leq}\coloneqq (i_2,\dots, i_m)=(i'_1,\dots, i'_{m-1})$. Comparing the FZ-seed $\mathbf{s}_{\bm{i}_{2\leq}}$ (whose index set is $\{1,\dots, m-1\}$) with  $\overleftarrow{\boldsymbol{\mu}_{\bm{i}}}[1](\mathbf{s}_{\bm{i}})$, we observe the following:  
\begin{itemize}
    \item[$(\ast)$] applying the modification (i)--(iii) in the proof of \cref{t:mutation_max_ex} to $\overleftarrow{\boldsymbol{\mu}_{\bm{i}}}[1](\mathbf{s}_{\bm{i}})$, we obtain the FZ-seed which can be obtained by substituting $D_{\bm{i}}(s+1, 1)$ for the cluster variable $D_{\bm{i}_{2\leq}}(s, 0)$ of $\mathbf{s}_{\bm{i}_{2\leq}}$ at $s\in \{1,\dots, m-1\}$. 
\end{itemize} 
Hence \cref{r:T-systemshift} and the observation of $\overleftarrow{\boldsymbol{\mu}_{\bm{i}}}[1]$ imply that the exchange relations appearing in the process $\overleftarrow{\boldsymbol{\mu}_{\bm{i}}}[2]$ from $\overleftarrow{\boldsymbol{\mu}_{\bm{i}}}[1](\mathbf{s}_{\bm{i}})$ are the equalities of the form \eqref{eq:T-sys}. In particular, the cluster variable at $s$ changes from an element of the form $D_{\bm{i}}(t_1, t_2)$ to $D_{\bm{i}}(t_1^+, t_2^+)$ in this process. Iterating this argument, we can calculate $\overleftarrow{\boldsymbol{\mu}_{\bm{i}}}[m] \circ \cdots \circ \overleftarrow{\boldsymbol{\mu}_{\bm{i}}}[1](\mathbf{s}_{\bm{i}})$ inductively, and we know that the cluster variable at $s$ changes from an element of the form $D_{\bm{i}}(t_1, t_2)$ to $D_{\bm{i}}(t_1^+, t_2^+)$ at each mutation.
\end{proof}

\begin{rem}\label{r:minorvariable}
By \cref{t:independence_of_rex} and the proof of \cref{t:mutation_max_variable}, 
\[
D_{w_1\varpi_i, w_2\varpi_i}\text{ such that }\begin{cases}
\ell (w)\geq \ell(w_1)> \ell(w_2),\\
\ell (w_1 s_i)<\ell(w_1),\\
\ell (w)=\ell(w_1)+\ell(w_1^{-1}w)\text{ and }\ell (w_1)=\ell(w_2)+\ell(w_2^{-1}w_1)
\end{cases}
\]
is a cluster variable of $\mathbb{C}[U_w^-]$ with respect to the cluster structure whose initial FZ-seed is $\mathbf{s}_{\bm{i}}$ ($\bm{i}\in R(w)$). Indeed, for $(j_1,\dots, j_{m_2})\in R(w_2)$, $(j_{m_2+1},\dots, j_{m_1})\in R(w_2^{-1}w_1)$, and $(j_{m_1+1},\dots, j_m)\in R(w_1^{-1}w)$, let
$\bm{j} \coloneqq (j_{1},\dots, j_{m_2}, j_{m_2+1},\dots, j_{m_1}, j_{m_1+1},\dots, j_m)\in R(w)$. Then
\[
D_{w_1\varpi_i, w_2\varpi_i}=D_{\bm{j}}(m_1, m_2).
\]
\end{rem}
\begin{rem}
The proofs of \cref{t:mutation_max,t:mutation_max_variable} work also in the quantum settings by using \cite[Proposition 5.5]{GLS:Selecta} (see also \cite[Proposition 3.24 and Remark 3.25]{KimOya:qtwist}). 
\end{rem}

\section{Double Bruhat cells and unipotent cells}\label{a:doubleBruhat}

In this appendix, we see a direct relation between the cluster structure on the coordinate ring of $U^-_w$ and that of the \emph{double Bruhat cell} $G^{w, e}\coloneqq B^- \cap B\widetilde{w}B$, 
where $\widetilde{w}\in N_G(H)$ is a lift for $w$. Indeed, Berenstein--Fomin--Zelevinsky \cite{BFZ2} and Williams \cite{Wil} proved that $\mathbb{C}[G^{w, e}]$ (more generally, $\mathbb{C}[G^{w, w'}]$ for $w, w'\in W$) has a structure of an upper cluster algebra. Actually, their upper cluster algebra structure induces an upper cluster algebra structure of $\mathbb{C}[U^-_w]$, but as far as the authors know, this procedure is not explicitly explained in the literature. Hence we demonstrate it in this appendix. We adopt an algebraic proof, and as a byproduct we show that $\mathbb{C}[U^-_w]$ also has an (ordinary) cluster algebra structure.  

\begin{rem}
Geiss--Leclerc--Schr\"{o}er \cite{GLS:Kac-Moody} verified the cluster algebra structure on  $\mathbb{C}[U^-_w]$ without going through $G^{w, e}$ under the assumption that $G$ is a \emph{symmetric} Kac--Moody group. 
Demonet \cite{Dem} extended Geiss--Leclerc--Schr\"{o}er's method to the case of symmetrizable Kac--Moody groups for specific $w$. Goodearl--Yakimov \cite{GY:Mem} showed the quantum cluster algebra structure on the quantum analogue of $\mathbb{C}[U^-_w]$ for an arbitrary symmetrizable Kac--Moody group $G$ and $w\in W$. They also announced the classical version of their results in  \cite[Section 1.2]{GY:Poisson}. 
\end{rem}

Let $w \in W$, and write 
\[
\widetilde{D}_{f, v}\coloneqq C_{f, v}|_{G^{w, e}},\quad 
\widetilde{D}_{u\lambda,u'\lambda}\coloneqq \Delta_{u\lambda,u'\lambda}|_{G^{w, e}},\quad 
e^{\mu}\coloneqq \Delta_{\mu,\mu}|_{H}
\]
for $\lambda\in P_+$, $f\in V(\lambda)^{\ast}$, $v\in V(\lambda)$, $u, u'\in W$, and $\mu\in P$. It is easy to see that the multiplication map gives an isomorphism 
\[
U^-_w\times H\xrightarrow{\sim} G^{w, e}
\]
of varieties. Hence we have an isomorphism $\mathbb{C}[G^{w, e}]\xrightarrow{\sim} 
\mathbb{C}[U^-_w]\otimes_{\mathbb{C}}\mathbb{C}[H]$ of $\c$-algebras given by 
\[
\widetilde{D}_{f, v}\mapsto D_{f, v}\otimes e^{\mu}
\]
for $f\in V(\lambda)^{\ast}$, $\lambda\in P_+$, and a weight vector $v\in V(\lambda)$ of weight $\mu\in P$. In particular, there exists a $\mathbb{C}$-algebra isomorphism 
\begin{align}
    \mathbb{C}[G^{w, e}]/(\widetilde{D}_{\varpi_i, \varpi_i}-1\mid i\in I)\xrightarrow{\sim}  \mathbb{C}[U^-_w],\label{eq:Bruhatunip}
\end{align}
where $(\widetilde{D}_{\varpi_i, \varpi_i}-1\mid i\in I)$ is the ideal of $\mathbb{C}[G^{w, e}]$ generated by $\{\widetilde{D}_{\varpi_i, \varpi_i}-1\mid i\in I\}$. 

Let us see the upper cluster algebra structure on $G^{w, e}$. Fix $\bm{i}=(i_1,\dots, i_m)\in R(w)$. We set $\overline{I}\coloneqq \{\overline{i}\mid i\in I\}$, and write 
	\begin{align*}
		\overline{i}^{+} &\coloneqq \min(\{m+1\}\cup\{ 1\leq j\leq m \mid i_{j}=i\})
	\end{align*}
	for $i\in I$. Let $i_{\overline{i}}\coloneqq i$, and consider that $\overline{i}<j$ for all $1 \leq j \leq m$. We set 
	\[
	\widetilde{J} \coloneqq \overline{I} \cup \{1,\dots,m\}\ \text{and}\ \widetilde{J}_{\rm fr} \coloneqq \overline{I}\cup \{j\in J\mid j^+=m+1\}. 
	\]
	Define a $J_{\rm uf} \times \widetilde{J}$-integer matrix $\widetilde{\varepsilon}^{\bm{i}}=(\widetilde{\varepsilon}_{s, t})_{s\in J_{\rm uf}, t\in \widetilde{J}}$ by
	\[
	\widetilde{\varepsilon}_{s, t}\coloneqq
	\begin{cases}
	-1&\text{if}\ s=t^+, \\
	-c_{i_t, i_s}&\text{if}\ t<s<t^+<s^+,\\
	1&\text{if}\ s^+=t, \\
	c_{i_t, i_s}&\text{if}\ s<t<s^+<t^+, \\
	0&\text{otherwise}.
	\end{cases}
	\]
	For $s\in \widetilde{J}$, we set  
\[
\widetilde{D}(s, \bm{i})\coloneqq \begin{cases}
\widetilde{D}_{w_{\leq s}\varpi_{i_s}, \varpi_{i_s}}&\text{if }s\in \{1,\dots,m\},\\
\widetilde{D}_{\varpi_{i_s}, \varpi_{i_s}}&\text{if }s\in \overline{I}.
\end{cases}
\]
Let us consider the upper cluster algebra $\mathscr{U}(\widetilde{\mathbf{s}}_{t_0})$ whose initial FZ-seed is given as $\widetilde{\mathbf{s}}_{t_0} = (\widetilde{\mathbf{A}}_{t_0} = (\widetilde{A}_{s; t_0})_{s \in \widetilde{J}}, \widetilde{\varepsilon}^{\bm{i}})$. 
\begin{thm}[{\cite[Theorem 2.10]{BFZ2} and \cite[Theorem 4.16]{Wil}}]\label{t:Double_cluster}
There exists a $\mathbb{C}$-algebra isomorphism 
\[
\mathscr{U}(\widetilde{\mathbf{s}}_{t_0})\xrightarrow{\sim} \mathbb{C}[G^{w, e}]\ \text{given by }\widetilde{A}_{s; t_0}\mapsto \widetilde{D}(s, \bm{i})\ \text{for }s\in \widetilde{J}.
\]
\end{thm}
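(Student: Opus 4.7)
The plan is to construct the isomorphism $\phi\colon \mathscr{U}(\widetilde{\mathbf{s}}_{t_0})\xrightarrow{\sim} \mathbb{C}[G^{w,e}]$ sending $\widetilde{A}_{s;t_0}$ to $\widetilde{D}(s,\bm{i})$ by three steps: (a) verifying that $\phi$ is well-defined at the level of fraction fields; (b) showing that $\phi$ lands in $\mathbb{C}[G^{w,e}]$; (c) establishing the reverse inclusion $\mathbb{C}[G^{w,e}]\subseteq \phi(\mathscr{U}(\widetilde{\mathbf{s}}_{t_0}))$.

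For (a), I would show that the functions $\widetilde{D}(s,\bm{i})$, $s\in \widetilde{J}$, are algebraically independent in $\mathbb{C}(G^{w,e})$. Since $\dim G^{w,e}=\ell(w)+\mathrm{rank}\,G=|\widetilde{J}|$, it suffices to exhibit one point where their differentials are linearly independent; the natural choice is a point in the image of the factorization map $((t_j)_j,h)\mapsto y_{i_1}(t_1)\cdots y_{i_m}(t_m)\cdot h$ with $t_j\in \mathbb{C}^{\times}$ and $h\in H$ generic, at which the Chamber Ansatz formulas in \cref{t:Chamber_Ansatz} express each $\widetilde{D}(s,\bm{i})$ as an explicit Laurent monomial in $t_j$ and $h^{\varpi_i}$. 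For (b), I need the image of every mutated cluster variable to be regular on $G^{w,e}$. The key input is the determinantal identity in \cref{p:T-sys}, which realizes the exchange relation at each mutable vertex for mutations connecting $\mathbf{s}_{\bm{i}}$ to $\mathbf{s}_{\bm{i}'}$ for various $\bm{i}'\in R(w)$ (as in \cref{t:independence_of_rex}); iterated use of these identities keeps the class of generalized minors stable, and the Laurent phenomenon \cref{t:laurentpheno} handles arbitrary mutations.

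Step (c), the reverse inclusion, is the heart of the matter. The idea is to use the Chamber Ansatz \cref{t:Chamber_Ansatz} to express each factorization parameter $t_s$ — which together with the torus coordinates $h^{\varpi_i}$ generate the coordinate ring of the open subvariety $(\mathbb{C}^{\times})^m\times H\subset G^{w,e}$ as a Laurent polynomial ring — as a Laurent monomial in the $\widetilde{D}(s,\bm{i})$ and the frozen variables $\widetilde{D}(\overline{i},\bm{i})$. Combined with the fact that every matrix coefficient $\widetilde{D}_{f,v}$ restricts to a polynomial in the $t_s$ on this open subvariety, this yields the desired Laurent expressions in one seed. To promote this to a Laurent expression simultaneously in \emph{every} seed reachable by mutation, I would invoke the ``starfish'' lemma of \cite{BFZ2}: it suffices to check that the initial cluster variables $\{\widetilde{D}(s,\bm{i})\}_{s\in\widetilde{J}}$ are pairwise coprime in $\mathbb{C}[G^{w,e}]$ and that each one-step mutation $\mu_k(\widetilde{D}(k,\bm{i}))$ is again regular on $G^{w,e}$.

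The hard part is the coprimality check inside the starfish lemma, since it amounts to verifying that the prime divisors $\{\widetilde{D}(s,\bm{i})=0\}$ in $G^{w,e}$ are distinct and irreducible, which ultimately depends on the geometry of the Bruhat decomposition and cannot be extracted from purely combinatorial data about the quiver $\widetilde{\varepsilon}^{\bm{i}}$. One would need, for each mutable vertex $s$, to identify the exchange polynomial $\widetilde{D}(s,\bm{i})\cdot \mu_s(\widetilde{D}(s,\bm{i}))$ with the right-hand side of the determinantal identity in \cref{p:T-sys} restricted to $G^{w,e}$, and then verify that the two monomial terms on that right-hand side have no common prime factor divisible by $\widetilde{D}(s,\bm{i})$. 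This is precisely where the arguments of \cite[Proof of Theorem 2.10]{BFZ2} and \cite[Section 4]{Wil} do nontrivial work, and I would follow their approach rather than attempt a direct verification.
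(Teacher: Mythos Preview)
The paper does not provide its own proof of this theorem: it is stated as a citation from \cite{BFZ2} and \cite{Wil}, and Appendix~\ref{a:doubleBruhat} takes it as input to derive \cref{t:Bruhat-unip}. There is no argument in the paper to compare your sketch against.

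Your outline follows the shape of the proof in \cite{BFZ2}. Two points deserve tightening. In step~(a), the Chamber Ansatz of \cref{t:Chamber_Ansatz} gives monomial expressions for the \emph{twisted} minors $(\eta_w^{\ast})^{-1}(D(s,\bm{i}))$, not for the $\widetilde{D}(s,\bm{i})$ themselves; algebraic independence follows more directly from the lower-triangular shape in \cref{p:transform} combined with the open embedding $(\mathbb{C}^\times)^m\times H\hookrightarrow G^{w,e}$. In step~(b), the Laurent phenomenon only shows that an element of $\mathscr{U}(\widetilde{\mathbf{s}}_{t_0})$ is a Laurent polynomial in each cluster, hence regular on each cluster torus inside $G^{w,e}$; it does not by itself yield regularity on all of $G^{w,e}$. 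Closing that gap --- the inclusion $\mathscr{U}(\widetilde{\mathbf{s}}_{t_0})\subseteq \mathbb{C}[G^{w,e}]$ --- is exactly the additional input supplied in \cite{Wil}, and is not a consequence of the determinantal identities plus the Laurent phenomenon alone. Your step~(c), the inclusion $\mathbb{C}[G^{w,e}]\subseteq \mathscr{U}(\widetilde{\mathbf{s}}_{t_0})$ via the starfish lemma, is the correct mechanism, and you rightly flag coprimality of the initial minors as the geometric crux to be borrowed from \cite{BFZ2}.
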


\begin{ex}
	Recall the quiver description of the exchange matrix appearing in \cref{e:initial}. When $G = SL_5(\mathbb{C})$ and $\bm{i} = (1, 2, 1, 3, 2, 1, 4, 3, 2, 1)$, the initial FZ-seed $(\widetilde{\mathbf{A}}_{t_0}, \widetilde{\varepsilon}^{\bm{i}})$ is described as follows:
	
	\hfill
	\scalebox{0.7}[0.7]{
		\begin{xy} 0;<1pt,0pt>:<0pt,-1pt>::
			(60,0) *+{\widetilde{D}_{\varpi_4, \varpi_4}} ="10",
			(0,30) *+{\widetilde{D}_{\varpi_3, \varpi_3}} ="11",
			(-60,60) *+{\widetilde{D}_{\varpi_2, \varpi_2}} ="12",
			(-120,90) *+{\widetilde{D}_{\varpi_1, \varpi_1}} ="13",
			(180,0) *+{\widetilde{D}_{w_0 \varpi_4, \varpi_4}} ="0",
			(120,30) *+{\widetilde{D}_{s_1 s_2 s_1 s_3 \varpi_3, \varpi_3}} ="1",
			(240,30) *+{\widetilde{D}_{w_0 \varpi_3, \varpi_3}} ="2",
			(60,60) *+{\widetilde{D}_{s_1 s_2 \varpi_2, \varpi_2}} ="3",
			(180,60) *+{\widetilde{D}_{s_1 s_2 s_1 s_3 s_2 \varpi_2, \varpi_2}} ="4",
			(300,60) *+{\widetilde{D}_{w_0 \varpi_2, \varpi_2}} ="5",
			(0,90) *+{\widetilde{D}_{s_1 \varpi_1, \varpi_1}} ="6",
			(120,90) *+{\widetilde{D}_{s_1 s_2 s_1 \varpi_1, \varpi_1}} ="7",
			(240,90) *+{\widetilde{D}_{s_1 s_2 s_1 s_3 s_2 s_1 \varpi_1, \varpi_1}} ="8",
			(360,90) *+{\widetilde{D}_{w_0 \varpi_1, \varpi_1}} ="9",
			"6", {\ar"13"},
			"3", {\ar"12"},
			"1", {\ar"11"},
			"10", {\ar"1"},
			"11", {\ar"3"},
			"12", {\ar"6"},			
			"1", {\ar"0"},
			"3", {\ar"1"},
			"1", {\ar"4"},
			"4", {\ar"2"},
			"6", {\ar"3"},
			"3", {\ar"7"},
			"7", {\ar"4"},
			"4", {\ar"8"},
			"8", {\ar"5"},
			"2", {\ar"1"},
			"4", {\ar"3"},
			"5", {\ar"4"},
			"7", {\ar"6"},
			"8", {\ar"7"},
			"9", {\ar"8"},
		\end{xy}
	}
	\hfill
	\hfill
	
\end{ex}

Our main statement in this appendix is that 
\cref{t:Double_cluster} implies the following theorem. 

\begin{thm}\label{t:Bruhat-unip}
There is a $\mathbb{C}$-algebra isomorphism $\mathscr{U}(\mathbf{A}_{t_0}, \varepsilon^{\bm{i}})\xrightarrow{\sim} \mathbb{C}[U_w^-]$ given by 
\[
A_{s; t_0}\mapsto D(s, \bm{i})
\]
for $s\in J$. Moreover, $\mathscr{U}(\mathbf{A}_{t_0}, \varepsilon^{\bm{i}})=\mathscr{A}(\mathbf{A}_{t_0}, \varepsilon^{\bm{i}})$.
\end{thm}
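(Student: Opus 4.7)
The plan is to derive \cref{t:Bruhat-unip} from \cref{t:Double_cluster} by transporting the quotient $\pi\colon \mathbb{C}[G^{w, e}]\twoheadrightarrow \mathbb{C}[U_w^-]$ of \eqref{eq:Bruhatunip} to the cluster side via the isomorphism $\phi\colon \mathscr{U}(\widetilde{\mathbf{s}}_{t_0})\xrightarrow{\sim}\mathbb{C}[G^{w, e}]$. Under $\phi$, the frozen cluster variables $\widetilde{A}_{\overline{i}; t_0}$ ($\overline{i}\in\overline{I}$) correspond to the minors $\widetilde{D}_{\varpi_i, \varpi_i}$, so $\pi\circ \phi$ corresponds to setting $\widetilde{A}_{\overline{i}; t_0}=1$ on the cluster side. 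The goal is then to show that this specialization induces a $\mathbb{C}$-algebra homomorphism $\sigma\colon \mathscr{U}(\widetilde{\mathbf{s}}_{t_0})\to\mathscr{U}(\mathbf{A}_{t_0},\varepsilon^{\bm{i}})$ which, composed with $\psi\colon \mathscr{U}(\mathbf{A}_{t_0},\varepsilon^{\bm{i}})\to \mathbb{C}[U_w^-]$, $A_{s;t_0}\mapsto D(s,\bm{i})$, yields $\pi\circ \phi$.

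First I will compare the cluster patterns $\widetilde{\mathcal{S}}=\{\widetilde{\mathbf{s}}_t\}_{t\in\mathbb{T}}$ and $\mathcal{S}=\{\mathbf{s}_t\}_{t\in\mathbb{T}}$ generated from $\widetilde{\mathbf{s}}_{t_0}$ and $\mathbf{s}_{t_0}$ via mutations in directions of $J_{\rm uf}$. By induction on the distance from $t_0$ in $\mathbb{T}$, I will verify, for every $t\in\mathbb{T}$: (a) the $J_{\rm uf}\times J$-submatrix of $\widetilde{\varepsilon}_t$ coincides with $\varepsilon_t$; (b) the Laurent expansion of $\widetilde{A}_{s;t}$ in $\widetilde{\mathbf{A}}_{t_0}$, after the substitution $\widetilde{A}_{\overline{i};t_0}\mapsto 1$, $\widetilde{A}_{s;t_0}\mapsto A_{s;t_0}$ ($s\in J$), coincides with the Laurent expansion of $A_{s;t}$ in $\mathbf{A}_{t_0}$. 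Statement (a) holds because the update rule \eqref{eq_mutation_for_varepsilon} in a direction $k\in J_{\rm uf}$ only uses entries of the $J_{\rm uf}\times J$-submatrix to update that submatrix. Then (b) follows from \eqref{eq_mutation_for_cluster_variable} upon noting that the factors of $\widetilde{A}_{\overline{i};t}=\widetilde{A}_{\overline{i};t_0}$ appearing in the exchange relation become trivial after specialization, so the formula reduces to the corresponding exchange relation in $\mathbf{s}_t$.

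From (a) and (b), for $f\in\mathscr{U}(\widetilde{\mathbf{s}}_{t_0})$ the specialized element lies in $\bigcap_{t}\mathbb{C}[A_{s;t}^{\pm 1}\mid s\in J]=\mathscr{U}(\mathbf{A}_{t_0},\varepsilon^{\bm{i}})$, and this defines $\sigma$. The map $\psi$ is then well-defined and fits into a commutative square $\psi\circ\sigma=\pi\circ\phi$. Surjectivity of $\pi\circ\phi$ implies that of $\psi$, while injectivity follows because $\psi$ is the restriction of a field isomorphism $\mathbb{C}(A_{s;t_0}\mid s\in J)\xrightarrow{\sim}\mathbb{C}(U_w^-)$, thanks to the algebraic independence of $\{D(s,\bm{i})\}_{s\in J}$ over $\mathbb{C}$ (deduced from that of the initial cluster $\{\widetilde{D}(s,\bm{i})\}_{s\in\widetilde{J}}$ together with the factorization $\mathbb{C}[G^{w, e}]\simeq\mathbb{C}[U_w^-]\otimes_{\mathbb{C}}\mathbb{C}[H]$).

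For the final assertion $\mathscr{A}(\mathbf{A}_{t_0},\varepsilon^{\bm{i}})=\mathscr{U}(\mathbf{A}_{t_0},\varepsilon^{\bm{i}})$, combining the just-proved isomorphism $\mathscr{U}(\mathbf{A}_{t_0},\varepsilon^{\bm{i}})\simeq\mathbb{C}[U_w^-]$ with the known result $\mathscr{A}(\mathbf{A}_{t_0},\varepsilon^{\bm{i}})\simeq\mathbb{C}[U_w^-]$ from \cite{GLS:Kac-Moody,Dem,GY:Mem,GY:Poisson} (via the same identification $A_{s;t_0}\leftrightarrow D(s,\bm{i})$) yields the desired equality. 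The main obstacle I anticipate is the careful bookkeeping in step (b): one must verify inductively that a mutation in a $J_{\rm uf}$-direction produces exchange relations whose $\widetilde{A}_{\overline{i};t}$-monomials factor through the initial variables $\widetilde{A}_{\overline{i};t_0}$, so that the specialization is compatible with the mutation rule and the induction passes cleanly from $t$ to $\mu_k(t)$.
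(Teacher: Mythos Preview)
Your inductive comparison (items (a) and (b)) of the two cluster patterns is correct and is precisely what the paper does in \eqref{eq:specializationimage}. However, there is a genuine gap in the step where you conclude that $\psi$ is an isomorphism onto $\mathbb{C}[U_w^-]$. From $\psi\circ\sigma=\pi\circ\phi$ and the surjectivity of $\pi\circ\phi$ you only obtain $\mathbb{C}[U_w^-]\subset \psi(\mathscr{U}(\mathbf{A}_{t_0},\varepsilon^{\bm{i}}))$ inside $\mathcal{F}_w$; you never establish the reverse containment $\psi(\mathscr{U}(\mathbf{A}_{t_0},\varepsilon^{\bm{i}}))\subset \mathbb{C}[U_w^-]$. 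Your map $\sigma$ is not shown to be surjective, so there may be elements of $\mathscr{U}(\mathbf{A}_{t_0},\varepsilon^{\bm{i}})$ lying outside $\mathrm{Im}\,\sigma$ whose images under $\psi$ are merely in $\mathcal{F}_w$. Thus your claim that $\psi$ is ``well-defined'' as a map into $\mathbb{C}[U_w^-]$ is exactly the missing content of the theorem. The final paragraph is then circular: you invoke the ``just-proved isomorphism $\mathscr{U}\simeq\mathbb{C}[U_w^-]$'' to deduce $\mathscr{A}=\mathscr{U}$, but that isomorphism has not been established.

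The paper closes this gap differently. After obtaining $\mathbb{C}[U_w^-]\subset\mathscr{U}$ via the same induction, it first shows $\mathscr{A}(\mathbf{A}_{t_0},\varepsilon^{\bm{i}})=\mathbb{C}[U_w^-]$ \emph{internally}: using \cref{r:minorvariable} (a consequence of the mutation-sequence computations in Appendix~\ref{a:mutation_sequence}) together with \cite[Proposition~8.5]{GLS:Kac-Moody} to exhibit algebra generators of $\mathbb{C}[U_w^-]$ as cluster variables. Then, since $\mathbb{C}[U_w^-]$ is a unique factorization domain, \cite[Corollary~1.5]{GLS:factorial} yields $\mathscr{A}=\mathscr{U}$, which simultaneously proves both assertions. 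Your appeal to \cite{GLS:Kac-Moody,Dem,GY:Mem,GY:Poisson} for $\mathscr{A}\simeq\mathbb{C}[U_w^-]$ is also problematic in this context: as the paper itself remarks, \cite{GLS:Kac-Moody} covers only the symmetric case, \cite{Dem} only specific $w$, and \cite{GY:Poisson} is announced; the point of Appendix~\ref{a:doubleBruhat} is precisely to give an argument valid uniformly in the symmetrizable case.
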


\begin{proof}
The fraction field $\mathcal{F}_w$ of the coordinate ring $\mathbb{C}[U^-_w]$ is isomorphic to the field of rational functions in $|J|$-variables, and $\{D(s, \bm{i})\mid s\in J\}$ forms a free generating set of $\mathcal{F}_w$ (cf.~\cref{t:Chamber_Ansatz}). Hence the upper cluster algebra $\mathscr{U}(\mathbf{A}_{t_0}, \varepsilon^{\bm{i}})$ can be realized as a $\mathbb{C}$-subalgebra of $\mathcal{F}_w$ in such a manner that the initial FZ-seed is given as 
\[
(\mathbf{A}_{t_0}\coloneqq (D(s, \bm{i})\mid s\in J), \varepsilon^{\bm{i}}). 
\]
Moreover, we consider $\mathscr{U}(\widetilde{\mathbf{A}}_{t_0}, \widetilde{\varepsilon}^{\bm{i}})$ as a $\c$-subalgebra of the fraction field of $\mathbb{C}[G^{w, e}]$ by \cref{t:Double_cluster}. By \eqref{eq:Bruhatunip}, we have a $\mathbb{C}$-algebra homomorphism 
\[
\pi\colon \mathbb{C}[G^{w, e}]\twoheadrightarrow \mathbb{C}[U_w^-]\hookrightarrow \mathcal{F}_w. 
\]
We shall prove that the image $\Image\pi$ of $\pi$ is contained in $\mathscr{U}(\mathbf{A}_{t_0}, \varepsilon^{\bm{i}})$. It suffices to show that 
\begin{align}
\pi(\widetilde{A}_{s; t})=\begin{cases}
A_{s; t}&\text{if }s\in J,\\
1&\text{if }s\in \overline{I}
\end{cases}\label{eq:specializationimage}
\end{align}
for all $t\in\mathbb{T}$ and $s\in\widetilde{J}$. We show it by induction on the distance from $t_0$ in $\mathbb{T}$. When $t=t_0$, \eqref{eq:specializationimage} is clear from the definition of the initial FZ-seed. Let $t, t'\in \mathbb{T}$ and $k\in J_{\rm uf}$ such that $t\overset{k}{\text{---}} t'$, and suppose that \eqref{eq:specializationimage} holds at $t$. First it is clear from the definition of exchange relations that 
\[
\pi(\widetilde{A}_{s; t'})=\pi(\widetilde{A}_{s; t})=A_{s; t}=A_{s; t'}
\]
for $s\in \widetilde{J}\setminus \{k\}$, where we set $A_{s; t'}=A_{s; t}=1$ for $s\in \overline{I}$. Next, by our induction hypothesis and the definition of exchange relations again, we have
\begin{align*}
\pi(\widetilde{A}_{k; t})\pi(\widetilde{A}_{k; t'})&=\pi(\widetilde{A}_{k; t}\widetilde{A}_{k; t'})\\
&=\pi\left(\prod_{s\in \widetilde{J}}\widetilde{A}_{s; t}^{[\varepsilon_{k, s}^{(t)}]_+}+\prod_{s\in \widetilde{J}}\widetilde{A}_{s; t}^{[-\varepsilon_{k, s}^{(t)}]_+}\right)\\
&=\prod_{s\in J}A_{s; t}^{[\varepsilon_{k,s}^{(t)}]_+}+\prod_{s\in J}A_{s; t}^{[-\varepsilon_{k,s}^{(t)}]_+}\\
&=A_{k; t}A_{k; t'}=\pi(\widetilde{A}_{k; t})A_{k; t'}. 
\end{align*}
Hence we have $\pi(\widetilde{A}_{k; t'})=A_{k; t'}$. Thus, we obtain \eqref{eq:specializationimage}, and $\mathbb{C}[U_w^-]\simeq \Image\pi\subset \mathscr{U}(\mathbf{A}_{t_0}, \varepsilon^{\bm{i}})$. 

By \eqref{eq:specializationimage} again, $\pi$ induces a surjective $\c$-algebra homomorphism between the ordinary cluster algebras 
\[
\mathscr{A}(\widetilde{\mathbf{A}}_{t_0}, \widetilde{\varepsilon}^{\bm{i}})\twoheadrightarrow \mathscr{A}(\mathbf{A}_{t_0}, \varepsilon^{\bm{i}}). 
\]
Here remark that $\mathscr{A}(\widetilde{\mathbf{A}}_{t_0}, \widetilde{\varepsilon}^{\bm{i}})\subset\mathscr{U}(\widetilde{\mathbf{A}}_{t_0}, \widetilde{\varepsilon}^{\bm{i}})$ and $\mathscr{A}(\mathbf{A}_{t_0}, \varepsilon^{\bm{i}})\subset \mathscr{U}(\mathbf{A}_{t_0}, \varepsilon^{\bm{i}})$. Hence $\mathscr{A}(\mathbf{A}_{t_0}, \varepsilon^{\bm{i}})$ is contained in $\Image\pi\simeq \mathbb{C}[U_w^-]$. By \cite[Proposition 8.5]{GLS:Kac-Moody} (or \cite[Corollary 2.22]{KimOya}) and \cref{r:minorvariable}, we conclude that the $\c$-algebra generators of $\mathbb{C}[U_w^-]$ can be obtained as cluster variables of $\mathscr{A}(\mathbf{A}_{t_0}, \varepsilon^{\bm{i}})$. These imply that $\mathscr{A}(\mathbf{A}_{t_0}, \varepsilon^{\bm{i}})=\mathbb{C}[U_w^-]$. Moreover, since $\mathbb{C}[U^-_w]$ is isomorphic to a certain localization of a polynomial ring by \cite[Proposition 8.5]{GLS:Kac-Moody} (or \cite[Corollary 2.22]{KimOya}), $\mathbb{C}[U^-_w]$ is a unique factorization domain. Hence, by \cite[Corollary 1.5]{GLS:factorial}, we obtain 
\[
\mathbb{C}[U_w^-]=\mathscr{A}(\mathbf{A}_{t_0}, \varepsilon^{\bm{i}})=\mathscr{U}(\mathbf{A}_{t_0}, \varepsilon^{\bm{i}}).
\]
\end{proof}

\section{Upper global bases}\label{a:upper_global_bases}

In this appendix, we briefly summarize some properties of the upper global basis together with precise references.  
For $w \in W$, we regard the intersection $U^- \cap X(w)$ as a closed subvariety of $U^-$, and denote by 
\[
\pi_w \colon \c[U^-] \twoheadrightarrow \c[U^- \cap X(w)]
\]
the restriction map. 
Lusztig \cite{Lus_can, Lus_quivers, Lus1} and Kashiwara \cite{Kas1,Kas2,Kas3} constructed a specific $\c$-basis  ${\bf B}^{\rm up}\subset \c[U^-]$ of $\c[U^-]$ via the quantized enveloping algebra $U_q (\mathfrak{g})$ associated with $\mathfrak{g}$. 
This is called (the specialization at $q = 1$ of) the \emph{dual canonical basis}/\emph{upper global basis} of $\c[U^-]$. 

\begin{thm}[{\cite[Propositions 3.2.3 and 3.2.5]{Kas4} (see also \cite[Corollary 3.20]{FO})}]\label{t:Demazure}
For $w\in W$, there uniquely exists a subset ${\bf B}^{\rm up}[w] \subset {\bf B}^{\rm up}$ having the following properties {\rm (1)}--{\rm (3)}.
\begin{enumerate}
\item[{\rm (1)}] The set $\{\pi_w (b) \mid b \in {\bf B}^{\rm up}[w]\}$ forms a $\c$-basis of $\c[U^- \cap X(w)]$.
\item[{\rm (2)}] The equality $\pi_w (b) = 0$ holds for all $b \in {\bf B}^{\rm up} \setminus {\bf B}^{\rm up}[w]$.
\item[{\rm (3)}] For $\lambda\in P_+$, there uniquely exists a subset ${\bf B}^{\rm up}_w[\lambda] \subset {\bf B}^{\rm up}[w]$ such that 
\[
\{\sigma / \tau_{\lambda} \mid \sigma \in H^0(X(w), \mathcal{L}_{\lambda})\} = \sum_{b \in {\bf B}^{\rm up}_w[\lambda]} \c \pi_w (b).
\]
\end{enumerate}
\end{thm}

\begin{rem}
\cref{t:Demazure} is verified in a quantum setting in \cite{Kas4}. We rephrase the results of \cite{Kas4} specializing $q$ to $1$ here. 
\end{rem}

Since $U^- _w$ is an open subvariety of $U^- \cap X(w)$, given by $D_{w \varpi_i, \varpi_i} \neq 0$ for $i \in I$, we have 
\[
\c[U^- \cap X(w)][D_{w \varpi_i, \varpi_i}^{-1}\mid i\in I]\simeq \c[U^- _w]. 
\]
For $b \in {\bf B}^{\rm up}[w]$, write $b_w\coloneqq \pi_w (b)$. Through the isomorphism above, the set 
\[
{\bf B}_w ^{\rm up} \coloneqq \{ b_w \cdot \prod_{i \in I} D_{w \varpi_i, \varpi_i} ^{-a_i} \mid b \in {\bf B}^{\rm up}[w],\ (a_i)_{i \in I} \in \z_{\ge 0} ^I\}
\]
forms a $\c$-basis of $\c[U^- _w]$, which is called (the specialization at $q = 1$ of) the \emph{dual canonical basis}/\emph{upper global basis} of $\c[U_w ^-]$ (see \cite[Proposition 4.5 and Definition 4.6]{KimOya}).
By \cref{t:Demazure} (3), this basis ${\bf B}_w ^{\rm up}$ has the property $({\rm T})_4$ in Section \ref{ss:bases_tropical}. 
In addition, Kashiwara--Kim \cite[Lemma 3.6, Lemma 3.12, and Theorem 3.16]{KasKim} and Qin \cite[Theorem 9.5.1]{Qin3} proved that ${\bf B}_w ^{\rm up}$ has the properties $({\rm T})_1$--$({\rm T})_3$ in Section \ref{ss:bases_tropical}, which implies \cref{t:existence_of_bases}.

\begin{rem}
In a quantum setting, the upper global basis gives a common triangular basis in the sense of \cite[Definition 6.1.3]{Qin}; see \cite[Proposition 3.19 and Remark 3.20]{KasKim} and \cite[Theorem 9.5.1]{Qin3}.
\end{rem}

\begin{rem}
When the Cartan matrix of $\mathfrak{g}$ is symmetric, the opposite dominance order $\preceq_{\varepsilon_t} ^{\rm op}$ in \cref{d:main_valuation} is the same as the partial order $\preceq_{\mathscr{S}}$ in \cite[Section 3.3]{KasKim} by \cite[Proposition 3.3]{KasKim}.
In particular, the extended $g$-vector $g_t$ in \cref{d:weakly_pointed} corresponds to ${\bf g}_{\mathscr{S}} ^L$ defined in \cite[Definition 3.8]{KasKim} by \cite[Lemma 3.6]{KasKim}.
\end{rem}

\bibliographystyle{jplain} 
\def\cprime{$'$} 

\end{document}